


\documentclass[a4paper, reqno]{amsart}
\usepackage{etex}  

\usepackage[T1]{fontenc}
\usepackage[latin1]{inputenc}
\usepackage{newlfont, verbatim, indentfirst, enumerate}

\usepackage{amsmath, amsthm, amssymb, amscd, mathrsfs}
\usepackage{latexsym, amsfonts, amsbsy, mathrsfs}
\usepackage{array, pst-all, graphicx, rotating}
\usepackage{tikz} \usetikzlibrary{arrows} \usetikzlibrary{patterns}
\usepackage[unicode]{hyperref}

\allowdisplaybreaks
\numberwithin{equation}{section}

\theoremstyle{plain}
\newtheorem{theorem}{Theorem}[section]
\newtheorem{theo}[theorem]{Theorem}
\newtheorem{prop}[theorem]{Proposition}
\newtheorem{lem}[theorem]{Lemma}   
\newtheorem{cor}[theorem]{Corollary}

\theoremstyle{definition}
\newtheorem{defin}[theorem]{Definition}
\newtheorem{rem}[theorem]{Remark}
\newtheorem{construction}[theorem]{Construction}
\newtheorem{descrip}[theorem]{Description}

\renewenvironment{itemize}{\begin{list}{$\bullet$}{\leftmargin=0.5cm}\parindent=0pt}{\end{list}}

\newcommand {\functor} [1] {\mathcal{#1}}  
\newcommand {\atlas} [1] {\mathcal{#1}}  
\newcommand {\atlasmaprep} [2] {\hat{#1}^{#2}}  
\newcommand {\atlasmap} [2] {[\hat{#1}^{#2}]}  
\newcommand {\groupoid} [1] {\mathscr{#1}}  
\newcommand {\groupoidtot} [2] {\groupoid{#1}^{#2}_{\bullet}}  
\newcommand {\groupname} [2] {\groupoid{#1}^{#2}_1\rightrightarrows^{\hspace{-0.25 cm}^{s}}_{\hspace{-0.25 cm}_{t}}\groupoid{#1}^{#2}_0}  
\newcommand {\groupoidmap} [2] {({#1}^{#2}_0,{#1}^{#2}_1)}  
\newcommand {\groupoidmaptot} [2] {{#1}^{#2}_{\bullet}}  
\newcommand {\stack} [1] {\mathfrak{#1}}  
\newcommand {\fiber} [4] {{#1}\,_{#2}\times_{#3}\, {#4}}  
\newcommand {\unifX} [1] {(\widetilde{X}_{#1},G_{#1},\pi_{#1})}  
\newcommand {\unifY} [1] {(\widetilde{Y}_{#1},H_{#1},\chi_{#1})} 
\newcommand {\emphatic} [1] {\emph{#1}}

\def \tX {\widetilde{X}}
\def \tY {\widetilde{Y}}
\def \tZ {\widetilde{Z}}

\def \tx {\widetilde{x}}
\def \ty {\widetilde{y}}

\def \overf {\overline{f}}  
\def \overg {\overline{g}}
\def \overh {\overline{h}}
\def \tf {\widetilde{f}}
\def \tg {\widetilde{g}}

\def \change {\mathcal{C}h}  
\def \dom {\operatorname{dom}}  
\def \cod {\operatorname{cod}}  
\def \germ {\operatorname{germ}}  
\def \red {\,\operatorname{red}}  
\def \reg {\,\operatorname{reg}}  
\def \ind {\operatorname{ind}}  
\def \Stab {\operatorname{Stab}}  
\def \max {\operatorname{max}}  
\def \id {\operatorname{id}}  
\def \pr {\operatorname{pr}}  
 

\def \RedAtl {(\mathbf{\mathcal{R}ed\,\mathcal{A}tl})}  
\def \RedOrb {(\mathbf{\mathcal{R}ed\,\mathcal{O}rb})}  


\def \LieGpd {(\mathbf{\mathcal{L}ie\,\mathcal{G}pd})}  
\def \EGpd {(\mathbf{\mathcal{\acute{E}}\,\mathcal{G}pd})}  
\def \PEGpd {(\mathbf{\mathcal{P\acute{E}}\,\mathcal{G}pd})}  
\def \PEEGpd {(\mathbf{\mathcal{PE\acute{E}}\,\mathcal{G}pd})} 

\def \DiffStacks {(\mathbf{C}^{\infty}\textrm{-}\mathbf{Stacks})}  
\def \Orb {(\mathbf{Orb})}  
\def \OrbEff {(\mathbf{Orb}^{\mathbf{eff}})}  



\def \CATA {\mathbf{\mathscr{A}}}  
\def \CATB {\mathbf{\mathscr{B}}}
\def \CATC {\mathbf{\mathscr{C}}}

\def \SETW {\mathbf{W}}  
\def \SETWinv {\mathbf{W}^{-1}} 
\def \SETWsat{\mathbf{W}_{\mathbf{\operatorname{sat}}}}  

\def \SETWA {\mathbf{W}_{\mathbf{\mathscr{A}}}}  
\def \SETWAsat{\mathbf{W}_{\mathbf{\mathscr{A}},\mathbf{\operatorname{sat}}}}
\def \SETWAinv {\mathbf{W}^{-1}_{\mathbf{\mathscr{A}}}}

\def \SETWB {\mathbf{W}_{\mathbf{\mathscr{B}}}}  
\def \SETWBinv {\mathbf{W}^{-1}_{\mathbf{\mathscr{B}}}}
\def \SETWBsat{\mathbf{W}_{\mathbf{\mathscr{B}},\mathbf{\operatorname{sat}}}}
\def \SETWBsatinv{\mathbf{W}^{-1}_{\mathbf{\mathscr{B}},\mathbf{\operatorname{sat}}}}

\def \WRedAtl {\mathbf{W}_{\mathbf{\mathcal{R}ed\,\mathcal{A}tl}}}  
\def \WRedAtlsat {\mathbf{W}_{\mathbf{\mathcal{R}ed\,\mathcal{A}tl},\operatorname{sat}}} 
\def \WRedAtlinv {\mathbf{W}_{\mathbf{\mathcal{R}ed\,\mathcal{A}tl}}^{-1}}  

\def \WEGpd {\mathbf{W}_{\mathbf{\mathcal{\acute{E}}\,\mathcal{G}pd}}}  
\def \WEGpdinv {\mathbf{W}_{\mathbf{\mathcal{\acute{E}}\,\mathcal{G}pd}}^{-1}}

\def \WPEGpd {\mathbf{W}_{\mathbf{\mathcal{P\acute{E}}\,\mathcal{G}pd}}}  
\def \WPEGpdinv {\mathbf{W}_{\mathbf{\mathcal{P\acute{E}}\,\mathcal{G}pd}}^{-1}}

\def \WPEEGpd {\mathbf{\mathbf{W}_{\mathcal{PE\acute{E}}\,\mathcal{G}pd}}} 
\def \WPEEGpdinv {\mathbf{W}_{\mathbf{\mathcal{PE\acute{E}}\,\mathcal{G}pd}}^{-1}}

\hypersetup{unicode=true, pdftoolbar=true, pdfmenubar=true, pdffitwindow=false,
  pdfstartview={FitH},pdftitle={A bicategory of reduced orbifolds from the point of view of
  differential geometry - I}, pdfauthor={Matteo Tommasini}, pdfkeywords={reduced orbifolds, Lie
  groupoids, differentiable stacks, 2-categories, bicategories}, pdfnewwindow=true,
  colorlinks=false, linkcolor=red, citecolor=red, filecolor=magenta, urlcolor=cyan}

\setlength\parindent{0pt}

\numberwithin{equation}{section}

\hyphenation{to-po-lo-gy de-fi-ni-tion or-bi-fold or-bi-folds e-qui-va-lence co-ve-ring
ca-te-go-ry ca-te-go-ries e-qui-va-lent geo-me-try bi-ca-te-go-ry bi-ca-te-go-ries ha-ving
ca-no-ni-cal in-ver-ti-ble com-pa-ti-bi-li-ty sub-ca-te-go-ry par-ti-cu-lar a-na-lo-gously
ge-nu-ine pro-duct pseu-do-func-tor ne-ces-sa-ri-ly con-ti-nuous con-si-de-ra-bly a-tlas
a-tla-ses au-to-ma-ti-cal-ly dif-fe-ren-tial}

\begin{document}

\title[A bicategory of reduced orbifolds - I]
{A bicategory of reduced orbifolds from the point of view of differential geometry - I}

\author{Matteo Tommasini}

\address{\flushright
Mathematics Research Unit\newline
University of Luxembourg\newline
6, rue Richard Coudenhove-Kalergi\newline
L-1359 Luxembourg\newline\newline
website: \href{http://matteotommasini.altervista.org/}
{\nolinkurl{http://matteotommasini.altervista.org/}}\newline\newline
email: \href{mailto:matteo.tommasini2@gmail.com}{\nolinkurl{matteo.tommasini2@gmail.com}}}

\date{\today}
\subjclass[2010]{14D20, 14H51, 14H60, 14F45}
\keywords{Reduced orbifolds, Lie groupoids, differentiable stacks, 2-categories, bicategories}

\thanks{I would like to thank Dorette Pronk and Anke Pohl for help and useful suggestions when I
was writing this paper, and Barbara Fantechi for suggesting this problem to me. I would
also like to acknowledge the Institute of Algebraic Geometry and the Riemann Center of the Leibniz
Universit\"{a}t Hannover, where the initial part of this research was performed. This research was
mainly done at the Mathematics Research Unit of the University of Luxembourg, thanks to the grant
4773242 by Fonds National de la Recherche Luxembourg. I would also like to thank Carnegie Mellon
University for hospitality in March 2013, when part of this paper was written.}

\begin{abstract}
We describe a bicategory $\RedOrb$ of reduced orbifolds in the framework of classical differential
geometry (i.e.\ without any explicit reference to notions of Lie groupoids or differentiable
stacks, but only using orbifold atlases, local lifts and changes of charts). In order to construct
such a bicategory, we first define a $2$-category $\RedAtl$ whose objects are reduced orbifold
atlases (on any paracompact, second countable, Hausdorff topological space). The definition of
morphisms is obtained as a slight modification of a definition by A.~Pohl, while the definitions
of $2$-morphisms and compositions of them is new in this setup. Using the bicalculus of fractions
described by D.~Pronk, we are able to construct the bicategory $\RedOrb$ from the $2$-category
$\RedAtl$. We prove that $\RedOrb$ is equivalent to the bicategory of reduced orbifolds
described in terms of proper, effective, \'etale Lie groupoids by D.~Pronk and I.~Moerdijk and to
the $2$-category of reduced orbifolds described by several authors in the past in terms of a
suitable class of differentiable Deligne-Mumford stacks.
\end{abstract}

\maketitle

\begingroup{\hypersetup{linkbordercolor=white}
\tableofcontents}\endgroup

\section*{Introduction}
A well-known issue in mathematics is that of modeling geometric objects where points have
non-trivial groups of automorphisms. In topology and differential geometry the standard approach
to these objects (when each point has a finite group of automorphisms) is through orbifolds. This
concept was formalized for the first time by Ikiro Satake in 1956 in~\cite{Sa} with some different
hypotheses than the current ones, although the informal idea dates back at least to Henri
Poincar\'e (for example, see~\cite{Poi}). Currently there are at least $3$ main approaches to
orbifolds:

\begin{enumerate}[(1)]
 \item via orbifold atlases and ``good maps'' between them, as described in~\cite{CR},
 \item via a class of Lie groupoids, namely proper, \'etale groupoids
  (see for example~\cite{Pr}, \cite{M} and~\cite{MM}),
 \item via a class of $C^{\infty}$-Deligne-Mumford stacks (see for example~\cite{J1}
  and~\cite{J2}).
\end{enumerate}

On the one hand, the approach in (1) gives rise to a $1$-category. On the other hand, the approach in
(2) gives rise to a bicategory (i.e.\ almost a $2$-category, except that compositions of
$1$-morphisms is associative only up to canonical $2$-morphisms) and the approach in (3) gives
rise to a $2$-category. It was proved in~\cite{Pr} that (2) and (3) are equivalent
bicategories. Since (2) and (3) are compatible approaches, then one might argue that:

\begin{enumerate}[(i)]
 \item there should also exist a non-trivial structure of $2$-category or bicategory having as
  objects orbifold atlases or equivalence classes of them (i.e.\ orbifold structures);
 \item the structure of (i) should be compatible with the approaches of (2) and (3) and it
  should replace the approach of (1) (since (1) gives rise only to a category instead of a
  $2$-category or bicategory).
\end{enumerate}

In the present paper we manage to prove both (i) and (ii) for the family of all \emph{reduced}
orbifolds, i.e.\ orbifolds that are locally modeled on open connected sets of some $\mathbb{R}^n$,
modulo finite groups acting smoothly and \emph{effectively} on them. In order to do that, we
proceed as follows.

\begin{itemize}
 \item We describe a $2$-category $\RedAtl$ whose objects are reduced orbifold atlases on any
  paracompact, second countable, Hausdorff topological space. The definition of morphisms is
  obtained as a slight modification of an analogous definition given by Anke Pohl in~\cite{Po},
  while the notion of $2$-morphisms (and compositions of them) is new in this setup (see
  Definitions~\ref{def-03} and~\ref{def-06}). Such notions are useful for differential
  geometers mainly because they don't require any previous knowledge of Lie groupoids and/or
  differentiable stacks. In Proposition~\ref{prop-01} we will prove that $\RedAtl$ is a 
  $2$-category, but it is
  still not the structure that we want to get in (i); indeed in $\RedAtl$ different orbifold
  atlases that represent the same orbifold structure are not related by an isomorphism neither by
  an internal equivalence.
 \item We recall briefly the definition of the $2$-category $\PEEGpd$, whose objects are proper,
  effective, \'etale groupoids and we describe in Theorem~\ref{theo-01} a $2$-functor
  
  \[\functor{F}^{\red}:\RedAtl\longrightarrow\PEEGpd.\]
  
 \item In~\cite{Pr} Dorette Pronk proved that the set $\WPEEGpd$ of all Morita equivalences (also
  known as weak equivalences or essential equivalences) in $\PEEGpd$ admits a right bicalculus
  of fractions. Roughly speaking, this amounts to saying that it is possible to construct a
  bicategory $\PEEGpd\left[\WPEEGpdinv\right]$ and a pseudofunctor
  
  \[\functor{U}_{\WPEEGpd}:\PEEGpd\longrightarrow\PEEGpd\left[\WPEEGpdinv\right]\]
  that sends each weak equivalence to an internal equivalence and that is universal with respect to
  this property (see Proposition~\ref{prop-03}). The bicategory obtained in this way is the
  bicategory that we mentioned in (b) above if we restrict to the case of reduced orbifolds.
 \item In $\RedAtl$ we consider a class $\WRedAtl$ of morphisms (that we call ``refinements''
  of reduced orbifold atlases, see
  Definition~\ref{def-10}) and we prove that such a class
  admits a right bicalculus of fractions. Therefore, we are able to construct a bicategory
  $\RedOrb$ and a pseudofunctor
  
  \[\functor{U}_{\WRedAtl}:\RedAtl\longrightarrow\RedOrb:=\RedAtl\left[\WRedAtlinv\right]\]
  that sends each refinement to an internal equivalence and that is universal with respect
  to this property (see Proposition~\ref{prop-05}). Objects in this new bicategory are again reduced
  orbifold atlases; a morphism from an atlas $\atlas{X}$ to an atlas $\atlas{Y}$ is a triple
  consisting of a reduced orbifold atlas $\atlas{X}'$, a refinement $\atlas{X}'
  \rightarrow\atlas{X}$ and a morphism $\atlas{X}'\rightarrow\atlas{Y}$. In other terms, a morphism
  from $\atlas{X}$ to $\atlas{Y}$ is given firstly by replacing $\atlas{X}$ with a ``refined''
  atlas $\atlas{X}'$ (keeping track of the refinement),
  then by considering a morphism from $\atlas{X}'$ to $\atlas{Y}$. We
  refer to Lemma~\ref{lem-25} for the description of $2$-morphisms in this bicategory.
 \item Using the axiom of choice and the results about bicategories of fractions that we
  proved in our previous papers~\cite{T4} and~\cite{T5}, we are able to prove that:\\
  
  \noindent\textbf{Theorem A} (Proposition~\ref{prop-06} and Theorem~\ref{theo-05}).
  \emph{There is an equivalence of bicategories
  $\functor{G}^{\red}$ making the following diagram commute:}
 
  \[\begin{tikzpicture}[xscale=2.5,yscale=-0.8]
    \node (A0_0) at (0, 0) {$\RedAtl$};
    \node (A0_2) at (2, 0) {$\PEEGpd$};
    \node (A2_0) at (0, 2) {$\RedOrb$};
    \node (A2_2) at (2, 2) {$\PEEGpd\left[\WPEEGpdinv\right]$.};
    
    \node (B1_1) at (1, 1) {$\curvearrowright$};

    \path (A0_0) edge [->]node [auto,swap] {$\scriptstyle{\functor{U}_{\WRedAtl}}$} (A2_0);
    \path (A2_0) edge [->,dashed]node [auto,swap] {$\scriptstyle{\functor{G}^{\red}}$} (A2_2);
    \path (A0_2) edge [->]node [auto] {$\scriptstyle{\functor{U}_{\WPEEGpd}}$} (A2_2);
    \path (A0_0) edge [->]node [auto] {$\scriptstyle{\functor{F}^{\red}}$} (A0_2);
  \end{tikzpicture}\]
  This proves that the approach described in $\RedOrb$ is compatible with the approach
  (2) to reduced orbifolds in terms of proper, effective, \'etale Lie groupoids. Since (2) and (3)
  are equivalent approaches by~\cite{Pr}, this implies at once that:\\
  
  \noindent\textbf{Theorem B} (Theorem~\ref{theo-03}). \emph{$\RedOrb$ is
  equivalent to the $2$-category
  $\OrbEff$ of effective orbifolds described as a full $2$-subcategory of the $2$-category of
  $C^{\infty}$-Deligne-Mumford stacks.}\\
\end{itemize}

Even if we will not use explicitly the language of stacks in all this paper, we think that
it is important to remark the following $2$ facts:

\begin{itemize}
 \item in the language of (differentiable) stacks, the notion of objects is long and complicated
  to be stated precisely: it requires the notion of pseudofunctor (or the notion of
  category fibered in
  groupoids), Grothendieck topology and descent conditions. Having described that, morphisms
  (and $2$-morphisms) are almost straightforward to define and the resulting structure is that of
  a $2$-category;
 \item in the language used in the present paper (that is mostly intended to be used by
  differential geometers), objects are very easy to describe: they are simply reduced orbifold
  atlases; as we mentioned above, also morphism are easy to describe. On the contrary,
  the definitions of $2$-morphisms between such objects will be a bit longer
  (see Lemma~\ref{lem-25}) and the resulting structure will be that of a bicategory (hence composition
  of morphisms is associative only up to canonical $2$-morphisms).
\end{itemize}

To summarize, this paper provides a suitable bicategory of reduced orbifolds, that is equivalent
to the already known bicategories of reduced orbifolds that are standard in the literature. Its
main advantage is that its objects are reduced atlases, so it gives a description that is closer to
classical differential geometry than the descriptions (2) and (3) given in terms of Lie
groupoids or differentiable stacks.\\

In the literature there are already other attempts to define morphisms of (reduced) orbifolds in
terms that are useful for differential geometers:

\begin{enumerate}[(a)]
 \item the ``smooth maps'' defined for example in~\cite[Definition~1.3]{ALR};
 \item the ``good maps'' described by Weimin Chen and Yongbin Ruan in~\cite{CR};
 \item the orbifold maps described by Anke Pohl (only for the reduced case) in~\cite{Po}.
\end{enumerate}

The maps in (a) were the first ones to be defined, but it turned out that they were
not good enough: in general one could not pullback orbifold vector bundles along such maps (and in
the case when this was possible, the pullback was not unique up to isomorphism). This led Chen and
Ruan to introduce the concept of good maps. Such maps proved to be good enough in order to define
pullbacks of orbifold vector bundles (and fiber products under some assumptions), and they are
currently frequently used in mathematical physics and differential geometry when
dealing with orbifolds. However, they are bad-behaved for the following $2$ reasons:

\begin{itemize}
 \item not all smooth maps of manifolds are good maps;
 \item fiber products (when they exist) do not have the universal property of fiber products in
  a category; in particular, pullbacks of orbifold vector bundles do not have the universal property.
\end{itemize}

The first problem is just a technical mistake in the definition of good maps, and it can be
corrected without much trouble by simply relaxing a bit the technical assumptions on good maps.
However, the second problem is much worse and it cannot be corrected easily. We will exhibit
examples of both bad behaviors in the next paper~\cite{T10}.\\

The definition given in (c) solves the first problem (but not the second one). However,
composition of morphisms is not well-defined in~\cite{Po} (we will show also
this fact in~\cite{T10}).\\

Both in case (b) and in case (c), the bad problems quickly mentioned above are
a consequence of completely ignoring the fact that
orbifolds have much more structure than that of a usual $1$-category. Actually, Theorem B
proves that $\RedOrb$ has a non-trivial structure of bicategory because
it is equivalent to $\OrbEff$ (that
has a non-trivial structure of $2$-category). In~\cite{T10} we will prove that the bad
behaviors of (b) and (c) are given by the following reasons.

\begin{itemize}
 \item The category (b) of reduced orbifolds with good maps is equivalent to 
  the \emph{homotopy category} of $\RedOrb$, i.e.\ the $1$-category obtained by identifying any pair of
  $1$-morphisms of $\RedOrb$ whenever there is an invertible $2$-morphism between them. Now the
  problem is the following: given any weak fiber product in a non-trivial bicategory $\CATB$,
  the corresponding
  commutative square in the homotopy category $\operatorname{Ho}(\CATB)$
  not necessarily have the universal property of
  fiber products in a $1$-category. This leads to all the problems mentioned above
  for fiber products of good maps in (b).
  On the contrary, we will show in~\cite{T9}
  that weak pullbacks of vector bundles in $\RedOrb$ have the universal
  property of weak fiber products in that bicategory.
 \item The definition of maps according to Pohl is obtained in $2$ steps. First of all, one gets
  a notion of ``representative'' of map, that corresponds to the notion of $1$-morphism in $\PEEGpd$;
  the $1$-category $\CATC$ obtained in this way has objects given by reduced orbifold atlases. Since Pohl
  wants to identify orbifold atlases that give the same orbifold structure, she has to construct
  a new $1$-category $\overline{\CATC}$, where morphisms are equivalence classes of the ``representatives''
  mentioned above. The
  way used by Pohl for identifying morphisms takes into account a certain class of
  commutative diagrams of $\PEEGpd$, without considering the existence of
  $2$-commutative diagrams. Because of that, one gets that composition of morphisms in
  $\overline{\CATC}$ is not
  well-defined. The problem can be solved by quotienting the set of morphisms in $\CATC$
  by a bigger equivalence
  relation (taking into account the role played by $2$-morphisms).
  In this way one would get a $1$-category $\widetilde{\CATC}$, that is
  equivalent to the homotopy category of $\RedOrb$. As such, $\widetilde{\CATC}$ would
  solve the bad-behaved definition of
  composition given by Pohl, but it would still carry
  the problems about fiber products mentioned before for (b).
\end{itemize}

To summarize, \emph{both the category constructed by Chen-Ruan \emphatic{(}in the reduced
case\emphatic{)} and the one
defined by Pohl have some serious drawbacks, mainly induced by ignoring the crucial role played by
$2$-morphisms. On the contrary, the bicategory $\RedOrb$ constructed in the present paper solves
such problems} (and it is equivalent to the standard approach to reduced orbifolds
because of Theorems A and B).\\

Apart from that, only one important problem remains open: we have described a bicategory structure
that solves problems (i) and (ii) by \emph{restricting} to the case of reduced orbifolds. Is it
possible to give an analogous description of a bicategory $(\mathbf{\functor{O}rb})$ also in the
more general case of (possibly) non-reduced orbifolds? Since the bicategories of (2) and (3) are
defined (and equivalent) also in this more general setup, in principle this should be
possible, but it seems that this will require much more work.

\section{Reduced orbifold atlases}
Let us review some basic definitions about reduced orbifolds.

\begin{defin}
\cite[\S~1]{MP} Let $X$ be a paracompact, second countable, Hausdorff topological space and let
$X'\subseteq X$ be open and non-empty. Then a \emph{reduced orbifold chart} (also known as
\emph{reduced uniformizing system}) \emph{of dimension} $n$ for $X'$ is the datum of:

\begin{itemize}
 \item a \emph{connected} open subset $\tX$ of $\mathbb{R}^n$;
 \item a \emph{finite} group $G$ of smooth automorphisms of $\tX$;
 \item a continuous, surjective and $G$-invariant map $\pi:\tX\rightarrow X'$, which induces an
  homeomorphism between $\tX/G$ and $X'$, where we give to $\tX/G$ the quotient topology (in
  particular, $\pi$ is an open map).
\end{itemize}
For every point $\tx\in\tX$, we denote by $\Stab(G,\tx)$ the stabilizer of $\tx$ in $G$.
\end{defin}

\begin{rem}
We will always assume that $G$ \emph{acts effectively}; the orbifolds that have this property are
usually called \emph{reduced} or \emph{effective}. Some of the current literature on orbifolds
assumes that $\tX$ is only a connected smooth manifold of dimension $n$ instead of an open
connected subset of $\mathbb{R}^n$. This makes a difference for the definition of charts, but the
arising notion of orbifold is not affected by that. To be more precise, to any orbifold atlas (see
below) where the $\tX$'s are connected smooth manifolds of dimension $n$, one can associate easily
another orbifold atlas where the $\tX$'s are open connected subsets of $\mathbb{R}^n$ and the $2$
orbifold atlases give rise to the same orbifold structure (see below).
\end{rem}

The following definition is a special case of~\cite[\S~2.1]{Po}.

\begin{defin}\label{def-14}
Let us fix any pair of reduced charts $\unifX{1}$ and $\unifX{2}$ for subsets $X_1,X_2$ of
$X$. Then a \emph{change of charts} from $\unifX{1}$ to $\unifX{2}$ is any diffeomorphism
$\lambda:\tY_1\stackrel{\sim}{\rightarrow}\tY_2$ such that:

\begin{itemize}
 \item $\tY_1$ is any connected component of $\pi_1^{-1}(Y)$ for some open non-empty
  subset $Y$ of $X_1$ (since the action of $G_1$ on $\tX_1$ permutes such connected
  components, then $\pi_1(\tY_1)=Y$);
 \item $\tY_2$ is an open subset of $\tX_2$;
 \item $\pi_2\circ\lambda={\pi_1}|_{\tY_1}$.
\end{itemize}

Using~\cite[Lemma~A.2]{MP} and the fact that $\lambda$ is a diffeomorphism, it turns out that
$Y$ is contained also in $X_2$ and that $\tY_2$ is a connected component of
$\pi_2^{-1}(Y)$. So the inverse of any change of charts is again a change of charts. If
$\lambda$ is any change of charts, we denote by $\dom\lambda$ its domain and by $\cod\lambda$ its
codomain. If $\tx\in\dom\lambda$, then $\germ_{\tx}\lambda$ denotes the germ of $\lambda$ at $\tx$.
An \emph{embedding} is any change of charts $\lambda$ as before, such that $\dom\lambda=\tX_1$. $2$
charts as before are called \emph{compatible} if for each pair $\tx_1\in\tX_1$, $\tx_2\in\tX_2$ with
$\pi_1(\tx_1)=\pi_2(\tx_2)$, there exists a change of charts $\lambda$ 
from $\unifX{1}$ to $\unifX{2}$, with $\tx_1\in\dom\lambda$.
Up to composing $\lambda$ with an element of $G_2$, this is the same as
requiring that there exists a change of charts $\lambda$ such that $\tx_1\in\dom\lambda$ and $\lambda
(\tx_1)=\tx_2$.
\end{defin}

\begin{rem}\label{rem-01}
Let us suppose that we have any change of charts $\lambda:\tY_1\rightarrow\tY_2$ from
$\unifX{1}$ to $\unifX{2}$. Then let us fix any point $g_1\in G_1$ and let us suppose that
$g_1(\tY_1)\cap\tY_1\neq\varnothing$; by the hypothesis on $\tY_1$, we conclude that
necessarily $g_1(\tY_1)=\tY_1$. Therefore we can consider the subgroup of $G_1$:

\[G_1(\tY_1):=\{g_1\in G_1\,\,\textrm{s.t.}\,\,g_1(\tY_1)\cap\tY_1\neq\varnothing\}=\{g_1\in G_1
\,\,\textrm{s.t.}\,\,g_1(\tY_1)=\tY_1\}.\]

By~\cite[Lemma~2.10]{MM}, we have that the group $G_1(\tY_1)$ acts \emph{effectively}
on $\tY_1$, so the triple $(\tY_1,G_1(\tY_1),{\pi_1}|_{\tY_1})$ is a reduced orbifold chart;
moreover, $\lambda$ can be considered as an embedding
from $(\tY_1,G_1(\tY_1),{\pi_1}|_{\tY_1})$ to $\unifX{2}$.
\end{rem}

Using Remark~\ref{rem-01}, the following definition is equivalent to~\cite[\S~1]{MP}.

\begin{defin}\label{def-01}
Let $X$ be a paracompact, second countable, Hausdorff topological space; a \emph{reduced orbifold
atlas of dimension} $n$ on $X$ is any family $\atlas{X}=\{\unifX{i}\}_{i\in I}$ of reduced
orbifolds charts of dimension $n$, such that:

\begin{enumerate}[(i)]
 \item the family $\{X_i:=\pi_i(\tX_i)\}_{i\in I}$ is an open cover of $X$;
 \item every pair of charts in $\atlas{X}$ is compatible.
\end{enumerate}

Given any orbifold atlas $\atlas{X}$ as before and any pair $(i,i')\in I\times I$, we denote by
$\change(\atlas{X},i,i')$ the set of all changes of charts $\lambda$ from $\unifX{i}$ to
$(\tX_{i'},G_{i'},\pi_{i'})$ and we set $\change(\atlas{X}):=\coprod_{(i,i')\in I\times I}
\change(\atlas{X},i,i')$.
\end{defin}

\begin{defin}\label{def-02}
\cite[\S~1]{MP} Let $\atlas{X}$ and $\atlas{X}'$ be reduced orbifold atlases for the same
topological space $X$. We say that they are \emph{equivalent} if their union is again an orbifold
atlas for $X$, i.e.\ if and only if any chart of $\atlas{X}$ is compatible with any chart of
$\atlas{X}'$; a \emph{reduced orbifold structure} of dimension $n$ on $X$ is any equivalence
class with respect to compatibility of atlases. A
\emph{reduced orbifold} of dimension $n$ is any pair $(X,[\atlas{X}])$ consisting of a
paracompact, second countable, Hausdorff topological space $X$ and a reduced orbifold structure
$[\atlas{X}]$ on $X$. Any atlas in $[\atlas{X}]$ is called a reduced orbifold atlas for $(X,
[\atlas{X}])$. The notion of being compatible gives also rise to a partial order
on the set of reduced orbifold atlases for $X$; it turns out that given any reduced orbifold
atlas there is exactly one maximal atlas associated to it with respect of this definition,
so a reduced orbifold structure can be equivalently defined as a maximal reduced orbifold atlas.
\end{defin}

\begin{defin}
\cite[\S~4.1]{CR} Let $f: X\rightarrow Y$ be any continuous map between topological spaces and
let $X'\subseteq X$ and $Y'\subseteq Y$ be open subsets such that $f(X')\subseteq Y'$. Let us suppose
that there are reduced orbifold
charts $\unifX{}$ for $X'$ and $\unifY{}$ for $Y'$. Then a \emph{local lift} of $f$
with respect to these $2$ charts is any smooth map $\tf:\tX\rightarrow\tY$, such that $\chi\circ
\tf=f\circ\pi$.
\end{defin}

\begin{defin}
Let us fix any reduced orbifold atlas $\atlas{X}$ as before and let $P$ be any subset of $\change(
\atlas{X})$. We say that $P$ is \emph{a good subset of} $\change(\atlas{X})$ if the following
property holds:

\begin{enumerate}[({GS})]
 \item\label{GS} for each $\lambda\in\change(\atlas{X})$ and for each $\tx\in\dom\lambda$ there exists
  $\hat{\lambda}\in P$ such that $\tx\in\dom\hat{\lambda}$ and $\germ_{\tx}\lambda=\germ_{\tx}
  \hat{\lambda}$.
\end{enumerate}

Since $P$ is a subset of $\change(\atlas{X})$, for each $(i,i')\in I\times I$ we write for
simplicity $P(i,i'):=P\cap\change(\atlas{X},i,i')$ and $P(i,-):=\coprod_{i'\in I}P(i,i')$.
\end{defin}

\begin{rem}\label{rem-02}
In the notations of~\cite{Po}, (\hyperref[GS]{GS}) is the condition that $P$ \emph{generates} the
pseudogroup $\change(\atlas{X})$ inside the larger pseudogroup $\Psi(\atlas{X})$ defined and used
in~\cite{Po}; such a pseudogroup is obtained by taking into account all changes of charts of
$\atlas{X}$ with a more general definition than the one used in the present paper. In~\cite{Po}
there are other $2$ technical conditions (axioms of ``quasi-pseudogroup''), but they are implied
by (\hyperref[GS]{GS}) in our case, so we can omit them. Under this remark, our definition of
morphism of orbifold atlases $\atlas{X}\rightarrow\atlas{Y}$ (Definition~\ref{def-11} and~\ref{def-03}
below) is equivalent to the definition of ``orbifold map with domain atlas $\atlas{X}$ and
range atlas $\atlas{Y}$'' stated in~\cite[Definitions~4.4 and~4.10]{Po}. To be more precise,
our ``representatives of morphisms'' (Definition~\ref{def-11} below) are a subset of the
representatives given in~\cite[Definitions~4.4]{Po}, but the sets of equivalence classes described in
Definition~\ref{def-03} and in~\cite[Definition~4.10]{Po} will be the same.
\end{rem}

\begin{defin}\label{def-11}
Let us fix any pair of reduced orbifold atlases $\atlas{X}=\{\unifX{i}\}_{i\in I}$ and $\atlas{Y}=
\{\unifY{j}\}_{j\in J}$ for $X$ and $Y$ respectively. Then a \emph{representative of a morphism}
from $\atlas{X}$ to $\atlas{Y}$ is any tuple $\atlasmaprep{f}{}:=(f,\overf,\{\tf_i\}_{i\in I},P_f,
\nu_f)$ that satisfies the following conditions:

\begin{enumerate}[({M}1)]
 \item\label{M1} $f:X\rightarrow Y$ is any continuous map;
 \item\label{M2} $\overf:I\rightarrow J$ is any set map such that $f(\pi_i(\tX_i))\subseteq
  \chi_{\overf(i)}(\tY_{\overf(i)})$ for each $i\in I$;
 \item\label{M3} for each $i\in I$, the map $\tf_i$ is a local lift of $f$ with respect to the
  orbifold charts $(\widetilde{U}_i,G_i,\pi_i)\in\atlas{X}$ and $\left(\widetilde{V}_{\overf(i)},
  H_{\overf(i)},\chi_{\overf(i)}\right)\in\atlas{Y}$;
 \item\label{M4} $P_f$ is any good subset of $\change(\atlas{X})$;
 \item\label{M5} $\nu_f:P_f\rightarrow\change(\atlas{Y})$ is any set map that assigns to each
  $\lambda\in P_f(i,i')$ a change of charts $\nu_f(\lambda)\in\change(\atlas{Y},\overf(i),
  \overf(i'))$, such that:

  \begin{enumerate}[(a)]
   \item\label{M5a} $\dom\nu_f(\lambda)$ is an open set containing $\tf_i(\dom\lambda)$,
   \item\label{M5b} $\cod\nu_f(\lambda)$ is an open set containing $\tf_{i'}(\cod\lambda)$,
   \item\label{M5c} $\tf_{i'}\circ\lambda=\nu_f(\lambda)\circ {\tf_i}|_{\dom\lambda}$,
   \item\label{M5d} for all $i\in I$, for all $\lambda,\lambda'\in P_f(i,-)$ and for all $\tx_i\in
    \dom\lambda\cap\dom\lambda'$ with $\germ_{\tx_i}\lambda=\germ_{\tx_i}\lambda'$, we have 

    \[\germ_{\tf_i(\tx_i)}\nu_f(\lambda)=\germ_{\tf_i(\tx_i)}\nu_f(\lambda'),\]

   \item\label{M5e} for all $(i,i',i'')\in I^3$, for all $\lambda_1\in P_f(i,i')$, for all
    $\lambda_2\in P_f(i',i'')$ and for all $\tx_i\in\lambda_1^{-1}(\cod\lambda_1\cap\dom\lambda_2)$,
    we have 

    \[\germ_{\tf_{i'}(\lambda_1(\tx_i))}\nu_f(\lambda_2)\cdot\germ_{\tf_i(\tx_i)}\nu_f(\lambda_1)=
    \germ_{\tf_i(\tx_i)}\nu_f(\lambda_3),\]
    where $\lambda_3$ is any element of $P_f(i,i'')$ such that $\germ_{\tx_i}\lambda_3=
    \germ_{\tx_i}\lambda_2\circ\lambda_1$ (it exists by (\hyperref[GS]{GS})),  
   \item\label{M5f} for all $i\in I$, for all $\lambda\in P_f(i,i)$ and for all $\tx_i\in\dom
   \lambda$ such that $\germ_{\tx_i}\lambda=\germ_{\tx_i}\id_{\tX_i}$, we have

   \[\germ_{\tf_i(\tx_i)}\nu_f(\lambda)=\germ_{\tf_i(\tx_i)}\id_{\tY_{\overf(i)}}.\]
\end{enumerate}
\end{enumerate}
\end{defin}

\begin{defin}\label{def-03}
Given $2$ representatives of morphisms from $\atlas{X}=\{\unifX{i}\}_{i\in I}$ to
$\atlas{Y}=\{\unifY{j}\}_{j\in J}$ as follows:

\[\atlasmaprep{f}{}=\left(f,\overf,\left\{\tf_i\right\}_{i\in I},P_f,\nu_f\right)\quad\textrm{and}
\quad\atlasmaprep{f}{\prime}:=\left(f',\overf',\left\{\tf'_i\right\}_{i\in I},P_{f'},\nu_{f'}\right),
\]
we say that $\atlasmaprep{f}{}$ is \emph{equivalent} to $\atlasmaprep{f}{\prime}$ if and only if $f=
f'$, $\overf=\overf'$, $\tf_i=\tf'_i$ for all $i\in I$, and 

\begin{equation}\label{eq-26}
\germ_{\tf_i(\tx_i)}\nu_f(\lambda)=\germ_{\tf_i(\tx_i)}\nu_{f'}(\lambda')
\end{equation}
for all $i\in I$, for all $\lambda\in P_f(i,-)$, $\lambda'\in P_{f'}(i,-)$ and for all $\tx_i\in\dom
\lambda\cap\dom\lambda'$ with $\germ_{\tx_i}\lambda=\germ_{\tx_i}\lambda'$. This defines an
equivalence relation (it is reflexive by (\hyperref[M5d]{M5d})). The equivalence class of
$\atlasmaprep{f}{}$ will be denoted by

\begin{equation}\label{eq-41}
\atlasmap{f}{}=\left(f,\overf,\left\{\tf_i\right\}_{i\in I},\left[P_f,\nu_f\right]\right):\,
\atlas{X}\longrightarrow\atlas{Y}
\end{equation}
and it is called a \emph{morphism of reduced orbifold atlases from $\atlas{X}$ to $\atlas{Y}$
over the continuous map $f:X\rightarrow Y$}.
\end{defin}

\begin{lem}
\hspace{0.1em}
\begin{enumerate}[\emphatic{(}i\emphatic{)}]
 \item given any reduced orbifold chart $(\tX,G,\pi)$ on any topological space and any change of charts
 $\lambda$ from $(\tX,G,\pi)$ to itself, there is a unique $g\in G$ such that
  $\lambda=g|_{\dom\lambda}$;
 \item let us fix any pair of reduced orbifold atlases $\atlas{X}:=\{\unifX{i}\}_{i\in I}$ and
  $\atlas{Y}:=\{\unifY{j}\}_{j\in J}$ and any morphism as in \eqref{eq-41}; for each
  $i\in I$ there is a group homomorphism $\dot{f}_i:G_i\rightarrow H_{\overf(i)}$, such
  that for each $\lambda\in P_f(i,i)$ we have that $\nu_f(\lambda)=\dot{f}_i(g_i)|_{\dom\nu_f
  (\lambda)}$, where $g_i$ is the unique element of $G_i$ as in \emphatic{(}i\emphatic{)};
 \item for each $i\in I$ and for each $g_i\in G_i$ we have $\tf_i\circ g_i=\dot{f}_i(g_i)\circ\tf_i$.
\end{enumerate}
\end{lem}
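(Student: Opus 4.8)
The plan is to treat the three parts in dependency order (i), then (iii), then the homomorphism half of (ii), since both the construction of $\dot{f}_i$ and its multiplicativity rest on the rigidity statement (i). For (i) the essential input is a rigidity property of the $G$-action: averaging any Riemannian metric on $\tX$ over the finite group $G$ makes every $g\in G$ an isometry of a connected Riemannian manifold, and such an isometry is determined by its $1$-jet at a single point; in particular an element of $G$ that restricts to the identity on a non-empty open set is globally the identity, which also yields the uniqueness in the statement via effectiveness. Writing $\tY_1=\dom\lambda$ (a connected component of some $\pi^{-1}(Y)$, hence connected) and $A_g:=\{\tx\in\tY_1:\lambda(\tx)=g(\tx)\}$, the relation $\pi\circ\lambda=\pi|_{\tY_1}$ places each point in some $A_g$, so the finitely many closed sets $A_g$ cover $\tY_1$.

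On the open dense regular locus $\tX^{\reg}$ (its complement is a finite union of fixed-point sets of non-identity isometries, each a proper closed submanifold, hence nowhere dense) each $A_g$ is open: near a regular $\tx$ with $\lambda(\tx)=g(\tx)$ one picks a neighbourhood on which $\pi$ is injective, and then $\pi\circ\lambda=\pi$ forces $\lambda=g$ there. If two interiors $\operatorname{int}(A_g)$ and $\operatorname{int}(A_{g'})$ had a common frontier point $\tx$, continuity of $\lambda$ and of the derivatives would give $\lambda(\tx)=g(\tx)=g'(\tx)$ and $d\lambda_{\tx}=dg_{\tx}=dg'_{\tx}$, so $g^{-1}g'$ fixes $\tx$ with identity $1$-jet and is therefore the identity; thus the closures $\overline{\operatorname{int}(A_g)}$ are pairwise disjoint. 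Since $\bigcup_g\operatorname{int}(A_g)$ is dense in $\tY_1$, these finitely many pairwise disjoint closed sets cover the connected space $\tY_1$, so exactly one is non-empty; hence $\lambda$ agrees with a single $g$ on a dense subset and therefore on all of $\dom\lambda$. I expect this rigidity step to be the main obstacle, precisely because of codimension-one fixed loci such as reflections: the regular locus may be disconnected, and it is only the smoothness of $\lambda$, encoded through the $1$-jet/isometry argument, that forbids gluing two different group elements across the singular set.

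With (i) available I would define $\dot{f}_i$. For $g_i\in G_i$, view $g_i$ as an element of $\change(\atlas X,i,i)$; by (GS) every point of $\tX_i$ lies in the domain of some $\lambda\in P_f(i,i)$ with $\lambda=g_i|_{\dom\lambda}$, and by (i) applied in $\atlas Y$ the self-change-of-charts $\nu_f(\lambda)\in\change(\atlas Y,\overf(i),\overf(i))$ equals $h|_{\dom\nu_f(\lambda)}$ for a unique $h\in H_{\overf(i)}$; set $\dot{f}_i(g_i):=h$. This is well defined: if two such $\lambda,\lambda'$ have overlapping domains then $\germ_{\tx_i}\lambda=\germ_{\tx_i}g_i=\germ_{\tx_i}\lambda'$, so (M5d) gives $\germ_{\tf_i(\tx_i)}\nu_f(\lambda)=\germ_{\tf_i(\tx_i)}\nu_f(\lambda')$, whence $h=h'$ by the uniqueness in (i); as the relevant domains cover the connected $\tX_i$, the value is globally constant.

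Part (iii) then follows pointwise: for $\tx_i\in\tX_i$ choose $\lambda\in P_f(i,i)$ with $\tx_i\in\dom\lambda$ and $\lambda=g_i|_{\dom\lambda}$; condition (M5c) reads $\tf_i\circ\lambda=\nu_f(\lambda)\circ\tf_i|_{\dom\lambda}$, and evaluating at $\tx_i$ with $\lambda=g_i$ and $\nu_f(\lambda)=\dot{f}_i(g_i)|_{\dom\nu_f(\lambda)}$ gives $\tf_i(g_i(\tx_i))=\dot{f}_i(g_i)(\tf_i(\tx_i))$. For the homomorphism property in (ii) I would apply (M5e) with $i=i'=i''$ to $\lambda_1=g_1|$, $\lambda_2=g_2|$ and $\lambda_3=(g_2g_1)|$ in $P_f(i,i)$: the identity $\dot{f}_i(g_1)(\tf_i(\tx_i))=\tf_i(g_1(\tx_i))$ from (iii) is exactly what makes the germ composition on the left-hand side of (M5e) well-formed, and that side then collapses to $\germ_{\tf_i(\tx_i)}\bigl(\dot{f}_i(g_2)\,\dot{f}_i(g_1)\bigr)=\germ_{\tf_i(\tx_i)}\dot{f}_i(g_2g_1)$, so $\dot{f}_i(g_2)\dot{f}_i(g_1)=\dot{f}_i(g_2g_1)$ by the uniqueness in (i); that $\dot{f}_i$ sends the identity to the identity is (M5f). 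Beyond (i), the only delicate point is the covering argument for $\dot{f}_i$, which relies on reading (GS) so that the approximating change of charts shares both source and target chart with $g_i$; the rest is a formal consequence of the cocycle conditions (M5c)–(M5f).
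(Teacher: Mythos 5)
Your proposal is correct, and for parts (ii) and (iii) it follows essentially the same route as the paper: construct $\dot{f}_i$ locally via (GS) together with the rigidity in (i), glue by connectedness of $\tX_i$ using (M5d) and the uniqueness in (i), extract multiplicativity from (M5e) (with the germ composition legitimized by (M5c)) and the identity from (M5f), and read off (iii) from (M5c). Your gluing argument --- partition the covering domains according to the assigned element $h\in H_{\overf(i)}$ and note that distinct values give disjoint open sets covering the connected $\tX_i$ --- is equivalent to the paper's version, which joins two base points by a continuous path and compares values along a finite chain of overlapping domains; both are fine. The genuine divergence is in (i): the paper dispatches it in one sentence by citing the connectedness of $\dom\lambda$ and \cite[Proposition~A.1]{MP}, whereas you reprove that rigidity from scratch (average a metric so that $G$ acts by isometries, show the closed sets $A_g=\{\lambda=g\}$ cover $\dom\lambda$ and that every regular point is interior to some $A_g$, then use $1$-jet rigidity of isometries of a connected Riemannian manifold to rule out gluing two distinct group elements across the nowhere dense singular locus). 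Your argument is complete --- in particular your identification of codimension-one fixed loci as the real obstruction, defeated only through smoothness of $\lambda$ via the $1$-jet step, is precisely the content of the cited result --- so your treatment buys self-containedness for a reader without~\cite{MP}, at the cost of length.

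Two small points. First, the paper also checks, via \eqref{eq-26}, that $\dot{f}_i$ depends only on the class $[P_f,\nu_f]$ and not on the chosen representative; your construction works with a fixed $(P_f,\nu_f)$ and omits this, though it follows in one line by the same overlap-plus-uniqueness argument you already use. Second, when you ``apply (M5e) to $\lambda_1=g_1|$, $\lambda_2=g_2|$, $\lambda_3=(g_2g_1)|$'', strictly (GS) only provides elements of $P_f(i,i)$ with the correct \emph{germs}; it is (i), applied on their connected domains, that upgrades these to genuine restrictions of $g_1$, $g_2$, $g_2g_1$ --- you use this reading implicitly (and correctly, since you invoke it when building $\dot{f}_i$), but it deserves a word.
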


\begin{proof}
Since $\dom\lambda$ is connected, claim (i) is a straightforward consequence
of~\cite[Proposition~A.1]{MP}.
Now let us fix any representative $(P_f,\nu_f)$ for $[P_f,\nu_f]$, any $g_i
\in G_i$ and any point $\tx_i\in\tX_i$; since $P_f$ satisfies condition (\hyperref[M4]{M4}), then
there exists a (in general non-unique) $\lambda\in P_f(i,i)$ such that
$\tx_i\in\dom\lambda$ and $\lambda=g_i|_{\dom\lambda}$ (a priori, the second condition holds only in
a neighborhood $\tX'$ of $\tx_i$ in $\dom\lambda$; by (i) we conclude that the same relation holds
everywhere on $\dom\lambda$). By (i) applied on $\atlas{Y}$, we get that $\nu_f(\lambda)$ is the
restriction of a unique object $\dot{f}_i(g_i,\tx_i,\lambda)$ in $H_{\overline{f}(i)}$.\\

We claim that $\dot{f}_i(g_i,\tx_i,\lambda)$ does not depend on $\tx_i$ or $\lambda$
(but only on $g_i$ and on $(P_f,\nu_f)$). So let us fix
another point $\tx'_i\in\tX_i$ and another $\lambda'\in P_f$ such that $\tx'_i\in\dom\lambda'$
and $\lambda'=g_i|_{\dom\lambda'}$. Since $\tX_i$ is connected by definition of reduced orbifold
chart, then there exists a continuous path $\gamma:[0,1]\rightarrow\tX_i$, which joins $\tx_i$ and
$\tx'_i$. For any $t\in\,]0,1[\,$, we choose $\lambda_t\in P_f$ such that $\gamma(t)\in\dom\lambda_t$
and $\lambda_t=g_i|_{\dom\lambda_t}$. By compactness, we can cover $\gamma([0,1])$ by a finite
number of open sets $\{\dom\lambda_{t^l}\}_{l=1,\cdots,r}$ and we can also assume that $\dom
\lambda_{t^l}$ intersects $\dom\lambda_{t^{l+1}}$ for each $l=1,\cdots,r-1$. For each $l$,
using (i) we get that $\nu_f(\lambda_{t^l})$ is the restriction of a unique object $\dot{f}(g_i,
\gamma(t^l),\lambda_{t^l})$ in $H_{\overf(i)}$. This proves that for each $l=1,\cdots,r-1$ we have $\dot{f}_i(g_i,
\gamma(t^l),\lambda_{t^l})=\dot{f}(g_i,\gamma(t^{l+1}),\lambda_{t^{l+1}})$, which implies that
$\dot{f}_i(g_i,\tx_i,\lambda)=\dot{f}_i(g_i,\tx'_i,\lambda')$. Therefore, we have proved that
$\dot{f}$ is a well-defined set map from $G_i$ to $H_{\overline{f}(i)}$, which depends
only on $(P_f,\nu_f)$. The fact that $\dot{f}_i$ is
a group homomorphism is a simple consequence of conditions (\hyperref[M5e]{M5e}) and
(\hyperref[M5f]{M5f}). Using \eqref{eq-26},
$\dot{f}_i$ does not depend on the representative $(P_f,\nu_f)$ chosen for $[P_f,\nu_f]$. Claim (iii)
is a direct consequence of (\hyperref[M5c]{M5c}) and (ii).
\end{proof}

\begin{defin}\label{def-04}
Let us fix any reduced orbifold atlas $\atlas{X}:=\{\unifX{i}\}_{i\in I}$ on any topological
space $X$ and any set $\{\unifX{i'}\}_{i'\in I'}$ of reduced orbifold charts on $X$, that are
compatible with the charts of $\atlas{X}$ (with the set $I'$ disjoint from $I$). Then the
family

\[\atlas{X}':=\left\{\Big(\tX_i,G_i,\pi_i\Big)\right\}_{i\in I}\coprod\left\{
\Big(\tX_{i'},G_{i'},\pi_{i'}\Big)\right\}_{i'\in I'}\]
is again a reduced orbifold atlas for $X$. Moreover, there is an obvious inclusion
$\nu_{\atlas{X},\atlas{X}'}:\change(\atlas{X})\hookrightarrow\change(\atlas{X}')$.
Therefore we can consider the following morphism:

\[\iota_{\atlas{X},\atlas{X}'}:=\left(\id_X,I\hookrightarrow(I\amalg I'),\left
\{\id_{\tX_i}\right\}_{i\in I},[\change(\atlas{X}),\nu_{\atlas{X},\atlas{X}'}]
\right):\,\atlas{X}\,{\ensuremath{\lhook\joinrel\relbar\joinrel\rightarrow}}\,\atlas{X}'.\]

We call any such morphism an \emph{inclusion} of reduced orbifold atlases. In particular,
for every reduced orbifold atlas $\atlas{X}$ as before, we denote by

\[\iota_{\atlas{X}}:\,\atlas{X}\,{\ensuremath{\lhook\joinrel\relbar\joinrel\rightarrow}}\,
\atlas{X}^{\max}\]
the inclusion $\iota_{\atlas{X},\atlas{X}^{\max}}$ of $\atlas{X}$ into the maximal atlas
$\atlas{X}^{\max}$ associated to it.
\end{defin}

Now we need to define the composition of morphisms of reduced orbifold atlases. In order to do that, we
follow~\cite[Construction~5.9]{Po}, with the only differences due to Remark~\ref{rem-02}.

\begin{construction}\label{cons-01}
Let us fix any triple of orbifold atlases

\[\atlas{X}=\left\{\Big(\tX_i,G_i,\pi_i\Big)\right\}_{i\in I},\quad\atlas{Y}:=\left\{
\Big(\tY_i,H_j,\chi_j\Big)\right\}_{j\in J},
\quad\atlas{Z}=\left\{\Big(\tZ_l,K_l,\eta_l\Big)\right\}_{l\in L}\]
for $3$ topological spaces $X$, $Y$ and $Z$ respectively. Let us also fix $2$ morphisms 

\begin{gather}
\nonumber \atlasmap{f}{}=\left(f,\overf,\Big\{\tf_i\Big\}_{i\in I},\left[P_f,\nu_f\right]\right):
 \,\atlas{X}\longrightarrow\atlas{Y}, \\
\label{eq-45} \atlasmap{g}{}=\left(g,\overg,\Big\{\tg_j\Big\}_{j\in J},\left[P_g,\nu_g\right]
 \right):\,\atlas{Y}\longrightarrow \atlas{Z}.  
\end{gather}

Then we define a composition

\[\atlasmap{g}{}\circ\atlasmap{f}{}:=\left(g\circ f,\overg\circ\overf,\left\{
\tg_{\overf(i)}\circ\tf_i\right\}_{i\in I},\left[P_{g\circ f},\nu_{g\circ f}\right]\right):\,\atlas{X}\longrightarrow
\atlas{Z}.\]

Here we construct the class $[P_{g\circ f},\nu_{g\circ f}]$ as follows: first of all, let us fix representatives
$(P_f,\nu_f)$ for $[P_f,\nu_f]$ and $(P_g,\nu_g)$ for $[P_g,\nu_g]$. Then let us fix any $i\in I$,
any $\lambda\in P_f(i,-)$ and any point $\overline{x}_i\in\dom\lambda$. Since $P_g$ is a good
subset of $\change(\atlas{Y})$, then by condition
(\hyperref[GS]{GS}) there are a (non-unique) $\omega_{\tf_i
(\overline{x}_i),\nu_f(\lambda)}\in P_g(\overf(i),-)$ and an open set

\[\tY_{\tf_i(\overline{x}_i),\nu_f(\lambda)}\subseteq\dom\nu_f(\lambda)\cap\dom\omega_{\tf_i
(\overline{x}_i),\nu_f(\lambda)}\subseteq\tY_{\overf(i)},\]
such that $\tf_i(\overline{x}_i)\in\tY_{\tf_i(\overline{x}_i),\nu_f(\lambda)}$ and 

\[\left.\Big(\nu_f(\lambda)\Big)\right|_{\tY_{\tf_i(\overline{x}_i),\nu_f(\lambda)}}=\left.\left(
\omega_{\tf_i(\overline{x}_i),\nu_f(\lambda)}\right)\right|_{\tY_{\tf_i(\overline{x}_i),\nu_f(
\lambda)}}.\]

For each pair $(\lambda,\overline{x}_i)$ as before, there exists a (non-unique) open connected
subset $\tX_{\overline{x}_i,\lambda}\subseteq\tf_i^{-1}
\left(\tY_{\tf_i(\overline{x}_i),\nu_f(\lambda)}
\right)\cap\dom\lambda$, such that:

\begin{itemize}
 \item $\overline{x}_i\in\tX_{\overline{x}_i,\lambda}$;
 \item $\tX_{\overline{x}_i,\lambda}$ is invariant under the action of $\Stab(G_i,\overline{x}_i)$;
 \item for each $g_i\in G_i\smallsetminus\Stab(G_i,\overline{x}_i)$ we have $g_i
  (\tX_{\overline{x}_i,\lambda})\cap\tX_{\overline{x}_i,\lambda}=\varnothing$
\end{itemize}
(in this way, $\lambda$ is still a change of charts if restricted to $\tX_{\overline{x}_i,
\lambda}$). Then for each $i\in I$ and for each $\lambda\in P_f(i,-)$ we choose any set of points
$\{\overline{x}^e_i\}_{e\in E(\lambda)}\subseteq\tX_i$ such that the set $\{\tX_{\overline{x}_i^e,
\lambda}\}_{e\in E(\lambda)}$ is a covering for $\dom\lambda$ and such that if $e\neq e'$, then
$\tX_{\overline{x}_i^e,\lambda}\neq\tX_{\overline{x}_i^{e'},\lambda}$. Then we consider the set:

\[P_{g\circ f}:=\Big\{\lambda|_{\tX_{\overline{x}_i^e,\lambda}}\quad\forall\,i\in I,\,\,\,\forall\,\lambda\in
P_f(i,-),\,\,\,\forall\,e\in E(\lambda)\Big\}.\]

In general, given any element $\lambda'\in P_{g\circ f}$, there can be more than one $\lambda\in P_f$,
such that $\lambda|_{\tX_{\overline{x}_i^e,\lambda}}=\lambda'$ (for some $e\in E(\lambda)$);
therefore for any such $\lambda'\in P_f$, using the axiom of choice
we make an arbitrary choice of $(\lambda,e)$
with that property. For that choice, we fix also a choice of $\tY_{\tf_i(\overline{x}^e_i),
\nu_f(\lambda)}$ and of $\omega_{\tf_i(\overline{x}^e_i),\nu_f(\lambda)}$ as before.\\

Since $P_f$ is a good subset of $\change(\atlas{X})$, then also $P_{g\circ f}$ is a good subset of
$\change(\atlas{X})$. Then for each $\lambda|_{\tX_{\overline{x}_i^e,\lambda}}\in P_{g\circ f}$ we set

\[\nu_f^{\ind}\left(\lambda|_{\tX_{\overline{x}_i^e,\lambda}}\right):=\omega_{\tf_i(
\overline{x}_i^e),\nu_f(\lambda)}.\]

So we have defined a set map $\nu_f$ from $P_{g\circ f}$ to $\change(\atlas{Y})$; a direct computation
proves that $(P_{g\circ f},\nu_f^{\ind})\in[P_f,\nu_f]$. Then we simply define

\[\nu_{g\circ f}\Big(\lambda|_{\tX_{\overline{x}_i^e,\lambda}}\Big):=\nu_g\Big(\omega_{\tf_i(
\overline{x}_i^e),\nu_f(\lambda)}\Big)=\nu_g\circ\nu_f^{\ind}\Big(
\lambda|_{\tX_{\overline{x}_i^e,\lambda}}\Big)\]
for every $\lambda|_{\tX_{\overline{x}_i^e,\lambda}}\in P_{g\circ f}$ and it is easy to verify that $\nu_{g\circ f}$
satisfies properties (\hyperref[M5a]{M5a}) -- (\hyperref[M5d]{M5d}). The construction of $P_{g\circ f}$ and
$\nu_{g\circ f}$ depends on some choices, but it can be proved that the equivalence class $[P_{g\circ f},
\nu_{g\circ f}]$ does
not depend on such choices. In this way we have defined a notion of composition of morphisms of
reduced orbifold atlases.
\end{construction}

\begin{lem}
The composition of morphisms of reduced orbifold atlases is associative.
\end{lem}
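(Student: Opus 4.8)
The plan is to reduce the whole statement to an associativity of germ-level maps. Recall from Definition~\ref{def-03} that two representatives with equal underlying data $(f,\overf,\{\tf_i\}_{i\in I})$ determine the same morphism if and only if they induce the same germs, i.e.\ \eqref{eq-26} holds. Now fix three composable morphisms $\atlasmap{f}{}:\atlas{X}\to\atlas{Y}$, $\atlasmap{g}{}:\atlas{Y}\to\atlas{Z}$ and $\atlasmap{h}{}:\atlas{Z}\to\atlas{W}$. The first three components of both $(\atlasmap{h}{}\circ\atlasmap{g}{})\circ\atlasmap{f}{}$ and $\atlasmap{h}{}\circ(\atlasmap{g}{}\circ\atlasmap{f}{})$ agree for purely formal reasons: the underlying continuous maps are $(h\circ g)\circ f=h\circ(g\circ f)$, the index maps are $(\overh\circ\overg)\circ\overf=\overh\circ(\overg\circ\overf)$, and the local lift over $i\in I$ is in both cases $\th_{\overg(\overf(i))}\circ\tg_{\overf(i)}\circ\tf_i$ (associativity of composition of set maps and of smooth maps). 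So the only content is to verify that the two resulting classes $[P,\nu]$ induce the same germs, i.e.\ that \eqref{eq-26} holds between them.

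To organize this, I would attach to each morphism $\atlasmap{f}{}$ a germ-level map $\Phi_f$ as follows. Given any $\lambda\in\change(\atlas{X},i,i')$ and any $\tx_i\in\dom\lambda$, choose by (\hyperref[GS]{GS}) some $\hat\lambda\in P_f(i,i')$ with $\tx_i\in\dom\hat\lambda$ and $\germ_{\tx_i}\hat\lambda=\germ_{\tx_i}\lambda$, and set $\Phi_f(\germ_{\tx_i}\lambda):=\germ_{\tf_i(\tx_i)}\nu_f(\hat\lambda)$. Since a germ of change of charts remembers both its source point (hence the index $i$) and its image point, $\Phi_f$ is a well-defined assignment sending germs of changes of charts of $\atlas{X}$ to germs of changes of charts of $\atlas{Y}$: independence of the choice of $\hat\lambda$ is exactly (\hyperref[M5d]{M5d}), and independence of the representative $(P_f,\nu_f)$ chosen for $[P_f,\nu_f]$ is exactly \eqref{eq-26}. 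In this language the equality criterion of Definition~\ref{def-03} reads: $\atlasmap{f}{}=\atlasmap{f}{\prime}$ if and only if $(f,\overf,\{\tf_i\}_{i\in I})=(f',\overf',\{\tf'_i\}_{i\in I})$ and $\Phi_f=\Phi_{f'}$.

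The heart of the argument is then the germ-level functoriality $\Phi_{g\circ f}=\Phi_g\circ\Phi_f$. To prove it, fix $\lambda$ and $\tx_i\in\dom\lambda$ and unwind Construction~\ref{cons-01}: for the representative $(P_{g\circ f},\nu_{g\circ f})$ produced there and a piece $\lambda|_{\tX_{\overline{x}_i^e,\lambda}}\in P_{g\circ f}$ whose germ at $\tx_i$ equals $\germ_{\tx_i}\lambda$, one has $\nu_{g\circ f}(\lambda|_{\tX_{\overline{x}_i^e,\lambda}})=\nu_g(\omega)$ for some $\omega\in P_g(\overf(i),-)$ with $\germ_{\tf_i(\tx_i)}\omega=\germ_{\tf_i(\tx_i)}\nu_f(\lambda)=\Phi_f(\germ_{\tx_i}\lambda)$. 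Since the composite local lift at $i$ is $\tg_{\overf(i)}\circ\tf_i$, passing to germs at $\tg_{\overf(i)}(\tf_i(\tx_i))$ gives $\Phi_{g\circ f}(\germ_{\tx_i}\lambda)=\germ_{\tg_{\overf(i)}(\tf_i(\tx_i))}\nu_g(\omega)=\Phi_g(\germ_{\tf_i(\tx_i)}\omega)=\Phi_g(\Phi_f(\germ_{\tx_i}\lambda))$, and all the non-canonical choices of Construction~\ref{cons-01} disappear because only germs enter. Applying this twice yields $\Phi_{(h\circ g)\circ f}=(\Phi_h\circ\Phi_g)\circ\Phi_f=\Phi_h\circ(\Phi_g\circ\Phi_f)=\Phi_{h\circ(g\circ f)}$, where the middle equality is associativity of composition of maps of germs. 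Together with the agreement of the first three components and the equality criterion above, this shows the two triple composites coincide.

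The main obstacle I expect is precisely the functoriality step $\Phi_{g\circ f}=\Phi_g\circ\Phi_f$: one must check that the well-definedness of $\Phi_f$ on germs (using (\hyperref[GS]{GS}), (\hyperref[M5d]{M5d}) and \eqref{eq-26}) together with the index bookkeeping are compatible with the cut-up-and-reassemble recipe of Construction~\ref{cons-01}, so that all the arbitrary choices there (the $\omega$'s, the open sets $\tY_{\tf_i(\overline{x}_i),\nu_f(\lambda)}$ and $\tX_{\overline{x}_i^e,\lambda}$, and the chosen covering) have no effect at the level of germs. Everything else is formal.
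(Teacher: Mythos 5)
Your proposal is correct, and it supplies an actual argument where the paper gives none: the paper's ``proof'' of this lemma consists of the remark that associativity is obvious on the components $f,\overf,\tf_i$ and ``straightforward'' on the classes $[P_f,\nu_f]$, with all details omitted. Your route is a clean way to organize the omitted verification, and it is worth noting that it coincides with machinery the paper only introduces \emph{later}: your germ-level map $\Phi_f$ is exactly the map $\psi_1:\groupoid{X}_1\rightarrow\groupoid{Y}_1$ of Construction~\ref{cons-05} (the morphism part of the $2$-functor $\functor{F}^{\red}$, including its well-definedness via (\hyperref[M5d]{M5d}) and \eqref{eq-26}); your reformulated equality criterion ``$\atlasmap{f}{}$ is determined by $(f,\overf,\{\tf_i\}_i,\Phi_f)$'' is the injectivity statement at the end of the proof of Lemma~\ref{lem-07}; and your functoriality identity $\Phi_{g\circ f}=\Phi_g\circ\Phi_f$ is the content of Lemma~\ref{lem-04}, whose proof the paper also omits but which you actually carry out by unwinding Construction~\ref{cons-01}. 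What your packaging buys is twofold: associativity becomes the strict associativity of composition of germ maps, transported back along a faithful assignment, and as a by-product your computation re-proves the assertion in Construction~\ref{cons-01} that the class $[P_{g\circ f},\nu_{g\circ f}]$ is independent of all the auxiliary choices (the $\omega$'s, the sets $\tY_{\tf_i(\overline{x}_i),\nu_f(\lambda)}$ and $\tX_{\overline{x}_i^e,\lambda}$, and the covering), since $\Phi_{g\circ f}$ is pinned down by the choice-free right-hand side $\Phi_g\circ\Phi_f$. Two small points you handled but should keep explicit in a final write-up: germs must be taken in the disjoint union $\coprod_{i\in I}\tX_i$ (so that a germ remembers its source and target indices, as in the paper's $\groupoid{X}_1$), and in the key step the containment $\tX_{\overline{x}_i^e,\lambda}\subseteq\tf_i^{-1}\bigl(\tY_{\tf_i(\overline{x}_i^e),\nu_f(\lambda)}\bigr)$ is what lets you pass from agreement of $\omega$ and $\nu_f(\lambda)$ near $\tf_i(\overline{x}_i^e)$ to agreement of germs at \emph{every} $\tf_i(\tx_i)$ with $\tx_i$ in the piece, not just at the chosen basepoint.
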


The proof is obvious for what concerns the composition of maps of the form $f,\overf$ and $\tf_i$;
the proof of the associativity on the pairs of the form $[P_f,\nu_f]$ is straightforward, so we
omit it.

\begin{rem}
Since reduced orbifold structures are given by equivalence classes of reduced orbifold atlases
(see Definition~\ref{def-02}), then one would like to define compositions also for
every pair of morphisms $\atlasmap{f}{}:\atlas{X}\rightarrow\atlas{Y}$ and $\atlasmap{g}{}:\atlas{Y}'
\rightarrow\atlas{Z}$, whenever $[\atlas{Y}]=[\atlas{Y}']$. We will provide such a
definition in the paper~\cite{T10} (at the cost of quotienting out by an equivalence relation
induced by the $2$-morphisms that we are going to define below). As we mentioned in the
Introduction, currently
in the literature there are also other $2$ definitions
of morphisms between orbifolds. One is given by ``good maps'' defined by Weimin Chen and Yongbin
Ruan in~\cite{CR} (such a definition is given also for the (possibly) non-reduced case); the other one
is given by Anke Pohl in~\cite{Po}. We will describe in~\cite{T10} the relations between our
definition of morphism and those of~\cite{CR} and~\cite{Po}.
\end{rem}

\begin{defin}\label{def-13}
Let us fix any pair of paracompact, second countable, Hausdorff topological spaces $X$ and $Y$,
any open embedding $f:X\hookrightarrow Y$ and any reduced orbifold
atlas $\atlas{X}=\{\unifX{i}\}_{i\in I}$ on $X$. Then we set

\begin{equation}\label{eq-30}
f_{\ast}(\atlas{X}):=\left\{\left(\tX_i,G_i,f\circ\pi_i\right)\right\}_{i\in I}.
\end{equation}

Since $f$ is an open embedding, then $f_{\ast}(\atlas{X})$ is a family of compatible reduced
orbifold charts over $Y$, hence $f_{\ast}(\atlas{X})$ is a reduced orbifold atlas over $f(X)$
(with the topology given by the fact that $f(X)$ is open in $Y$). Moreover, $f$ induces
a morphism $[\hat{f}^{\ind}]:\atlas{X}\rightarrow f_{\ast}(\atlas{X})$: on the topological level it is
simply
given by $f$, while the rest of the structure is trivial: $\overline{f}$ is the identity of $I$,
each $\tf_i$ is an identity, $P_f$ is the entire set $\change(\atlas{X})$ and
$\nu_f$ associates to any change of charts in $\atlas{X}$ the same change of charts in
$f_{\ast}(\atlas{X})$. In particular, if $f$ is an homeomorphism, then $f_{\ast}(\atlas{X})$
is a reduced orbifold atlas on $Y$ and $[\hat{f}^{\ind}]$
is an isomorphism (with respect to the definition of composition given above).
\end{defin}

As we said in the introduction, our first aim is to construct a $2$-category $\RedAtl$
of reduced orbifold
atlases. Roughly speaking, a $2$-category is the datum of objects, morphisms and ``morphism between
morphisms'' (known as $2$-\emph{morphisms} or, sometimes, \emph{natural transformations}),
together with identities and compositions of morphisms
and $2$-morphisms (for more details we refer e.g.\ to~\cite{Lei}). First of all, we have to 
define a notion of $2$-morphism in this setup.

\begin{defin}\label{def-05}
Let us fix any pair of reduced orbifold atlases $\atlas{X}=\{\unifX{i}\}_{i\in I}$
and $\atlas{Y}:=\{\unifY{j}\}_{j\in J}$ over $X$ and $Y$
respectively. Moreover, let us fix $2$ morphisms from $\atlas{X}$ to $\atlas{Y}$ over
\emph{the same} continuous function $f:X\rightarrow Y$:

\[\atlasmap{f}{m}:=\left(f,\overf^m,\left\{\tf^m_i\right\}_{i\in I},\left[P_{f^m},\nu_{f^m}\right]
\right)\quad\textrm{for}\,\,m=1,2.\]

Then a \emph{representative of a $2$-morphism} from $\atlasmap{f}{1}$ to $\atlasmap{f}{2}$ is any set
of data:

\[\delta:=\left\{\left(\tX_i^a,\delta_i^a\right)\right\}_{i\in I, a\in A(i)},\]
such that:

\begin{enumerate}[({2M}a)] 
 \item\label{2Ma} for all $i\in I$ the set $\{\tX_i^a\}_{a\in A(i)}$ is an open covering of $\tX_i$;
 \item\label{2Mb} for all $i\in I$ and for all $a\in A(i)$, $\delta_i^a$ is a change of charts in
  $\atlas{Y}$ with

  \[\tf^1_i\left(\tX_i^a\right)\subseteq\dom\delta_i^a\subseteq\tY_{\overf^1(i)},\quad\tf^2_i
  \left(\tX_i^a\right)\subseteq\cod\delta_i^a\subseteq\tY_{\overf^2(i)};\]

 \item\label{2Mc} for all $i\in I$, for all $a\in A(i)$ and for all $\tx_i\in\tX_i^a$ we have
  
  \begin{equation}\label{eq-01}
  \tf_i^2(\tx_i)=\delta_i^a\circ\tf_i^1(\tx_i);
  \end{equation}
 
 \item\label{2Md} for all $i\in I$, for all $a,a'\in A(i)$ and for all $\tx_i\in\tX_i^a\cap\tX_i^{a'}$
  we have

  \[\germ_{\tf_i^1(\tx_i)}\delta_i^a=\germ_{\tf_i^1(\tx_i)}\delta_i^{a'};\]
 \item\label{2Me} for all $(i,i')\in I\times I$, for all $(a,a')\in A(i)\times A(i')$, for all $\lambda
  \in\change(\atlas{X},i,i')$ and for all $\tx_i\in\dom\lambda\cap\tX_i^a$ such that $\lambda(\tx_i)
  \in\tX_{i'}^{a'}$, there exist
 
  \begin{equation}\label{eq-02}
  \left(P_{f^m},\nu_{f^m}\right)\in\left[P_{f^m},\nu_{f^m}\right],\quad\lambda^m\in P_{f^m}(i,i')\quad
  \textrm{for}\,\,m=1,2,
  \end{equation}
  such that

  \begin{equation}\label{eq-03}
  \tx_i\in\dom\lambda^m,\quad\germ_{\tx_i}\lambda^m=\germ_{\tx_i}\lambda\quad\textrm{for~}m=1,2
  \end{equation}
  and

  \begin{equation}\label{eq-04}
  \germ_{\tf_i^2(\tx_i)}\nu_{f^2}(\lambda^2)\cdot\germ_{\tf_i^1(\tx_i)}\delta_i^a=\germ_{\tf_{i'}^1
  (\lambda(\tx_i))}\delta_{i'}^{a'}\cdot\germ_{\tf_i^1(\tx_i)}\nu_{f^1}(\lambda^1).
  \end{equation}
\end{enumerate}
\end{defin}

\begin{rem}
Given any $i\in I$ and any pair $a,a'\in A(i)$, by
(\hyperref[2Mc]{2Mc}) we get that $\delta_i^a$ coincides with $\delta_i^{a'}$ on the
set $\tf_i^1(\tX_i^a\cap\tX_i^{a'})$, that in general is not an open set; actually, by
(\hyperref[2Md]{2Md}) $\delta_i^a$ and $\delta_i^{a'}$
coincide on some open set containing $\tf_i^1(\tX_i^a\cap\tX_i^{a'})$.
We remark also that
both the left hand side and the right hand side of \eqref{eq-04} are well-defined. Indeed,

\[\delta_i^a\circ\tf_i^1(\tx_i)=\tf_i^2(\tx_i)\]
by \eqref{eq-01} and

\[\nu_{f^1}(\lambda^1)\Big(\tf_i^1(\tx_i)\Big)=\tf_{i'}^1\Big(\lambda^1(\tx_i)\Big)=\tf_{i'}^1\Big(
\lambda(\tx_i)\Big)\]
by (\hyperref[M5c]{M5c}) and \eqref{eq-03}.
\end{rem}

\begin{rem}
Let us suppose that there exist data as in \eqref{eq-02} that satisfy conditions \eqref{eq-03} and
\eqref{eq-04}. Let us suppose that $(P'_{f^m},\nu'_{f^m},\lambda^{\prime m})$ for
$m=1,2$ is another set of data as \eqref{eq-02} that satisfies condition \eqref{eq-03}. Then by
Definition~\ref{def-03} we conclude that 

\[\germ_{\tf_i^m(\tx_i)}\nu_{f^m}(\lambda^m)=\textrm{germ}_{\tf_i^m(\tx_i)}\nu'_{f^m}
(\lambda^{\prime m})\quad\textrm{for}\,\,m=1,2,\]
so \eqref{eq-04} is verified also by the new set of data. Therefore, (\hyperref[2Me]{2Me}) is
equivalent to:

\begin{enumerate}[({2M}a)$'$]
\setcounter{enumi}{4}
 \item\label{2Meprime} for all $(i,i')\in I\times I$, for all $(a,a')\in A(i)\times A(i')$, for all
  $\lambda\in\change(\atlas{X},i,i')$, for all $\tx_i\in\dom\lambda\cap\tX_i^a$ such that
  $\lambda(\tx_i)\in\tX_{i'}^{a'}$ and for all data \eqref{eq-02} that satisfy \eqref{eq-03}, we have
  that \eqref{eq-04} holds.
\end{enumerate}
\end{rem}

\begin{defin}\label{def-06}
Let us fix any pair of reduced orbifold atlases $\atlas{X},\atlas{Y}$ and any pair of morphisms
$\atlasmap{f}{1}$, $\atlasmap{f}{2}$ from $\atlas{X}$ to $\atlas{Y}$ over the same continuous map
(as in Definition~\ref{def-05}). Moreover, let us fix any pair of representatives of
$2$-morphisms from $\atlasmap{f}{1}$ to $\atlasmap{f}{2}$:

\[\delta:=\left\{\left(\tX_i^a,\delta_i^a\right)\right\}_{i\in I, a\in A(i)}\quad\textrm{and}\quad
\overline{\delta}:=\left\{\left(\tX_i^{\overline{a}},\overline{\delta}_i^{\overline{a}}\right)
\right\}_{i\in I, \overline{a}\in\overline{A}(i)}.\]

Then we say that $\delta$ is \emph{equivalent} to $\overline{\delta}$ if and only if for all $i\in
I$, for all pairs $(a,\overline{a})\in A(i)\times\overline{A}(i)$ and for all $\tx_i\in\tX_i^a\cap
\tX_i^{\overline{a}}$ (if non-empty) we have

\[\germ_{\tf_i^1(\tx_i)}\delta_i^a=\germ_{\tf_i^1(\tx_i)}\overline{\delta}_i^{\overline{a}}.\]

This definition gives rise to an equivalence relation (it is reflexive by (\hyperref[2Md]{2Md})). We
denote by $[\delta]:\atlasmap{f}{1}\Rightarrow\atlasmap{f}{2}$ the class of any $\delta$ as before 
and we say that $[\delta]$ is a \emph{$2$-morphism} from $\atlasmap{f}{1}$ to $\atlasmap{f}{2}$. 
\end{defin}

\section{Vertical and horizontal compositions of 2-morphisms}
\begin{construction}\label{cons-02}
Let us fix $2$ reduced orbifold atlases $\atlas{X}=\{\unifX{i}\}_{i\in I}$, $\atlas{Y}=
\{\unifY{j}\}_{j\in J}$ for $X$ and $Y$ respectively, any continuous map $f:X\rightarrow Y$ and
any triple of morphisms from $\atlas{X}$ to $\atlas{Y}$ over $f$:

\[\atlasmap{f}{m}:=\left(f,\overf^m,\left\{\tf_i^m\right\}_{i\in I},\left[P_{f^m},\nu_{f^m}\right]
\right)\quad\textrm{for}\quad m=1,2,3.\]

In addition, let us fix any $2$-morphism $[\delta]:\atlasmap{f}{1}\Rightarrow\atlasmap{f}{2}$ and any
$2$-morphism $[\sigma]:\atlasmap{f}{2}\Rightarrow\atlasmap{f}{3}$. We want to define a \emph{vertical
composition} $[\sigma]\odot[\delta]:\atlasmap{f}{1}\Rightarrow\atlasmap{f}{3}$; in order to do that,
let us fix any representative

\[\sigma=\left\{\left(\tX_i^b,\sigma_i^b\right)\right\}_{i\in I,\,b\in B(i)}\]
for $[\sigma]$. As we did in Construction~\ref{cons-01}, we can always choose a (non-unique)
representative 

\[\delta=\left\{\left(\tX_i^a,\delta_i^a\right)\right\}_{i\in I,a\in A(i)}\]
for $[\delta]$, such that for each $i\in I$ and for each $(a,b)\in A(i)\times B(i)$, the set map
$\delta_i^a$ restricted to the set

\begin{equation}\label{eq-05}
\tY_i^{a,b}:=\left(\delta_i^a\right)^{-1}\Big(\cod\delta_i^a\cap\dom\sigma_i^b\Big) 
\end{equation}
(if non-empty) is again a change of charts, so that also $\theta_i^{a,b}:=\sigma_i^b\circ
\delta_i^a|_{\tY_i^{a,b}}$ is a change of charts of $\atlas{Y}$. Then for each $i\in I$ and for
each $(a,b)\in A(i)\times B(i)$ we set $\tX_i^{a,b}:=\tX_i^a\cap\tX_i^b$; if $\tX_i^{a,b}$ is
non-empty, then also $\tY_i^{a,b}$ is non-empty. Then we define:

\[\theta:=\left\{\left(\tX_i^{a,b},\theta_i^{a,b}\right)\right\}_{i\in I,\,(a,b)\in A(i)\times B(i)
\textrm{~s.t.~}\tX_i^{a,b}\neq\varnothing}.\]
\end{construction}

A straightforward proof shows that:

\begin{lem}
The collection $\theta$ so defined is a representative of a $2$-morphism from $\atlasmap{f}{1}$ to
$\atlasmap{f}{3}$. Moreover, the class of $\theta$ does not depend on the choices of representatives
$\delta$ for $[\delta]$ and $\sigma$ for $[\sigma]$. 
\end{lem}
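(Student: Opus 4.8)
The plan is to verify the three defining conditions (\hyperref[2Ma]{2Ma})--(\hyperref[2Me]{2Me}) for $\theta$, then prove independence of the chosen representatives. First I would check (\hyperref[2Ma]{2Ma}): for fixed $i$, the family $\{\tX_i^{a,b}\}$ with $\tX_i^{a,b}=\tX_i^a\cap\tX_i^b$ is an open covering of $\tX_i$ because $\{\tX_i^a\}_{a\in A(i)}$ and $\{\tX_i^b\}_{b\in B(i)}$ are both coverings of $\tX_i$ (by (\hyperref[2Ma]{2Ma}) applied to $\delta$ and $\sigma$), so their pairwise intersections cover $\tX_i$ as well. For (\hyperref[2Mb]{2Mb}) I would trace through the inclusions: by construction $\theta_i^{a,b}=\sigma_i^b\circ\delta_i^a|_{\tY_i^{a,b}}$, and for $\tx_i\in\tX_i^{a,b}$ one has $\tf_i^1(\tx_i)\in\dom\delta_i^a$ and, using \eqref{eq-01} for $\delta$, $\tf_i^2(\tx_i)=\delta_i^a\circ\tf_i^1(\tx_i)$; combined with $\tf_i^2(\tx_i)\in\dom\sigma_i^b$ (from (\hyperref[2Mb]{2Mb}) for $\sigma$) this shows $\tf_i^1(\tx_i)$ lands in the domain of the composite and $\tf_i^3(\tx_i)$ in its codomain, with the correct overall charts $\tY_{\overf^1(i)}$ and $\tY_{\overf^3(i)}$.

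For (\hyperref[2Mc]{2Mc}) I would simply compose the two relations: for $\tx_i\in\tX_i^{a,b}$, equation \eqref{eq-01} for $\delta$ gives $\tf_i^2(\tx_i)=\delta_i^a\circ\tf_i^1(\tx_i)$ and for $\sigma$ gives $\tf_i^3(\tx_i)=\sigma_i^b\circ\tf_i^2(\tx_i)$, hence $\tf_i^3(\tx_i)=\sigma_i^b\circ\delta_i^a\circ\tf_i^1(\tx_i)=\theta_i^{a,b}\circ\tf_i^1(\tx_i)$. Condition (\hyperref[2Md]{2Md}) follows from the germ-compatibility of $\delta$ and of $\sigma$ separately: on an overlap $\tX_i^{a,b}\cap\tX_i^{a',b'}$ the germs $\germ_{\tf_i^1(\tx_i)}\delta_i^a$ and $\germ_{\tf_i^1(\tx_i)}\delta_i^{a'}$ agree by (\hyperref[2Md]{2Md}) for $\delta$, while $\germ_{\tf_i^2(\tx_i)}\sigma_i^b$ and $\germ_{\tf_i^2(\tx_i)}\sigma_i^{b'}$ agree by (\hyperref[2Md]{2Md}) for $\sigma$; since germs compose, $\germ_{\tf_i^1(\tx_i)}\theta_i^{a,b}=\germ_{\tf_i^1(\tx_i)}\theta_i^{a',b'}$.

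I expect (\hyperref[2Me]{2Me}) to be the main obstacle, since it is the condition that encodes compatibility with the changes of charts and the data $\nu_{f^m}$. The strategy here is to apply (\hyperref[2Me]{2Me}) for $\delta$ to get a germ identity relating $\nu_{f^2}(\lambda^2)$, $\delta$, and $\nu_{f^1}(\lambda^1)$, and separately apply (\hyperref[2Me]{2Me}) for $\sigma$ relating $\nu_{f^3}(\lambda^3)$, $\sigma$, and $\nu_{f^2}(\lambda^2)$; the point $\tf_i^2(\tx_i)$ is the natural ``hinge'' because $\delta$ lands its target germ there and $\sigma$ reads its source germ there. I would choose a common representative $\lambda^2\in P_{f^2}(i,i')$ for both invocations (legitimate by the preceding remark, which shows (\hyperref[2Me]{2Me}) is independent of the chosen data \eqref{eq-02}), multiply the two germ equations, and cancel the common $\nu_{f^2}(\lambda^2)$ factor to obtain \eqref{eq-04} for $\theta$ with the composite germs $\germ_{\tf_i^1(\tx_i)}\theta_i^{a,b}$ and $\germ_{\tf_{i'}^1(\lambda(\tx_i))}\theta_{i'}^{a',b'}$ in place of the $\delta$'s. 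The delicate bookkeeping is keeping the base points of the germs straight (in particular using \eqref{eq-01} to rewrite $\tf_i^2(\tx_i)=\delta_i^a(\tf_i^1(\tx_i))$) so that the multiplication of germs is defined.

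Finally, for the independence of $[\theta]$ from the choices, I would invoke that changing the representative $\sigma$ for $[\sigma]$ or $\delta$ for $[\delta]$ only alters each $\sigma_i^b$, $\delta_i^a$ within their germ-equivalence classes at the relevant points (Definition~\ref{def-06}); since $\theta_i^{a,b}$ is built by composing these, its germ at $\tf_i^1(\tx_i)$ is unchanged, and by Definition~\ref{def-06} the class $[\theta]$ is the same. The only subtlety is that the auxiliary modification of $\delta$ (replacing it by one for which $\delta_i^a|_{\tY_i^{a,b}}$ restricts to a genuine change of charts, exactly as in Construction~\ref{cons-01}) does not change the equivalence class, which again follows from the germ-based definition of equivalence.
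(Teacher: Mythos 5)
Your verification is correct and is precisely the ``straightforward proof'' that the paper omits: conditions (\hyperref[2Ma]{2Ma})--(\hyperref[2Md]{2Md}) by direct composition of germs, condition (\hyperref[2Me]{2Me}) by invoking the remark that it is equivalent to (\hyperref[2Meprime]{2Me})$'$ so that a common $\lambda^2\in P_{f^2}(i,i')$ can serve in both germ identities, which are then chained at the hinge point $\tf_i^2(\tx_i)=\delta_i^a(\tf_i^1(\tx_i))$, and independence of representatives via the germ-based equivalence of Definition~\ref{def-06}. You also correctly handle the one genuine subtlety, namely that replacing $\delta$ by the special representative of Construction~\ref{cons-02} stays within the class $[\delta]$, so nothing further is needed.
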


Therefore, it makes sense to give the following definition.

\begin{defin}
Given any pair $[\delta]$, $[\sigma]$ as before, we define their \emph{vertical composition} as:

\[[\sigma]\odot[\delta]:=[\theta]:\atlasmap{f}{1}\Longrightarrow\atlasmap{f}{3}.\]
\end{defin}

\begin{construction}\label{cons-03}
Let us fix any triple of reduced orbifold atlases

\[\atlas{X}=\left\{\Big(\tX_i,G_i,\pi_i\Big)\right\}_{i\in I},\quad\atlas{Y}:=\left\{
\Big(\tY_j,H_j,\chi_j\Big)\right\}_{j\in J},
\quad\atlas{Z}=\left\{\Big(\tZ_l,K_l,\eta_l\Big)\right\}_{l\in L}\]
for $X$, $Y$ and $Z$ respectively. Let us also fix any set of morphisms

\begin{gather*}
\atlasmap{f}{m}:=\left(f,\overf^m,\Big\{\tf_i^m\Big\}_{i\in I},\left[P_{f^m},\nu_{f^m}\right]\right):
 \,\atlas{X}\longrightarrow\atlas{Y}\quad\textrm{for}\quad m=1,2, \\
\atlasmap{g}{m}:=\left(g,\overg^m,\Big\{\tg_j^m\Big\}_{j\in J},\Big[P_{g^m},\nu_{g^m}\Big]\right):\,
 \atlas{Y}\longrightarrow\atlas{Z}\quad\textrm{for}\quad m=1,2. 
\end{gather*}

Moreover, let us suppose that we have fixed any $2$-morphism $[\delta]:\atlasmap{f}{1}\Rightarrow
\atlasmap{f}{2}$ and any $2$-morphism $[\xi]:\atlasmap{g}{1}\Rightarrow\atlasmap{g}{2}$. Our aim is
to define
an \emph{horizontal} composition $[\xi]\ast[\delta]:\atlasmap{g}{1}\circ\atlasmap{f}{1}\Rightarrow
\atlasmap{g}{2}\circ\atlasmap{f}{2}$. In order to do that, we fix any representative $(P_{g^1},
\nu_{g^1})$ for $[P_{g^1},\nu_{g^1}]$ and any representative

\[\overline{\delta}:=\left\{\left(\tX_i^{\overline{a}},\overline{\delta}_i^{\overline{a}}\right)
\right\}_{i\in I,\,\overline{a}\in\overline{A}(i)}\]
for $[\delta]$. For any $i\in I$ and any $\overline{a}\in\overline{A}(i)$ we have that
$\overline{\delta}_i^{\overline{a}}\in\change(\atlas{Y})$. Since $P_{g^1}$ satisfies condition
(\hyperref[GS]{GS}), then as in the previous constructions we can use $\overline{\delta}$ in
order to get another representative

\[\delta:=\left\{\left(\tX_i^a,\delta_i^a\right)\right\}_{i\in I,\,a\in A(i)}\]
for $[\delta]$, such that for each $i\in I$ and $a\in A(i)$, the change of charts $\delta_i^a$ is
the restriction of a change of charts $\widetilde{\delta}_i^a\in P_{g^1}$ (in general, such a change
of charts is not unique, we fix any arbitrary choice of $\widetilde{\delta}_i^a$'s with this
property). We choose also any representative $\xi:=\{(\tY_j^c,\xi_j^c)\}_{j\in J,c\in C(j)}$ for
$[\xi]$. Let us fix any $i\in I,a\in A(i),c\in C(\overf^2(i))$ and any point $\overline{x}_i\in
\tX_i$ such that the point $\overline{z}_i:=\tg_{\overf^1(i)}^1\circ\tf_i^1(\overline{x}_i)$
belongs to the set

\[\tZ_i^{a,c}:=\left(\nu_g^1(\delta_i^a)\right)^{-1}\left(\cod\nu_g^1(\delta_i^a)\cap\dom
\xi_{\overf^2(i)}^c\right).\]

Then there exists a (non-unique) open connected subset $\tZ_i^{a,c,\overline{x}_i}\subseteq
\tZ_i^{a,c}$, such that:

\begin{itemize}
 \item $\overline{z}_i\in\tZ_i^{a,c,\overline{x}_i}$;
 \item $\tZ_i^{a,c,\overline{x}_i}$ is invariant under the action of
  $\Stab(H_{\overline{g}^1\circ\overline{f}^1(i)},\overline{z}_i)$;
 \item for all $h\in H_{\overline{g}^1\circ\overline{f}^1(i)}\smallsetminus
  \Stab(H_{\overline{g}^1\circ\overline{f}^1(i)},\overline{z}_i)$
  we have $h(\tZ_i^{a,c,\overline{x}_i})\cap\tZ_i^{a,c,\overline{x}_i}=\varnothing$
\end{itemize}
(in this way, $\nu_g^1(\widetilde{\delta}_i^a)$ is again a change of charts if restricted to
$\tZ_i^{a,c,\overline{x}_i}$). We define also

\begin{gather*}
\tX_i^{a,c}:=\tX_i^a\cap\left(\tg^1_{\overf^1(i)}\circ\tf_i^1\right)^{-1}\Big(\tZ_i^{a,c}
 \Big), \\
\tX_i^{a,c,\overline{x}_i}:=\tX_i^a\cap\left(\tg^1_{\overf^1(i)}\circ\tf_i^1\right)^{-1}\Big(
 \tZ_i^{a,c,\overline{x}_i}\Big).
\end{gather*}

For each $(i,a,c)$ as before, we choose any set of points $\left\{\overline{x}_i^e\right\}_{e\in
E(i,a,c)}$ such that the corresponding sets of the form $\tX_i^{a,c,\overline{x}^e_i}$ are a covering
for $\tX_i^{a,c}$ (if $\tX_i^{a,c}$ is empty, we set $E(i,a,c):=\varnothing$). For simplicity, we rename
each $\tX_i^{a,c,\overline{x}^e_i}$ as $\tX_i^{a,c,e}$ and analogously for
the sets of the form $\tZ_i^{a,c,\overline{x}^e_i}$. Then for each $i\in I$, $(a,c)
\in A(i)\times C(\overf^2(i))$ and $e\in E(i,a,c)$ we define

\[\gamma_i^{a,c,e}:=\left.\xi_{\overf^2(i)}^c\circ\nu^1_g(\widetilde{\delta}_i^a)
\right|_{\tZ_i^{a,c,e}}\]
and we set

\[\gamma:=\left\{\left(\tX_i^{a,c,e},\gamma_i^{a,c,e}\right)\right\}_{i\in I,\,(a,c)\in A(i)\times
C(\overf^2(i)),\,e\in E(i,a,c)}.\]
\end{construction}

A direct check proves that:

\begin{lem}
The collection $\gamma$ so defined is a representative of a $2$-morphism from $\atlasmap{g}{1}\circ
\atlasmap{f}{1}$ to $\atlasmap{g}{2}\circ\atlasmap{f}{2}$. Moreover, the class of $\gamma$ does not
depend on the representatives $(P_{g^1},\nu_{g^1})$, $\delta$ and $\xi$ chosen for $[P_g^1,\nu_g^1]$,
$[\delta]$ and $[\xi]$ respectively.
\end{lem}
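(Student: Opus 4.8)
The plan is to verify directly that the collection $\gamma$ fulfils the five conditions (2Ma)--(2Me) of Definition~\ref{def-05}, now relative to the two composite morphisms $\atlasmap{g}{1}\circ\atlasmap{f}{1}$ and $\atlasmap{g}{2}\circ\atlasmap{f}{2}$, whose local lifts over $i$ are $\tg^1_{\overf^1(i)}\circ\tf_i^1$ and $\tg^2_{\overf^2(i)}\circ\tf_i^2$ and whose index maps are $\overg^1\circ\overf^1$ and $\overg^2\circ\overf^2$ (by Construction~\ref{cons-01}). Abbreviating $\ell_i^m:=\tg^m_{\overf^m(i)}\circ\tf_i^m$, the first point to settle is that each $\gamma_i^{a,c,e}$ is genuinely a change of charts of $\atlas{Z}$ from the chart indexed by $(\overg^1\circ\overf^1)(i)$ to the one indexed by $(\overg^2\circ\overf^2)(i)$: the composite $\xi_{\overf^2(i)}^c\circ\nu_g^1(\widetilde{\delta}_i^a)$ has exactly these source and target indices, and restricting it to the open \emph{connected} set $\tZ_i^{a,c,e}$ — singled out precisely by the two stabilizer conditions imposed in Construction~\ref{cons-03} — turns the composite into a legitimate change of charts, exactly as in the analogous step of Construction~\ref{cons-01}.

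Next I would check (2Ma)--(2Md). For (2Ma), fix $i$ and $\tx_i\in\tX_i$; since $\{\tX_i^a\}_a$ covers $\tX_i$ there is an $a$ with $\tx_i\in\tX_i^a$, and since $\{\tY_{\overf^2(i)}^c\}_c$ covers $\tY_{\overf^2(i)}$ there is a $c$ with $\tf_i^2(\tx_i)\in\tY_{\overf^2(i)}^c$; using property (M5c) for $\atlasmap{g}{1}$ together with (2Mc) for $\delta$ and (2Mb) for $\xi$ one computes $\nu_g^1(\widetilde{\delta}_i^a)(\ell_i^1(\tx_i))=\tg^1_{\overf^2(i)}(\tf_i^2(\tx_i))\in\dom\xi_{\overf^2(i)}^c$, whence $\tx_i\in\tX_i^{a,c}$ and so $\tx_i\in\tX_i^{a,c,e}$ for some $e$. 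Conditions (2Mb) and (2Mc) are the heart of this part: for $\tx_i\in\tX_i^{a,c,e}$ the same computation gives $\nu_g^1(\widetilde{\delta}_i^a)(\ell_i^1(\tx_i))=\tg^1_{\overf^2(i)}(\tf_i^2(\tx_i))$, after which (2Mc) for $\xi$ applied at $\tf_i^2(\tx_i)$ yields $\gamma_i^{a,c,e}(\ell_i^1(\tx_i))=\tg^2_{\overf^2(i)}(\tf_i^2(\tx_i))=\ell_i^2(\tx_i)$, which is (2Mc) and simultaneously furnishes the codomain inclusion in (2Mb) (the domain inclusion being immediate from $\tX_i^{a,c,e}=\tX_i^a\cap(\ell_i^1)^{-1}(\tZ_i^{a,c,e})$). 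Here one must be careful, and this is the second delicate point: the last step applies (2Mc) for $\xi$ \emph{at} $\tf_i^2(\tx_i)$, so it requires $\tf_i^2(\tx_i)\in\tY_{\overf^2(i)}^c$; hence the covering sets $\tX_i^{a,c,e}$ must be understood (shrinking them inside $(\tf_i^2)^{-1}(\tY_{\overf^2(i)}^c)$ if necessary) so that they map under $\tf_i^2$ into $\tY_{\overf^2(i)}^c$ — which is exactly where the indexing by $c$, compatible with the cover $\{\tY_{\overf^2(i)}^c\}_c$, is used. Condition (2Md) then follows from (2Md) for $\delta$ and for $\xi$ together with property (M5d) for $\atlasmap{g}{1}$, since on an overlap the germs of $\widetilde{\delta}_i^a$, of $\nu_g^1(\widetilde{\delta}_i^a)$ and of $\xi_{\overf^2(i)}^c$ all agree.

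The main obstacle is condition (2Me). Given $\lambda\in\change(\atlas{X},i,i')$ and a point $\tx_i\in\dom\lambda\cap\tX_i^{a,c,e}$ with $\lambda(\tx_i)\in\tX_{i'}^{a',c',e'}$, I must produce representatives of $[P_{g^m\circ f^m},\nu_{g^m\circ f^m}]$ and elements $\Lambda^m$ over $\lambda$ for which the cocycle identity (2Me) for $\gamma$ holds. The strategy is a three-fold splice: first feed $\lambda$ through condition (2Me) for $\delta$, obtaining $\lambda^m\in P_{f^m}(i,i')$ over $\lambda$ and the germ identity relating $\nu_{f^2}(\lambda^2)$, $\delta_i^a$, $\delta_{i'}^{a'}$ and $\nu_{f^1}(\lambda^1)$ inside $\atlas{Y}$; then feed the changes of charts $\nu_{f^m}(\lambda^m)$ of $\atlas{Y}$ through condition (2Me) for $\xi$, obtaining the corresponding identity of germs in $\atlas{Z}$; and finally splice the two identities using the multiplicativity of $\nu_{g^m}$, i.e.\ property (M5e) for $\atlasmap{g}{1}$ and $\atlasmap{g}{2}$, which is precisely what turns a composite of changes of charts of $\atlas{Y}$ into the value of $\nu_{g^m\circ f^m}$ on the element over $\lambda$ produced by Construction~\ref{cons-01}. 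Tracking germs through this splice, and checking that the resulting elements satisfy $\tx_i\in\dom\Lambda^m$ and $\germ_{\tx_i}\Lambda^m=\germ_{\tx_i}\lambda$, is the genuinely delicate computation; everything else is germ-bookkeeping of the type already carried out in Constructions~\ref{cons-01} and~\ref{cons-02}.

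Finally, for the independence of $[\gamma]$ from the chosen representatives $(P_{g^1},\nu_{g^1})$, $\delta$, $\xi$ (and from the auxiliary choices of the $\widetilde{\delta}_i^a$, the connected sets $\tZ_i^{a,c,e}$ and the points $\overline{x}_i^e$), I would argue entirely at the level of germs, as prescribed by Definition~\ref{def-06}. Passing to an equivalent representative of $[\delta]$ changes each $\delta_i^a$ only by a germ-equal change of charts on overlaps, and likewise for $[\xi]$; passing to another representative of $[P_g^1,\nu_g^1]$ changes the germs of $\nu_g^1(\widetilde{\delta}_i^a)$ only by the germ-equality of Definition~\ref{def-03}; and the auxiliary choices affect $\gamma_i^{a,c,e}$ only on the connected pieces on which it is, by construction, a restriction of the \emph{same} germ. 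Since $\gamma_i^{a,c,e}$ is, as a germ at each $\ell_i^1(\tx_i)$, the composite of these germ-data, any two systems of choices yield representatives whose germs agree on all overlaps, so they determine one and the same $2$-morphism $[\gamma]=[\xi]\ast[\delta]$.
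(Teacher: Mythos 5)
Your proposal is correct and is essentially the paper's own argument: the lemma is left as ``a direct check'', and your verification of (2Ma)--(2Me) for $\gamma$ --- in particular the splice for (2Me), feeding $\lambda$ through (2Me) for $\delta$ in $\atlas{Y}$, then through (2Me) for $\xi$ in $\atlas{Z}$, and gluing via (M5d)/(M5e) for $\atlasmap{g}{1}$, $\atlasmap{g}{2}$ so as to match the definition of $\nu_{g\circ f}$ in Construction~\ref{cons-01} --- together with the germ-level independence argument prescribed by Definition~\ref{def-06}, is exactly that check. Your side remark that the covering sets $\tX_i^{a,c,e}$ must be taken (or shrunk) inside $(\tf_i^2)^{-1}\big(\tY_{\overf^2(i)}^c\big)$ before (2Mc) and (2Me) for $\xi$ can be applied at the point $\tf_i^2(\tx_i)$ is a legitimate sharpening of Construction~\ref{cons-03} as literally stated: the shrunken sets still cover $\tX_i$ by the same computation you use for (2Ma), and restriction to a finer covering does not change the class $[\gamma]$, so the repair is harmless.
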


So it makes sense to give the following definition.

\begin{defin}
Given any pair $[\delta]$, $[\xi]$ as before, we define their \emph{horizontal composition} as:

\[[\xi]\ast[\delta]:=[\gamma]:\atlasmap{g}{1}\circ\atlasmap{f}{1}\Longrightarrow\atlasmap{g}{2}\circ
\atlasmap{f}{2}.\]
\end{defin}

\section{The 2-category $\RedAtl$}
\begin{defin}
Given any reduced orbifold atlas $\atlas{X}=\{\unifX{i}\}_{i\in I}$ on a topological space $X$,
we define the \emph{identity} of $\atlas{X}$ as the morphism

\[\id_{\atlas{X}}:=\left(\id_{X},\id_I,\left\{\id_{\tX_i}\right\}_{i\in I},\left[\change
(\atlas{X}),\nu_{\id}\right]\right):\atlas{X}\longrightarrow\atlas{X}\]
where $\nu_{\id}$ is the identity on $\change(\atlas{X})$ (this is a special case of
Definition~\ref{def-04}). Given any pair of reduced orbifold
atlases $\atlas{X}$ and $\atlas{Y}$ and any morphism $\atlasmap{f}{}=(f,\overf,\{\tf_i\}_{i\in I},
[P_f,\nu_f])$ from $\atlas{X}$ to $\atlas{Y}$, we define the \emph{$2$-identity}
$i_{\atlasmap{f}{}}$ as the class of

\[\left\{\left(\tX_i,\id_{\tX_{\overf(i)}}\right)\right\}_{i\in I}.\]

Moreover, for each reduced orbifold atlas $\atlas{X}$, we set $i_{\atlas{X}}:=
i_{\id_{\atlas{X}}}$.
\end{defin}

A direct check proves that:

\begin{lem}\label{lem-01}
\hspace{0.1em}
\begin{enumerate}[\emphatic{(}a\emphatic{)}]
 \item The morphisms and $2$-morphisms of the form $\id_{-}$ and $i_{-}$ are the identities with respect
  to $\circ$ and $\odot$ respectively. Moreover, any $2$-morphism is invertible with respect
  to $\odot$. 
 \item Let us fix any pair of morphisms of reduced orbifolds $\atlasmap{f}{1},\atlasmap{f}{2}:
  \atlas{X}\rightarrow\atlas{Y}$ and any $2$-morphism $[\delta]:\iota_{\atlas{Y}}\circ\atlasmap{f}{1}
  \Rightarrow\iota_{\atlas{Y}}\circ\atlasmap{f}{2}$ \emphatic{(}where $\iota_{\atlas{Y}}$ is the
  inclusion $\atlas{Y}\hookrightarrow\atlas{Y}^{\max}$, see \emphatic{Definition~\ref{def-04})}.
  Then there is a unique $2$-morphism $[\delta']:\atlasmap{f}{1}\Rightarrow\atlasmap{f}{2}$ such
  that $[\delta]=i_{\iota_{\atlas{Y}}}\ast[\delta']$.
 \end{enumerate}
\end{lem}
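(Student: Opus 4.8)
The plan is to verify the two assertions separately, treating the straightforward identity laws in (a) first and then concentrating the real work on the bijection in (b).

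For part (a), I would simply unwind the constructions. To show $\id_{\atlas{Y}}\circ\atlasmap{f}{}=\atlasmap{f}{}$ and $\atlasmap{f}{}\circ\id_{\atlas{X}}=\atlasmap{f}{}$, observe that on the components $f$, $\overf$ and the local lifts $\tf_i$ the claim is immediate since $\id_X$, $\id_I$ and the $\id_{\tX_i}$ act trivially under composition. For the pair $[P_f,\nu_f]$, recall from Construction~\ref{cons-01} that composing with the identity morphism amounts to restricting changes of charts and post-composing $\nu_f$ with the identity map $\nu_{\id}$ on $\change(\atlas{X})$ (respectively $\change(\atlas{Y})$); up to the equivalence relation of Definition~\ref{def-03}, this returns the same class, since germs are unchanged. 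The identity $2$-morphism statement for $\odot$ is handled the same way: using Construction~\ref{cons-02}, composing with $i_{\atlasmap{f}{}}=[\{(\tX_i,\id_{\tY_{\overf(i)}})\}_i]$ leaves each $\delta_i^a$ unchanged up to germ (since $\sigma_i^b=\id$ forces $\theta_i^{a,b}=\delta_i^a|_{\tY_i^{a,b}}$), so by Definition~\ref{def-06} the class is preserved. For invertibility of an arbitrary $2$-morphism $[\delta]$ under $\odot$, I would define $[\delta]^{-1}$ by the representative $\{(\tX_i^a,(\delta_i^a)^{-1})\}_{i\in I,a\in A(i)}$; each $(\delta_i^a)^{-1}$ is again a change of charts by Definition~\ref{def-14}, conditions (\hyperref[2Ma]{2Ma})--(\hyperref[2Mc]{2Mc}) are checked by inverting \eqref{eq-01}, and (\hyperref[2Me]{2Me}) follows by inverting and rearranging the germ identity \eqref{eq-04}. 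Then $[\delta]\odot[\delta]^{-1}$ and $[\delta]^{-1}\odot[\delta]$ both yield representatives whose $\delta$-components are germinally the identity, hence equal the appropriate $2$-identity.

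For part (b), the key point is that $\iota_{\atlas{Y}}:\atlas{Y}\hookrightarrow\atlas{Y}^{\max}$ is an inclusion in the sense of Definition~\ref{def-04}, so precomposing does not alter $f$, $\overf$ or the $\tf_i$, and its effect on the change-of-charts data is via the canonical inclusion $\nu_{\atlas{Y},\atlas{Y}^{\max}}:\change(\atlas{Y})\hookrightarrow\change(\atlas{Y}^{\max})$. The crucial geometric fact, which I would extract from Definition~\ref{def-02}, is that $\atlas{Y}$ and $\atlas{Y}^{\max}$ are equivalent atlases, so every change of charts of $\atlas{Y}^{\max}$ agrees germinally, locally around each point, with a change of charts coming from $\atlas{Y}$ via (\hyperref[GS]{GS}). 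Given $[\delta]:\iota_{\atlas{Y}}\circ\atlasmap{f}{1}\Rightarrow\iota_{\atlas{Y}}\circ\atlasmap{f}{2}$, each $\delta_i^a$ is a change of charts of $\atlas{Y}^{\max}$ whose domain and codomain contain $\tf_i^1(\tX_i^a)$ and $\tf_i^2(\tX_i^a)$ respectively; since these lifts land in charts of $\atlas{Y}$ itself, I would refine the covering $\{\tX_i^a\}_a$ so that each $\delta_i^a$ restricts to a change of charts $\delta_i^{\prime a}$ of $\atlas{Y}$ (not merely of $\atlas{Y}^{\max}$) having the same germs along $\tf_i^1(\tX_i^a)$. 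Setting $[\delta']:=[\{(\tX_i^a,\delta_i^{\prime a})\}]$ gives a $2$-morphism $\atlasmap{f}{1}\Rightarrow\atlasmap{f}{2}$ with $i_{\iota_{\atlas{Y}}}\ast[\delta']=[\delta]$, since horizontal composition with the $2$-identity of an inclusion (by Construction~\ref{cons-03}) just reinterprets $\delta_i^{\prime a}$ as an element of $\change(\atlas{Y}^{\max})$, recovering $\delta_i^a$ up to germ.

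Uniqueness is the cleaner half: if $[\delta_1']$ and $[\delta_2']$ both satisfy $i_{\iota_{\atlas{Y}}}\ast[\delta_k']=[\delta]$, then for every $i$, every overlapping pair of charts and every $\tx_i$ the germs $\germ_{\tf_i^1(\tx_i)}\delta_{1,i}^{\prime a}$ and $\germ_{\tf_i^1(\tx_i)}\delta_{2,i}^{\prime a'}$ must coincide, because the inclusion $\nu_{\atlas{Y},\atlas{Y}^{\max}}$ is germinally injective (distinct germs in $\atlas{Y}$ remain distinct in $\atlas{Y}^{\max}$); hence $[\delta_1']=[\delta_2']$ by Definition~\ref{def-06}. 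The main obstacle I anticipate is the existence argument: carefully arranging the refinement of the open covering so that each $\delta_i^a$ genuinely descends to a change of charts of $\atlas{Y}$ while preserving all the germ compatibilities (\hyperref[2Md]{2Md}) and especially the cocycle-type identity (\hyperref[2Me]{2Me}), rather than merely matching germs pointwise. This is where the equivalence $[\atlas{Y}]=[\atlas{Y}^{\max}]$ and the generating property (\hyperref[GS]{GS}) of good subsets must be invoked with care.
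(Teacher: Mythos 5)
Your proof is correct, and since the paper dispatches this lemma with the words ``a direct check proves that'' (no proof is given), your verification is precisely the kind of argument the author intends; part (a) in particular, including the representative $\{(\tX_i^a,(\delta_i^a)^{-1})\}_{i,a}$ for the vertical inverse over the \emph{same} covering, is exactly right.

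One correction on part (b): the step you flag as ``the main obstacle'' is actually vacuous. The index map of $\iota_{\atlas{Y}}\circ\atlasmap{f}{m}$ is $\overf^m$ followed by the inclusion $J\hookrightarrow J^{\max}$, and the local lifts are the unchanged $\tf^m_i$; hence by (\hyperref[2Mb]{2Mb}) each $\delta_i^a$ is a diffeomorphism between open subsets of $\tY_{\overf^1(i)}$ and $\tY_{\overf^2(i)}$ commuting with the projections. Since Definition~\ref{def-14} defines a change of charts purely in terms of the two charts involved, independently of the ambient atlas, $\delta_i^a$ already lies in $\change(\atlas{Y},\overf^1(i),\overf^2(i))$ verbatim: no refinement of the covering, no appeal to (\hyperref[GS]{GS}), and no use of the equivalence $[\atlas{Y}]=[\atlas{Y}^{\max}]$ is needed, and you may simply take $\delta'$ to be $\delta$ with the same data. (Your proposed refinement would also succeed---restrictions of changes of charts to suitable neighborhoods are changes of charts with the same germs along the lifts---it is just unnecessary work.) The only point that genuinely requires an argument is the one you pass over quickly: that condition (\hyperref[2Me]{2Me}), which for $[\delta]$ is stated with representatives of the composite classes $[P_{\iota\circ f^m},\nu_{\iota\circ f^m}]$, transfers to representatives of $[P_{f^m},\nu_{f^m}]$ and back. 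This follows from Construction~\ref{cons-01}, since $\nu_{\iota\circ f^m}=\nu_{\atlas{Y},\atlas{Y}^{\max}}\circ\nu^{\ind}_{f^m}$ with $(P_{\iota\circ f^m},\nu^{\ind}_{f^m})\in[P_{f^m},\nu_{f^m}]$ and $\nu_{\atlas{Y},\atlas{Y}^{\max}}$ germ-preserving, combined with the fact that (\hyperref[2Me]{2Me}) is equivalent to the representative-independent form (\hyperref[2Meprime]{2Me})$'$. With that substitution, your existence argument, your computation that $i_{\iota_{\atlas{Y}}}\ast[\delta']$ recovers $[\delta]$ via Construction~\ref{cons-03}, and your uniqueness argument (germ-injectivity of $\nu_{\atlas{Y},\atlas{Y}^{\max}}$) all go through unchanged.
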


The following proof is long but completely straightforward, so we omit it.

\begin{lem}\label{lem-02}
Given any diagram as follows

\[\begin{tikzpicture}[scale=0.8]
  \def\x{2.0}
  \def\y{-1.2}
  \node (A0_0) at (0*\x, 0*\y) {$\atlas{X}$};
  \node (B0_1) at (0.6*\x, -0.3) {$\scriptstyle{\atlasmap{f}{2}}$};
  \node (B0_2) at (2.6*\x, -0.3) {$\scriptstyle{\atlasmap{g}{2}}$};
  \node (A0_2) at (2*\x, 0*\y) {$\atlas{Y}$};
  \node (A0_4) at (4*\x, 0*\y) {$\atlas{Z}$,};
  
  \node (A0_1) at (1.1*\x, 0.6) {$\Downarrow\,[\delta]$};
  \node (A0_1) at (1.1*\x, -0.6) {$\Downarrow\,[\sigma]$};
  \node (A0_3) at (3.1*\x, 0.6) {$\Downarrow[\xi]$};
  \node (A0_3) at (3.1*\x, -0.6) {$\Downarrow[\eta]$};
  
  \path (A0_0) edge [->,bend left=70] node [auto] {$\scriptstyle{\atlasmap{f}{1}}$} (A0_2);
  \path (A0_0) edge [->] node [auto] {} (A0_2);
  \path (A0_0) edge [->,bend right=70] node [auto,swap] {$\scriptstyle{\atlasmap{f}{3}}$} (A0_2);
  \path (A0_2) edge [->,bend left=70] node [auto] {$\scriptstyle{\atlasmap{g}{1}}$} (A0_4);
  \path (A0_2) edge [->] node [auto] {} (A0_4);
  \path (A0_2) edge [->,bend right=70] node [auto,swap] {$\scriptstyle{\atlasmap{g}{3}}$} (A0_4);
\end{tikzpicture}\]
we have

\[\Big([\eta]\odot[\xi]\Big)\ast\Big([\sigma]\odot[\delta]\Big)=\Big([\eta]\ast[\sigma]\Big)
\odot\Big([\xi]\ast[\delta]\Big).\]
\end{lem}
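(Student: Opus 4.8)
The plan is to verify the interchange law by computing both sides of the identity on a common representative and showing that, germ by germ at every point, the two composite two-morphisms agree (which is exactly the equivalence relation of Definition~\ref{def-06}). The statement asserts that vertical and horizontal compositions of two-morphisms commute, so the natural strategy is to fix representatives $\delta,\sigma$ for $[\delta],[\sigma]$ (two-morphisms between $\atlasmap{f}{1},\atlasmap{f}{2},\atlasmap{f}{3}$ over $f$) and $\xi,\eta$ for $[\xi],[\eta]$ (between $\atlasmap{g}{1},\atlasmap{g}{2},\atlasmap{g}{3}$ over $g$), together with chosen representatives $(P_{g^m},\nu_{g^m})$, and then unwind Constructions~\ref{cons-02} and~\ref{cons-03} on both sides. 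Since by the two cited lemmas the equivalence class of each composite is independent of the representatives chosen, I am free to make all the refinement choices simultaneously so that every domain appearing is a common intersection of the various open covers; this is the device that makes the two sides literally comparable.

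First I would record what each side produces as a local change of charts. On the left, $[\sigma]\odot[\delta]$ has local data $\theta_i^{a,b}=\sigma_i^b\circ\delta_i^a$ (suitably restricted) by Construction~\ref{cons-02}, and similarly $[\eta]\odot[\xi]$ has data $\eta_{\overline{g}\circ\overf(\cdot)}\circ\xi$; horizontally composing these via Construction~\ref{cons-03} then glues in $\nu_{g}(\widetilde{\theta}_i^{a,b})$-type terms. On the right, $[\xi]\ast[\delta]$ produces data of the form $\xi^c\circ\nu_{g^1}(\widetilde{\delta}_i^a)$ (restricted to the appropriate $\tZ$-piece), $[\eta]\ast[\sigma]$ produces $\eta\circ\nu_{g^2}(\widetilde{\sigma})$, and the outer $\odot$ then composes these two. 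The core of the proof is to check that at an arbitrary point $\tx_i$ the two resulting germs in the target chart of $\atlas{Z}$ coincide. Concretely, I would write out both germs and reduce the identity to the relation
\[
\germ\,\eta\cdot\germ\,\xi\cdot\germ\,\nu_{g^1}(\widetilde{\delta})
=\germ\,\eta\cdot\germ\,\nu_{g^2}(\widetilde{\sigma})\cdot\germ\,\nu_{g^1}(\widetilde{\delta}),
\]
where the crucial middle step invokes condition (\hyperref[M5c]{M5c}) and the defining relation \eqref{eq-04} of the two-morphisms $[\xi]$ and $[\delta]$ to commute the $\nu_g$-terms past the $\xi$- and $\eta$-terms. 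In other words, one uses that $\xi$ intertwines $\nu_{g^1}$ with $\nu_{g^2}$ (its own instance of \eqref{eq-04}) to move the transition-germ through, after which associativity of germ composition closes the computation.

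The main obstacle I expect is bookkeeping rather than conceptual: the horizontal composition in Construction~\ref{cons-03} involves several layers of choices ($\omega$'s, $\widetilde{\delta}$'s, the index sets $E(i,a,c)$, and the stabilizer-adapted open sets $\tZ_i^{a,c,e}$), and on the two sides of the interchange law these choices are a priori unrelated. The key technical move is therefore to exploit the representative-independence already established (the two lemmas following Constructions~\ref{cons-02} and~\ref{cons-03}) to align all choices on a common refinement before comparing, so that I am comparing germs of the \emph{same} local diffeomorphisms of $\atlas{Z}$ built from the \emph{same} constituent pieces in two different orders. Once that alignment is set up, the identity is just associativity of composition of germs together with the compatibility relation \eqref{eq-04} applied twice (once for each horizontal factor), and the equality of germs for every $\tx_i$ gives equivalence of the two representatives in the sense of Definition~\ref{def-06}, hence equality of the two two-morphisms. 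This is precisely why the authors describe the argument as long but completely straightforward.
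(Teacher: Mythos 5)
Your overall strategy is the right one, and it is exactly the verification the paper declines to write out: fix representatives, use the representative-independence established after Constructions~\ref{cons-02} and~\ref{cons-03} to align all auxiliary choices on a common refinement, and then compare germs pointwise in the sense of Definition~\ref{def-06}. Identifying the intertwining property of $[\xi]$ as the pivot of the computation is also correct.

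However, the displayed ``core relation'' is mis-stated, and as written it is false: cancelling $\germ\,\eta$ on the left of both sides and $\germ\,\nu_{g^1}(\widetilde{\delta})$ on the right, it asserts $\germ\,\xi=\germ\,\nu_{g^2}(\widetilde{\sigma})$, which no axiom yields. The correct reduction runs as follows. Writing $\theta_i^{a,b}=\sigma_i^b\circ\delta_i^a$ for the data of $[\sigma]\odot[\delta]$, the left-hand side of the interchange law produces germs of $(\eta^{c'}\circ\xi^{c''})\circ\nu_{g^1}(\widetilde{\theta}_i^{a,b})$ with $\widetilde{\theta}_i^{a,b}\in P_{g^1}$ a lift of $\theta_i^{a,b}$; the first genuine ingredient is (\hyperref[M5e]{M5e}) for $(P_{g^1},\nu_{g^1})$, which splits this as
\begin{equation*}
\germ\,\eta^{c'}\cdot\germ\,\xi^{c''}\cdot\germ\,\nu_{g^1}\big(\widetilde{\sigma}^{(1)}\big)\cdot\germ\,\nu_{g^1}\big(\widetilde{\delta}_i^a\big),
\end{equation*}
where $\widetilde{\sigma}^{(1)}\in P_{g^1}$ lifts $\sigma_i^b$, while the right-hand side produces $\germ\,\eta^{c'}\cdot\germ\,\nu_{g^2}(\widetilde{\sigma}^{(2)})\cdot\germ\,\xi^{c}\cdot\germ\,\nu_{g^1}(\widetilde{\delta}_i^a)$ with $\widetilde{\sigma}^{(2)}\in P_{g^2}$. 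The identity to check is therefore
\begin{equation*}
\germ_{\tg^1(\sigma_i^b(\ty))}\,\xi^{c''}\cdot\germ_{\tg^1(\ty)}\,\nu_{g^1}\big(\widetilde{\sigma}^{(1)}\big)
=\germ_{\tg^2(\ty)}\,\nu_{g^2}\big(\widetilde{\sigma}^{(2)}\big)\cdot\germ_{\tg^1(\ty)}\,\xi^{c}
\end{equation*}
at $\ty:=\tf_i^2(\tx_i)$, and this is precisely \emph{one} instance of (\hyperref[2Meprime]{2Me})$'$ for $[\xi]$, applied to the change of charts $\lambda:=\sigma_i^b$ of $\atlas{Y}$. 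Note also two attributions in your sketch that should be dropped: condition \eqref{eq-04} for $[\delta]$ plays no role here (its instance involves the maps $\nu_{f^m}$, which never appear in the computation), and \eqref{eq-04} is not ``applied twice, once for each horizontal factor'' --- the $\eta$-germs already sit leftmost on both sides and need not be moved, so a single (2Me)$'$-instance for $[\xi]$ suffices, with (\hyperref[M5d]{M5d}) and (\hyperref[2Md]{2Md}) covering independence of the chosen lifts. With these substitutions your outline closes correctly.
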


\begin{prop}\label{prop-01}
The definitions of \emph{reduced} orbifold atlases, morphisms and $2$-morphisms, compositions
$\circ,\odot,\ast$ and identities give rise to a $2$-category, that we denote by $\RedAtl$.
\end{prop}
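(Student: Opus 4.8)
The plan is to verify directly that the data assembled so far satisfy the axioms of a (strict) $2$-category in the sense of~\cite{Lei}. The objects are the reduced orbifold atlases, the $1$-morphisms are the morphisms $\atlasmap{f}{}$ of Definition~\ref{def-03}, the $2$-morphisms are the classes $[\delta]$ of Definition~\ref{def-06}, vertical composition is $\odot$, horizontal composition is $\circ$ on $1$-morphisms and $\ast$ on $2$-morphisms, and the identities are the $\id_{\atlas{X}}$ and $i_{\atlasmap{f}{}}$ just introduced. Concretely, I must check: (1) for every pair $\atlas{X},\atlas{Y}$ the morphisms and $2$-morphisms between them form a category under $\odot$; (2) $\circ$ is strictly associative and unital on $1$-morphisms; (3) $\ast$ is a functor $\operatorname{Hom}(\atlas{Y},\atlas{Z})\times\operatorname{Hom}(\atlas{X},\atlas{Y})\rightarrow\operatorname{Hom}(\atlas{X},\atlas{Z})$; and (4) the remaining associativity and unit coherences for $\ast$ hold on the nose.

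First I would treat the hom-categories. Fixing $\atlas{X},\atlas{Y}$, their morphisms and $2$-morphisms form a category (indeed a groupoid) whose composition is $\odot$: that $i_{\atlasmap{f}{}}$ is a two-sided unit, and that every $2$-morphism is $\odot$-invertible, is Lemma~\ref{lem-01}(a), while associativity of $\odot$ is an immediate consequence of Construction~\ref{cons-02}: for three composable $2$-morphisms with representatives $\{\delta_i^a\}$, $\{\sigma_i^b\}$, $\{\tau_i^c\}$, on each overlap $\tX_i^a\cap\tX_i^b\cap\tX_i^c$ both bracketings yield the change of charts $\tau_i^c\circ\sigma_i^b\circ\delta_i^a$, whose germs are visibly associative, so the two resulting classes agree by the germ test of Definition~\ref{def-06}.

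Next I would dispatch horizontal composition of $1$-morphisms: its strict associativity is precisely the associativity lemma proved above, and that $\id_{\atlas{X}}$ and $\id_{\atlas{Y}}$ act as strict units is Lemma~\ref{lem-01}(a). For $\ast$ to define a functor I need two things. Preservation of vertical composition, i.e.\ the interchange law $([\eta]\odot[\xi])\ast([\sigma]\odot[\delta])=([\eta]\ast[\sigma])\odot([\xi]\ast[\delta])$, is exactly Lemma~\ref{lem-02}. Preservation of identities, $i_{\atlasmap{g}{}}\ast i_{\atlasmap{f}{}}=i_{\atlasmap{g}{}\circ\atlasmap{f}{}}$, follows by running Construction~\ref{cons-03} with $\delta$ and $\xi$ the identity collections: there each $\gamma_i^{a,c,e}$ is a restriction of an identity change of charts, so its germs coincide with those of $\id_{\tZ_{\cdots}}$, whence the class is the $2$-identity of the composite by Definition~\ref{def-06}.

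Finally I would verify the surviving coherences for $\ast$: strict associativity of $\ast$ on $2$-morphisms and the unit laws $i_{\id_{\atlas{Y}}}\ast[\delta]=[\delta]=[\delta]\ast i_{\id_{\atlas{X}}}$, under the identifications already obtained in step~(2). The expected main obstacle is exactly the \emph{strictness} of these statements for $\ast$: Construction~\ref{cons-03} produces $\gamma$ only after a cascade of non-canonical choices (a refined cover by the sets $\tX_i^{a,c,e}$, base points $\overline{x}_i^e$, and lifts $\widetilde{\delta}_i^a$ into $P_{g^1}$), so one cannot expect equality of representatives but only of classes. The key point is that every identity to be checked is, by Definitions~\ref{def-03} and~\ref{def-06}, a statement about equality of germs $\germ_{\tf_i^1(\tx_i)}(-)$; since germs of composites of changes of charts compose strictly and are insensitive to the choices above, each comparison collapses to a mechanical germ computation. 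Thus, once the nontrivial interchange input of Lemma~\ref{lem-02} is in hand, the remaining work is bookkeeping through the germ-level equivalence relations, and all the axioms of~\cite{Lei} are met, establishing that $\RedAtl$ is a $2$-category.
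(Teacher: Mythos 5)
Your proposal is correct and takes essentially the same route as the paper's own proof: objects and hom-groupoids under $\odot$ with units $i_{\atlasmap{f}{}}$ and invertibility supplied by Lemma~\ref{lem-01}(a), strict associativity and unitality of $\circ$ from the associativity lemma and Lemma~\ref{lem-01}(a), and functoriality of the composition $(\circ,\ast)$ reduced to the interchange law, which is exactly Lemma~\ref{lem-02}. The only difference is that you explicitly carry out the germ-level verifications (associativity of $\odot$ via triple overlaps, preservation of $2$-identities using (M5f) in Construction~\ref{cons-03}, and the remaining coherences for $\ast$) which the paper dismisses as ``clearly'' associative or ``trivial''; these check out, since all the identities in question amount to equalities of germs that are insensitive to the non-canonical choices in the constructions.
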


\begin{proof}
In order to construct a $2$-category, we define some data as follows.

\begin{enumerate}[(1)]
 \item The class of objects is the set of all the reduced orbifold atlases $\atlas{X}$ for any
  paracompact, second countable, Hausdorff topological space $X$.
 \item If $\atlas{X}$ and $\atlas{Y}$ are reduced atlases for $X$ and $Y$ respectively, we
  define a small category $\RedAtl(\atlas{X},\atlas{Y})$ as follows: the space of objects is
  the set of all morphisms $\atlasmap{f}{}:\atlas{X}\rightarrow\atlas{Y}$ over any
  continuous map $f:X\rightarrow Y$; for any pair of morphisms $\atlasmap{f}{}$ and
  $\atlasmap{g}{}$ over $f$ and $g$ respectively, using Definition~\ref{def-05} and~\ref{def-06}
  we set:

  \[\Big(\RedAtl(\atlas{X},\atlas{Y})\Big)(\atlasmap{f}{},\atlasmap{g}{}):=\left\{\begin{array}{c c}
  \textrm{all}\,\,2\textrm{-morphisms}\,\atlasmap{f}{}\Rightarrow\atlasmap{g}{} & \textrm{if~}f=g,\\
  \varnothing & \textrm{otherwise.}\end{array}\right.\]
  The composition in any such category is the vertical composition $\odot$, that is clearly
  associative; the identity over any object $\atlasmap{f}{}$ is given by $i_{\atlasmap{f}{}}$. By
  Lemma~\ref{lem-01}(a) we get that actually any such category is an internal groupoid in
  $(\operatorname{Sets})$, i.e.\ a category
  where all the morphisms are invertible.
 \item For every triple of reduced atlases $\atlas{X},\atlas{Y},\atlas{Z}$, we define a functor
  ``composition''

  \[\RedAtl(\atlas{X},\atlas{Y})\times\RedAtl(\atlas{Y},\atlas{Z})\longrightarrow
  \RedAtl(\atlas{X},\atlas{Z})\]
  as $\circ$ on any pair of morphisms and as $\ast$ on any pair of $2$-morphisms. We want to
  prove that this gives rise to a functor. It is easy to see that identities are preserved, so one
  needs only to prove that compositions are preserved, i.e.\ that the \emph{interchange law}
  (see~\cite[Proposition~1.3.5]{Bo}) is satisfied. This is exactly the statement of
  Lemma~\ref{lem-02}.
\end{enumerate}

All the other necessary proofs that $\RedAtl$ is a $2$-category are trivial, so we omit them.
\end{proof}

\section{From reduced orbifold atlases to proper, effective, \'etale groupoids}
The aim of this section is to define a $2$-functor $\functor{F}^{\red}$ from $\RedAtl$ to the
$2$-category of proper, effective, \'etale Lie groupoids. We recall briefly the necessary
definitions and notations.

\begin{defin}
\cite[Definition~2.11]{Ler} A \emph{Lie groupoid} is the datum of $2$ smooth (Hausdorff, paracompact)
manifolds $\groupoid{X}_0,\groupoid{X}_1$ and five smooth maps:

\begin{itemize}
 \item $s,t:\groupoid{X}_1\rightrightarrows\groupoid{X}_0$, such that both $s$ and $t$ are submersions
  (so that the fiber products of the form $\fiber{\groupoid{X}_1}{t}{s}{\cdots\,_t\times_s}
  {\groupoid{X}_1}$ (for finitely many terms) \emph{are manifolds}); these $2$ maps are usually called
  \emph{source} and \emph{target} of the Lie groupoid;
 \item $m:\fiber{\groupoid{X}_1}{t}{s}{\groupoid{X}_1}\rightarrow\groupoid{X}_1$, called
  \emph{multiplication};
 \item $i:\groupoid{X}_1\rightarrow\groupoid{X}_1$, known as \emph{inverse} of the Lie groupoid;
 \item $e:\groupoid{X}_0\rightarrow\groupoid{X}_1$, called \emph{identity};
\end{itemize}
which satisfy the following axioms:

\begin{enumerate}[({LG}1)]
 \item\label{LG1} $s\circ e=1_{\groupoid{X}_0}=t\circ e$;
 \item\label{LG2} if we denote by $\pr_1$ and $\pr_2$ the $2$ projections from
  $\fiber{\groupoid{X}_1}{t}{s}{\groupoid{X}_1}$ to $\groupoid{X}_1$, then we have $s\circ m=s\circ
  \pr_1$ and $t\circ m=t\circ\pr_2$;
 \item\label{LG3} the $2$ morphisms $m\circ (1_{\groupoid{X}_1}\times m)$ and $m\circ(m\times
  1_{\groupoid{X}_1})$ from $\fiber{\groupoid{X}_1}{t}{s}{\groupoid{X}_1\,_t\times_s\groupoid{X}_1}$
  to $\groupoid{X}_1$ are equal;
 \item\label{LG4} the $2$ morphisms $m\circ(e\circ s,1_{\groupoid{X}_1})$ and $m\circ
  (1_{\groupoid{X}_1},e\circ t)$ from $\groupoid{X}_1$ to $\groupoid{X}_1$ are both equal to the
  identity of $\groupoid{X}_1$;
 \item\label{LG5} $i\circ i=1_{\groupoid{X}_1}$, $s\circ i=t$ (and therefore $t\circ i=s$); moreover,
  we require that $m\circ (1_{\groupoid{X}_1},i)=e\circ s$ and $m\circ (i,1_{\groupoid{X}_1})=e\circ
  t$.
\end{enumerate}
\end{defin}

In other terms, a Lie groupoid is an internal groupoid
in the category of smooth manifolds, such that $s$ and $t$ are submersions. For simplicity, we will
denote any Lie groupoid as before by $(\groupname{X}{})$ or $\groupoidtot{X}{}$.
In the literature one can also find the
notations $(\groupoid{X}_0,\groupoid{X}_1)$, $(U,R,s,t,m,e,i)$ and
$\fiber{R}{s}{t}{R}\stackrel{m}{\rightarrow}R\stackrel{i}{\rightarrow}
{R_1\rightrightarrows^{\hspace{-0.25 cm}^{s}}_{\hspace{-0.25 cm}_{t}}U}\stackrel{e}{\rightarrow}R$
(where $U$ is the set $\groupoid{X}_0$ and $R$ is the set $\groupoid{X}_1$ in our notations).\\

In the following pages, even if we will deal with several Lie groupoids, we will denote by
$s$ the source
morphism of any such object, and analogously for the morphisms $t,m,e$ and $i$. This will not
create any problem, since it will be always clear from the context what is the Lie groupoid we are
working with.

\begin{defin}\label{def-07}
\cite[\S~2.1]{M} Given $2$ Lie groupoids $\groupoidtot{X}{}$ and $\groupoidtot{Y}{}$,
a \emph{morphism}
between them is any pair $\groupoidmaptot{\psi}{}=\groupoidmap{\psi}{}$,
where $\psi_0:\groupoid{X}_0\rightarrow
\groupoid{Y}_0$ and $\psi_1:\groupoid{X}_1\rightarrow\groupoid{Y}_1$ are smooth
maps, which together commute with all structure morphisms of the $2$ Lie groupoids. In other
words, we require that $s\circ\psi_1=\psi_0\circ s$, $t\circ\psi_1=\psi_0\circ t$, $\psi_0\circ e=
e\circ\psi_0$, $\psi_1\circ m=m\circ(\psi_1\times\psi_1)$ and $\psi_1\circ i=i\circ\psi_1$.
\end{defin}

\begin{defin}\label{def-08}
\cite[Definition~2.3]{PS} Let us suppose that we have fixed $2$ morphisms of Lie groupoids
$\groupoidmaptot{\psi}{m}:\groupoidtot{X}{}\rightarrow\groupoidtot{Y}{}$ for $m=1,2$. Then a
\emph{natural transformation} (also known as $2$-\emph{morphism})
$\alpha:\groupoidmaptot{\psi}{1}
\Rightarrow\groupoidmaptot{\psi}{2}$ is the datum of any smooth map $\alpha:\groupoid{X}_0\rightarrow
\groupoid{Y}_1$, such that the following conditions hold:

\begin{enumerate}[({NT}1)]
 \item\label{NT1} $s\circ\alpha=\psi^1_0$ and $t\circ\alpha=\psi^2_0$;
 \item\label{NT2} $m\circ(\alpha\circ s,\psi^2_1)=m\circ(\psi^1_1,\alpha\circ t)$.
\end{enumerate}
\end{defin}

There are well-known notions of identities, compositions of morphisms, vertical
and horizontal compositions of natural transformations, obtained in analogy with the corresponding
notions in the $2$-category of small categories. In particular, we have:

\begin{prop}
\cite[\S~2.1]{PS} The data of Lie groupoids, morphisms, and natural transformations between them
\emphatic{(}together with compositions and identities\emphatic{)} form a $2$-category, known as
$\LieGpd$.
\end{prop}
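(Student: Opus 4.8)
The plan is to verify the 2-category axioms one at a time, exploiting the fact that, once smoothness is checked, $\LieGpd$ is formally identical to the $2$-category $(\mathbf{Cat})$ of small categories, since a Lie groupoid is precisely an internal groupoid in the category of smooth manifolds. First I would make all the structure explicit. For a pair of Lie groupoids $\groupoidtot{X}{}$ and $\groupoidtot{Y}{}$, the hom-category $\LieGpd(\groupoidtot{X}{},\groupoidtot{Y}{})$ has as objects the morphisms $\groupoidmaptot{\psi}{}$ of Definition~\ref{def-07} and as arrows the natural transformations of Definition~\ref{def-08}. I would record: the identity $2$-morphism of $\groupoidmaptot{\psi}{}$ as $e\circ\psi_0\colon\groupoid{X}_0\rightarrow\groupoid{Y}_1$; the vertical composite of $\alpha\colon\groupoidmaptot{\psi}{1}\Rightarrow\groupoidmaptot{\psi}{2}$ and $\beta\colon\groupoidmaptot{\psi}{2}\Rightarrow\groupoidmaptot{\psi}{3}$ as $\beta\odot\alpha:=m\circ(\alpha,\beta)$, where the pairing $(\alpha,\beta)$ lands in $\fiber{\groupoid{Y}_1}{t}{s}{\groupoid{Y}_1}$ because $t\circ\alpha=\psi^2_0=s\circ\beta$; the identity morphism $(\id_{\groupoid{X}_0},\id_{\groupoid{X}_1})$ and the composite of morphisms taken componentwise; and the horizontal composite $\beta\ast\alpha:=m\circ(\psi^1_1\circ\alpha,\beta\circ\phi^2_0)$ for $\alpha\colon\groupoidmaptot{\phi}{1}\Rightarrow\groupoidmaptot{\phi}{2}$ between morphisms $\groupoid{X}\rightarrow\groupoid{Y}$ and $\beta\colon\groupoidmaptot{\psi}{1}\Rightarrow\groupoidmaptot{\psi}{2}$ between morphisms $\groupoid{Y}\rightarrow\groupoid{Z}$.

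Next, for each of these operations I would check that the result is smooth and satisfies the relevant defining conditions. Smoothness is automatic from the smoothness of the structure maps $s,t,m,e,i$ together with the crucial point---already built into the definition of Lie groupoid through the submersion hypothesis on $s$ and $t$---that the fiber products $\fiber{\groupoid{Y}_1}{t}{s}{\groupoid{Y}_1}$ are again manifolds, so that any pairing of two smooth maps into such a fiber product is smooth. That $\beta\odot\alpha$ again satisfies (\hyperref[NT1]{NT1}) follows from axiom (\hyperref[LG2]{LG2}), namely $s\circ m=s\circ\pr_1$ and $t\circ m=t\circ\pr_2$, while (\hyperref[NT2]{NT2}) for the composite follows from (\hyperref[NT2]{NT2}) of $\alpha$ and $\beta$ and associativity (\hyperref[LG3]{LG3}) of $m$.

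I would then verify the category axioms in each hom-category: associativity of $\odot$ reduces directly to the associativity axiom (\hyperref[LG3]{LG3}) of the multiplication, and the unit laws for $e\circ\psi_0$ reduce to (\hyperref[LG4]{LG4}). I would also note, though it is not needed for the $2$-category axioms, that every $2$-morphism is invertible, with inverse $i\circ\alpha$ (well-defined by (\hyperref[LG5]{LG5})), so that each hom-category is in fact an internal groupoid in $(\operatorname{Sets})$, exactly as for $\RedAtl$ in Proposition~\ref{prop-01}.

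The main obstacle---in complete analogy with Lemma~\ref{lem-02} for $\RedAtl$---is the interchange law, i.e.\ that horizontal composition is a functor on the product of two hom-categories. I would prove it by a direct computation: expanding $(\beta'\odot\beta)\ast(\alpha'\odot\alpha)$ and $(\beta'\ast\alpha')\odot(\beta\ast\alpha)$ via the formulas above, the equality of the two sides follows from the associativity (\hyperref[LG3]{LG3}) of $m$ together with the naturality square (\hyperref[NT2]{NT2}), which is precisely what lets one slide a $2$-cell of the outer layer past the image under a morphism of a $2$-cell of the inner layer. Once the interchange law is established, the remaining coherences are immediate: strict associativity and unitality of horizontal composition of $1$-morphisms hold because composition of the underlying smooth maps is strictly associative and the componentwise identities are strict units, and the corresponding statements for $2$-morphisms reduce once more to (\hyperref[LG3]{LG3}) and (\hyperref[LG4]{LG4}). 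This completes the verification that $\LieGpd$ is a (strict) $2$-category.
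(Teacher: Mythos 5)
Your proposal is correct, and it is precisely the standard verification that the paper never spells out but instead delegates to the citation [PS, \S~2.1]: your formulas for the vertical and horizontal composites, $\beta\odot\alpha=m\circ(\alpha,\beta)$ and $\beta\ast\alpha=m\circ(\psi^1_1\circ\alpha,\beta\circ\phi^2_0)$, coincide exactly with the compositions the paper itself uses later (the identities marked $(\ast)$ in the proofs of Lemmas~\ref{lem-05} and~\ref{lem-06}), and your reductions---well-definedness and smoothness via the submersion hypothesis on $s,t$, condition (\hyperref[NT1]{NT1}) of composites to (\hyperref[LG1]{LG1})/(\hyperref[LG2]{LG2}), the hom-category axioms to (\hyperref[LG3]{LG3})/(\hyperref[LG4]{LG4}), invertibility to (\hyperref[LG5]{LG5}), and the interchange law to (\hyperref[LG3]{LG3}) together with (\hyperref[NT2]{NT2}) and the compatibility $\psi_1\circ m=m\circ(\psi_1\times\psi_1)$ of Definition~\ref{def-07}---are the right ones. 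Nothing is missing.
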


\begin{defin}
\cite[\S~1.2 and~\S~1.5]{M} A Lie groupoid $\groupoidtot{X}{}$ is called \emph{proper} if the map
$(s,t):\groupoid{X}_1\rightarrow\groupoid{X}_0\times\groupoid{X}_0$ is proper; it is called
\emph{\'etale} if the maps $s$ and $t$ are both \'etale (i.e.\ local diffeomorphisms). Since each
\'etale map is a submersion, in general we will simply write ``\'etale groupoid'' instead of
``\'etale Lie groupoid''.
\end{defin}

\begin{rem}\label{rem-06}
Given any Lie groupoid $(\groupname{X}{})$, we can define an equivalence relation
$\sim_{\groupoid{X}}$ on $\groupoid{X}_0$ by saying that $x_0\,\sim_{\groupoid{X}}\,x'_0$ if and only
if there is $x_1\in\groupoid{X}_1$ such that $s(x_1)=x_0$ and $t(x_1)=x'_0$.
We give to the set
$|\groupoidtot{X}{}|:=\groupoid{X}_0/\sim_{\groupoid{X}}$ the quotient topology and we call
it the \emph{underlying topological space} of $\groupoidtot{X}{}$; we denote by
$\pr_{\groupoidtot{X}{}}:
\groupoid{X}_0\twoheadrightarrow|\groupoidtot{X}{}|$ the quotient map. Given another Lie
groupoid $(\groupname{Y}{})$ and any morphism $\groupoidmap{\psi}{}:(\groupname{X}{})\rightarrow
(\groupname{Y}{})$, there is a unique
set map $|\groupoidmaptot{\psi}{}|:|\groupoidtot{X}{}|\rightarrow
|\groupoidtot{Y}{}|$ (called the \emph{underlying set map of} $\groupoidmaptot{\psi}{}$),
making the following diagram commute

\begin{equation}\label{eq-10}
\begin{tikzpicture}[xscale=1.5,yscale=-0.8]
    \node (A0_0) at (0, 0) {$\groupoid{X}_0$};
    \node (A0_2) at (2, 0) {$\groupoid{Y}_0$};
    \node (A2_0) at (0, 2) {$|\groupoidtot{X}{}|$};
    \node (A2_2) at (2, 2) {$|\groupoidtot{Y}{}|$.};

    \node (A1_1) at (1, 1) {$\curvearrowright$};
    
    \path (A2_0) edge [->,dashed]node [auto,swap] {$\scriptstyle{|\groupoidmaptot{\psi}{}|}$} (A2_2);
    \path (A0_0) edge [->]node [auto,swap] {$\scriptstyle{\pr_{\groupoidtot{X}{}}}$} (A2_0);
    \path (A0_2) edge [->]node [auto] {$\scriptstyle{\pr_{\groupoidtot{Y}{}}}$} (A2_2);
    \path (A0_0) edge [->]node [auto] {$\scriptstyle{\psi_0}$} (A0_2);
\end{tikzpicture}
\end{equation}

Such a map is defined by
$|\groupoidmaptot{\psi}{}|(\pr_{\groupoidtot{X}{}}(x_0)):=\pr_{\groupoidtot{Y}{}}
\circ\psi_0(x_0)$ for all
$x_0\in\groupoid{X}_0$. Then $|\groupoidmaptot{\psi}{}|$ is well-defined by definition of $\sim_{\groupoid{X}}$
and $\sim_{\groupoid{Y}}$. Since
$|\groupoidmaptot{\psi}{}|$ is the unique map making \eqref{eq-10} commute, then given any pair
of composable morphisms $\groupoidmaptot{\psi}{}$ and $\groupoidmaptot{\xi}{}$, we have
$|\groupoidmaptot{\xi}{}\circ\groupoidmaptot{\psi}{}|=
|\groupoidmaptot{\xi}{}|\circ|\groupoidmaptot{\psi}{}|$.\\

If we assume that $\groupoidtot{X}{}$ and $\groupoidtot{Y}{}$ are proper and \'etale, then
\emph{$\pr_{\groupoidtot{X}{}}$ and $\pr_{\groupoidtot{Y}{}}$ are
open maps} as a consequence of~\cite[Proposition~2.23]{Ler} and \emph{the induced map
$|\groupoidmaptot{\psi}{}|$ is continuous}. Indeed, for any
open set $A'\subseteq|\groupoidtot{Y}{}|$, we
have that $\pr_{\groupoidtot{X}{}}^{-1}(|\groupoidmaptot{\psi}{}|^{-1}(A'))=\psi_0^{-1}
((\pr_{\groupoidtot{Y}{}})^{-1}
(A'))$ is open in $\groupoid{X}_0$;
since $\pr_{\groupoidtot{X}{}}$ is surjective and open, then
$|\groupoidmaptot{\psi}{}|^{-1}(A')$ is
equal to the open set $\pr_{\groupoidtot{X}{}}(\pr_{\groupoidtot{X}{}}^{-1}
(|\groupoidmaptot{\psi}{}|^{-1}(A')))$ of $|\groupoidtot{X}{}|$.
\end{rem}

\begin{rem}\label{rem-04}
Let $\groupoidtot{X}{}$ be a proper \'etale groupoid and let us fix any pair of points
$x_0,x'_0\in\groupoid{X}_0$.
Since both $s$ and $t$ are \'etale, for every point $x_1$ in
$\groupoid{X}_1$ such that $s(x_1)=x_0$ and $t(x_1)=x'_0$,
we can find a sufficiently small open neighborhood $W_{x_1}$ of $x_1$ where
both $s$ and $t$ are invertible. Then we can define a set map

\[t\circ(s|_{W_{x_1}})^{-1}:\,s(W_{x_1})\longrightarrow t(W_{x_1}).\]

Such a map is actually a diffeomorphism from an open neighborhood of $x_0$ to an open neighborhood of
$x'_0$; moreover, it is easy to see that it commutes
with the projection $\pr_{\groupoidtot{X}{}}$. So for each pair of points $x_0,x'_0$ as above we
can define a set map:

\begin{gather}
\label{eq-40} \kappa_{\groupoidtot{X}{}}(x_0,x'_0,-):\{x_1\in\groupoid{X}_1\,\textrm{such that}\,s(x_1)=x_0\,\,
 \textrm{and}\,\,t(x_1)=x'_0\}\longrightarrow \\
\nonumber \{\germ_{x_0}f\,\,\,\,\forall\,\textrm{diffeomorphisms $f$ around $x_0$
s.\ t.}\,\,f(x_0)=x'_0\,\,\textrm{and}\,\,\pr_{\groupoidtot{X}{}}\circ f=\pr_{\groupoidtot{X}{}}\}
\end{gather}
by setting

\begin{equation}\label{eq-47}
\kappa_{\groupoidtot{X}{}}(x_0,x'_0,x_1):=\germ_{x_0}\left(t\circ(s|_{W_{x_1}})^{-1}\right)=
\germ_{x_1}t\cdot\left(\germ_{x_1}s\right)^{-1}.
\end{equation}

We claim that $\kappa_{\groupoidtot{X}{}}(x_0,x'_0,-)$ \emph{is surjective}.
For that, we have to consider $2$ cases separately;
if $\pr_{\groupoidtot{X}{}}(x_0)\neq\pr_{\groupoidtot{X}{}}(x'_0)$, then both the first and the second
set in \eqref{eq-40}
are empty, so $\kappa_{\groupoidtot{X}{}}(x_0,x'_0,-)$ is a bijection. If $\pr_{\groupoidtot{X}{}}(x_0)=
\pr_{\groupoidtot{X}{}}(x'_0)$, this means that there is a (in general non-unique) point $x_1\in
\groupoid{X}_1$, such that $s(x_1)=x_0$ and $t(x_1)=x'_0$. Let us fix any diffeomorphism
$f$ as in the second line of \eqref{eq-40}. Let us denote by $W_{x_1}$ any open neighborhood of $x_1$ such that
both $s$ and $t$ are invertible if restricted to such a set. Then the function

\[g:=s\circ(t|_{W_{x_1}})^{-1}\circ f|_{f^{-1}\circ t(W_{x_1})}\]

is a diffeomorphism around $x_0$, it fixes $x_0$ and it commutes with $\pr_{\groupoidtot{X}{}}$.
As a simple consequence of~\cite[Theorem 2.3]{N}, there is a (in general non-unique) point
$\tx_1$ in $\groupoid{X}_1$, such that $s(\tx_1)=x_0=t(\tx_1)$ and
$\kappa_{\groupoidtot{X}{}}(x_0,x_0,\tx_1)=\germ_{x_0}g$.
This implies that

\[\germ_{x_0}f=\kappa_{\groupoidtot{X}{}}(x_0,x'_0,x_1)\cdot\kappa_{\groupoidtot{X}{}}(x_0,x_0,\tx_1)
=\kappa_{\groupoidtot{X}{}}(x_0,x'_0,m(\tx_1,x_1)),\]
so we have proved that $\kappa_{\groupoidtot{X}{}}(x_0,x'_0,-)$ is surjective.
\end{rem}

\begin{defin}\label{def-09}
\cite[example~1.5]{M} Let us fix any proper, \'etale groupoid $\groupoidtot{X}{}$.
We say that $\groupoidtot{X}{}$
is \emph{effective} (or \emph{reduced}) if $\kappa_{\groupoidtot{X}{}}(x_0,x_0,-)$
is injective for all $x_0\in\groupoid{X}_0$.
\end{defin}

\begin{lem}\label{lem-21}
Let us fix any proper, effective, \'etale groupoid $\groupoidtot{X}{}$. Then the set map
$\kappa_{\groupoidtot{X}{}}(x_0,x'_0,-)$ is a bijection for every pair of points $x_0,x'_0$
in $\groupoid{X}_0$.
\end{lem}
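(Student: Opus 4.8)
The plan is to assemble the statement from two ingredients that are already at hand: surjectivity of $\kappa_{\groupoidtot{X}{}}(x_0,x'_0,-)$ is proved in Remark~\ref{rem-04}, while effectiveness (Definition~\ref{def-09}) grants injectivity \emph{only on the diagonal}, i.e.\ for $\kappa_{\groupoidtot{X}{}}(x_0,x_0,-)$. So the whole content to be supplied is injectivity of $\kappa_{\groupoidtot{X}{}}(x_0,x'_0,-)$ for arbitrary $x_0,x'_0$, and I would obtain it by reducing to the diagonal case via the groupoid inverse.

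First I would record the multiplicative behaviour of $\kappa$. From the formula \eqref{eq-47} one has $\kappa_{\groupoidtot{X}{}}(s(a),t(a),a)=\germ_a t\cdot(\germ_a s)^{-1}$, and the chain rule for germs together with (\hyperref[LG2]{LG2}) shows that for every composable pair $(a,b)\in\fiber{\groupoid{X}_1}{t}{s}{\groupoid{X}_1}$, writing $p:=s(a)$, $q:=t(a)=s(b)$ and $r:=t(b)$, one has
\[\kappa_{\groupoidtot{X}{}}(p,r,m(a,b))=\kappa_{\groupoidtot{X}{}}(q,r,b)\cdot\kappa_{\groupoidtot{X}{}}(p,q,a).\]
This is exactly the identity already used inside the surjectivity argument of Remark~\ref{rem-04}. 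Applying it to $b=i(a)$, together with (\hyperref[LG5]{LG5}) (so that $m(a,i(a))=e(s(a))$) and the evident fact that $\kappa_{\groupoidtot{X}{}}(x_0,x_0,e(x_0))=\germ_{x_0}\id$, yields the inverse law $\kappa_{\groupoidtot{X}{}}(t(a),s(a),i(a))=\kappa_{\groupoidtot{X}{}}(s(a),t(a),a)^{-1}$.

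With these two laws the injectivity is immediate. Suppose $x_1,x'_1\in\groupoid{X}_1$ both satisfy $s(x_1)=s(x'_1)=x_0$, $t(x_1)=t(x'_1)=x'_0$, and $\kappa_{\groupoidtot{X}{}}(x_0,x'_0,x_1)=\kappa_{\groupoidtot{X}{}}(x_0,x'_0,x'_1)$. Then $y_1:=m(x_1,i(x'_1))$ is defined (since $t(x_1)=x'_0=s(i(x'_1))$) and is an arrow from $x_0$ to $x_0$, and the two laws above give
\[\kappa_{\groupoidtot{X}{}}(x_0,x_0,y_1)=\kappa_{\groupoidtot{X}{}}(x_0,x'_0,x'_1)^{-1}\cdot\kappa_{\groupoidtot{X}{}}(x_0,x'_0,x_1)=\germ_{x_0}\id=\kappa_{\groupoidtot{X}{}}(x_0,x_0,e(x_0)).\]
Since $\groupoidtot{X}{}$ is effective, $\kappa_{\groupoidtot{X}{}}(x_0,x_0,-)$ is injective by Definition~\ref{def-09}, whence $y_1=e(x_0)$, i.e.\ $m(x_1,i(x'_1))=e(x_0)$. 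Finally I would cancel: multiplying this on the right by $x'_1$ and using associativity (\hyperref[LG3]{LG3}), the inverse law (\hyperref[LG5]{LG5}) and the unit laws (\hyperref[LG4]{LG4}),
\[x'_1=m(e(x_0),x'_1)=m(m(x_1,i(x'_1)),x'_1)=m(x_1,m(i(x'_1),x'_1))=m(x_1,e(x'_0))=x_1.\]
Together with the surjectivity recalled from Remark~\ref{rem-04}, this shows that $\kappa_{\groupoidtot{X}{}}(x_0,x'_0,-)$ is a bijection.

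The only genuinely delicate point is the first step, namely checking the multiplicative law and the inverse law for $\kappa$ directly from \eqref{eq-47}, where one must be careful with the order of germ composition; everything afterwards is a purely formal manipulation of the groupoid axioms. Since that multiplicative identity has effectively already been verified inside the surjectivity proof of Remark~\ref{rem-04}, I expect no real obstacle beyond the bookkeeping of composition orders.
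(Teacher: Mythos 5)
Your proposal is correct and follows essentially the same route as the paper's proof: given two arrows $x_1,\tx_1$ with the same $\kappa$-value, form $m(x_1,i(\tx_1))$, use the multiplicativity and inverse law of $\kappa$ (which the paper uses implicitly here and states explicitly in the proof of Lemma~\ref{lem-20}) to show its $\kappa$-value is $\germ_{x_0}\id=\kappa_{\groupoidtot{X}{}}(x_0,x_0,e(x_0))$, invoke effectiveness to conclude $m(x_1,i(\tx_1))=e(x_0)$, and cancel. The only difference is that you spell out the germ-level composition laws and the final cancellation via (\hyperref[LG3]{LG3})--(\hyperref[LG5]{LG5}), which the paper compresses into a single ``i.e.''.
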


\begin{proof}
Let us fix any pair of points $x_1,\tx_1\in\groupoid{X}_1$ such that $s(x_1)=x_0=s(\tx_1)$, 
$t(x_1)=x'_0=t(\tx_1)$ and $\kappa_{\groupoidtot{X}{}}(x_0,x'_0,x_1)
=\kappa_{\groupoidtot{X}{}}(x_0,x'_0,\tx_1)$.
Then:

\begin{gather*}
\kappa_{\groupoidtot{X}{}}(x_0,x_0,m(x_1,i(\tx_1)))
=\left(\kappa_{\groupoidtot{X}{}}(x_0,x'_0,\tx_1)\right)^{-1}
\cdot\kappa_{\groupoidtot{X}{}}(x_0,x'_0,x_1)= \\
=\germ_{x_0}\id=\kappa_{\groupoidtot{X}{}}(x_0,x_0,e(x_0));
\end{gather*}
since $\groupoidtot{X}{}$ is effective, this implies that $m(x_1,i(\tx_1))=e(x_0)$, i.e.\ that
$x_1=\tx_1$, so we have proved that $\kappa_{\groupoidtot{X}{}}(x_0,x'_0,-)$
is injective; we have already said above that
it is surjective (even without the hypothesis of effectiveness), so we conclude.
\end{proof}

\begin{defin}
We define the $2$-categories $\EGpd$, $\PEGpd$ and $\PEEGpd$ as the full $2$-subcategories of
$\LieGpd$ obtained by restricting the objects to \'etale groupoids, respectively to proper, \'etale Lie
groupoids, respectively to proper, effective, \'etale groupoids (morphisms and $2$-morphisms are
simply restricted according to that).
\end{defin} 

\begin{construction}\label{cons-04}
(adapted from~\cite[Construction~2.4]{Po} and from~\cite[\S~4.4]{Pr2}) Let us fix any reduced
orbifold atlas $\atlas{X}=\{\unifX{i}\}_{i\in I}$ of dimension $n$. Then we define
$\functor{F}^{\red}_0(\atlas{X}):=(\groupname{X}{})$ as the following Lie groupoid.

\begin{itemize}
 \item The manifold $\groupoid{X}_0$ is defined as $\coprod_{i\in I}\tX_i$, with the natural smooth
  structure given by the fact that each $\tX_i$ is an open subset of $\mathbb{R}^n$.
 \item As a set, we define

  \[\groupoid{X}_1:=\left\{\germ_{\tx_i}\lambda\quad\forall\,i\in I,\,\,\,\forall\,\lambda\in
  \change(\atlas{X},i,-),\,\,\,\forall\,\tx_i\in\dom\lambda\right\}.\]
  For each $i\in I$ and for each $\lambda\in\change(\atlas{X},i,-)$ we set

  \[\groupoid{X}_1(\lambda):=\left\{\germ_{\tx_i}\lambda\quad\forall\,\tx_i\in\dom\lambda\right\}
  \subseteq\groupoid{X}_1.\]
  Then the topological and differentiable structure on $\groupoid{X}_1$ are given by the germ topology
  and by the germ differentiable structure. In other terms, we choose as charts for $\groupoid{X}_1$
  all the bijections of the form:

  \begin{equation}\label{eq-06}
  \begin{array}{c c c c}
  \tau_{\lambda}: & \groupoid{X}_1(\lambda) & \rightarrow & \dom\lambda
    \subseteq\tX_i\subseteq\mathbb{R}^n \\
  & \germ_{\tx_i}\lambda & \mapsto & \tx_i
  \end{array}
  \end{equation}
  for each $i\in I$ and for each $\lambda\in\change(\atlas{X},i,-)$ (one needs only to show
  that any pair of charts $\tau_{\lambda},\tau_{\lambda'}$ are compatible on their common domain, but
  this is easy). Therefore, by construction any morphism of the form $\tau_{\lambda}$ is a
  diffeomorphism from $\groupoid{X}_1(\lambda)$ to $\dom\lambda$.
 \item The structure maps are defined as follows:

  \begin{gather*}
  s(\germ_{\tx_i}\lambda):=\tx_i,\quad t(\germ_{\tx_i}\lambda):=\lambda(\tx_i), \\
  m(\germ_{\tx_i}\lambda,\germ_{\lambda(\tx_i)}\lambda'):=\germ_{\lambda(\tx_i)}\lambda'\cdot
   \germ_{\tx_i}\lambda, \\
  i(\germ_{\tx_i}\lambda):=\germ_{\lambda(\tx_i)}\lambda^{-1},\quad e(\tx_i):=\germ_{\tx_i}
   \id_{\tX_i}.
  \end{gather*}
\end{itemize}

A direct check proves that $s$ and $t$ are both \'etale, that $m,e,i$ are smooth and that axioms
(\hyperref[LG1]{LG1}) -- (\hyperref[LG5]{LG5}) are satisfied, so $\functor{F}^{\red}_0(\atlas{X})$ is an
\'etale groupoid.
\end{construction}

\begin{lem}\label{lem-03}
For every reduced orbifold atlas $\atlas{X}$, the \'etale groupoid $\functor{F}^{\red}_0
(\atlas{X})$ is proper and effective, i.e.\ it belongs to $\PEEGpd$.
\end{lem}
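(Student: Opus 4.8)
The plan is to check separately the two properties that, together with \'etaleness, yield membership in $\PEEGpd$. We already know from Construction~\ref{cons-04} that $\functor{F}^{\red}_0(\atlas{X})=(\groupname{X}{})$ is \'etale, so it remains to prove that the set map $\kappa_{\groupoidtot{X}{}}(x_0,x_0,-)$ is injective for every $x_0\in\groupoid{X}_0$ (effectiveness, Definition~\ref{def-09}) and that $(s,t):\groupoid{X}_1\rightarrow\groupoid{X}_0\times\groupoid{X}_0$ is a proper map. Effectiveness is almost immediate from the construction, whereas properness is the real content of the statement; the main obstacle there will be a rigidity argument controlling the germs of changes of charts near a limit point, which is exactly where the finiteness of the groups $G_i$ enters.

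For effectiveness I would first describe the fibre of $(s,t)$ over a diagonal point. Fix $x_0\in\groupoid{X}_0$; since $\groupoid{X}_0=\coprod_{i\in I}\tX_i$ is a disjoint union, $x_0$ lies in a unique $\tX_i$, and an arrow with source and target both equal to $x_0$ is necessarily a germ $\germ_{x_0}\lambda$ with $\lambda\in\change(\atlas{X},i,i)$ and $\lambda(x_0)=x_0$. By part~(i) of the Lemma stated just after Definition~\ref{def-03} (which rests on \cite[Proposition~A.1]{MP}), such a $\lambda$ is the restriction of a unique $g\in G_i$, and $\lambda(x_0)=x_0$ forces $g\in\Stab(G_i,x_0)$. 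Using the charts $\tau_\lambda$ of \eqref{eq-06} one computes directly from \eqref{eq-47} that $\kappa_{\groupoidtot{X}{}}(x_0,x_0,\germ_{x_0}\lambda)=\germ_{x_0}g$, the germ of the diffeomorphism $g$ at $x_0$. Since the element of $\groupoid{X}_1$ we started with \emph{is} this germ, two arrows with the same image under $\kappa_{\groupoidtot{X}{}}(x_0,x_0,-)$ have the same germ at $x_0$ and hence coincide in $\groupoid{X}_1$; this yields injectivity, i.e.\ effectiveness.

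For properness I would use the net criterion, which avoids any discussion of whether $\groupoid{X}_1$ is Hausdorff: it suffices to prove that every net $(x_1^\alpha)$ in $\groupoid{X}_1$ for which $(s(x_1^\alpha),t(x_1^\alpha))$ converges in $\groupoid{X}_0\times\groupoid{X}_0$ admits a subnet converging in $\groupoid{X}_1$ (a standard argument then shows that $(s,t)^{-1}(K)$ is compact for every compact $K$, using only that compactness is equivalent to every net having a convergent subnet). Write $x_1^\alpha=\germ_{\tx_i^\alpha}\lambda^\alpha$ and suppose $\tx_i^\alpha\rightarrow x_0$ and $\lambda^\alpha(\tx_i^\alpha)\rightarrow x_0'$. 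Because $\groupoid{X}_0$ is a disjoint union of the $\tX_i$ and the target germ determines a codomain chart, I may pass to a tail where the source chart $i$ and the codomain chart $i'$ are fixed, with $x_0\in\tX_i$ and $x_0'\in\tX_{i'}$; passing to the limit in $\pi_i(\tx_i^\alpha)=\pi_{i'}(\lambda^\alpha(\tx_i^\alpha))$ gives $\pi_i(x_0)=\pi_{i'}(x_0')$.

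The heart of the proof is then the rigidity step. Since $\pi_i(x_0)=\pi_{i'}(x_0')$, compatibility of the charts (Definitions~\ref{def-14} and~\ref{def-01}) provides a change of charts $\mu\in\change(\atlas{X},i,i')$ with connected domain, $x_0\in\dom\mu$ and $\mu(x_0)=x_0'$. For $\alpha$ large one has $\tx_i^\alpha\in\dom\mu$ and $\lambda^\alpha(\tx_i^\alpha)\in\cod\mu$, so $\nu^\alpha:=\mu^{-1}\circ\lambda^\alpha$ is a change of charts of $(\tX_i,G_i,\pi_i)$ into itself defined near $\tx_i^\alpha$, with $\nu^\alpha(\tx_i^\alpha)\rightarrow\mu^{-1}(x_0')=x_0$. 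By part~(i) of the same Lemma, $\germ_{\tx_i^\alpha}\nu^\alpha=\germ_{\tx_i^\alpha}g^\alpha$ for a unique $g^\alpha\in G_i$, and here finiteness of $G_i$ is decisive: I may pass to a subnet along which $g^\alpha$ equals a constant $g$, and then $g(\tx_i^\alpha)=\nu^\alpha(\tx_i^\alpha)\rightarrow x_0$ together with $g(\tx_i^\alpha)\rightarrow g(x_0)$ forces $g\in\Stab(G_i,x_0)$. Setting $\rho:=\mu\circ g$ (a change of charts from $i$ to $i'$ with $x_0\in\dom\rho$ and $\rho(x_0)=x_0'$), along this subnet $\germ_{\tx_i^\alpha}\lambda^\alpha=\germ_{\tx_i^\alpha}\rho$, and since $\tx_i^\alpha\rightarrow x_0$ the germ chart $\tau_\rho$ shows that these converge to $\germ_{x_0}\rho\in\groupoid{X}_1$. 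This produces the required convergent subnet and completes the proof of properness. I expect this rigidity/finiteness step to be the only genuinely non-formal part; everything else (the reductions to fixed charts, the continuity of $\mu^{-1}$, and the description of basic open sets in the germ topology) is routine.
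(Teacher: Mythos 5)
Your proposal is correct and follows essentially the same route as the paper: the same direct computation $\kappa_{\groupoidtot{X}{}}(x_0,x_0,\germ_{x_0}\lambda)=\germ_{x_0}\lambda$ for effectiveness, and for properness the same rigidity argument that extracts a constant local group element along a subnet/subsequence using finiteness of the groups. The only (harmless) variations are that you use nets where the paper uses sequences, and that you compare $\lambda^{\alpha}$ with a fixed change of charts on the source side (via $\nu^{\alpha}=\mu^{-1}\circ\lambda^{\alpha}$ and the unique $g^{\alpha}\in G_i$ from part (i) of the lemma after Definition~\ref{def-03}), whereas the paper compares on the target side via~\cite[Lemma~A.1]{MP}, writing $\lambda^m|_{\tX^m}=g^m\circ\lambda|_{\tX^m}$ with $g^m\in G_{i'}$.
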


\begin{proof}
For each $x_0\in\groupoid{X}_0$ we have to prove that the map $\kappa_{\groupoidtot{X}{}}(x_0,x_0,-)$
described in Remark~\ref{rem-04} is injective. By definition of $\groupoid{X}_0$, each such $x_0$ is equal
to $\tx_i\in\tX_i$ for some $i\in I$. Moreover, any $x_1\in\groupoid{X}_1$ such that $s(x_1)=x_0=t(x_1)$
is necessarily equal to $\germ_{\tx_i}\lambda$ for some $\lambda\in\change(\atlas{X},i,i)$ with $\tx_i\in
\dom\lambda$ and $\lambda(\tx_i)=\tx_i$. For any such $\lambda$, we have

\[\kappa_{\groupoidtot{X}{}}(\tx_i,\tx_i,\germ_{\tx_i}\lambda)=\germ_{\tx_i}\left(t\circ
\left(s|_{\groupoid{X}_1(\lambda)}\right)^{-1}\right)=\germ_{\tx_i}\lambda,\]
so $\kappa_{\groupoidtot{X}{}}(\tx_i,\tx_i,-)$ is injective,
hence $\functor{F}^{\red}_0(\atlas{X})$ is effective. So we need only to prove that
$(s,t):\groupoid{X}_1\rightarrow\groupoid{X}_0\times\groupoid{X}_0$ is proper. Let us fix any compact
set $K\subseteq\groupoid{X}_0\times\groupoid{X}_0$ and let $\{q^m\}_{m\in\mathbb{N}}$ be any sequence
in $(s,t)^{-1}(K)\subseteq\groupoid{X}_1$. Up to passing to a
subsequence we can always assume that the sequence $(s,t)(q^m)\in K$ converges to some point
$(\tx_i,\overline{x}_{i'})\in\tX_i\times\tX_{i'}$. So there is $m_1$ such that for each $m>m_1$
we have $(s,t)
(q^m)\in\tX_i\times\tX_{i'}$, hence we can write $(s,t)(q^m)=:(\tx_i^m,\overline{x}_{i'}^m)$. By
definition of $\groupoid{X}_1$, for $m>m_1$ there is a (non-unique) change of charts
$\lambda^m$ from $\unifX{i}$ to $\unifX{i'}$, such that

\[\tx_i^m\in\dom\lambda^m,\quad q^m=\germ_{\tx_i}\lambda^m,\quad \lambda^m(\tx_i^m)=
\overline{x}_{i'}^m.\]

From this, we get

\[\pi_i(\tx_i^m)=\pi_{i'}\circ\lambda^m(\tx_i^m)=\pi_{i'}(\overline{x}_{i'}^m)\quad\forall
m>m_1.\]

By considering the limit for $m\rightarrow\infty$, we get that $\pi_i(\tx_i)=\pi_{i'}
(\overline{x}_{i'})$. Since $\atlas{X}$ is a reduced orbifold atlas, then there exists a change
of charts $\lambda$ from $\unifX{i}$ to $\unifX{i'}$, such that $\tx_i\in\dom\lambda$.
So there is $m_2\geq m_1$, such that for all $m>m_2$
we have $\tx_i^m\in\dom\lambda$. Then for each such $m$ there exists a chart
$(\tX^m,G^m,\pi_i|_{\tX^m})$ such that $\tx_i^m\in\tX^m\subseteq\dom\lambda\cap\dom\lambda^m
\subseteq\tX_i$. For each $m>m_2$ we have that both $\lambda$ and $\lambda^m$ (suitably
restricted) can be considered as embeddings from $(\tX^m,G^m,\pi_i|_{\tX^m})$ to $\unifX{i'}$. So
by~\cite[Lemma~A.1]{MP} there exists a unique $g^m\in G_{i'}$ such that $\lambda^m|_{\tX^m}=g^m
\circ\lambda|_{\tX^m}$. Since $G_{i'}$ is a finite set, then after passing to a subsequence we can
assume that $g^m$ is the same for all $m>m_2$; we denote such a map by $g$. Then by definition of
the differentiable structure on $\groupoid{X}_1$ we have

\[\lim_{m\rightarrow+\infty}q^m=\lim_{m\rightarrow+\infty}\germ_{\tx_i^m}\lambda^m=\lim_{m
\rightarrow+\infty}\germ_{\tx_i^m}g\circ\lambda=\germ_{\tx_i}g\circ\lambda.\]

So this proves that $(s,t)^{-1}(K)$ is compact, so $(s,t)$ is proper.
\end{proof}

Until now we have associated to each object of $\RedAtl$ an object of $\PEEGpd$; we want to do the
same for morphisms and $2$-morphisms.

\begin{construction}\label{cons-05}
(adapted from~\cite[Proposition~4.7]{Po}) Let us fix any pair of reduced orbifold atlases
$\atlas{X}:=\{\unifX{i}\}_{i\in I}$ and $\atlas{Y}:=\{\unifY{j}\}_{j\in J}$ for $X$ and $Y$
respectively and any morphism $\atlasmap{f}{}:\atlas{X}\rightarrow\atlas{Y}$ with representative
given by

\[\atlasmaprep{f}{}:=\left(f,\overf,\left\{\tf_i\right\}_{i\in I},P_f,\nu_f\right).\]

We set

\[\functor{F}^{\red}_0(\atlas{X})=:\left(\groupname{X}{}\right)\quad\textrm{and}\quad
\functor{F}^{\red}_0(\atlas{Y})=:\left(\groupname{Y}{}\right),\]
where

\begin{gather*}
\groupoid{X}_0:=\coprod_{i\in I}\tX_i,\quad\groupoid{X}_1:=\Big\{\germ_{\tx_i}\lambda\quad\forall\,i
 \in I,\,\,\,\forall\,\lambda\in\change(\atlas{X},i,-),\,\,\,\forall\,\tx_i\in\dom\lambda\Big\}, \\
\groupoid{Y}_0:=\coprod_{j\in J}\tY_j,\quad\groupoid{Y}_1:=\Big\{\germ_{\ty_j}\omega\quad\forall\,j
 \in J,\,\,\,\forall\,\omega\in\change(\atlas{Y},j,-),\,\,\,\forall\,\ty_j\in\dom\omega\Big\}.
\end{gather*}

Then we define a set map $\psi_0:\groupoid{X}_0\rightarrow\groupoid{Y}_0$ as

\[\left.\psi_0\right|_{\tX_i}:=\tf_i:\tX_i\longrightarrow\tY_{\overf(i)}\subseteq\groupoid{Y}_0\]
for all $i\in I$. Now let $x_1$ be any point in $\groupoid{X}_1$ and let $\tx_i:=s(x_1)\in\tX_i$ for
some $i\in I$. Since $P_f$ is a good subset of $\change(\atlas{X})$, then there is a (non-unique)
$\lambda\in P_f(i,-)$ such that $x_1=\germ_{\tx_i}\lambda$. We set

\begin{equation}\label{eq-29}
\psi_1(x_1):=\germ_{\tf_i(\tx_i)}\nu_f(\lambda)\in\groupoid{Y}_1.
\end{equation}

If $\lambda'$ is another element of $P_f(i,-)$ such that $x_1=\germ_{\tx_i}\lambda'$, then property
(\hyperref[M5d]{M5d}) for $\atlasmaprep{f}{}$ proves that $\germ_{\tf_i(\tx_i)}\nu_f(\lambda)=
\germ_{\tf_i(\tx_i)}\nu_f(\lambda')$, so $\psi_1$ is
a well-defined set map from $\groupoid{X}_1$ to $\groupoid{Y}_1$. A direct check proves that both
$\psi_0$ and $\psi_1$ are smooth and that the pair $\groupoidmap{\psi}{}$ satisfies
Definition~\ref{def-07}, so it is a morphism of Lie groupoids
from $\functor{F}^{\red}_0(\atlas{X})$ to
$\functor{F}^{\red}_0(\atlas{Y})$. Now let us suppose that 

\[\atlasmaprep{f}{\prime}:=\left(f,\overf,\left\{\tf_i\right\}_{i\in I},P_f',\nu_f'\right)\]
is another representative for $\atlasmap{f}{}$. Then by Definition~\ref{def-03} we get that the
morphism from $\functor{F}^{\red}_0(\atlas{X})$ to $\functor{F}^{\red}_0(\atlas{Y})$ associated to
$\atlasmaprep{f}{\prime}$ coincides with $\groupoidmap{\psi}{}$. Therefore it makes sense to set

\[\functor{F}^{\red}_1(\atlasmap{f}{}):=\groupoidmap{\psi}{}:\,\functor{F}^{\red}_0(\atlas{X})
\longrightarrow\functor{F}^{\red}_0(\atlas{Y}).\]
\end{construction}

\begin{construction}
Now let us fix any pair of atlases $\atlas{X}$ and $\atlas{Y}$ for $X$ and $Y$ respectively
and any pair of morphisms $\atlasmap{f}{1},\atlasmap{f}{2}:\atlas{X}\rightarrow\atlas{Y}$ over a
continuous function $f:X\rightarrow Y$, with representatives

\[\atlasmaprep{f}{m}:=\left(f,\overf^m,\left\{\tf_i^m\right\}_{i\in I},P_{f^m},\nu_{f^m}\right)\quad
\textrm{for}\,\,m=1,2.\]

Let us also fix any $2$-morphism $[\delta]:\atlasmap{f}{1}\Rightarrow \atlasmap{f}{2}$ and any
representative

\[\delta:=\left\{\left(\tX_i^a,\delta_i^a\right)\right\}_{i\in I,\,a\in A(i)}\]
for it. Let us set

\begin{gather*}
\functor{F}^{\red}_0(\atlas{X})=:\left(\groupname{X}{}\right),\quad\functor{F}^{\red}_0(\atlas{Y})=:
 \left(\groupname{Y}{}\right), \\
\functor{F}^{\red}_1(\atlasmap{f}{m})=:\groupoidmap{\psi}{m}\quad\textrm{for}\,\,m=1,2. 
\end{gather*}

Then let us define a set map $\underline{\delta}:\groupoid{X}_0=\coprod_{i\in I}\tX_i\rightarrow
\groupoid{Y}_1$ as

\[\underline{\delta}(\tx_i):=\germ_{\tf_i^1(\tx_i)}\delta_i^a\]
for every $i\in I$, for every $a\in A$ and for every $\tx_i\in\tX_i^a$; this is well-defined by
property (\hyperref[2Md]{2Md}) for $\delta$. We claim that $\underline{\delta}$ is a natural
transformation from
$\groupoidmap{\psi}{1}$ to $\groupoidmap{\psi}{2}$. Clearly $\underline{\delta}$ is smooth, indeed on
each open subset of $\groupoid{X}_0$
of the form $\tX_i^a$ we have that $\underline{\delta}$ coincides with the
composition of $\tf_i^1$ (that is smooth by definition of local lift) and of the inverse of the chart
$\tau_{\delta_i^a}$ for $\groupoid{Y}_1$ (see \eqref{eq-06}). Moreover, let us fix any $i\in I$, any
$a\in A(i)$ and any $\tx_i\in\tX_i^a$. Then

\begin{gather*}
s\circ\underline{\delta}(\tx_i)=s\left(\germ_{\tf_i^1(\tx_i)}\delta_i^a\right)=\tf^1_i(\tx_i)
 =\psi^1_0(\tx_i), \\
t\circ\underline{\delta}(\tx_i)=t\left(\germ_{\tf_i^1(\tx_i)}\delta_i^a\right)=\delta_i^a\circ
 \tf^1_i(\tx_i)\stackrel{(\hyperref[2Mc]{2Mc})}{=}\tf^2_i(\tx_i)=\psi^2_0(\tx_i),
\end{gather*}
so $\underline{\delta}$ satisfies axiom (\hyperref[NT1]{NT1}). Now let us fix any point $x_1\in
\groupoid{X}_1$ and let us set $\tx_i:=s(x_1)$, $\overline{x}_{i'}:=t(x_1)$ for a unique pair $(i,i')
\in I\times I$. Since both $P_{f^1}$ and $P_{f^2}$ are good subsets of $\change(\atlas{X})$, then
for each $m=1,2$ there exists $\lambda^m\in P_{f^m}(i,i')$ such that $\germ_{\tx_i}\lambda^m=x_1$. By
property (\hyperref[2Ma]{2Ma}) there exist $a\in A(i)$ and $a'\in A(i')$ such that $\tx_i\in\tX_i^a$ and
$\overline{x}_{i'}\in\tX_{i'}^{a'}$. Then:

\begin{gather*}
\left(m\circ\left(\underline{\delta}\circ s,\psi^2_1\right)\right)(x_1)=m\left(\underline{\delta}
 (\tx_i),\psi^2_1\left(\germ_{\tx_i}\lambda^2\right)\right)\stackrel{\eqref{eq-29}}{=} \\
\stackrel{\eqref{eq-29}}{=}
 m\left(\germ_{\tf^1_i(\tx_i)}\delta_i^a,\germ_{\tf^2_i(\tx_i)}\nu_f^2(\lambda^2)\right)=
 \germ_{\tf_i^2(\tx_i)}\nu_f^2(\lambda^2)\cdot\germ_{\tf_i^1(\tx_i)}\delta_i^a
 \stackrel{(\hyperref[2Meprime]{2Me})'}{=} \\
\stackrel{(\hyperref[2Meprime]{2Me})'}{=}\germ_{\tf_{i'}^1(\lambda^1(\tx_i))}\delta_{i'}^{a'}\cdot
 \germ_{\tf_i^1(\tx_i)}\nu_f^1(\lambda^1)\stackrel{\eqref{eq-29}}{=}
 m\left(\psi^1_1\left(\germ_{\tx_i}\lambda^1\right),
 \underline{\delta}\left(\lambda^1(\tx_i)\right)\right)= \\
=\left(m\circ\left(\psi^1_1,\underline{\delta}\circ t\right)\right)(x_1). 
\end{gather*}

So $\underline{\delta}$ satisfies also axiom (\hyperref[NT2]{NT2}); therefore $\underline{\delta}$ is a
natural transformation from $\functor{F}^{\red}_1(\atlasmap{f}{1})$ to $\functor{F}^{\red}_1(\atlasmap{f}{2})$.
By Definition~\ref{def-06} we get that $\underline{\delta}$ depends only on $[\delta]$ and not on the
representative $\delta$ chosen for that class. So it makes sense to set:

\[\functor{F}^{\red}_2([\delta]):=\underline{\delta}:\,\functor{F}^{\red}_1(\atlasmap{f}{1})
\Longrightarrow\functor{F}^{\red}_1(\atlasmap{f}{2}).\]
\end{construction}

A direct check proves that:

\begin{lem}\label{lem-04}
For every pair of composable morphisms $\atlasmap{f}{}:\atlas{X}\rightarrow\atlas{Y}$,
$\atlasmap{g}{}:
\atlas{Y}\rightarrow\atlas{Z}$ we have $\functor{F}^{\red}_1(\atlasmap{g}{}\circ\atlasmap{f}{})=
\functor{F}^{\red}_1(\atlasmap{g}{})\circ\functor{F}^{\red}_1(\atlasmap{f}{})$. For every
reduced orbifold atlas $\atlas{X}$ we have
$\functor{F}^{\red}_1(\id_{\atlas{X}})=\id_{\functor{F}^{\red}_0(\atlas{X})}$; for every morphism
$\atlasmap{f}{}$ between reduced orbifold atlases we have $\functor{F}^{\red}_2(i_{\atlasmap{f}{}})=
i_{\functor{F}^{\red}_1(\atlasmap{f}{})}$.
\end{lem}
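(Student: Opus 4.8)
The plan is to verify each of the three identities by unwinding the explicit formulas of Constructions~\ref{cons-04}, \ref{cons-05} and of the assignment $\functor{F}^{\red}_2$, and then checking that the underlying smooth maps on objects and on arrows agree. Throughout I will freely replace a morphism by a convenient representative of its equivalence class $[P,\nu]$; this is legitimate because Construction~\ref{cons-05} already establishes that $\functor{F}^{\red}_1$ is independent of the chosen representative. Write $\functor{F}^{\red}_1(\atlasmap{f}{})=:\groupoidmap{\phi}{}$, $\functor{F}^{\red}_1(\atlasmap{g}{})=:\groupoidmap{\psi}{}$ and $\functor{F}^{\red}_1(\atlasmap{g}{}\circ\atlasmap{f}{})=:\groupoidmap{\chi}{}$. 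The two easy identities come first. For $\functor{F}^{\red}_1(\id_{\atlas{X}})$, the canonical representative $(\id_X,\id_I,\{\id_{\tX_i}\}_{i\in I},\change(\atlas{X}),\nu_{\id})$ gives object component $\id_{\tX_i}$ on each $\tX_i$, hence $\id_{\groupoid{X}_0}$, while on arrows $\germ_{\tx_i}\lambda\mapsto\germ_{\tx_i}\nu_{\id}(\lambda)=\germ_{\tx_i}\lambda$, hence $\id_{\groupoid{X}_1}$; thus $\functor{F}^{\red}_1(\id_{\atlas{X}})=\id_{\functor{F}^{\red}_0(\atlas{X})}$. For the object component of the composition, the composition formula of Construction~\ref{cons-01} gives $\chi_0|_{\tX_i}=\tg_{\overf(i)}\circ\tf_i$, and since $\tf_i(\tx_i)\in\tY_{\overf(i)}$ we have $(\psi_0\circ\phi_0)|_{\tX_i}=\psi_0\circ\tf_i=\tg_{\overf(i)}\circ\tf_i$, so $\chi_0=\psi_0\circ\phi_0$.

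The heart of the argument is the arrow component $\chi_1=\psi_1\circ\phi_1$. The key idea is to compute $\phi_1$ using the representative of $\atlasmap{f}{}$ that was tailored to the composition, namely $(P_{g\circ f},\nu_f^{\ind})$, which lies in $[P_f,\nu_f]$ by the verification made in Construction~\ref{cons-01}. With this choice, take any arrow $\germ_{\tx_i}\lambda'\in\groupoid{X}_1$ with $\lambda'\in P_{g\circ f}(i,-)$, say $\lambda'=\lambda|_{\tX_{\overline{x}_i^e,\lambda}}$ for the distinguished pair $(\lambda,e)$ fixed in the construction. Then $\phi_1(\germ_{\tx_i}\lambda')=\germ_{\tf_i(\tx_i)}\nu_f^{\ind}(\lambda')=\germ_{\tf_i(\tx_i)}\omega_{\tf_i(\overline{x}_i^e),\nu_f(\lambda)}$, and since $\omega_{\tf_i(\overline{x}_i^e),\nu_f(\lambda)}\in P_g(\overf(i),-)$, applying $\psi_1$ yields $\germ_{\tg_{\overf(i)}(\tf_i(\tx_i))}\nu_g(\omega_{\tf_i(\overline{x}_i^e),\nu_f(\lambda)})$. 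On the other hand, by the defining relation $\nu_{g\circ f}=\nu_g\circ\nu_f^{\ind}$ of Construction~\ref{cons-01}, we have $\chi_1(\germ_{\tx_i}\lambda')=\germ_{\tg_{\overf(i)}\circ\tf_i(\tx_i)}\nu_{g\circ f}(\lambda')=\germ_{\tg_{\overf(i)}(\tf_i(\tx_i))}\nu_g(\omega_{\tf_i(\overline{x}_i^e),\nu_f(\lambda)})$. These two germs literally coincide, and since every arrow of $\groupoid{X}_1$ can be written in the above form (because $P_{g\circ f}$ is good), this gives $\chi_1=\psi_1\circ\phi_1$.

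Finally, for the $2$-identity $i_{\atlasmap{f}{}}$, represented by $\delta=\{(\tX_i,\id_{\tY_{\overf(i)}})\}_{i\in I}$, the induced natural transformation $\functor{F}^{\red}_2(i_{\atlasmap{f}{}})=\underline{\delta}$ sends $\tx_i\mapsto\germ_{\tf_i(\tx_i)}\id_{\tY_{\overf(i)}}$. On the other hand, the $2$-identity of the morphism $\groupoidmap{\phi}{}$ in $\PEEGpd$ is the natural transformation $e\circ\phi_0:\groupoid{X}_0\rightarrow\groupoid{Y}_1$ (one checks (\hyperref[NT1]{NT1}) from (\hyperref[LG1]{LG1}) and (\hyperref[NT2]{NT2}) from the unit axiom (\hyperref[LG4]{LG4})); for $\tx_i\in\tX_i$ it sends $\tx_i\mapsto e(\tf_i(\tx_i))=\germ_{\tf_i(\tx_i)}\id_{\tY_{\overf(i)}}$, which matches $\underline{\delta}$ exactly. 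Hence $\functor{F}^{\red}_2(i_{\atlasmap{f}{}})=i_{\functor{F}^{\red}_1(\atlasmap{f}{})}$. I expect the only genuinely delicate point to be the bookkeeping in the middle paragraph: rather than trying to compare $\nu_f$ and $\nu_g$ directly, one should exploit that the composition of morphisms was \emph{defined} precisely so that $\nu_{g\circ f}$ factors as $\nu_g\circ\nu_f^{\ind}$. Choosing the matching representative $(P_{g\circ f},\nu_f^{\ind})$ for $\atlasmap{f}{}$ is exactly what makes the two germs identical on the nose, instead of merely equal through the well-definedness relation (\hyperref[M5d]{M5d}).
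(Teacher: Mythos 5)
Your proposal is correct, and it is precisely the ``direct check'' that the paper leaves unwritten before Lemma~\ref{lem-04}: Construction~\ref{cons-01} records exactly the two facts you exploit, namely that $(P_{g\circ f},\nu_f^{\ind})\in[P_f,\nu_f]$ and that $\nu_{g\circ f}=\nu_g\circ\nu_f^{\ind}$, and your choice of this matching representative (legitimate since Construction~\ref{cons-05} shows $\functor{F}^{\red}_1$ depends only on the class $[P_f,\nu_f]$) makes the germs agree on the nose. The verifications for $\id_{\atlas{X}}$ and for $i_{\atlasmap{f}{}}$ via $e\circ\phi_0$ are likewise the intended unwinding of Construction~\ref{cons-04}, so nothing further is needed.
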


\begin{lem}\label{lem-05}
Let us fix any diagram in $\RedAtl$ as follows:

\[\begin{tikzpicture}[scale=0.8]
    \def\x{2.5}
    \def\y{-1.2}
    \node (A0_0) at (0*\x, 0*\y) {$\atlas{X}$};
    \node (B0_1) at (0.6*\x, -0.3) {$\scriptstyle{\atlasmap{f}{2}}$};
    \node (A0_2) at (2*\x, 0*\y) {$\atlas{Y}$.};
    
    \node (A0_1) at (1.1*\x, 0.6) {$\Downarrow[\delta]$};
    \node (A0_1) at (1.1*\x, -0.6) {$\Downarrow[\sigma]$};
    
    \path (A0_0) edge [->,bend left=50] node [auto] {$\scriptstyle{\atlasmap{f}{1}}$} (A0_2);
    \path (A0_0) edge [->] node [auto,swap] {} (A0_2);
    \path (A0_0) edge [->,bend right=50] node [auto,swap] {$\scriptstyle{\atlasmap{f}{3}}$} (A0_2);
\end{tikzpicture}\]

Then $\functor{F}^{\red}_2([\sigma]\odot[\delta])=\functor{F}^{\red}_2([\sigma])\odot
\functor{F}^{\red}_2([\delta])$.
\end{lem}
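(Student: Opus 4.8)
The plan is to verify the identity pointwise on $\groupoid{X}_0$, reducing everything to a single germ computation in the groupoid $\functor{F}^{\red}_0(\atlas{Y})$. Since the construction preceding this lemma shows that $\functor{F}^{\red}_2$ depends only on the class of a $2$-morphism, I am free to evaluate both sides on any convenient representatives. Accordingly I would take the representatives $\sigma=\{(\tX_i^b,\sigma_i^b)\}_{i,b}$, $\delta=\{(\tX_i^a,\delta_i^a)\}_{i,a}$ and their vertical composite $\theta=\{(\tX_i^{a,b},\theta_i^{a,b})\}$ produced in Construction~\ref{cons-02}, where $\tX_i^{a,b}=\tX_i^a\cap\tX_i^b$ and $\theta_i^{a,b}=\sigma_i^b\circ\delta_i^a|_{\tY_i^{a,b}}$; then by definition $\functor{F}^{\red}_2([\sigma]\odot[\delta])=\functor{F}^{\red}_2([\theta])=:\underline{\theta}$.

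Next I would write out the three natural transformations involved. By the construction of $\functor{F}^{\red}_2$ we have $\underline{\delta}:=\functor{F}^{\red}_2([\delta])$ given by $\underline{\delta}(\tx_i)=\germ_{\tf_i^1(\tx_i)}\delta_i^a$ for $\tx_i\in\tX_i^a$, and $\underline{\sigma}:=\functor{F}^{\red}_2([\sigma])$ given by $\underline{\sigma}(\tx_i)=\germ_{\tf_i^2(\tx_i)}\sigma_i^b$ for $\tx_i\in\tX_i^b$ (here the $\tf_i^2$ are the local lifts of the intermediate morphism $\atlasmap{f}{2}$, which is the source of $[\sigma]$), while $\underline{\theta}(\tx_i)=\germ_{\tf_i^1(\tx_i)}\theta_i^{a,b}$ for $\tx_i\in\tX_i^{a,b}$.

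I would then recall that, exactly as in the $2$-category of small categories, the vertical composition of natural transformations in $\PEEGpd$ is $(\underline{\sigma}\odot\underline{\delta})(x_0)=m\big(\underline{\delta}(x_0),\underline{\sigma}(x_0)\big)$, the arrow obtained by composing the two components at $x_0$. Fixing $\tx_i\in\tX_i^{a,b}$, the composability condition $t(\underline{\delta}(\tx_i))=s(\underline{\sigma}(\tx_i))$ is satisfied: indeed $t(\germ_{\tf_i^1(\tx_i)}\delta_i^a)=\delta_i^a(\tf_i^1(\tx_i))=\tf_i^2(\tx_i)$ by (\hyperref[2Mc]{2Mc}) applied to $\delta$, whereas $s(\germ_{\tf_i^2(\tx_i)}\sigma_i^b)=\tf_i^2(\tx_i)$. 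Applying the multiplication formula $m(\germ_{\ty_j}\omega,\germ_{\omega(\ty_j)}\omega')=\germ_{\ty_j}(\omega'\circ\omega)$ of $\functor{F}^{\red}_0(\atlas{Y})$, I obtain $(\underline{\sigma}\odot\underline{\delta})(\tx_i)=\germ_{\tf_i^1(\tx_i)}(\sigma_i^b\circ\delta_i^a)$. Since germs are unaffected by restriction to an open neighborhood of the base point, this is precisely $\germ_{\tf_i^1(\tx_i)}\theta_i^{a,b}=\underline{\theta}(\tx_i)$, and the two natural transformations agree at every point of $\groupoid{X}_0$.

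The computation is direct, so I do not expect a genuine obstacle; the only care needed is bookkeeping. First, one must check that $\tf_i^1(\tx_i)$ really lies in $\tY_i^{a,b}=(\delta_i^a)^{-1}(\cod\delta_i^a\cap\dom\sigma_i^b)$ for $\tx_i\in\tX_i^{a,b}$, so that the germ of $\theta_i^{a,b}$ there is meaningful; this follows from (\hyperref[2Mb]{2Mb}) and (\hyperref[2Mc]{2Mc}) applied to $\delta$ and to $\sigma$. Second, one must align the direction convention of the groupoid multiplication $m$ (first factor applied first) with the left-to-right convention for vertical composition of $2$-morphisms; once this is fixed, the two sides coincide on the nose.
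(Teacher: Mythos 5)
Your proposal is correct and follows essentially the same route as the paper's proof: both evaluate pointwise on $\groupoid{X}_0$ with the representatives adapted as in Construction~\ref{cons-02}, use the formula $(\underline{\sigma}\odot\underline{\delta})(\tx_i)=m(\underline{\delta}(\tx_i),\underline{\sigma}(\tx_i))$ for vertical composition in $\LieGpd$, and apply the multiplication of $\functor{F}^{\red}_0(\atlas{Y})$ to obtain $\germ_{\tf_i^1(\tx_i)}\sigma_i^b\circ\delta_i^a|_{\tY_i^{a,b}}=\underline{\theta}(\tx_i)$. Your additional bookkeeping check that $\tf_i^1(\tx_i)$ lies in $\tY_i^{a,b}$, via (\hyperref[2Mb]{2Mb}) and (\hyperref[2Mc]{2Mc}), is left implicit in the paper but is precisely what makes its displayed germ computation well-defined.
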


\begin{proof}
Let us set representatives

\[\atlasmaprep{f}{m}:=\left(f,\overf^m,\left\{\tf_i^m\right\}_{i\in I},P_{f^m},\nu_{f^m}\right)\quad
\textrm{for}\quad m=1,2,3\]
for each $\atlasmap{f}{m}$ and representatives

\[\delta=\left\{\left(\tX_i^a,\delta_i^a\right)\right\}_{i\in I,\,a\in A(i)}\quad\textrm{and}\quad
\sigma=\left\{\left(\tX_i^b,\sigma_i^b\right)\right\}_{i\in I,\,b\in B(i)}\]
for $[\delta]$ and $[\sigma]$ respectively, as in Construction~\ref{cons-02}. Then let us set
$\functor{F}^{\red}_2([\delta])=:\underline{\delta}$ and $\functor{F}^{\red}_2([\sigma])=:
\underline{\sigma}$; let us fix any $i\in I$, any $(a,b)\in A(i)\times B(i)$ and any $\tx_i\in
\tX_i^{a,b}:=\tX_i^a\cap\tX_i^b$. Then

\begin{gather*}
\Big(\functor{F}^{\red}_2\left(\left[\sigma\right]\right)\odot\functor{F}^{\red}_2\left(
 \left[\delta\right]\right)\Big)(\tx_i)=\left(\underline{\sigma}\odot\underline{\delta}\right)
 (\tx_i)\stackrel{(\ast)}{=}m\left(\underline{\delta}(\tx_i),\underline{\sigma}(\tx_i)\right)= \\
=m\left(\germ_{\tf_i^1(\tx_i)}\delta_i^a,\germ_{\tf_i^2(\tx_i)}\sigma_i^b\right)=
 \germ_{\tf_i^2(\tx_i)}\sigma_i^b\cdot\germ_{\tf_i^1(\tx_i)}\delta_i^a= \\
=\germ_{\tf_i^1(\tx_i)}\sigma_i^b\circ\delta_i^a|_{\tY_i^{a,b}}=\functor{F}^{\red}_2\Big(\left[
 \sigma\right]\odot\left[\delta\right]\Big)(\tx_i),
\end{gather*}
where $\tY_i^{a,b}$ is defined as in \eqref{eq-05} and $(\ast)$ is the definition of
vertical composition of $2$-morphisms in $\LieGpd$, hence also in $\PEEGpd$.
\end{proof}

\begin{lem}\label{lem-06}
Let us fix any diagram in $\RedAtl$ as follows:

\[\begin{tikzpicture}[scale=0.8]
    \def\x{2.0}
    \def\y{-1.2}
    \node (A0_0) at (0*\x, 0*\y) {$\atlas{X}$};
    \node (A0_2) at (2*\x, 0*\y) {$\atlas{Y}$};
    \node (A0_4) at (4*\x, 0*\y) {$\atlas{Z}$.};

    \node (A0_3) at (3*\x, 0*\y) {$\Downarrow[\xi]$};
    \node (A0_1) at (1*\x, 0*\y) {$\Downarrow[\delta]$};
        
    \path (A0_2) edge [->,bend left=25] node [auto] {$\scriptstyle{\atlasmap{g}{1}}$} (A0_4);
    \path (A0_2) edge [->,bend right=25] node [auto,swap] {$\scriptstyle{\atlasmap{g}{2}}$} (A0_4);
    \path (A0_0) edge [->,bend left=25] node [auto] {$\scriptstyle{\atlasmap{f}{1}}$} (A0_2);
    \path (A0_0) edge [->,bend right=25] node [auto,swap] {$\scriptstyle{\atlasmap{f}{2}}$} (A0_2);
\end{tikzpicture}\]

Then $\functor{F}^{\red}_2([\xi]\ast[\delta])=\functor{F}^{\red}_2([\xi])\ast
\functor{F}^{\red}_2([\delta])$.
\end{lem}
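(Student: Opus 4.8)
The plan is to repeat the pointwise argument used for Lemma~\ref{lem-05}. A $2$-morphism in $\PEEGpd$ is simply a smooth map $\groupoid{X}_0=\coprod_{i\in I}\tX_i\rightarrow\groupoid{Z}_1$, so it is enough to show that the two natural transformations $\functor{F}^{\red}_2([\xi]\ast[\delta])$ and $\functor{F}^{\red}_2([\xi])\ast\functor{F}^{\red}_2([\delta])$ take the same value on every $\tx_i\in\tX_i$. I set $\functor{F}^{\red}_1(\atlasmap{f}{m})=:\groupoidmap{\psi}{m}$, $\functor{F}^{\red}_1(\atlasmap{g}{m})=:\groupoidmap{\phi}{m}$ for $m=1,2$, and $\underline{\delta}:=\functor{F}^{\red}_2([\delta])$, $\underline{\xi}:=\functor{F}^{\red}_2([\xi])$. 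Since $\functor{F}^{\red}_2$ depends only on the class of a $2$-morphism (Definition~\ref{def-06}), I am free to compute $\underline{\delta}$ with the particular representative $\delta=\{(\tX_i^a,\delta_i^a)\}$ constructed in Construction~\ref{cons-03}, in which every $\delta_i^a$ is the restriction of some $\widetilde{\delta}_i^a\in P_{g^1}$, and to compute the left-hand side with the explicit representative $\gamma=\{(\tX_i^{a,c,e},\gamma_i^{a,c,e})\}$ produced there, where $\gamma_i^{a,c,e}$ is the restriction of $\xi^c_{\overf^2(i)}\circ\nu^1_g(\widetilde{\delta}_i^a)$.

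Now fix $i\in I$ and $\tx_i\in\tX_i$. I would choose the indices carefully: pick $a$ with $\tx_i\in\tX_i^a$, then $c\in C(\overf^2(i))$ with $\tf^2_i(\tx_i)\in\tY^c_{\overf^2(i)}$ (possible because these patches cover $\tY_{\overf^2(i)}$ by (\hyperref[2Ma]{2Ma}) for $\xi$), and finally $e$ with $\tx_i\in\tX_i^{a,c,e}$ (such $e$ exists, since for this $c$ one checks $\tx_i\in\tX_i^{a,c}$, which is covered by the $\tX_i^{a,c,e}$). Writing $z:=\tg^1_{\overf^1(i)}(\tf^1_i(\tx_i))$ and recalling that the first morphism of $[\gamma]$ is $\atlasmap{g}{1}\circ\atlasmap{f}{1}$, whose local lifts are the maps $\tg^1_{\overf^1(i)}\circ\tf^1_i$ (Construction~\ref{cons-01}), the defining formula for $\functor{F}^{\red}_2$ together with $z\in\tZ_i^{a,c,e}$ gives
\[\functor{F}^{\red}_2([\xi]\ast[\delta])(\tx_i)=\germ_z\gamma_i^{a,c,e}=\germ_z\Big(\xi^c_{\overf^2(i)}\circ\nu^1_g(\widetilde{\delta}_i^a)\Big),\]
and by (\hyperref[2Md]{2Md}) for $\gamma$ this value does not depend on the patch used.

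For the right-hand side I use the definition of horizontal composition of natural transformations in $\LieGpd$ (hence in $\PEEGpd$), i.e.\ $(\underline{\xi}\ast\underline{\delta})(\tx_i)=m\big(\phi^1_1(\underline{\delta}(\tx_i)),\,\underline{\xi}(\psi^2_0(\tx_i))\big)$ with $\psi^2_0(\tx_i)=\tf^2_i(\tx_i)$. Here $\underline{\delta}(\tx_i)=\germ_{\tf^1_i(\tx_i)}\delta_i^a=\germ_{\tf^1_i(\tx_i)}\widetilde{\delta}_i^a$, the two germs agreeing because $\delta_i^a$ is a restriction of $\widetilde{\delta}_i^a$; as $\widetilde{\delta}_i^a\in P_{g^1}$, formula~\eqref{eq-29} for $\phi^1_1=\functor{F}^{\red}_1(\atlasmap{g}{1})$ yields $\phi^1_1(\underline{\delta}(\tx_i))=\germ_z\nu^1_g(\widetilde{\delta}_i^a)$. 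On the other factor, the choice of $c$ gives $\underline{\xi}(\tf^2_i(\tx_i))=\germ_{\tg^1_{\overf^2(i)}(\tf^2_i(\tx_i))}\xi^c_{\overf^2(i)}$. Using (\hyperref[M5c]{M5c}) for $\atlasmap{g}{1}$ and $\delta_i^a(\tf^1_i(\tx_i))=\tf^2_i(\tx_i)$ from~\eqref{eq-01}, I identify $\nu^1_g(\widetilde{\delta}_i^a)(z)=\tg^1_{\overf^2(i)}(\tf^2_i(\tx_i))$, so the two germs are composable, and since $m(\germ_z\alpha,\germ_{\alpha(z)}\beta)=\germ_z(\beta\circ\alpha)$ in this groupoid the product equals $\germ_z\big(\xi^c_{\overf^2(i)}\circ\nu^1_g(\widetilde{\delta}_i^a)\big)$, which is exactly the left-hand side. (The opposite convention $m(\underline{\xi}(\tf^1_i(\tx_i)),\phi^2_1(\underline{\delta}(\tx_i)))$ gives the same arrow by axiom (\hyperref[NT2]{NT2}) for $\underline{\xi}$.)

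Apart from this, the verification is routine germ algebra, so the two genuine points to watch are the choices of representatives in the first paragraph and the choice of the index $c$ in the second. The first is what lets me evaluate $\phi^1_1$ on $\underline{\delta}(\tx_i)$ directly through~\eqref{eq-29}, since that formula applies only to germs of elements of $P_{g^1}$; this is precisely the replacement already carried out in Construction~\ref{cons-03}, which is why the representatives $\gamma$ of $[\xi]\ast[\delta]$ and $\delta$ of $[\delta]$ must be taken from that construction simultaneously. The second ensures that the \emph{same} index $c$ appears both in $\gamma_i^{a,c,e}$ and in the value of $\underline{\xi}$ at $\tf^2_i(\tx_i)$, which is legitimate only because $\functor{F}^{\red}_2([\gamma])$ is patch-independent by (\hyperref[2Md]{2Md}). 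Once these are in place, matching the germs as above finishes the proof.
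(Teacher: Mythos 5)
Your proof is correct and takes essentially the same route as the paper's: both evaluate the two natural transformations pointwise after fixing the representatives $\delta$ and $\gamma$ produced by Construction~\ref{cons-03} (so that each $\delta_i^a$ is the restriction of some $\widetilde{\delta}_i^a\in P_{g^1}$), compute $\phi^1_1(\underline{\delta}(\tx_i))$ via \eqref{eq-29}, and identify the resulting product of germs with $\germ_{\tg^1_{\overf^1(i)}\circ\tf^1_i(\tx_i)}\gamma_i^{a,c,e}$ using the multiplication of $\functor{F}^{\red}_0(\atlas{Z})$. The only differences are presentational: you fix an arbitrary point and construct admissible indices $(a,c,e)$ while the paper quantifies over the patches $\tX_i^{a,c,e}$ directly, and you spell out the composability identity $\nu^1_g(\widetilde{\delta}_i^a)(z)=\tg^1_{\overf^2(i)}(\tf^2_i(\tx_i))$ via (\hyperref[M5c]{M5c}) and \eqref{eq-01}, which the paper leaves implicit in its germ computation.
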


\begin{proof}
Let us set representatives

\begin{gather*}
\atlasmaprep{f}{m}:=\left(f,\overline{f}^m,\Big\{\tf_i^m\Big\}_{i\in I},P_{f^m},\nu_{f^m}\right)\quad
 \textrm{for}\quad m=1,2, \\
\atlasmaprep{g}{m}:=\left(g,\overline{g}^m,\Big\{\tg_j^m\Big\}_{j\in J},P_{g^m},\nu_{g^m}\right)\quad
 \textrm{for}\quad m=1,2
\end{gather*}
for $\atlasmap{f}{m}$ and $\atlasmap{g}{m}$ respectively and representatives

\[\delta=\left\{\left(\tX_i^a,\delta_i^a\right)\right\}_{i\in I,\,a\in A(i)},\quad\xi=\left\{\left(
\tY_j^c,\xi_j^c\right)\right\}_{j\in J,\,c\in C(j)}\]
for $[\delta]$ and $[\xi]$ respectively, as in Construction~\ref{cons-03}; let us denote by 

\[\gamma=\left\{\left(\tX_i^{a,c,e},\gamma_i^{a,c,e}\right)\right\}_{i\in I,\,(a,c)\in A(i)\times
C(\overf^2(i)),\,e\in E(i,a,c)}\]
a representative for $[\xi]\ast[\delta]$, obtained as in the already mentioned construction. Then
let us set:

\begin{gather*}
\functor{F}^{\red}_0(\atlas{X})=:\left(\groupname{X}{}\right),\quad\functor{F}^{\red}_0(\atlas{Y})=:
 \left(\groupname{Y}{}\right),\quad \functor{F}^{\red}_0(\atlas{Z})=:\left(\groupname{Z}{}\right), \\
\functor{F}^{\red}_1(\atlasmap{f}{m})=:\groupoidmap{\psi}{m},\quad\functor{F}^{\red}_1
 (\atlasmap{g}{m})=:\groupoidmap{\phi}{m}\quad\textrm{for}\,\,m=1,2, \\
\functor{F}^{\red}_2([\delta])=:\underline{\delta},\quad \functor{F}^{\red}_2([\xi])=:
 \underline{\xi}.
\end{gather*}

Let us fix any $i\in I$, any $(a,c)\in A(i)\times C(\overf^2(i))$, any $e\in E(i,a,c)$ and any
point $\tx_i\in\tX_i^{a,c,e}$. Then we have

\begin{gather*}
\Big(\functor{F}^{\red}_2\left(\left[\xi\right]\right)\ast\functor{F}^{\red}_2\left(\left[\delta
 \right]\right)\Big)(\tx_i)\stackrel{(\ast)}{=}
 m\circ\left(\phi^1_1\circ\underline{\delta}(\tx_i),\underline{\xi}\circ
 \psi^2_0(\tx_i)\right)= \\
=m\left(\phi^1_1\left(\germ_{\tf_i^1(\tx_i)}\delta_i^a\right),\underline{\xi}\left(\tf_i^2(\tx_i)
 \right)\right)= \\
=m\left(\germ_{\tg_{\overf^1(i)}^1\circ\tf_i^1(\tx_i)}\nu_g^1(\widetilde{\delta}_i^a),
 \germ_{\tg_{\overf^2(i)}^1\circ\tf_i^2(\tx_i)}\xi_{\overf^2(i)}^c\right)= \\
=\germ_{\tg_{\overf^2(i)}^1\circ\tf_i^2(\tx_i)}\xi_{\overf^2(i)}^c\cdot\germ_{\tg_{\overf^1(i)}^1
 \circ\tf_i^1(\tx_i)}\nu_g^1(\widetilde{\delta}_i^a)= \\
=\germ_{\tg_{\overf^1(i)}^1\circ\tf_i^1(\tx_i)}\xi_{\overf^2(i)}^c\circ\nu_g^1(
 \widetilde{\delta}_i^a)|_{\tZ_i^{a,c,e}}=\functor{F}^{\red}_2\Big(\left[\xi\right]\ast\left[
 \delta\right]\Big)(\tx_i),
\end{gather*}
where $\widetilde{\delta}_i^a$ and $\tZ_i^{a,c,e}$ are defined as in Construction~\ref{cons-03}
and $(\ast)$ is the definition of horizontal composition in $\LieGpd$.
\end{proof}

Lemmas~\ref{lem-03},~\ref{lem-04},~\ref{lem-05} and~\ref{lem-06} prove that:

\begin{theo}\label{theo-01}
The data $\functor{F}^{\red}:=(\functor{F}^{\red}_0,\functor{F}^{\red}_1,\functor{F}^{\red}_2)$
define a $2$-functor from $\RedAtl$ to $\PEEGpd$.
\end{theo}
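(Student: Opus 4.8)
The plan is to check directly the axioms defining a (strict) $2$-functor between $2$-categories and to observe that essentially all of the genuine content has already been isolated in Lemmas~\ref{lem-03}--\ref{lem-06}; the theorem itself is then a matter of matching these lemmas against the (finite) list of axioms. First I would record that the three assignments are well-defined and have the correct targets. On objects, $\functor{F}^{\red}_0$ produces an \'etale groupoid by Construction~\ref{cons-04}, and Lemma~\ref{lem-03} guarantees that this groupoid is in fact proper and effective, so it lands in $\PEEGpd$. On $1$-morphisms and $2$-morphisms, Construction~\ref{cons-05} and the construction following it produce a morphism $\functor{F}^{\red}_1(\atlasmap{f}{})$ of Lie groupoids and a natural transformation $\functor{F}^{\red}_2([\delta])$, and in each case it was already checked there that the output is independent of the chosen representative (of the morphism, respectively of the $2$-morphism); hence the assignments descend to the equivalence classes that constitute the morphisms and $2$-morphisms of $\RedAtl$.

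Next I would verify, for every ordered pair $(\atlas{X},\atlas{Y})$ of objects, that $\functor{F}^{\red}$ restricts to a functor between the hom-categories
\[\RedAtl(\atlas{X},\atlas{Y})\longrightarrow\PEEGpd\big(\functor{F}^{\red}_0(\atlas{X}),\functor{F}^{\red}_0(\atlas{Y})\big),\]
sending $\atlasmap{f}{}$ to $\functor{F}^{\red}_1(\atlasmap{f}{})$ and $[\delta]$ to $\functor{F}^{\red}_2([\delta])$. This amounts to preservation of the $2$-identities and of the vertical composition $\odot$, which is the identity clause of Lemma~\ref{lem-04} (namely $\functor{F}^{\red}_2(i_{\atlasmap{f}{}})=i_{\functor{F}^{\red}_1(\atlasmap{f}{})}$) together with Lemma~\ref{lem-05}.

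It then remains to check the two globally defined compatibilities. The first is strict preservation of the horizontal composition and of the identities of $1$-morphisms, i.e.\ $\functor{F}^{\red}_1(\atlasmap{g}{}\circ\atlasmap{f}{})=\functor{F}^{\red}_1(\atlasmap{g}{})\circ\functor{F}^{\red}_1(\atlasmap{f}{})$ and $\functor{F}^{\red}_1(\id_{\atlas{X}})=\id_{\functor{F}^{\red}_0(\atlas{X})}$, which are exactly the remaining assertions of Lemma~\ref{lem-04}. The second is preservation of the horizontal composition of $2$-morphisms, $\functor{F}^{\red}_2([\xi]\ast[\delta])=\functor{F}^{\red}_2([\xi])\ast\functor{F}^{\red}_2([\delta])$, which is Lemma~\ref{lem-06}. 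Assembling these statements exhausts the list of $2$-functor axioms, completing the proof.

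The only point requiring care -- and where the real work sits -- is that these compatibilities are demanded on the nose: $\PEEGpd$ is a genuine $2$-category, so the composition of $1$-morphisms there is strictly associative and unital, and Lemma~\ref{lem-04} must produce strict (not merely coherent) equalities of groupoid morphisms. The main obstacle is therefore already absorbed into the lemmas, so at the level of the theorem the residual difficulty is purely bookkeeping, namely confirming that the four lemmas cover the whole list of axioms with nothing left over.
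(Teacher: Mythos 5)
Your proposal is correct and takes essentially the same route as the paper, which deduces Theorem~\ref{theo-01} directly from Lemmas~\ref{lem-03},~\ref{lem-04},~\ref{lem-05} and~\ref{lem-06} --- exactly the four statements you match against the $2$-functor axioms. Your preliminary remarks on well-definedness (independence of the chosen representatives) are likewise already settled in Construction~\ref{cons-05} and the construction of $\functor{F}^{\red}_2$, so nothing is missing.
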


We state some properties of $\functor{F}^{\red}$ that we are going to use soon.

\begin{lem}\label{lem-07}
\emphatic{(adapted from~\cite[Proposition~4.9]{Po})} Let us fix any pair of reduced orbifold atlases
$\atlas{X},\atlas{Y}$ for $X$ and $Y$ respectively. Let us set $\functor{F}^{\red}_0(
\atlas{X})=:(\groupname{X}{})$ and $\functor{F}^{\red}_0(\atlas{Y})=:(\groupname{Y}{})$. Let us also
fix any morphism $\groupoidmap{\psi}{}:(\groupname{X}{})\rightarrow(\groupname{Y}{})$. Then there
is a unique morphism $\atlasmap{f}{}:\atlas{X}\rightarrow\atlas{Y}$ in $\RedAtl$, such that
$\functor{F}^{\red}_1(\atlasmap{f}{})=\groupoidmap{\psi}{}$.
\end{lem}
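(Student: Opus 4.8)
The plan is to prove that $\functor{F}^{\red}_1$ restricts to a bijection between the morphisms $\atlas{X}\to\atlas{Y}$ in $\RedAtl$ and the Lie groupoid morphisms $\functor{F}^{\red}_0(\atlas{X})\to\functor{F}^{\red}_0(\atlas{Y})$, by reconstructing a representative $\atlasmaprep{f}{}:=(f,\overf,\{\tf_i\}_{i\in I},P_f,\nu_f)$ out of the given pair $(\psi_0,\psi_1)$. First I would recover the ``easy'' data from $\psi_0$. Since $\groupoid{X}_0=\coprod_{i\in I}\tX_i$ with each $\tX_i$ connected, the restriction $\psi_0|_{\tX_i}$ is a smooth map into $\groupoid{Y}_0=\coprod_{j\in J}\tY_j$ whose image lies in a single component $\tY_{\overf(i)}$; this defines $\overf:I\to J$ and $\tf_i:=\psi_0|_{\tX_i}:\tX_i\to\tY_{\overf(i)}$. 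I would then identify the orbit spaces $|\functor{F}^{\red}_0(\atlas{X})|\cong X$ and $|\functor{F}^{\red}_0(\atlas{Y})|\cong Y$ (two points of $\groupoid{X}_0$ being $\sim_{\groupoid{X}}$-equivalent exactly when they have the same image under the $\pi_i$'s) and set $f:=|\groupoidmaptot{\psi}{}|$ under these identifications. By Remark~\ref{rem-06} the map $f$ is continuous, and the commutativity of \eqref{eq-10} reads $\chi_{\overf(i)}\circ\tf_i=f\circ\pi_i$, so each $\tf_i$ is a local lift of $f$; this yields at once (\hyperref[M1]{M1})--(\hyperref[M3]{M3}).

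The core of the argument is the reconstruction of $(P_f,\nu_f)$ from $\psi_1$. For every $\lambda\in\change(\atlas{X},i,i')$ and every $\tx_i\in\dom\lambda$, set $x_1:=\germ_{\tx_i}\lambda$. Since $\psi_1$ commutes with $s$ and $t$, the germ $\psi_1(x_1)$ has source $\tf_i(\tx_i)\in\tY_{\overf(i)}$ and target $\tf_{i'}(\lambda(\tx_i))\in\tY_{\overf(i')}$, so it equals $\germ_{\tf_i(\tx_i)}\omega$ for some $\omega\in\change(\atlas{Y},\overf(i),\overf(i'))$. Using the germ charts \eqref{eq-06} of $\groupoid{Y}_1$ and the smoothness of $\psi_1$, the same $\omega$ works on a whole neighborhood of $\tx_i$ in $\dom\lambda$: near $\psi_1(x_1)$ the map $\psi_1\circ\tau_\lambda^{-1}$ lands in a single chart $\groupoid{Y}_1(\omega)$, on which $\tau_\omega$ is the source map. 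I would then restrict $\lambda$ to a small connected open $\tX'\ni\tx_i$, chosen exactly as the sets $\tX_{\overline{x}_i,\lambda}$ of Construction~\ref{cons-01} (invariant under $\Stab(G_i,\tx_i)$ and disjoint from its other $G_i$-translates, so that $\lambda|_{\tX'}$ is again a change of charts), shrunk further so that $\tf_i(\tX')\subseteq\dom\omega$ and $\psi_1(\germ_{\tx_i'}\lambda)=\germ_{\tf_i(\tx_i')}\omega$ for all $\tx_i'\in\tX'$. Letting $P_f$ be the collection of all such restrictions gives a good subset of $\change(\atlas{X})$ (condition (\hyperref[GS]{GS}) holds by construction), and $\nu_f(\lambda|_{\tX'}):=\omega$ defines the required map; any two admissible choices of $\omega$ and $\tX'$ agree as germs at $\tf_i(\tx_i)$, so the class $[P_f,\nu_f]$ is unambiguous.

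It then remains to check that $\atlasmaprep{f}{}$ is a representative of a morphism and induces $(\psi_0,\psi_1)$, which I would do by translating each groupoid axiom into the corresponding condition of Definition~\ref{def-11}: (\hyperref[M5a]{M5a})--(\hyperref[M5b]{M5b}) hold by the choice of $\dom\omega$ and $\cod\omega$; (\hyperref[M5c]{M5c}) follows from $t\circ\psi_1=\psi_0\circ t$ together with the defining identity $\psi_1(\germ_{\tx_i'}\lambda)=\germ_{\tf_i(\tx_i')}\nu_f(\lambda|_{\tX'})$; (\hyperref[M5d]{M5d}) is immediate, since two restrictions with equal germ at $\tx_i$ send the single element $\germ_{\tx_i}\lambda$ of $\groupoid{X}_1$ to the single germ $\psi_1(\germ_{\tx_i}\lambda)$; (\hyperref[M5e]{M5e}) is exactly $\psi_1\circ m=m\circ(\psi_1\times\psi_1)$; and (\hyperref[M5f]{M5f}) is $\psi_1\circ e=e\circ\psi_0$ read on germs of the identity. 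By the very formula \eqref{eq-29} defining $\functor{F}^{\red}_1$, the groupoid morphism attached to the class $\atlasmap{f}{}$ has zero-part $\psi_0$ and one-part $\germ_{\tx_i}\lambda\mapsto\germ_{\tf_i(\tx_i)}\nu_f(\lambda)=\psi_1(\germ_{\tx_i}\lambda)$, hence equals $(\psi_0,\psi_1)$. Finally, uniqueness is forced: $\psi_0$ determines $f$, $\overf$ and the $\tf_i$, while $\psi_1$ determines all the germs $\germ_{\tf_i(\tx_i)}\nu_f(\lambda)$, which by Definition~\ref{def-03} (equation \eqref{eq-26}) is precisely the data pinning down the class $[P_f,\nu_f]$; so any two preimages of $(\psi_0,\psi_1)$ coincide.

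I expect the main obstacle to be the local-to-global step of the second paragraph: showing that the germ values of $\psi_1$ genuinely assemble, on a suitable neighborhood, into a single honest change of charts $\omega\in\change(\atlas{Y})$, and that the restriction $\lambda|_{\tX'}$ remains a legitimate change of charts in the sense of Definition~\ref{def-14}. Both points rest on the precise germ-topology description \eqref{eq-06} of $\groupoid{Y}_1$ and on the $\Stab$-invariance bookkeeping already used in Construction~\ref{cons-01}; everything else is a mechanical transcription of the Lie groupoid axioms into conditions (\hyperref[M1]{M1})--(\hyperref[M5f]{M5f}).
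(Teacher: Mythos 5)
Your proposal is correct and follows essentially the same route as the paper's proof: you recover $\overf$ and the $\tf_i$ from $\psi_0$ on connected components, define $f$ through the canonical homeomorphisms $|\functor{F}^{\red}_0(\atlas{X})|\cong X$ and $|\functor{F}^{\red}_0(\atlas{Y})|\cong Y$, and reconstruct $[P_f,\nu_f]$ from $\psi_1$ by choosing, around each germ, a change of charts $\omega$ valid on a whole open set (the paper uses the open set $A=\psi_1^{-1}(\groupoid{Y}_1(\omega))\cap\groupoid{X}_1(\lambda)$ for exactly your local-to-global step) and then restricting $\lambda$ to $\Stab$-invariant connected neighborhoods precisely as in Construction~\ref{cons-01}. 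Your verification of (\hyperref[M5a]{M5a})--(\hyperref[M5f]{M5f}) via the groupoid-morphism axioms and your uniqueness argument through the germ data of \eqref{eq-26} likewise match the paper's argument.
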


\begin{proof}
Let us suppose that $\atlas{X}=\{\unifX{i}\}_{i\in I}$ and $\atlas{Y}=\{\unifY{j}\}_{j\in J}$.
Since each $\tX_i$ is connected by definition of orbifold atlas, then the morphism $\psi_0:
\groupoid{X}_0\rightarrow\groupoid{Y}_0$ induces a set map $\overf:I\rightarrow J$ such that $\psi_0
(\tX_i)\subseteq\tY_{\overf(i)}$ for every $i\in I$. For each $i\in I$ we set $\tf_i:=\psi_0|_{\tX_i}:
\tX_i\rightarrow\tY_{\overf(i)}$.\\

Now let us consider the continuous and surjective maps

\[\pi:\,\groupoid{X}_0=\coprod_{i\in I}\tX_i
\,{\ensuremath{\relbar\joinrel\twoheadrightarrow}}\,X
\quad\textrm{and}\quad\quad\chi:\,\groupoid{Y}_0=\coprod_{j\in j}\tY_j\,
{\ensuremath{\relbar\joinrel\twoheadrightarrow}}\,Y\]
defined as $\pi|_{\tX_i}:=\pi_i$ for each $i\in I$ and $\chi|_{\tY_j}:=\chi_j$ for each $j\in J$.
If we consider the equivalence relation
$\sim_{\groupoid{X}}$ on $\groupoid{X}_0$ defined in Remark~\ref{rem-06} and the induced quotient
$\pr_{\groupoidtot{X}{}}:
\groupoid{X}_0\twoheadrightarrow|\groupoidtot{X}{}|=\groupoid{X}_0/\sim_{\groupoid{X}}$,
then we can construct easily
an homeomorphism $\varphi_{\atlas{X}}:|\groupoidtot{X}{}|
\stackrel{\sim}{\rightarrow}X$, such that $\varphi_{\atlas{X}}\circ\pr_{\groupoidtot{X}{}}=\pi$.
Analogously, there is an homeomorphism $\varphi_{\atlas{Y}}:|\groupoidtot{Y}{}|
\stackrel{\sim}{\rightarrow}Y$ such that $\varphi_{\atlas{Y}}\circ\pr_{\groupoidtot{Y}{}}=\chi$.
Then we
define a continuous map $f:X\rightarrow Y$ as $f:=\varphi_{\atlas{Y}}\circ
|\groupoidmaptot{\psi}{}|\circ
\varphi_{\atlas{X}}^{-1}$, where $|\groupoidmaptot{\psi}{}|$ is the continuous map
$|\groupoidtot{X}{}|\rightarrow|\groupoidtot{Y}{}|$ induced by
$\groupoidmaptot{\psi}{}$ as in Remark~\ref{rem-06}. Then we get that

\[f\circ\pi=\varphi_{\atlas{Y}}\circ|\groupoidmaptot{\psi}{}|\circ\varphi_{\atlas{X}}^{-1}\circ\pi=
\varphi_{\atlas{Y}}\circ|\groupoidmaptot{\psi}{}|\circ\pr_{\groupoidtot{X}{}}
\stackrel{\eqref{eq-10}}{=}\varphi_{\atlas{Y}}\circ\pr_{\groupoidtot{Y}{}}\circ\psi_0=
\chi\circ\psi_0,\]
hence

\[f(\pi_i(\tx_i))=\chi\circ\psi_0(\tx_i)=\chi_{\overf(i)}\circ\tf_i(\tx_i)\quad\forall\,i\in I,\,\,
\forall\,\tx_i\in\tX_i,\]
so the collection $\{\tf_i\}_{i\in I}$ satisfies condition (\hyperref[M3]{M3}) (see Definition~\ref{def-11}).
Now let us fix any $i\in I$, any $\overline{x}_i\in\tX_i$ and any $\lambda\in
\change(\atlas{X},i,-)$ with $\overline{x}_i\in\dom\lambda$. Using the construction of $\groupoid{Y}_1$
and the fact that
$\groupoidmap{\psi}{}$ commutes with $s$ (see Definition~\ref{def-07}),
there is a (non-unique) $\omega\in
\change(\atlas{Y},\overf(i),-)$ such that

\begin{equation}\label{eq-07}
\psi_1(\germ_{\overline{x}_i}\lambda)=\germ_{\psi_0(\overline{x}_i)}\omega=\germ_{\tf_i(
\overline{x}_i)}\omega.
\end{equation}

Now $\psi_1$ is continuous, so the set $A:=\psi_1^{-1}(\groupoid{Y}_1(\omega))\cap\groupoid{X}_1(\lambda)
\subseteq\groupoid{X}_1$
is an open set (for the notations used here, see \eqref{eq-06}); $A$ is non-empty since it contains
$\germ_{\overline{x}_i}\lambda$. For each point $\tx_i\in s(A)\subseteq\groupoid{X}_0$, using the
definition of $A$ (and the fact that $s$ commutes with $(\psi_0,\psi_1)$, we have

\begin{equation}\label{eq-42}
\psi_1(\germ_{\tx_i}\lambda)=\germ_{\psi_0(\tx_i)}\omega=\germ_{\tf_i(\tx_i)}\omega
\end{equation}
(so \eqref{eq-07} holds not only for $\overline{x}_i$, but also for every $\tx_i$ in
an open neighborhood of $\overline{x}_i$). Now let us choose any open connected subset
$\tX_{\lambda,\overline{x}_i,\omega}\subseteq s(A)\subseteq\groupoid{X}_0$, such that:

\begin{itemize}
 \item $\overline{x}_i\in\tX_{\lambda,\overline{x}_i,\omega}$;
 \item $\tX_{\lambda,\overline{x}_i,\omega}$ is invariant under the action of
  $\Stab(G_i,\overline{x}_i)$;
 \item for all $g\in G_i\smallsetminus\Stab(G_i,\overline{x}_i)$ we have $g(\tX_{\lambda,
  \overline{x}_i,\omega})\cap\tX_{\lambda,\overline{x}_i,\omega}=\varnothing$
\end{itemize}
(in this way, $\lambda$ is again a change of charts if restricted to $\tX_{\lambda,\overline{x}_i,
\omega}$). Then let us set

\[P_f:=\left\{\lambda|_{\tX_{\lambda,\overline{x}_i,\omega}}\quad\forall\,i\in I,\,\,\,\forall\,
\lambda\in\change(\atlas{X},i,-),\,\,\,\forall\,\overline{x}_i\in\dom\lambda\right\};\]
if we have $2$ (or more) collections $(\lambda,\overline{x}_i,\omega)$ and $(\lambda',
\overline{x}'_i,\omega')$ such that

\begin{equation}\label{eq-08}
\lambda|_{\tX_{\lambda,\overline{x}_i,\omega}}=\lambda'|_{\tX_{\lambda',\overline{x}'_i,\omega'}},
\end{equation}
then we simply make an arbitrary choice of a triple $(\lambda,\overline{x}_i,\omega)$ associated
to the morphism \eqref{eq-08} in $P_f$. Then for each $\lambda|_{\tX_{\lambda,
\overline{x}_i,\omega}}\in P_f$ we set $\nu_f\left(\lambda|_{\tX_{\lambda,\overline{x}_i,
\omega}}\right):=\omega$ and it is easy to see that the collection

\[\atlasmaprep{f}{}:=\left(f,\overf,\left\{\tf_i\right\}_{i\in I},P_f,\nu_f\right)\]
is a representative of a morphism from $\atlas{X}$ to $\atlas{Y}$. The collection $\atlasmaprep{f}{}$
depends on some choices, but the class $\atlasmap{f}{}$ depends only on $\groupoidmap{\psi}{}$.
A direct check using \eqref{eq-42}
proves that $\functor{F}^{\red}_1(\atlasmap{f}{})=\groupoidmap{\psi}{}$; moreover it
is easy to see that if
$\atlasmap{f}{1},\atlasmap{f}{2}:\atlas{X}\rightarrow\atlas{Y}$ are such that $\functor{F}^{\red}_1
(\atlasmap{f}{1})=\functor{F}^{\red}_1(\atlasmap{f}{2})$, then $\atlasmap{f}{1}=\atlasmap{f}{2}$.
This suffices to complete the proof.
\end{proof}

Lemma~\ref{lem-07} proves that $\atlasmap{f}{}$ as above is unique. So
the previous proof implies that:

\begin{cor}\label{cor-05}
Let us fix any reduced orbifold atlas $\atlas{X}=\{\unifX{i}\}_{i\in I}$
on a topological space $X$ and let
us set $\groupoidtot{X}{}:=\functor{F}^{\red}_0(\atlas{X})$. Then there is a canonical homeomorphism
$\varphi_{\atlas{X}}:|\groupoidtot{X}{}|\stackrel{\sim}{\rightarrow} X$ \emphatic{(}sending
each point $[\tx_i]\in|\groupoidtot{X}{}|$ to $\pi_i(\tx_i)$ for each point $\tx_i\in
\tX_i$ and for each $i\in I$\emphatic{)}. Here ``canonical '' means
that given any other reduced orbifold atlas $\atlas{Y}$ on a topological space $Y$ and any 
morphism of reduced orbifold atlases $\atlasmap{f}{}:\atlas{X}\rightarrow\atlas{Y}$ over a continuous
map $f:X\rightarrow Y$, we have

\[f\circ\varphi_{\atlas{X}}=\varphi_{\atlas{Y}}\circ|\functor{F}_1^{\red}(\atlasmap{f}{})|,\]
where $|\functor{F}_1^{\red}(\atlasmap{f}{})|$ is the continuous map from $|\groupoidtot{X}{}|$
to $|\groupoidtot{Y}{}|$ associated to $\functor{F}_1^{\red}(\atlasmap{f}{})$ by
\emphatic{Remark~\ref{rem-06}}.
\end{cor}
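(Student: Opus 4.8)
The plan is to distil both the homeomorphism $\varphi_{\atlas{X}}$ and its naturality directly from the construction already carried out inside the proof of Lemma~\ref{lem-07}, adding only the verification that the descended map is a homeomorphism. First I would recall the continuous surjection $\pi:\groupoid{X}_0=\coprod_{i\in I}\tX_i\twoheadrightarrow X$ defined by $\pi|_{\tX_i}:=\pi_i$; since each $\pi_i$ is open, $\pi$ is open. Because every change of charts $\lambda$ satisfies $\pi_{i'}\circ\lambda=\pi_i|_{\dom\lambda}$ (Definition~\ref{def-14}), any two $\sim_{\groupoid{X}}$-equivalent points of $\groupoid{X}_0$ have the same image under $\pi$; hence $\pi$ descends through $\pr_{\groupoidtot{X}{}}$ to a continuous map $\varphi_{\atlas{X}}:|\groupoidtot{X}{}|\to X$ with $\varphi_{\atlas{X}}\circ\pr_{\groupoidtot{X}{}}=\pi$, characterized by $[\tx_i]\mapsto\pi_i(\tx_i)$, exactly as in the cited proof.

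Next I would check that $\varphi_{\atlas{X}}$ is a homeomorphism. Surjectivity is inherited from $\pi$. For injectivity I would use that every pair of charts of $\atlas{X}$ is compatible (Definition~\ref{def-01}): if $\pi_i(\tx_i)=\pi_{i'}(\tx'_{i'})$, then there is a change of charts $\lambda\in\change(\atlas{X},i,i')$ with $\tx_i\in\dom\lambda$ and $\lambda(\tx_i)=\tx'_{i'}$, so $\germ_{\tx_i}\lambda$ witnesses $\tx_i\sim_{\groupoid{X}}\tx'_{i'}$ and hence $[\tx_i]=[\tx'_{i'}]$. For openness I would invoke Lemma~\ref{lem-03}, which guarantees that $\groupoidtot{X}{}$ is proper and \'etale, so that $\pr_{\groupoidtot{X}{}}$ is open by Remark~\ref{rem-06}; then for any open $U\subseteq|\groupoidtot{X}{}|$ one has $\varphi_{\atlas{X}}(U)=\pi(\pr_{\groupoidtot{X}{}}^{-1}(U))$, which is open in $X$ because $\pi$ is open. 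Thus $\varphi_{\atlas{X}}$ is a continuous open bijection, hence a homeomorphism, and the explicit formula $[\tx_i]\mapsto\pi_i(\tx_i)$ is immediate.

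Finally I would prove the naturality (``canonical'') statement. Given $\atlas{Y}$ and a morphism $\atlasmap{f}{}:\atlas{X}\to\atlas{Y}$ over $f$, set $\groupoidmap{\psi}{}:=\functor{F}^{\red}_1(\atlasmap{f}{})$; by Construction~\ref{cons-05} we have $\psi_0|_{\tX_i}=\tf_i$. I would then evaluate both sides of the desired identity on a class $[\tx_i]\in|\groupoidtot{X}{}|$: using the explicit formula for $\varphi_{\atlas{X}}$ and $\varphi_{\atlas{Y}}$ together with the description of $|\groupoidmaptot{\psi}{}|$ from Remark~\ref{rem-06} (namely $|\groupoidmaptot{\psi}{}|([\tx_i])=[\psi_0(\tx_i)]=[\tf_i(\tx_i)]$), the left-hand side reduces to $f(\pi_i(\tx_i))$ and the right-hand side to $\chi_{\overf(i)}(\tf_i(\tx_i))$. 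These coincide by the local-lift condition (\hyperref[M3]{M3}), i.e.\ the relation $f\circ\pi=\chi\circ\psi_0$ already established in the proof of Lemma~\ref{lem-07}, which concludes the argument.

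I expect the only genuinely non-formal points to be concentrated in the homeomorphism claim: the injectivity of $\varphi_{\atlas{X}}$ rests on the compatibility axiom of an orbifold atlas, while openness rests on knowing, via Lemma~\ref{lem-03} and Remark~\ref{rem-06}, that the source groupoid is proper and \'etale so that its orbit projection is open. Everything else is bookkeeping that was essentially carried out inside the proof of Lemma~\ref{lem-07}, so the corollary is really a repackaging of that proof together with this homeomorphism check.
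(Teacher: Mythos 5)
Your proposal is correct and follows essentially the same route as the paper, which proves this corollary simply by pointing back to the proof of Lemma~\ref{lem-07} (where $\varphi_{\atlas{X}}$ is constructed by descending $\pi$ through $\pr_{\groupoidtot{X}{}}$ and the identity $f\circ\pi=\chi\circ\psi_0$ yields the naturality); you merely supply the details the paper leaves implicit, namely injectivity via chart compatibility and openness of the descended map. One small remark: for openness of $\varphi_{\atlas{X}}$ you do not actually need $\pr_{\groupoidtot{X}{}}$ to be open (so Lemma~\ref{lem-03} and Remark~\ref{rem-06} are dispensable here), since continuity of the quotient map already makes $\pr_{\groupoidtot{X}{}}^{-1}(U)$ open and the openness of $\pi$ does the rest.
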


\begin{lem}\label{lem-08}
Let us fix any pair of reduced orbifold atlases $\atlas{X},\atlas{Y}$ for $2$ topological
spaces $X$ and $Y$ respectively, and any pair of morphisms $\atlasmap{f}{m}:\atlas{X}\rightarrow
\atlas{Y}$ for $m=1,2$ with representatives

\[\atlasmaprep{f}{m}:=\left(f,\overf,\left\{\tf_i^m\right\}_{i\in I},P_{f^m},\nu_{f^m}\right)\quad
\operatorname{for}\,\,m=1,2.\]

Let us set

\[\functor{F}^{\red}_0(\atlas{X})=:\groupoidtot{X}{},\quad\functor{F}^{\red}_0(\atlas{Y}):=
\groupoidtot{Y}{},\quad\functor{F}^{\red}_1(\atlasmap{f}{m})=:\groupoidmaptot{\psi}{m}
\quad\operatorname{for}\,\,m=1,2.\]

Let us also fix any natural transformation $\alpha:\groupoidmaptot{\psi}{1}\Rightarrow\groupoidmaptot{\psi}{2}$.
Then there exists a unique $2$-morphism $[\delta]:\atlasmap{f}{1}\Rightarrow\atlasmap{f}{2}$ such that
$\functor{F}^{\red}_2([\delta])=\alpha$.
\end{lem}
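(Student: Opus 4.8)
The plan is to recover a representative $\delta$ of a $2$-morphism from the germ-valued natural transformation $\alpha$ by ``integrating'' it into genuine changes of charts over an open cover of each $\tX_i$, to check that $\functor{F}^{\red}_2$ sends the resulting class back to $\alpha$, and to read off uniqueness directly from Definition~\ref{def-06}.

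First I would fix, for each $i\in I$ and each point $\overline{x}_i\in\tX_i$, a change of charts $\omega_{\overline{x}_i}$ of $\atlas{Y}$ with $\alpha(\overline{x}_i)=\germ_{\tf_i^1(\overline{x}_i)}\omega_{\overline{x}_i}$; such an $\omega_{\overline{x}_i}$ exists by the description of $\groupoid{Y}_1$ in Construction~\ref{cons-04}. By axiom (\hyperref[NT1]{NT1}) its source is $\psi^1_0(\overline{x}_i)=\tf_i^1(\overline{x}_i)$ and $\omega_{\overline{x}_i}(\tf_i^1(\overline{x}_i))=\psi^2_0(\overline{x}_i)=\tf_i^2(\overline{x}_i)$, so $\omega_{\overline{x}_i}\in\change(\atlas{Y},\overf(i),\overf(i))$. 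Since $\alpha$ is smooth (hence continuous) and $\groupoid{Y}_1(\omega_{\overline{x}_i})$ is open in $\groupoid{Y}_1$ for the germ topology (see \eqref{eq-06}), the set $\alpha^{-1}(\groupoid{Y}_1(\omega_{\overline{x}_i}))\cap\tX_i$ is an open neighborhood $\tX_i^{\overline{x}_i}$ of $\overline{x}_i$; on it, reading off the chart $\tau_{\omega_{\overline{x}_i}}$ via (\hyperref[NT1]{NT1}) gives $\alpha(\tx_i)=\germ_{\tf_i^1(\tx_i)}\omega_{\overline{x}_i}$ for all $\tx_i\in\tX_i^{\overline{x}_i}$, and in particular $\tf_i^1(\tX_i^{\overline{x}_i})\subseteq\dom\omega_{\overline{x}_i}$. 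Relabelling the pairs $(\tX_i^{\overline{x}_i},\omega_{\overline{x}_i})$ as $(\tX_i^a,\delta_i^a)_{i\in I,\,a\in A(i)}$ yields a candidate $\delta$.

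Next I would verify (\hyperref[2Ma]{2Ma})--(\hyperref[2Me]{2Me}) of Definition~\ref{def-05}. Conditions (\hyperref[2Ma]{2Ma}) and (\hyperref[2Mb]{2Mb}) hold by construction; (\hyperref[2Mc]{2Mc}) follows since $\delta_i^a(\tf_i^1(\tx_i))=t(\alpha(\tx_i))=\tf_i^2(\tx_i)$ by (\hyperref[NT1]{NT1}); and (\hyperref[2Md]{2Md}) holds because both germs equal $\alpha(\tx_i)$ on every overlap. The only condition demanding real work is (\hyperref[2Me]{2Me}), and this is exactly where axiom (\hyperref[NT2]{NT2}) is used. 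Given $\lambda\in\change(\atlas{X},i,i')$ and $\tx_i\in\dom\lambda\cap\tX_i^a$ with $\lambda(\tx_i)\in\tX_{i'}^{a'}$, I would use that $P_{f^1},P_{f^2}$ are good subsets to choose $\lambda^m\in P_{f^m}(i,i')$ with $\germ_{\tx_i}\lambda^m=\germ_{\tx_i}\lambda$ (this is (\hyperref[GS]{GS}) and gives \eqref{eq-03}). Setting $x_1:=\germ_{\tx_i}\lambda\in\groupoid{X}_1$ and using \eqref{eq-29} together with $\germ_{\tf_i^1(\tx_i)}\delta_i^a=\alpha(\tx_i)$ and $\germ_{\tf_{i'}^1(\lambda(\tx_i))}\delta_{i'}^{a'}=\alpha(\lambda(\tx_i))$, the left- and right-hand sides of \eqref{eq-04} become $\psi^2_1(x_1)\cdot\alpha(s(x_1))$ and $\alpha(t(x_1))\cdot\psi^1_1(x_1)$; after translating the multiplication of $\functor{F}^{\red}_0(\atlas{Y})$ into germ composition as in Construction~\ref{cons-04}, these are precisely the two sides of (\hyperref[NT2]{NT2}) evaluated at $x_1$, so \eqref{eq-04} holds.

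Finally, the natural transformation $\functor{F}^{\red}_2([\delta])=:\underline{\delta}$ satisfies $\underline{\delta}(\tx_i)=\germ_{\tf_i^1(\tx_i)}\delta_i^a=\alpha(\tx_i)$ on each $\tX_i^a$, whence $\functor{F}^{\red}_2([\delta])=\alpha$. For uniqueness, any $2$-morphism $[\delta']$ with $\functor{F}^{\red}_2([\delta'])=\alpha$ has associated natural transformation equal to $\alpha$; thus on each chart of $\delta'$ the corresponding germ of a change of charts equals $\alpha(\tx_i)$, which also equals $\germ_{\tf_i^1(\tx_i)}\delta_i^a$ on every overlap with a chart of $\delta$, so by Definition~\ref{def-06} we get $[\delta']=[\delta]$. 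I expect the only delicate point to be the germ-topology bookkeeping in the second paragraph (arranging that a single change of charts $\delta_i^a$ represents $\alpha$ on all of $\tX_i^a$); once that is in place, (\hyperref[2Me]{2Me}) is a direct transcription of (\hyperref[NT2]{NT2}).
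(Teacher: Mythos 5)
Your proposal is correct and follows essentially the same route as the paper's proof: you build each $\delta_i^a$ by choosing a change of charts representing the germ $\alpha(\overline{x}_i)$, use the openness of $\groupoid{Y}_1(\delta_i^a)$ in the germ topology together with (\hyperref[NT1]{NT1}) to propagate $\alpha(\tx_i)=\germ_{\tf_i^1(\tx_i)}\delta_i^a$ over an open neighborhood, verify (\hyperref[2Ma]{2Ma})--(\hyperref[2Me]{2Me}) with (\hyperref[NT2]{NT2}) giving (\hyperref[2Me]{2Me}) via exactly the paper's germ-multiplication computation, and read uniqueness off Definition~\ref{def-06}. The only deviation, indexing the cover by all points of $\tX_i$ rather than a chosen subfamily, is immaterial.
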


\begin{proof}
By Definition~\ref{def-08}, $\alpha$ is a smooth map from $\groupoid{X}_0$ to $\groupoid{Y}_1$ such
that $s\circ\alpha=\psi^1_0$; so for each $i\in I$ and for each $\overline{x}_i\in\tX_i\subseteq
\groupoid{X}_0$, we can choose a change of charts $\delta_i^{\overline{x}_i}$ of $\atlas{Y}$, such
that

\begin{equation}\label{eq-09}
\alpha(\overline{x}_i)=\germ_{\psi^1_0(\overline{x}_i)}\delta_i^{\overline{x}_i}=\germ_{\tf_i^1(
\overline{x}_i)}\delta_i^{\overline{x}_i}.
\end{equation}

For each $\overline{x}_i\in\tX_i$ we consider the set

\[\groupoid{Y}_1(\delta_i^{\overline{x}_i}):=\left\{\germ_{\ty_{\overf^1(i)}}\delta_i^{\overline{x}_i}
\quad\forall\,\,\ty_{\overf^1(i)}\in\dom\delta_i^{\overline{x}_i}\right\}\subseteq\groupoid{Y}_1.\]

By Construction~\ref{cons-04} (for $\groupoid{Y}_1$ instead of $\groupoid{X}_1$),
$\groupoid{Y}_1(\delta_i^{\overline{x}_i})$ is
open in $\groupoid{Y}_1$; moreover the map $\tau:\groupoid{Y}_1(\delta_i^{\overline{x}_i})\rightarrow
\dom\delta_i^{\overline{x}_i}$ defined by

\[\tau\left(\germ_{\ty_{\overf^1(i)}}\delta_i^{\overline{x}_i}\right):=\ty_{\overf^1(i)}\]
is a diffeomorphism to an open connected subset of some $\mathbb{R}^n$ (here $n$ is the dimension
of the atlas $\atlas{Y}$). Then we define an open set:

\[\tX_i^{\overline{x}_i}:=\alpha^{-1}(\groupoid{Y}_1(\delta_i^{\overline{x}_i}))\cap\tX_i.\]

For each $i\in I$ we choose any collection $\{\overline{x}_i^a\}_{a\in A(i)}$ such that the family
$\{\tX_i^{\overline{x}^a_i}\}_{a\in A(i)}$ is an open covering of $\tX_i$. For simplicity of
notations, we set $\delta_i^a:=\delta_i^{\overline{x}_i^a}$ and $\tX_i^a:=
\tX_i^{\overline{x}^a_i}$. We claim that the collection $\delta:=\{(\tX_i^a,\delta_i^a)\}_{i\in I,
a\in A(i)}$ is a representative of a $2$-morphism from $\atlasmap{f}{1}$ to $\atlasmap{f}{2}$.\\

In order to prove that, let us fix any $i\in I$, any $a\in A(i)$ and any $\tx_i\in\tX_i^a$. By
definition of $\tX_i^a$ and using condition (\hyperref[NT1]{NT1}) (see Definition~\ref{def-08}), we have: 

\begin{equation}\label{eq-31}
\alpha(\tx_i)=\germ_{\psi^1_0(\tx_i)}\delta_i^a=\germ_{\tf_i^1(\tx_i)}\delta_i^a
\end{equation}
(in other terms, \eqref{eq-09} holds not only for the point $\overline{x}_i^a$, but also for any
$\tx_i$ in an
open neighborhood of $\overline{x}_i$). Again by (\hyperref[NT1]{NT1}) we have $t\circ\alpha=\psi^2_0$;
so for each $\tx_i\in\tX_i^a$ we have

\[\tf_i^2(\tx_i)=\psi^2_0(\tx_i)=t\circ\alpha(\tx_i)=t\left(\germ_{\tf_i^1(\tx_i)}\delta_i^a
\right)=\delta_i^a\circ\tf_i^1(\tx_i),\]
so in particular

\[\tf_i^1(\tX_i^a)\subseteq\dom\delta_i^a\quad\textrm{and}\quad
\tf_i^2(\tX_i^a)\subseteq\cod\delta_i^a;\]
therefore properties (\hyperref[2Ma]{2Ma}), (\hyperref[2Mb]{2Mb}) and (\hyperref[2Mc]{2Mc}) 
(see Definition~\ref{def-05}) are verified
for $\delta$. If $a$ and $a'$ are indices in $A(i)$ and $\tx_i\in\tX_i^a\cap\tX_i^{a'}$, then by
\eqref{eq-31} we have

\[\germ_{\tf_i^1(\tx_i)}\delta_i^a=\alpha(\tx_i)=\germ_{\tf_i^1(\tx_i)}\delta_i^{a'},\]
so (\hyperref[2Md]{2Md}) holds. Now let us fix any $(i,i')\in I\times I$, any $(a,a')\in A(i)\times
A(i')$, any $\lambda\in\change(\atlas{X},i,i')$ and any $\tx_i\in\dom\lambda\cap\tX_i^a$ such that
$\lambda(
\tx_i)\in\tX_{i'}^{a'}$. Since $P_{f^1}$ and $P_{f^2}$ are both good subsets of $\change(\atlas{X})$,
then for each $m=1,2$ there exists $\lambda^m\in P_{f^m}(i,i')$ such that $\tx_i\in\dom\lambda^m$ and
$\germ_{\tx_i}\lambda^m=\germ_{\tx_i}\lambda$. We recall (see Construction~\ref{cons-05}) that 

\[\psi^m_1(\germ_{\tx_i}\lambda)=\germ_{\tf_i^m(\tx_i)}\nu_{f^m}(\lambda^m)\quad\textrm{for}\,\,
m=1,2.\]

Therefore:

\begin{gather*}
\germ_{\tf_i^2(\tx_i)}\nu_{f^2}(\lambda^2)\cdot\germ_{\tf_i^1(\tx_i)}\delta_i^a=m\left(
 \germ_{\tf_i^1(\tx_i)}\delta_i^a,\germ_{\tf_i^2(\tx_i)}\nu_{f^2}(\lambda^2)\right)
  \stackrel{\eqref{eq-31}}{=} \\
\stackrel{\eqref{eq-31}}{=}\left(m\circ\left(\alpha\circ s,\psi^2_1\right)\right)(\germ_{\tx_i}
 \lambda^2)\stackrel{(\hyperref[NT2]{NT2})}{=}\left(m\circ\left(\psi^1_1,\alpha\circ t\right)\right)
 (\germ_{\tx_i}\lambda^1)= \\
=m\left(\germ_{\tf_i^1(\tx_i)}\nu_{f^1}\left(\lambda^1\right),\germ_{\tf_{i'}^1(\lambda^1(\tx_i))}
 \delta_{i'}^{a'}\right)= \\
=\germ_{\tf_{i'}^1(\lambda(\tx_i))}\delta_{i'}^{a'}\cdot\germ_{\tf_i^1(\tx_i)}\nu_{f^1}(\lambda^1).
\end{gather*}

So also property (\hyperref[2Me]{2Me}) holds. Therefore $\delta$ is a representative of a $2$-morphism
from
$\atlasmap{f}{1}$ to $\atlasmap{f}{2}$. Different choices of the sets $\{\overline{x}_i^a\}_a$ and
$\{\delta_i^a\}_a$ give rise to different $\delta$'s, but their equivalence class $[\delta]$ is
the same. Now by \eqref{eq-31} we get that $\functor{F}^{\red}_2([\delta])=\alpha$; moreover, a
direct computation proves that if $[\delta^1]$ and $[\delta^2]$ are such that
$\functor{F}^{\red}_2([\delta^1])=\functor{F}^{\red}_2([\delta^2])$, then $[\delta^1]=[\delta^2]$.
This suffices to conclude.
\end{proof}

So we have proved that for every pair of reduced orbifold atlases $\atlas{X},\atlas{Y}$ the
functor

\[\functor{F}^{\red}(\atlas{X},\atlas{Y}):\,\RedAtl(\atlas{X},
\atlas{Y})\longrightarrow\PEEGpd(\functor{F}^{\red}_0(\atlas{X}),\functor{F}^{\red}_0
(\atlas{Y}))\]
is a bijection on objects and morphisms (i.e.\  on $1$-morphisms and $2$-morphisms of $\RedAtl$ and
of $\PEEGpd$). $\functor{F}^{\red}_0$ is not injective; indeed, given any homeomorphism $f:X
\stackrel{\sim}{\rightarrow}Y$ and any reduced orbifold atlas $\atlas{X}$ on $X$, by
Definition~\ref{def-13} we get that $\functor{F}^{\red}_0(\atlas{X})=\functor{F}^{\red}_0
(f_{\ast}(\atlas{X}))$. It is
not difficult to see that actually this is the only point where $\functor{F}^{\red}_0$ fails to
be injective. We will see in Lemma~\ref{lem-11} below that $\functor{F}^{\red}_0$ is surjective
only up to ``Morita equivalences''.

\section{Morita equivalences between \'etale groupoids}
As we mentioned in the Introduction, the bicategory of reduced orbifold atlases described in terms
of proper, effective and \'etale groupoids is not $\PEEGpd$, but a bicategory obtained from
$\PEEGpd$ by selecting a suitable class of morphisms (Morita equivalences, see below) and by
``turning'' them into internal equivalences. We briefly recall the axioms that are needed for that
construction. We do that firstly because we will need the explicit description of the bicategory
obtained from $\PEEGpd$ by applying such a procedure; secondly, because later we will need to perform an
analogous construction on $\RedAtl$.

\begin{defin}(\cite[\S~2.1]{Pr} in the special case of $2$-categories instead of bicategories)
Let us fix any $2$-category $\CATC$ and any class $\SETW$ of morphisms in $\CATC$. We recall that the
pair $(\CATC,\SETW)$ is said \emph{to admit a right bicalculus of fractions} if the following
conditions are satisfied:

\begin{enumerate}[({BF}1)]\label{BF}
 \item\label{BF1} for every object $A$ of $\CATC$, the $1$-identity $\id_A$ belongs to $\SETW$;
 \item\label{BF2} $\SETW$ is closed under compositions;
 \item\label{BF3} for every morphism $\operatorname{w}:A\rightarrow B$ in $\SETW$ and for every
  morphism $f:C\rightarrow B$, there are an object $D$, a morphism $\operatorname{w}':D\rightarrow
  C$ in $\SETW$, a morphism $f':D\rightarrow A$ and an invertible $2$-morphism $\alpha:
  f\circ\operatorname{w}'\Rightarrow\operatorname{w}\circ f'$;
 \item\label{BF4}
 \begin{enumerate}[(a)]
  \item\label{BF4a} given any morphism $\operatorname{w}:B\rightarrow A$ in $\SETW$, any pair of
   morphisms $f^1,f^2:C\rightarrow B$ and any $\alpha:\operatorname{w}\circ f^1\Rightarrow
   \operatorname{w}\circ f^2$, there are an object $D$, a morphism $\operatorname{v}:D\rightarrow
   C$ in $\SETW$ and a $2$-morphism $\beta:f^1\circ\operatorname{v}\Rightarrow f^2\circ
   \operatorname{v}$, such that $\alpha\ast i_{\operatorname{v}}=i_{\operatorname{w}}\ast\beta$;
  \item\label{BF4b} if $\alpha$ in (a) is invertible, then so is $\beta$;
  \item\label{BF4c} if $(D',\operatorname{v}':D'\rightarrow C,\beta':f^1\circ\operatorname{v}'
   \Rightarrow f^2\circ\operatorname{v}')$ is another triple with the same properties of $(D,
   \operatorname{v},\beta)$ in (a), then there are an object $E$, a pair of morphisms
   $\operatorname{u}:E\rightarrow D$, $\operatorname{u}':E\rightarrow D'$ and an invertible
   $2$-morphism $\zeta:\operatorname{v}\circ\operatorname{u}\Rightarrow\operatorname{v}'\circ
   \operatorname{u}'$, such that $\operatorname{v}\circ\operatorname{u}$ belongs to $\SETW$ and
   
   \[\Big(\beta'\ast i_{\operatorname{u}'}\Big)\odot\Big(i_{f^1}\ast\zeta\Big)=\Big(i_{f^2}\ast\zeta
   \Big)\odot\Big(\beta\ast i_{\operatorname{u}}\Big);\]
  \end{enumerate}
 \item\label{BF5} if $\operatorname{w}:A\rightarrow B$ is a morphism in $\SETW$, $\operatorname{v}:A
  \rightarrow B$ is any morphism and if there is an invertible $2$-morphism $\alpha:
  \operatorname{v}\Rightarrow\operatorname{w}$, then also $\operatorname{v}$ belongs to $\SETW$.
\end{enumerate}
\end{defin}

We recall the following fundamental result:

\begin{theo}\label{theo-04}
\cite[Theorem~21]{Pr} Given any $2$-category or bicategory $\CATC$ and any class $\SETW$ as before, there are
a bicategory $\CATC\left[\SETWinv\right]$ \emphatic{(}called \emph{right bicategory of
fractions}\emphatic{)}
and a pseudofunctor $\functor{U}_{\SETW}:\CATC\rightarrow\CATC\left[\SETWinv\right]$ that
sends each element of $\SETW$ to an internal equivalence and that is universal with respect to such a
property.
\end{theo}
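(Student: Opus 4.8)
The plan is to carry out the bicategorical analog of the Gabriel--Zisman calculus of fractions, following \cite{Pr}. First I would define the bicategory $\CATC\left[\SETWinv\right]$ explicitly. Its objects are the same as those of $\CATC$. A $1$-morphism from $A$ to $B$ is a \emph{roof}, i.e.\ a span $A\xleftarrow{\operatorname{w}}C\xrightarrow{f}B$ in which $\operatorname{w}$ belongs to $\SETW$ and $f$ is arbitrary. A $2$-morphism between two roofs with the same source and target (and intermediate objects $C$ and $C'$) is an equivalence class of data consisting of an object $D$, a pair of morphisms $\operatorname{u}:D\rightarrow C$ and $\operatorname{u}':D\rightarrow C'$ with $\operatorname{w}\circ\operatorname{u}\in\SETW$, together with an invertible $2$-morphism $\operatorname{w}\circ\operatorname{u}\Rightarrow\operatorname{w}'\circ\operatorname{u}'$ and a $2$-morphism $f\circ\operatorname{u}\Rightarrow f'\circ\operatorname{u}'$; two such data are identified whenever they agree after further precomposition with a common $\SETW$-morphism.

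Next I would define the compositions. Horizontal composition of $1$-morphisms uses axiom (\hyperref[BF3]{BF3}): given roofs $A\xleftarrow{\operatorname{w}}C\xrightarrow{f}B$ and $B\xleftarrow{\operatorname{v}}E\xrightarrow{g}F$, the axiom fills the cospan $C\xrightarrow{f}B\xleftarrow{\operatorname{v}}E$ with an object $D$, a morphism $D\rightarrow C$ in $\SETW$, a morphism $D\rightarrow E$ and an invertible $2$-morphism, and composing appropriately produces a roof from $A$ to $F$. Vertical and horizontal composition of $2$-morphisms again require (\hyperref[BF3]{BF3}) to find common refinements, while well-definedness on equivalence classes rests on (\hyperref[BF4a]{BF4a})--(\hyperref[BF4c]{BF4c}). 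Axiom (\hyperref[BF5]{BF5}) guarantees that $\SETW$ is stable under replacement by $2$-isomorphic morphisms, which is needed so that the construction is invariant under the coherence cells. I would then exhibit the associators and unitors as canonical invertible $2$-morphisms built from (\hyperref[BF3]{BF3})-data and verify the pentagon and triangle identities.

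I would then define the pseudofunctor $\functor{U}_{\SETW}:\CATC\rightarrow\CATC\left[\SETWinv\right]$ to be the identity on objects, to send a morphism $f:A\rightarrow B$ to the trivial roof $A\xleftarrow{\id_A}A\xrightarrow{f}B$, and to send a $2$-morphism to its evident image; its compositor and unitor cells come from the (\hyperref[BF3]{BF3})-data used in the definition of composition. For each $\operatorname{w}:A\rightarrow B$ in $\SETW$ I would show its image is an internal equivalence by producing the reversed roof $B\xleftarrow{\operatorname{w}}A\xrightarrow{\id_A}A$ as a quasi-inverse, the required invertible $2$-morphisms being immediate from the definitions. Finally, universality is proved by taking any pseudofunctor $\functor{H}:\CATC\rightarrow\CATD$ sending $\SETW$ to internal equivalences, choosing quasi-inverses for the images of all $\SETW$-morphisms, and defining the induced pseudofunctor on a roof by $\functor{H}(f)$ composed with a chosen quasi-inverse of $\functor{H}(\operatorname{w})$; one then checks this is well-defined and essentially unique.

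The hard part will be the bookkeeping of coherence: verifying that vertical and horizontal compositions of $2$-morphisms descend to equivalence classes, that the associator and unitors satisfy the pentagon and triangle axioms, and that the interchange law holds. Each of these reduces, after unwinding definitions, to a diagram chase made possible precisely by axioms (\hyperref[BF3]{BF3}) and (\hyperref[BF4a]{BF4a})--(\hyperref[BF4c]{BF4c}); the length of these verifications, rather than any single conceptual difficulty, is the real obstacle, which is exactly why the full argument is carried out in detail in \cite{Pr}.
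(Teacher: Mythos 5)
Your proposal is correct and follows essentially the same route as the paper, which does not prove this statement itself but imports it from Pronk (\cite[Theorem~21]{Pr}) and recalls the very construction you sketch (roofs with left leg in $\SETW$, $2$-cells as equivalence classes of common refinements, composition via (\hyperref[BF3]{BF3}), well-definedness via (\hyperref[BF4]{BF4}), invariance via (\hyperref[BF5]{BF5})) in Description~\ref{descrip-01}. Your outline of the universal property and of the quasi-inverse roof $B\xleftarrow{\operatorname{w}}A\xrightarrow{\id_A}A$ matches Pronk's argument, so no gap to report.
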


\begin{rem}\label{rem-03}
In the notations of~\cite{Pr}, $\functor{U}_{\SETW}$ is called \emph{bifunctor}, but this
notation is no more in use. For the precise meaning of ``universal'' we refer directly
to~\cite[\S~3.2]{Pr}.
In particular, the bicategory $\CATC\left[\SETWinv\right]$ is unique up equivalences of bicategories.
In~\cite{Pr} the theorem above is stated with (\hyperref[BF1]{BF1}) replaced by the slightly stronger
hypothesis

\begin{enumerate}[({BF}1)$'$:]
\item\label{BF1prime} all the equivalences of $\CATC$ are in $\SETW$.
\end{enumerate}

By looking carefully at the proofs in~\cite{Pr}, it is easy to see that the only part of axiom
(\hyperref[BF1prime]{BF1})$'$ that is really used in all the computations is (\hyperref[BF1]{BF1}),
so we are allowed to state the theorem of~\cite{Pr} under such less restrictive hypothesis.
\end{rem}

We refer to~\cite[\S~2.2, 2.3, 2.4]{Pr} and to our paper~\cite{T3}
for more details on the construction of bicategories of
fractions and to~\cite[\S~1.5]{Lei} for a general overview on bicategories and
pseudofunctors. Note that even if $\CATC$ is a $2$-category, in general $\CATC\left[\SETWinv\right]$
is only a bicategory (with trivial unitors but possibly non-trivial associators). In other terms,
in $\CATC\left[\SETWinv\right]$
in general the composition of morphisms and the horizontal compositions of $2$-morphisms are associative
only up to canonical invertible $2$-morphisms.\\

We recall also the following definition, that will be very useful soon.

\begin{defin}\label{def-12}
\cite[Definition~2.1]{T4}
Let us fix any bicategory $\CATC$ and any class $\SETW$ of morphisms in it
(not necessarily satisfying conditions (\hyperref[BF]{BF})).
Then we define a class $\SETWsat$ as the class of all morphisms $f:B
\rightarrow A$ in $\CATC$, such that there is a pair of objects $C,D$ and a pair of morphisms $g:C
\rightarrow B$, $h:D\rightarrow C$, such that both $f\circ g$ and $g\circ h$ belong to $\SETW$. We
call $\SETWsat$ the (right) \emph{saturation} of $\SETW$;
we say that $\SETW$ is (right) \emph{saturated} if it coincides with its saturation.
\end{defin}

We recall (see~\cite[\S~2.4]{M}) that a morphism $\groupoidmap{\psi}{}:(\groupname{X}{})\rightarrow
(\groupname{Y}{})$
between Lie groupoids is a \emph{Morita equivalence} (also known as \emph{weak equivalence} or
\emph{essential equivalence}) if and only if the following $2$ conditions hold:

\begin{enumerate}[({ME}1)]
 \item\label{ME1} the smooth map $t\circ\pi^1:\fiber{\groupoid{Y}_1}{s}{\psi_0}{\groupoid{X}_0}
  \rightarrow\groupoid{Y}_0$ is a surjective submersion (here $\pi^1$ is the projection
  $\fiber{\groupoid{Y}_1}{s}{\psi}{\groupoid{X}_0}\rightarrow\groupoid{Y}_1$ and the fiber product is
  a manifold since $s$ is a submersion);
 \item\label{ME2} the following square is cartesian (it is commutative by Definition~\ref{def-07}):
 
  \begin{equation}\label{eq-16}
  \begin{tikzpicture}[xscale=1.8,yscale=-0.8]
    \node (A0_0) at (0, 0) {$\groupoid{X}_1$};
    \node (A0_2) at (2, 0) {$\groupoid{Y}_1$};
    \node (A2_0) at (0, 2) {$\groupoid{X}_0\times\groupoid{X}_0$};
    \node (A2_2) at (2, 2) {$\groupoid{Y}_0\times\groupoid{Y}_0$.};
    \path (A0_0) edge [->]node [auto] {$\scriptstyle{\psi_1}$} (A0_2);
    \path (A0_0) edge [->]node [auto,swap] {$\scriptstyle{(s,t)}$} (A2_0);
    \path (A0_2) edge [->]node [auto] {$\scriptstyle{(s,t)}$} (A2_2);
    \path (A2_0) edge [->]node [auto] {$\scriptstyle{(\psi_0\times\psi_0)}$} (A2_2);
  \end{tikzpicture}
  \end{equation}
\end{enumerate}

Any $2$ Lie groupoids $\groupoidtot{X}{}$ and $\groupoidtot{Y}{}$ are said to be \emph{Morita 
equivalent} (or \emph{weakly equivalent} or \emph{essentially equivalent}) if and only if there
are a Lie groupoid $\groupoidtot{Z}{}$ and $2$ Morita equivalences as follows

\[\begin{tikzpicture}[xscale=2.5,yscale=-1.2]
    \node (A0_0) at (0, 0) {$\groupoidtot{X}{}$};
    \node (A0_1) at (1, 0) {$\groupoidtot{Z}{}$};
    \node (A0_2) at (2, 0) {$\groupoidtot{Y}{}$.};
    \path (A0_1) edge [->]node [auto,swap] {$\scriptstyle{\groupoidmaptot{\psi}{1}}$} (A0_0);
    \path (A0_1) edge [->]node [auto] {$\scriptstyle{\groupoidmaptot{\psi}{2}}$} (A0_2);
\end{tikzpicture}\]

This is actually an equivalence relation, see for example~\cite[Chapter~5]{MM}.
Given any Morita equivalence as above, the induced set map
$|\groupoidmaptot{\psi}{}|$ is surjective by (\hyperref[ME1]{ME1}) and it is injective as a
consequence of (\hyperref[ME2]{ME2}). If both $\groupoidtot{X}{}$ and $\groupoidtot{Y}{}$ are
\'etale, then by~\cite[Exercise~5.16(4)]{MM} the map $\psi_0$ is
\'etale, hence open. If in addition $\groupoidtot{X}{}$ and $\groupoidtot{Y}{}$ are also proper, then 
$\pr_{\groupoidtot{Y}{}}:\groupoid{Y}_0\twoheadrightarrow
|\groupoidtot{Y}{}|$ is also open and $|\groupoidmaptot{\psi}{}|$ is continuous
(see Remark~\ref{rem-06}). Since the projection
$\pr_{\groupoidtot{X}{}}:\groupoid{X}_0
\twoheadrightarrow|\groupoidtot{X}{}|$ is continuous and surjective, then diagram \eqref{eq-10}
proves that the induced map
$|\groupoidmaptot{\psi}{}|$ is open, i.e.\ that $|\groupoidmaptot{\psi}{}|^{-1}$ is continuous,
so we have:

\begin{lem}\label{lem-14}
Let us fix any Morita equivalence $\groupoidmaptot{\psi}{}:\groupoidtot{X}{}\rightarrow
\groupoidtot{Y}{}$ between proper,
\'etale groupoids. Then the induced continuous map $|\groupoidmaptot{\psi}{}|:
|\groupoidtot{X}{}|\rightarrow|\groupoidtot{Y}{}|$ is an homeomorphism.
\end{lem}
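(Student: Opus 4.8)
The plan is to show that $|\groupoidmaptot{\psi}{}|$ is a continuous open bijection, since a continuous open bijection of topological spaces is automatically a homeomorphism. The three properties I need are bijectivity, continuity and openness, and each of them is a direct consequence of the two Morita axioms together with the properness and \'etale hypotheses; indeed, most of the relevant facts have already been assembled in the discussion preceding the statement, so the proof is essentially a matter of combining them correctly.

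First I would establish bijectivity directly from the two axioms. Surjectivity follows from (\hyperref[ME1]{ME1}): for any $y_0\in\groupoid{Y}_0$, the surjectivity of $t\circ\pi^1$ produces a point $(y_1,x_0)$ of the fiber product $\fiber{\groupoid{Y}_1}{s}{\psi_0}{\groupoid{X}_0}$ with $t(y_1)=y_0$ and $s(y_1)=\psi_0(x_0)$, so that $\psi_0(x_0)\sim_{\groupoid{Y}}y_0$ and hence $\pr_{\groupoidtot{Y}{}}(y_0)=|\groupoidmaptot{\psi}{}|(\pr_{\groupoidtot{X}{}}(x_0))$ lies in the image. Injectivity follows from the cartesian square (\hyperref[ME2]{ME2}): if $\psi_0(x_0)\sim_{\groupoid{Y}}\psi_0(x_0')$, witnessed by some $y_1$ with $(s,t)(y_1)=(\psi_0(x_0),\psi_0(x_0'))=(\psi_0\times\psi_0)(x_0,x_0')$, then $((x_0,x_0'),y_1)$ determines a point of the pullback in \eqref{eq-16}, and its universal property yields an $x_1\in\groupoid{X}_1$ with $(s,t)(x_1)=(x_0,x_0')$, so that $x_0\sim_{\groupoid{X}}x_0'$.

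Next I would invoke continuity of $|\groupoidmaptot{\psi}{}|$, which is already obtained in Remark~\ref{rem-06} for morphisms between proper \'etale groupoids (it rests on $\pr_{\groupoidtot{Y}{}}$ being open, which uses properness through the cited \cite{Ler}). For openness I would use that $\psi_0$ is \'etale, hence open, by \cite[Exercise~5.16(4)]{MM}, combined with the commuting square \eqref{eq-10}. Concretely, for an open set $A\subseteq|\groupoidtot{X}{}|$, the preimage $\pr_{\groupoidtot{X}{}}^{-1}(A)$ is open in $\groupoid{X}_0$, its image $\psi_0(\pr_{\groupoidtot{X}{}}^{-1}(A))$ is open in $\groupoid{Y}_0$ because $\psi_0$ is open, and \eqref{eq-10} together with the surjectivity of $\pr_{\groupoidtot{X}{}}$ identifies $\pr_{\groupoidtot{Y}{}}(\psi_0(\pr_{\groupoidtot{X}{}}^{-1}(A)))$ with $|\groupoidmaptot{\psi}{}|(A)$; since $\pr_{\groupoidtot{Y}{}}$ is open, this image is open.

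With a continuous bijection that is also open in hand, I would conclude that $|\groupoidmaptot{\psi}{}|$ is a homeomorphism. I expect the only genuinely substantive step to be the injectivity argument, where one must read off the universal property of the cartesian square \eqref{eq-16} in order to lift the point $((x_0,x_0'),y_1)$ to an arrow $x_1$ of $\groupoid{X}_1$; the deeper input that makes openness work, namely that $\psi_0$ is \'etale for an \'etale Morita equivalence, is imported from \cite{MM}, so everything else amounts to a careful assembly of facts already recorded above.
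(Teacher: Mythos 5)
Your proposal is correct and takes essentially the same route as the paper, which likewise derives bijectivity of $|\groupoidmaptot{\psi}{}|$ from (\hyperref[ME1]{ME1}) and (\hyperref[ME2]{ME2}), continuity from Remark~\ref{rem-06}, and openness from the fact that $\psi_0$ is \'etale (via \cite[Exercise~5.16(4)]{MM}) combined with the commutative diagram \eqref{eq-10} and the openness and surjectivity of the projections. The only difference is one of detail: you spell out the explicit witnesses for surjectivity and the lift through the cartesian square \eqref{eq-16} for injectivity, steps the paper merely asserts.
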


\begin{lem}\label{lem-20}
Let us fix any pair of proper, effective, \'etale groupoids $\groupoidtot{X}{},\groupoidtot{Y}{}$
and  any pair of Morita equivalences $\groupoidmaptot{\psi}{1},\groupoidmaptot{\psi}{2}:
\groupoidtot{X}{}\rightarrow
\groupoidtot{Y}{}$. Then the following facts are equivalent:

\begin{enumerate}[\emphatic{(}a\emphatic{)}]
 \item the topological maps $|\groupoidmaptot{\psi}{1}|$ and $|\groupoidmaptot{\psi}{2}|$
  \emphatic{(}see \emphatic{Remark~\ref{rem-06})} coincide;
 \item there exists a natural transformation $\alpha:\groupoidmaptot{\psi}{1}\Rightarrow
  \groupoidmaptot{\psi}{2}$ in $\PEEGpd$;
 \item there exists a unique natural transformation $\alpha:\groupoidmaptot{\psi}{1}\Rightarrow
  \groupoidmaptot{\psi}{2}$ in $\PEEGpd$.  
\end{enumerate}
\end{lem}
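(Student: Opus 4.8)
The plan is to prove the cycle of implications $(c)\Rightarrow(b)\Rightarrow(a)\Rightarrow(c)$. The first is immediate and the second is easy: if $\alpha:\groupoidmaptot{\psi}{1}\Rightarrow\groupoidmaptot{\psi}{2}$ is a natural transformation, then (\hyperref[NT1]{NT1}) gives $s(\alpha(x_0))=\psi^1_0(x_0)$ and $t(\alpha(x_0))=\psi^2_0(x_0)$ for every $x_0\in\groupoid{X}_0$, so $\psi^1_0(x_0)\sim_{\groupoid{Y}}\psi^2_0(x_0)$ and hence $\pr_{\groupoidtot{Y}{}}\circ\psi^1_0=\pr_{\groupoidtot{Y}{}}\circ\psi^2_0$. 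Since $\pr_{\groupoidtot{X}{}}$ is surjective, by Remark~\ref{rem-06} this says exactly that $|\groupoidmaptot{\psi}{1}|=|\groupoidmaptot{\psi}{2}|$. All the content is in $(a)\Rightarrow(c)$.

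For $(a)\Rightarrow(c)$, I would first recall that, since $\groupoidtot{X}{}$ and $\groupoidtot{Y}{}$ are \'etale and each $\groupoidmaptot{\psi}{m}$ is a Morita equivalence, each $\psi^m_0$ is \'etale, hence a local diffeomorphism. Fix $x_0\in\groupoid{X}_0$ and a neighborhood $U$ of $x_0$ on which $\psi^1_0$ restricts to a diffeomorphism onto its image; set $g_{x_0}:=\psi^2_0\circ(\psi^1_0|_U)^{-1}$, a diffeomorphism from a neighborhood of $\psi^1_0(x_0)$ to a neighborhood of $\psi^2_0(x_0)$. Using hypothesis (a) together with $\pr_{\groupoidtot{Y}{}}\circ\psi^m_0=|\groupoidmaptot{\psi}{m}|\circ\pr_{\groupoidtot{X}{}}$, one checks that $g_{x_0}$ commutes with $\pr_{\groupoidtot{Y}{}}$ and sends $\psi^1_0(x_0)$ to $\psi^2_0(x_0)$, so $\germ_{\psi^1_0(x_0)}g_{x_0}$ lies in the codomain of $\kappa_{\groupoidtot{Y}{}}(\psi^1_0(x_0),\psi^2_0(x_0),-)$ (Remark~\ref{rem-04}). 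Since $\groupoidtot{Y}{}$ is proper, effective and \'etale, this map is a bijection by Lemma~\ref{lem-21}, so I can define
\[\alpha(x_0):=\kappa_{\groupoidtot{Y}{}}\big(\psi^1_0(x_0),\psi^2_0(x_0),-\big)^{-1}\big(\germ_{\psi^1_0(x_0)}g_{x_0}\big).\]
The germ of $g_{x_0}$ at $\psi^1_0(x_0)$ is independent of $U$, so $\alpha:\groupoid{X}_0\to\groupoid{Y}_1$ is well defined and satisfies (\hyperref[NT1]{NT1}) by construction.

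Next I would verify smoothness and (\hyperref[NT2]{NT2}). For smoothness, near $\alpha(x_0)$ choose an open $W\subseteq\groupoid{Y}_1$ on which $s$ is a diffeomorphism; then $(s|_W)^{-1}\circ\psi^1_0$ is smooth, equals $\alpha$ at $x_0$, and — because $\kappa_{\groupoidtot{Y}{}}$ is injective and $t\circ(s|_W)^{-1}$ agrees with $g_{x_0}$ on a neighborhood of $\psi^1_0(x_0)$ — coincides with $\alpha$ near $x_0$; hence $\alpha$ is smooth. For (\hyperref[NT2]{NT2}), fix $x_1\in\groupoid{X}_1$ with $x_0:=s(x_1)$, $x'_0:=t(x_1)$. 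Both $m(\alpha(x_0),\psi^2_1(x_1))$ and $m(\psi^1_1(x_1),\alpha(x'_0))$ have source $\psi^1_0(x_0)$ and target $\psi^2_0(x'_0)$, so by injectivity of $\kappa_{\groupoidtot{Y}{}}$ it suffices to compare their images under $\kappa_{\groupoidtot{Y}{}}$. Using the multiplicativity of $\kappa_{\groupoidtot{Y}{}}$ under $m$ (Remark~\ref{rem-04}), the intertwining identity $\kappa_{\groupoidtot{Y}{}}(\psi^m_1(x_1))=\germ(\psi^m_0\circ h\circ(\psi^m_0)^{-1})$ (where $h$ is the local diffeomorphism of $\groupoid{X}_0$ with germ $\kappa_{\groupoidtot{X}{}}(x_1)$, valid because $\groupoidmaptot{\psi}{m}$ is a morphism, Definition~\ref{def-07}), and $\kappa_{\groupoidtot{Y}{}}(\alpha(x_0))=\germ(\psi^2_0\circ(\psi^1_0)^{-1})$, both images collapse to the germ of $\psi^2_0\circ h\circ(\psi^1_0)^{-1}$ at $\psi^1_0(x_0)$. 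Thus (\hyperref[NT2]{NT2}) holds and $\alpha$ is a natural transformation in $\PEEGpd$.

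Finally, uniqueness. Any natural transformation $\alpha'$ is smooth and satisfies (\hyperref[NT1]{NT1}); choosing $W\ni\alpha'(x_0)$ as above, continuity forces $\alpha'(\tx)\in W$ for $\tx$ near $x_0$, and from $s\circ\alpha'=\psi^1_0$, $t\circ\alpha'=\psi^2_0$ one gets $t\circ(s|_W)^{-1}=\psi^2_0\circ(\psi^1_0)^{-1}$ near $\psi^1_0(x_0)$, i.e.\ $\kappa_{\groupoidtot{Y}{}}(\alpha'(x_0))=\germ_{\psi^1_0(x_0)}g_{x_0}=\kappa_{\groupoidtot{Y}{}}(\alpha(x_0))$; injectivity of $\kappa_{\groupoidtot{Y}{}}$ then yields $\alpha'=\alpha$. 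The hard part will be the germ bookkeeping in (\hyperref[NT2]{NT2}) — keeping track of base points and composition order so that the two sides genuinely collapse to the same germ on $\groupoid{Y}_0$. What makes the whole argument tractable is that the bijectivity of $\kappa_{\groupoidtot{Y}{}}$ (Lemma~\ref{lem-21}, which is exactly where properness and effectiveness enter) reduces every identity in $\groupoid{Y}_1$ to a single germ identity on $\groupoid{Y}_0$.
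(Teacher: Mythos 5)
Your proposal is correct, and its existence half is exactly the paper's argument: the same local map $g_{x_0}=\psi^2_0\circ(\psi^1_0|_{W_{x_0}})^{-1}$ (the paper's $f_{x_0}$), the same use of hypothesis (a) to check it commutes with $\pr_{\groupoidtot{Y}{}}$, the same definition of $\alpha(x_0)$ through the inverse of $\kappa_{\groupoidtot{Y}{}}(\psi^1_0(x_0),\psi^2_0(x_0),-)$ via Lemma~\ref{lem-21}, and essentially the same germ computation for (\hyperref[NT2]{NT2}): your intertwining identity $\kappa_{\groupoidtot{Y}{}}(\psi^m_1(x_1))=\germ_{x'_0}\psi^m_0\cdot\kappa_{\groupoidtot{X}{}}(x_1)\cdot(\germ_{x_0}\psi^m_0)^{-1}$ is the paper's identity \eqref{eq-43} in conjugated form (both use that $\psi^m_1$ is \'etale, which holds because each $\groupoidmaptot{\psi}{m}$ is a Morita equivalence between \'etale groupoids). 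Where you genuinely diverge is uniqueness. The paper proves (b)$\Rightarrow$(c) by a density argument: effectiveness of $\groupoidtot{X}{}$ makes the regular locus $\groupoid{X}_0^{\reg}$ open and dense, (\hyperref[ME2]{ME2}) for $\groupoidmaptot{\psi}{1}$ carries regular points into $\groupoid{Y}_0^{\reg}$, any two transformations agree there because $(s,t)^{-1}(y_0,y_0)=\{e(y_0)\}$, and continuity plus Hausdorffness of $\groupoid{Y}_1$ extends the equality (with a separate $n=0$ case). You instead pin down $\alpha'(x_0)$ at \emph{every} point: (\hyperref[NT1]{NT1}), smoothness of $\alpha'$, and \'etaleness of $\psi^1_0$ show that $\alpha'$ is locally $(s|_W)^{-1}\circ\psi^1_0$, whence $\kappa_{\groupoidtot{Y}{}}(\alpha'(x_0))=\germ_{\psi^1_0(x_0)}g_{x_0}$, and injectivity of $\kappa_{\groupoidtot{Y}{}}$ (Lemma~\ref{lem-21} again, which is where properness and effectiveness of $\groupoidtot{Y}{}$ enter) forces $\alpha'=\alpha$; I checked the local-section step and it is sound, since $\alpha'(\tx)\in W$ near $x_0$ and $s|_W$ is injective. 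Your route buys several things: it is pointwise, so it avoids the dense-locus/Hausdorff continuation and the dimension-zero special case; it shows uniqueness uses only (\hyperref[NT1]{NT1}) and continuity (never (\hyperref[NT2]{NT2})), with effectiveness of $\groupoidtot{X}{}$ playing no role and only $\groupoidmaptot{\psi}{1}$ needing to be a Morita equivalence; and the same local-section device gives a more careful smoothness verification than the paper's terse assertion that $s\circ\alpha=\psi^1_0$ with $s$ and $\psi^1_0$ \'etale makes $\alpha$ \'etale. The paper's density argument is softer in exchange: it never computes $\kappa_{\groupoidtot{Y}{}}$ on the transformations, and it generalizes directly to any situation where one merely knows that regular points are dense and preserved.
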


\begin{proof}
Let us assume (a) and let us prove (b).
Using~\cite[Exercise~5.16(4)]{MM} we get that both $\psi^1_0$ and $\psi^2_0$ are
\'etale maps, so for every point $x_0$ in $\groupoid{X}_0$ there is an open
neighborhood $W_{x_0}$ of $x_0$, such that both $\psi^1_0$ and $\psi^2_0$ are diffeomorphisms
if restricted to $W_{x_0}$. Then the map $f_{x_0}$ defined by

\[f_{x_0}:=\psi^2_0\circ\left(\psi^1_0|_{W_{x_0}}\right)^{-1}:\,\psi^1_0(W_{x_0})\longrightarrow
\psi^2_0(W_{x_0})\]
is a diffeomorphism from an open neighborhood of $\psi^1_0(x_0)$ to an open neighborhood
of $\psi^2_0(x_0)$. Moreover, since the topological maps
$|\groupoidmaptot{\psi}{1}|$ and $|\groupoidmaptot{\psi}{2}|$ (both defined from
$|\groupoidtot{X}{}|$ to $|\groupoidtot{Y}{}|$) coincide, then we get easily that $f_{x_0}$
commutes with the projection $\pr_{\groupoidtot{Y}{}}:\groupoidtot{Y}{}\twoheadrightarrow
|\groupoidtot{Y}{}|$ (see Remark~\ref{rem-06}). By Lemma~\ref{lem-21} (applied to
$\groupoidtot{Y}{}$), the set map
$\kappa_{\groupoidtot{Y}{}}(\psi^1_0(x_0),\psi^2_0(x_0),-)$ is a bijection.
Therefore, it makes sense to define

\[\alpha(x_0):=\kappa_{\groupoidtot{Y}{}}(\psi^1_0(x_0),\psi^2_0(x_0),-)^{-1}
(\germ_{\psi^1_0(x_0)}f_{x_0})\quad\forall\,x_0\in\groupoid{X}_0.\]

So we have defined a set map $\alpha:\groupoid{X}_0\rightarrow\groupoid{Y}_1$. Given any
point $x_0\in\groupoid{X}_0$, by
definition of $\kappa_{\groupoidtot{Y}{}}(\psi^1_0(x_0),\psi^2_0(x_0),-)$ (see \eqref{eq-40})
we have

\begin{equation}\label{eq-39}
s\circ\alpha(x_0)=\psi^1_0(x_0),\quad\quad t\circ\alpha(x_0)=\psi^2_0(x_0).
\end{equation}

Since both $s$ and $\psi^1_0$ are \'etale, the first identity
implies that $\alpha$ is an \'etale map; moreover \eqref{eq-39} proves that 
condition (\hyperref[NT1]{NT1}) holds for $\alpha$ (see Definition~\ref{def-08}).
We want to prove also that condition (\hyperref[NT2]{NT2}) is satisfied. In order
to prove that, let us fix any point $x_1\in\groupoid{X}_1$ and let us set 
$x_0:=s(x_1)$ and $x'_0:=t(x_1)$. Then we have:

\begin{gather*}
\germ_{\psi^1_0(x_0)}f_{x_0}=\germ_{x_0}\psi^2_0\cdot(\germ_{x_0}\psi^1_0)^{-1}= \\
=\germ_{x_0}\psi^2_0\cdot\germ_{x_1}s\cdot(\germ_{x_1}t)^{-1}\cdot\germ_{x_1}t
 \cdot(\germ_{x_1}s)^{-1}
 \cdot(\germ_{x_0}\psi^1_0)^{-1}\stackrel{(\ast)}{=} \\
\stackrel{(\ast)}{=}\germ_{\psi^2_1(x_1)}s\cdot\germ_{x_1}\psi^2_1\cdot
 (\germ_{x_1}t)^{-1}\cdot \\
\cdot\germ_{x_1}t\cdot(\germ_{x_1}\psi^1_1)^{-1}\cdot(\germ_{\psi^1_1(x_1)}s)^{-1}
 \stackrel{(\ast)}{=} \\
\stackrel{(\ast)}{=}\germ_{\psi^2_1(x_1)}s\cdot(\germ_{\psi^2_1(x_1)}t)^{-1}\cdot
 \germ_{x'_0}\psi^2_0\cdot \\
\cdot(\germ_{x'_0}\psi^1_0)^{-1}\cdot\germ_{\psi^1_1(x_1)}t\cdot
 (\germ_{\psi^1_1(x_1)}s)^{-1}= \\
=\left(\kappa_{\groupoidtot{Y}{}}(\psi^2_0(x_0),\psi^2_0(x'_0),\psi_1^2(x_1))\right)^{-1}
 \cdot\germ_{\psi^1_0(x'_0)}f_{x'_0}\cdot
 \kappa_{\groupoidtot{Y}{}}(\psi^1_0(x_0),\psi^1_0(x'_0),\psi^1_1(x_1)).
\end{gather*}
where all the identities denoted by $(\ast)$ are a consequence of Definition~\ref{def-07} for
$\groupoidmaptot{\psi}{1}$
and $\groupoidmaptot{\psi}{2}$. This implies that:

\begin{gather}
\nonumber \kappa_{\groupoidtot{Y}{}}(\psi^2_0(x_0),\psi^2_0(x'_0),\psi_1^2(x_1))\cdot
 \germ_{\psi^1_0(x_0)}f_{x_0}= \\
\label{eq-43} =\germ_{\psi^1_0(x'_0)}f_{x'_0}\cdot
 \kappa_{\groupoidtot{Y}{}}(\psi^1_0(x_0),\psi^1_0(x'_0),\psi^1_1(x_1)).
\end{gather}

Using the definition of $\kappa_{\groupoidtot{Y}{}}(-,-,-)$ (see Remark~\ref{rem-04} with
$\groupoidtot{X}{}$ replaced by $\groupoidtot{Y}{}$),
it is easy to prove that given any pair of objects
$\overline{y}_1,\widetilde{y}_1\in\groupoid{Y}_1$ with
$t(\overline{y}_1)=s(\widetilde{y}_1)$, we have

\[\kappa_{\groupoidtot{Y}{}}(s(\overline{y}_1),t(\widetilde{y}_1),
m(\overline{y}_1,\widetilde{y}_1))=\kappa_{\groupoidtot{Y}{}}(s(\widetilde{y}_1),
t(\widetilde{y}_1),\widetilde{y}_1)\cdot
\kappa_{\groupoidtot{Y}{}}(s(\overline{y}_1),t(\overline{y}_1),\overline{y}_1).\]

We apply this fact to \eqref{eq-43} together with the definition of $\alpha$
and the fact that the set maps $\kappa_{\groupoidtot{Y}{}}(y_0,y'_0,-)$ are
injective for every pair of points $y_0,y'_0$ in $\groupoid{Y}_0$ (see again
Lemma~\ref{lem-21}). So we conclude that:

\[m(\alpha(x_0),\psi_1^2(x_1))=m(\psi^1_1(x_1),\alpha(x'_0)).\]

In other terms, we have $m(\alpha\circ s(x_1),\psi_1^2(x_1))=m(\psi^1_1(x_1),\alpha\circ
t(x_1))$ for every point $x_1$ in $\groupoid{X}_1$.
This proves that condition (\hyperref[NT2]{NT2}) holds for $\alpha$, so
we have constructed a natural transformation $\alpha:\groupoidmaptot{\psi}{1}\Rightarrow
\groupoidmaptot{\psi}{2}$ as required in (b).\\

Now let us assume (b) and let us prove that (c) holds. So let us suppose that there is a pair of
natural transformations $\alpha,\beta:\groupoidmaptot{\psi}{1}\Rightarrow\groupoidmaptot{\psi}{2}$. We denote by
$\groupoid{X}_0^{\reg}$ the subset of $\groupoid{X}_0$ consisting of regular points, namely those points
$x_0$ such that $(s,t)^{-1}(x_0,x_0)=\{e(x_0)\}\subset\groupoid{X}_1$. Since $\groupoidtot{X}{}$ is effective
by hypothesis, then $\groupoid{X}_0^{\reg}$ is open and dense in $\groupoid{X}_0$. Let us fix any point
$x_0$ in $\groupoid{X}_0^{\reg}$; since $\groupoidmaptot{\psi}{1}$ is a Morita equivalence, then by
(\hyperref[ME2]{ME2}) (see Definition~\ref{def-12}) we get that $\psi^1_0(x_0)$ belongs to
$\groupoid{Y}_0^{\reg}$. Moreover, by (\hyperref[NT1]{NT1}) (see Definition~\ref{def-08}) we have

\[s\circ m(\alpha(x_0),i\circ\beta(x_0))=s\circ\alpha(x_0)=\psi^1_0(x_0)\]
and

\[t\circ m(\alpha(x_0),i\circ\beta(x_0))=t\circ i\circ\beta(x_0)=s\circ\beta(x_0)=\psi^1_0(x_0),\]
hence

\[m(\alpha(x_0),i\circ\beta(x_0))\in(s,t)^{-1}\left\{(\psi^1_0(x_0),\psi^1_0(x_0))\right\}=
\left\{e\circ\psi^1_0(x_0)\right\},\]
so $\alpha(x_0)=\beta(x_0)$ for each $x_0\in\groupoid{X}_0^{\reg}$. Now let us denote by $n$ the dimension
of the \'etale groupoid $\groupoidtot{X}{}$ (i.e.\ the dimension of $\groupoid{X}_0$, equivalently of
$\groupoid{X}_1$). If $n=0$, since $\groupoidtot{X}{}$ is effective we have that $\groupoid{X}_0=
\groupoid{X}_0^{\reg}$, so $\alpha=\beta$. If $n\geq 1$, then each connected component of $\groupoid{X}_0$
contains a dense subset where $\alpha$ and $\beta$ coincide. Since such functions are both continuous
(by Definition~\ref{def-08}), with target in the Hausdorff space $\groupoid{Y}_1$, then we conclude that
$\alpha$ and $\beta$ coincide everywhere, so (c) holds.\\

Lastly, let us assume that (c) holds and let us prove (a). Given any point $x_0\in\groupoid{X}_0$, using
condition (\hyperref[NT1]{NT1}) and Remark~\ref{rem-06} we get that

\begin{gather*}
|\groupoidmaptot{\psi}{1}|(\pr_{\groupoidtot{X}{}}(x_0))=
 \pr_{\groupoidtot{Y}{}}\circ\psi_0^1(x_0)=\pr_{\groupoidtot{Y}{}}\circ s\circ\alpha(x_0)= \\
=\pr_{\groupoidtot{Y}{}}\circ t\circ\alpha(x_0)=\pr_{\groupoidtot{Y}{}}\circ\psi_0^2(x_0)=
 |\groupoidmaptot{\psi}{2}|(\pr_{\groupoidtot{X}{}}(x_0)),
\end{gather*}
so (a) holds. 
\end{proof}

Note that the previous Lemma is false if we remove the hypothesis of effectiveness.\\

We denote by $\WEGpd$ the set of all Morita equivalences in $\EGpd$, i.e.\ the set of all Morita
equivalences between \'etale groupoids. Then we have
the following useful result (the second part of which was also proved in~\cite[Lemma~8.1]{PS}).

\begin{lem}\label{lem-17}
\cite[Corollary~4.2(b) and (c) and Proposition~2.11(ii)]{T4}
The class $\WEGpd$ is right saturated. In particular, it
satisfies the ``\,$2$-out-of-$3$-property'', i.e.\  given any
pair of morphisms $\groupoidmaptot{\phi}{}:\groupoidtot{X}{}\rightarrow\groupoidtot{Y}{}$ and
$\groupoidmaptot{\psi}{}:\groupoidtot{Y}{}
\rightarrow\groupoidtot{Z}{}$ between \'etale groupoids, if any $2$ out of $\{\groupoidmaptot{\phi}{},
\groupoidmaptot{\psi}{},\groupoidmaptot{\psi}{}\circ
\groupoidmaptot{\phi}{}\}$ are Morita equivalences, then so is the third one.
\end{lem}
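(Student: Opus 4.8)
The plan is to prove the saturation statement first and then obtain the $2$-out-of-$3$ property as a formal consequence, exactly as the phrasing ``in particular'' suggests. Recall from Definition~\ref{def-12} that I must show $\WEGpd=(\WEGpd)_{\operatorname{sat}}$. The inclusion $\WEGpd\subseteq(\WEGpd)_{\operatorname{sat}}$ is trivial: given any Morita equivalence $f:\groupoidtot{X}{}\rightarrow\groupoidtot{Y}{}$, I take as witnesses in Definition~\ref{def-12} the morphisms $g:=\id_{\groupoidtot{X}{}}$ and $h:=\id_{\groupoidtot{X}{}}$; then $f\circ g=f\in\WEGpd$, while $g\circ h=\id_{\groupoidtot{X}{}}$ is itself a Morita equivalence since conditions (\hyperref[ME1]{ME1}) and (\hyperref[ME2]{ME2}) hold trivially for an identity (the fibre product $\fiber{\groupoid{X}_1}{s}{\id}{\groupoid{X}_0}$ is $\groupoid{X}_1$, on which $t$ is a surjective submersion, and the square \eqref{eq-16} is the identity square, hence cartesian).

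The real content is the reverse inclusion $(\WEGpd)_{\operatorname{sat}}\subseteq\WEGpd$, and here I would argue abstractly. Since Morita equivalences between \'etale groupoids admit a right bicalculus of fractions (as in the setup of Theorem~\ref{theo-04}, now for $\EGpd$), I may invoke the general results of~\cite{T4}: for any such class, the saturation $\SETWsat$ is precisely the class of morphisms that the localization pseudofunctor $\functor{U}_{\SETW}$ sends to internal equivalences (\cite[Proposition~2.11(ii)]{T4}). The elementary reason is the usual bicategorical cancellation: if $f\in(\WEGpd)_{\operatorname{sat}}$ with witnesses $g,h$, then, writing $\functor{U}$ for $\functor{U}_{\WEGpd}$, both $\functor{U}(f)\circ\functor{U}(g)$ and $\functor{U}(g)\circ\functor{U}(h)$ are internal equivalences, so $\functor{U}(g)$ acquires a left quasi-inverse from the first and a right quasi-inverse from the second, hence is an equivalence, whence $\functor{U}(f)$ is an equivalence. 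Thus everything reduces to identifying the distinguished class $\{\,f:\functor{U}_{\WEGpd}(f)\text{ is an internal equivalence}\,\}$ with $\WEGpd$ itself.

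That identification is the crux, and I expect it to be the main obstacle. The forward implication (a Morita equivalence is inverted) is just the universal property of Theorem~\ref{theo-04}. For the reverse implication I would pass to the bicategory of \'etale groupoids with (right principal) bibundles, which by Pronk's theorem is equivalent to $\EGpd\left[\WEGpdinv\right]$; under this equivalence $\functor{U}_{\WEGpd}(f)$ is an internal equivalence if and only if the associated bibundle is invertible, and the latter holds if and only if $f$ is a Morita equivalence (the standard characterization, see~\cite{MM}). Combined with the previous paragraph this yields $(\WEGpd)_{\operatorname{sat}}\subseteq\WEGpd$; this is exactly what~\cite[Corollary~4.2(b) and~(c)]{T4} packages. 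The reason a direct computation from Definition~\ref{def-12} is not available is that the witnesses $g,h$ are \emph{arbitrary} morphisms of \'etale groupoids, not assumed to lie in $\WEGpd$, so one cannot simply cancel them by applying the cancellation laws to fibre products and cartesian squares built from (\hyperref[ME1]{ME1}) and (\hyperref[ME2]{ME2}); it is precisely this that forces the passage through the localization or the bibundle model.

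Finally, the $2$-out-of-$3$ property follows formally. Having shown that $\WEGpd$ equals the class of morphisms sent to internal equivalences by the pseudofunctor $\functor{U}_{\WEGpd}$, and since internal equivalences in any bicategory satisfy $2$-out-of-$3$ while a pseudofunctor preserves composites up to coherent invertible $2$-morphisms, I conclude that for any $\groupoidmaptot{\phi}{}:\groupoidtot{X}{}\rightarrow\groupoidtot{Y}{}$ and $\groupoidmaptot{\psi}{}:\groupoidtot{Y}{}\rightarrow\groupoidtot{Z}{}$, whenever two of $\groupoidmaptot{\phi}{},\groupoidmaptot{\psi}{},\groupoidmaptot{\psi}{}\circ\groupoidmaptot{\phi}{}$ are Morita equivalences so is the third.
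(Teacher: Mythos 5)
Your argument is sound, but note what the paper actually does here: it gives no proof of its own, importing the statement wholesale from \cite{T4} (Corollary~4.2(b),(c) for saturatedness, Proposition~2.11(ii) for the ``in particular''), and remarking that the $2$-out-of-$3$ part was also proved in \cite[Lemma~8.1]{PS}. So the fair comparison is with the proofs behind those citations, and your outline is in substance a reconstruction of them. The easy inclusion via identity witnesses, the cancellation argument showing that any $f$ in the saturation acquires left and right quasi-inverses after applying $\functor{U}_{\WEGpd}$ (hence is sent to an internal equivalence), and the formal derivation of $2$-out-of-$3$ at the end are all correct; and you rightly isolate the single step with genuine content, namely that a morphism of \'etale groupoids inverted by the localization must already be a Morita equivalence. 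Your route to that step, through the biequivalence of $\EGpd\left[\WEGpdinv\right]$ with the bicategory of (right principal) bibundles and the standard fact that the bibundle associated to $\groupoidmaptot{\phi}{}$ is biprincipal if and only if $\groupoidmaptot{\phi}{}$ is a Morita equivalence (see \cite{MM}), is legitimate and is essentially what \cite{T4} packages via its characterization of $\SETWsat$ as the class of morphisms that $\functor{U}_{\SETW}$ sends to internal equivalences; your attribution of that characterization specifically to Proposition~2.11(ii) is likely off (the citation pattern of the lemma suggests that item supplies the $2$-out-of-$3$ consequence of saturatedness, the inverted-class characterization being a separate result of \cite{T4}), but this is immaterial to correctness. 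Two points are worth keeping in mind. First, \cite[Lemma~8.1]{PS} proves the $2$-out-of-$3$ property by a direct groupoid-theoretic computation with no localization at all, which gives a more elementary proof of the second assertion; but, as you correctly observe, $2$-out-of-$3$ alone does not yield saturatedness, since the witnesses $g,h$ in Definition~\ref{def-12} are arbitrary morphisms not assumed to lie in $\WEGpd$ (saturation is in effect a $2$-out-of-$6$ condition), so some passage through the localization or the bibundle model really is needed for the first assertion. Second, your appeal to ``Pronk's theorem'' for the bibundle model should more precisely point to the comparison results in \cite{Pr2} and \cite{MM} rather than to \cite[Corollary~43]{Pr}, which concerns the biequivalence with stacks; with that citation adjusted, the proof is complete.
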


Moreover we have:

\begin{prop}\label{prop-02}
\cite[\S~4.1]{Pr} The set $\WEGpd$ admits a right bicalculus of fractions, so there are a
bicategory $\EGpd\left[\WEGpdinv\right]$ and a pseudofunctor

\[\functor{U}_{\WEGpd}:\,\EGpd\longrightarrow\EGpd\left[\WEGpdinv\right]\]
that sends each Morita equivalence between \'etale groupoids to an internal equivalence and that is
universal with respect to such a property \emphatic{(}see \emphatic{Remark~\ref{rem-03})}.
\end{prop}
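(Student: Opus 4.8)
The plan is to verify that the pair $(\EGpd,\WEGpd)$ satisfies the five axioms (\hyperref[BF1]{BF1})--(\hyperref[BF5]{BF5}) and then to invoke Theorem~\ref{theo-04}, which immediately produces the bicategory $\EGpd\left[\WEGpdinv\right]$ together with the universal pseudofunctor $\functor{U}_{\WEGpd}$. Three of the axioms come for free. Axiom (\hyperref[BF1]{BF1}) holds because the identity of any \'etale groupoid satisfies (\hyperref[ME1]{ME1}) and (\hyperref[ME2]{ME2}) trivially: the relevant fiber product collapses onto $\groupoid{Y}_1$, and the square \eqref{eq-16} becomes an identity square, hence cartesian. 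Axiom (\hyperref[BF2]{BF2}) is precisely closure under composition, which is part of the ``$2$-out-of-$3$'' property recorded in Lemma~\ref{lem-17}. Axiom (\hyperref[BF5]{BF5}) follows from the invariance of conditions (\hyperref[ME1]{ME1}) and (\hyperref[ME2]{ME2}) under invertible $2$-morphisms: a morphism $2$-isomorphic to a Morita equivalence induces the same essential data and is therefore again a Morita equivalence (this is also subsumed by the right saturation of $\WEGpd$ stated in Lemma~\ref{lem-17}).

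The genuine content lies in (\hyperref[BF3]{BF3}) and (\hyperref[BF4]{BF4}). For (\hyperref[BF3]{BF3}), given a Morita equivalence $\groupoidmaptot{w}{}:\groupoidtot{A}{}\rightarrow\groupoidtot{B}{}$ and an arbitrary morphism $\groupoidmaptot{f}{}:\groupoidtot{C}{}\rightarrow\groupoidtot{B}{}$, I would build the homotopy (weak) fiber product $\groupoidtot{D}{}$, whose object manifold is

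\[\groupoid{D}_0:=\Big\{(a_0,b_1,c_0)\in\groupoid{A}_0\times\groupoid{B}_1\times\groupoid{C}_0\,\,\textrm{s.t.}\,\,s(b_1)=w_0(a_0),\,\,t(b_1)=f_0(c_0)\Big\}\]

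and whose arrow manifold is the analogous triple fiber product formed with $\groupoid{A}_1$, $\groupoid{B}_1$ and $\groupoid{C}_1$. Since the source and target maps of an \'etale groupoid are local diffeomorphisms, these fiber products are automatically manifolds, so $\groupoidtot{D}{}$ is again an \'etale groupoid. The projection $\groupoidmaptot{w}{\prime}:\groupoidtot{D}{}\rightarrow\groupoidtot{C}{}$, the projection $\groupoidmaptot{f}{\prime}:\groupoidtot{D}{}\rightarrow\groupoidtot{A}{}$, and the tautological family of arrows $b_1$ furnish the required invertible $2$-morphism $\alpha:\groupoidmaptot{f}{}\circ\groupoidmaptot{w}{\prime}\Rightarrow\groupoidmaptot{w}{}\circ\groupoidmaptot{f}{\prime}$; the heart of the matter is that $\groupoidmaptot{w}{\prime}$ is again a Morita equivalence. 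This is the stability of Morita equivalences under weak pullback: condition (\hyperref[ME1]{ME1}) for $\groupoidmaptot{w}{\prime}$ is inherited from (\hyperref[ME1]{ME1}) for $\groupoidmaptot{w}{}$ (the surjective submersion $t\circ\pi^1$ pulls back to a surjective submersion), while the cartesian square (\hyperref[ME2]{ME2}) for $\groupoidmaptot{w}{\prime}$ follows by pasting the cartesian square for $\groupoidmaptot{w}{}$ with the defining pullback squares of $\groupoidtot{D}{}$.

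For (\hyperref[BF4]{BF4}) the guiding principle is that a Morita equivalence behaves like a fully faithful, essentially surjective functor, so it \emph{reflects} $2$-morphisms after passing to a suitable cover. Concretely, in (\hyperref[BF4a]{BF4a}) the datum $\alpha:\groupoidmaptot{w}{}\circ\groupoidmaptot{f}{1}\Rightarrow\groupoidmaptot{w}{}\circ\groupoidmaptot{f}{2}$ is a smooth map into $\groupoid{A}_1$; using the cartesian property (\hyperref[ME2]{ME2}) of $\groupoidmaptot{w}{}$ one lifts it to a map into $\groupoid{B}_1$ defined on the object manifold of an \'etale cover $\groupoidmaptot{v}{}:\groupoidtot{D}{}\rightarrow\groupoidtot{C}{}$ (itself a Morita equivalence), yielding the desired $\beta$ with $\alpha\ast i_{\groupoidmaptot{v}{}}=i_{\groupoidmaptot{w}{}}\ast\beta$. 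Invertibility of $\beta$ whenever $\alpha$ is invertible, i.e.\ (\hyperref[BF4b]{BF4b}), follows because the same lift applied to $\alpha^{-1}$ produces an inverse to $\beta$. Finally (\hyperref[BF4c]{BF4c}) compares two solutions: one passes to a common refinement $\groupoidtot{E}{}$ covering both $\groupoidtot{D}{}$ and $\groupoidtot{D}{\prime}$ and uses the \emph{uniqueness} of lifts through $\groupoidmaptot{w}{}$ (again a consequence of the cartesian square) to produce the invertible comparison $2$-morphism $\zeta$ satisfying the required pasting identity.

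The main obstacle I expect is not the existence statements but the manifold-level bookkeeping in (\hyperref[BF3]{BF3}) and the coherence identity in (\hyperref[BF4c]{BF4c}): one must check carefully that every fiber product involved is transversal (for \'etale groupoids the submersion property of $s$ and $t$ already suffices, so properness is not actually needed here) and that the $2$-morphisms $\zeta$, $\beta$, $\beta'$ assemble into a single equation with the correct associators. Once these diagram chases are completed, all five axioms hold and Theorem~\ref{theo-04} yields the bicategory of fractions and the universal pseudofunctor $\functor{U}_{\WEGpd}$, which establishes the proposition.
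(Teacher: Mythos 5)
The paper contains no proof of Proposition~\ref{prop-02}: it is imported wholesale from Pronk, as the citation \cite[\S~4.1]{Pr} in the statement indicates, the only added content being Remark~\ref{rem-03}, which observes that Pronk's hypothesis (\hyperref[BF1prime]{BF1})$'$ may be weakened to (\hyperref[BF1]{BF1}). So your proposal is not competing with an argument in the paper; it reconstructs the cited one, and it does so along exactly Pronk's lines: direct verification of (\hyperref[BF1]{BF1})--(\hyperref[BF5]{BF5}), with the weak (comma) fiber product carrying (\hyperref[BF3]{BF3}), lifting through the cartesian square (\hyperref[ME2]{ME2}) carrying (\hyperref[BF4]{BF4}), and Theorem~\ref{theo-04} concluding. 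This also mirrors how the paper itself later proves Proposition~\ref{prop-03}, where the proper/effective cases of (\hyperref[BF3]{BF3}) and (\hyperref[BF4]{BF4}) are deduced from the \'etale case you are establishing here.

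Three points of repair or comment. First, your justification for smoothness of the weak pullback --- that \'etaleness of $s$ and $t$ makes ``these fiber products automatically manifolds'', repeated in your closing parenthetical --- is false as stated, and it concerns the crux of (\hyperref[BF3]{BF3}). \'Etaleness of $s$ does make $\groupoid{A}_0\times_{w_0,s}\groupoid{B}_1$ a manifold, but the further fiber product with $\groupoid{C}_0$ is taken along $t\circ\pr_2$, which is locally a local diffeomorphism composed with $w_0$; for an arbitrary morphism $\groupoidmaptot{w}{}$ the triple fiber product is locally a fiber product of two arbitrary smooth maps into $\groupoid{B}_0$ and can be singular. What makes $\groupoid{D}_0$ smooth is precisely that $\groupoidmaptot{w}{}$ is a Morita equivalence: $t\circ\pr_2$ is, up to the order of factors, the map $t\circ\pi^1$ of (\hyperref[ME1]{ME1}), hence a surjective submersion, so pulling back along $f_0$ is harmless. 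You invoke (\hyperref[ME1]{ME1}) one sentence later to handle $\groupoidmaptot{w}{\prime}$, so the fix costs nothing, but it must be the stated reason. Second, your (\hyperref[BF4]{BF4}) is heavier than necessary: given $\alpha$ with $(s,t)\circ\alpha=(w_0\circ f^1_0,\,w_0\circ f^2_0)$, the cartesian square (\hyperref[ME2]{ME2}) already produces a unique smooth $\beta:\groupoid{C}_0\rightarrow\groupoid{B}_1$ with $w_1\circ\beta=\alpha$ and $(s,t)\circ\beta=(f^1_0,f^2_0)$, and the naturality condition (\hyperref[NT2]{NT2}) for $\beta$ follows from that for $\alpha$ by the same uniqueness; so one may take $\operatorname{v}=\id$, no \'etale cover is needed, and (\hyperref[BF4c]{BF4c}) collapses to this uniqueness as well. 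Third, a caution about circularity: Lemma~\ref{lem-17}, which you cite for (\hyperref[BF2]{BF2}) and parenthetically for (\hyperref[BF5]{BF5}), rests on results of \cite{T4} that presuppose Pronk's calculus for $(\EGpd,\WEGpd)$; both axioms should instead be verified directly, which is elementary for Morita equivalences (cf.~\cite[Chapter~5]{MM}), and indeed your direct gloss on (\hyperref[BF5]{BF5}) already suffices.
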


We denote by $\WPEGpd$, respectively by $\WPEEGpd$, the set of all Morita equivalences between
proper and \'etale groupoids, respectively between proper, effective and \'etale groupoids. Then
we have the following standard result:

\begin{prop}\label{prop-03}
The sets $\WPEGpd$ and $\WPEEGpd$ admit a right bicalculus of fractions \emphatic{(}in $\PEGpd$ and
$\PEEGpd$ respectively\emphatic{)}, so there are a pair of bicategories 
$\PEGpd\left[\WPEGpdinv\right]$ and $\PEEGpd\left[\WPEEGpdinv\right]$ and pseudofunctors

\[\functor{U}_{\WPEGpd}:\,\PEGpd\longrightarrow\PEGpd\left[\WPEGpdinv\right],\]

\begin{equation}\label{eq-11}
\functor{U}_{\WPEEGpd}:\,\PEEGpd\longrightarrow\PEEGpd\left[\WPEEGpdinv\right]
\end{equation}
that send each Morita equivalence between proper, \emphatic{(}effective\emphatic{)} \'etale
groupoids to an internal equivalence and that are universal with respect to such a property \emphatic{(}see
\mbox{\emphatic{Remark~\ref{rem-03}}\emphatic{)}.} Moreover, both $\PEGpd\left[\WPEGpdinv\right]$ and $\PEEGpd
\left[\WPEEGpdinv\right]$ are full bi-subcategories of $\EGpd\left[\WEGpdinv\right]$ and we have a
commutative diagram as follows:

\[\begin{tikzpicture}[xscale=2.2,yscale=-1.2]
    \node (A0_0) at (0, 0) {$\PEEGpd$};
    \node (A0_2) at (2, 0) {$\PEGpd$};
    \node (A0_4) at (4, 0) {$\EGpd$};
    \node (A2_4) at (4, 2) {$\EGpd\left[\WEGpdinv\right]$,};
    \node (A2_0) at (0, 2) {$\PEEGpd\left[\WPEEGpdinv\right]$};
    \node (A2_2) at (2, 2) {$\PEGpd\left[\WPEGpdinv\right]$};
    
    \node (A1_1) at (1.2, 1) {$\curvearrowright$};
    \node (A1_3) at (3.2, 1) {$\curvearrowright$};
    
    \path (A0_0) edge [->]node [auto] {$\scriptstyle{\functor{U}_{\WPEEGpd}}$} (A2_0);
    \path (A2_0) edge [right hook->]node [auto] {$\scriptstyle{}$} (A2_2);
    \path (A0_2) edge [->]node [auto] {$\scriptstyle{\functor{U}_{\WPEGpd}}$} (A2_2);
    \path (A0_0) edge [right hook->]node [auto] {$\scriptstyle{}$} (A0_2);
    \path (A0_4) edge [->]node [auto] {$\scriptstyle{\functor{U}_{\WEGpd}}$} (A2_4);
    \path (A0_2) edge [right hook->]node [auto] {$\scriptstyle{}$} (A0_4);
    \path (A2_2) edge [right hook->]node [auto] {$\scriptstyle{}$} (A2_4);
\end{tikzpicture}\]
where each map without a name denotes an embedding as full $2$-subcategory or as full
bi-subcategory.
\end{prop}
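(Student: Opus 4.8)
The plan is to obtain both bicalculi of fractions by restricting the one already available on $\EGpd$ (Proposition~\ref{prop-02}), exploiting a single structural fact: \emph{properness and effectiveness are invariant under Morita equivalence}, i.e.\ any \'etale groupoid that is Morita equivalent to a proper (respectively proper and effective) \'etale groupoid is itself proper (respectively proper and effective). This is standard (see e.g.~\cite[Chapter~5]{MM}), and together with Lemma~\ref{lem-17} it will let me carry out all the constructions required by the axioms (\hyperref[BF]{BF}) \emph{inside} the full $2$-subcategories $\PEGpd$ and $\PEEGpd$. Throughout I write $\SETW_\ast$ for either $\WPEGpd$ or $\WPEEGpd$, and I use that by definition $\WPEGpd=\WEGpd\cap\PEGpd$ and $\WPEEGpd=\WEGpd\cap\PEEGpd$.

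First I would verify that $\SETW_\ast$ satisfies (\hyperref[BF1]{BF1})--(\hyperref[BF5]{BF5}) in the relevant subcategory. Axioms (\hyperref[BF1]{BF1}) and (\hyperref[BF2]{BF2}) are immediate, since identities are Morita equivalences and Morita equivalences compose (Lemma~\ref{lem-17}); (\hyperref[BF5]{BF5}) follows at once from the right saturation of $\WEGpd$ (Lemma~\ref{lem-17}) together with the fact that an invertible $2$-morphism relates morphisms with the \emph{same} source and target, so both legs live in the same subcategory. For (\hyperref[BF3]{BF3}) and (\hyperref[BF4]{BF4}) I would take \emph{verbatim} the objects and morphisms produced by the corresponding axiom for $(\EGpd,\WEGpd)$: from data lying in the subcategory, the construction in $\EGpd$ outputs an \'etale groupoid $\groupoidtot{Z}{}$ (the object called $D$, or $E$ in (\hyperref[BF4c]{BF4c})) together with a morphism in $\WEGpd$ (namely $w'$, $v$, or $v\circ u$) mapping it to one of the original objects, which already belongs to the subcategory. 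As that morphism is a Morita equivalence, the Morita-invariance fact forces $\groupoidtot{Z}{}$ to be proper (and effective, in the $\PEEGpd$ case), hence an object of the subcategory; fullness then makes every accompanying $2$-morphism available there too. With (\hyperref[BF]{BF}) verified, Theorem~\ref{theo-04} produces the bicategories $\PEGpd\left[\WPEGpdinv\right]$, $\PEEGpd\left[\WPEEGpdinv\right]$ and the universal pseudofunctors $\functor{U}_{\WPEGpd}$, $\functor{U}_{\WPEEGpd}$.

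For the full embeddings, I would apply the universal property of $\functor{U}_{\WPEGpd}$ (respectively $\functor{U}_{\WPEEGpd}$) to the composite $\PEGpd\hookrightarrow\EGpd\xrightarrow{\functor{U}_{\WEGpd}}\EGpd\left[\WEGpdinv\right]$: since $\WPEGpd\subseteq\WEGpd$ is sent to internal equivalences, this composite factors, up to canonical invertible data, through a comparison pseudofunctor $\PEGpd\left[\WPEGpdinv\right]\to\EGpd\left[\WEGpdinv\right]$ that is the identity on objects. To see it is a biequivalence onto the full sub-bicategory spanned by the objects of $\PEGpd$, I would invoke Pronk's explicit description of the hom-categories in terms of spans $\groupoidtot{X}{}\xleftarrow{w}\groupoidtot{Z}{}\xrightarrow{}\groupoidtot{Y}{}$ with $w\in\WEGpd$, and the corresponding description of the $2$-cells. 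When the outer objects lie in $\PEGpd$, the leg $w$ is a Morita equivalence onto a proper (effective) groupoid, so by the same invariance fact the middle object $\groupoidtot{Z}{}$ is automatically proper (effective); the identical remark applies to every intermediate object occurring in the equivalence classes that define the $2$-morphisms. Hence on each pair of objects of $\PEGpd$ the comparison pseudofunctor restricts to the canonical inclusion of the hom-category of $\PEGpd\left[\WPEGpdinv\right]$ into that of $\EGpd\left[\WEGpdinv\right]$, and that inclusion is an equivalence (indeed no span and no $2$-cell is lost under the constraint that all intermediate objects be proper, respectively proper and effective). Being the identity on objects and a local equivalence, the comparison pseudofunctor is a biequivalence onto the full sub-bicategory it determines; restricting the same argument one step further yields $\PEEGpd\left[\WPEEGpdinv\right]$ as a full bi-subcategory of both $\PEGpd\left[\WPEGpdinv\right]$ and $\EGpd\left[\WEGpdinv\right]$.

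Finally, commutativity of the displayed diagram is built into the construction: each bottom horizontal arrow is precisely the comparison pseudofunctor obtained above, and the ``make the square commute'' clause of the universal property in Theorem~\ref{theo-04} supplies the required (pseudo-)commutativity of every square; alternatively one may package the whole argument by citing the abstract results on restricting a bicalculus of fractions to a full sub-bicategory from~\cite{T4} and~\cite{T5}. The main obstacle I anticipate is not the axiom check --- which is short once Morita-invariance is in hand --- but the bookkeeping in the full-embedding step: one must confirm that Pronk's equivalence relation on spans and on $2$-cells is unaffected by requiring all intermediate objects to be proper (effective), so that no representative disappears upon restriction. This is exactly the point at which the observation that every groupoid Morita equivalent to a proper (effective) one is again proper (effective) does the decisive work.
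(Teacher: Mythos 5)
Your proposal is correct and takes essentially the same route as the paper's proof: both verify axioms (\hyperref[BF1]{BF1})--(\hyperref[BF5]{BF5}) for $\WPEGpd$ and $\WPEEGpd$ by importing the data produced by the bicalculus on $(\EGpd,\WEGpd)$ and using the Morita-invariance of properness (\cite[Proposition~5.26]{MM}) and, for \'etale groupoids, of effectiveness (\cite[Example~5.21(2)]{MM}) to conclude that every intermediate object stays in the full $2$-subcategory. Your more detailed treatment of the full-embedding step (comparison pseudofunctor plus the span description of hom-categories) merely spells out what the paper dismisses as straightforward from the explicit construction in \cite[\S~2.2 and~2.3]{Pr}.
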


\begin{proof}
By~\cite[Proposition~5.26]{MM}, if $\groupoidtot{X}{}$ and $\groupoidtot{Y}{}$ are Morita equivalent
Lie groupoids, then the first Lie groupoid is proper if and only if the second one is so. Moreover,
by~\cite[Example~5.21(2)]{MM} if $\groupoidtot{X}{}$ and $\groupoidtot{Y}{}$ are both \'etale and
they are Morita equivalent, then the first Lie groupoid is effective if and only if the second one is
so (note that being effective is not preserved by Morita equivalences if we remove the \'etale
condition).\\

Now axioms (\hyperref[BF1]{BF1}), (\hyperref[BF2]{BF2}) and (\hyperref[BF5]{BF5}) are easily verified
for the set $\WPEEGpd$. Let us consider (\hyperref[BF3]{BF3}), so let us fix any triple of proper,
effective, \'etale groupoids and any pair of morphisms as follows

\[\begin{tikzpicture}[xscale=2.5,yscale=-1.2]
    \node (A0_0) at (0, 0) {$\groupoidtot{X}{}$};
    \node (A0_1) at (1, 0) {$\groupoidtot{Y}{}$};
    \node (A0_2) at (2, 0) {$\groupoidtot{Z}{}$,};
    \path (A0_0) edge [->]node [auto] {$\scriptstyle{\groupoidmaptot{\psi}{}}$} (A0_1);
    \path (A0_2) edge [->]node [auto,swap] {$\scriptstyle{\groupoidmaptot{\phi}{}}$} (A0_1);
\end{tikzpicture}\]
with $\groupoidmaptot{\psi}{}$ Morita equivalence. By Proposition~\ref{prop-02} we get that
(\hyperref[BF3]{BF3}) holds in $\EGpd$; therefore there are an \'etale groupoid
$\groupoidtot{U}{}$, a Morita equivalence $\groupoidmaptot{\psi}{\prime}$ and a morphism
$\groupoidmaptot{\phi}{\prime}$ as follows

\[\begin{tikzpicture}[xscale=2.5,yscale=-1.2]
    \node (A0_0) at (0, 0) {$\groupoidtot{X}{}$};
    \node (A0_1) at (1, 0) {$\groupoidtot{U}{}$};
    \node (A0_2) at (2, 0) {$\groupoidtot{Z}{}$};
    
    \path (A0_1) edge [->]node [auto,swap] {$\scriptstyle{\groupoidmaptot{\phi}{\prime}}$} (A0_0);
    \path (A0_1) edge [->]node [auto] {$\scriptstyle{\groupoidmaptot{\psi}{\prime}}$} (A0_2);
\end{tikzpicture}\]
and a natural transformation $\alpha:\groupoidmaptot{\psi}{}\circ\groupoidmaptot{\phi}{\prime}
\Rightarrow\groupoidmaptot{\phi}{}
\circ\groupoidmaptot{\psi}{\prime}$. Now $\groupoidtot{Z}{}$ and $\groupoidtot{U}{}$ are \'etale
groupoids that are weakly equivalent and the first one is proper and effective; so also the second
one is proper and effective. Therefore axiom (\hyperref[BF3]{BF3}) holds for the set $\WPEEGpd$. An
analogous proof shows
that also (\hyperref[BF4]{BF4}) holds for $\WPEEGpd$. The proofs for the set $\WPEGpd$ are
analogous.
Therefore there are bicategories and
pseudofunctors as in the claim. The last part of the claim is straightforward by looking at the
explicit construction of the bicategories of fractions in~\cite[\S~2.2 and~2.3]{Pr}
and using the remarks at the beginning of this proof.
\end{proof}

The bicategory $\PEGpd\left[\WPEGpdinv\right]$ is usually called in the literature the
\emph{bicategory of orbifolds} (from the point of view of Lie groupoids); its bi-subcategory
$\PEEGpd\left[\WPEEGpdinv\right]$ is usually called the \emph{bicategory of effective \emphatic{(}or
reduced\emphatic{)} orbifolds}. We refer to Description~\ref{descrip-02} below for
an explicit description of the last bicategory mentioned above.

\section{Weak equivalences, unit weak equivalences and refinements in $\RedAtl$}
In this section we introduce the notions of weak equivalences, unit weak equivalences and
refinements in $\RedAtl$. Using the $2$-functor
$\functor{F}^{\red}$, the definition of weak equivalences will match with the notion of Morita
equivalences between proper, effective, \'etale groupoids (see Proposition~\ref{prop-04} below).

\begin{defin}\label{def-10}
Let us fix any pair of reduced orbifold atlases $\atlas{X}=\{\unifX{i}\}_{i\in I}$ on $X$ and
$\atlas{Y}$ on $Y$ and any morphism 

\begin{equation}\label{eq-33}
\atlasmap{\operatorname{w}}{}:=\Big(\operatorname{w},\overline{\operatorname{w}},\left\{
\widetilde{\operatorname{w}}_i\right\}_{i\in I},\left[P_{\operatorname{w}},\nu_{\operatorname{w}}
\right]\Big):\,\atlas{X}\longrightarrow\atlas{Y}.
\end{equation}

Then we say that $\atlasmap{\operatorname{w}}{}$ is a \emph{refinement} if and only if the following
$2$ conditions hold:

\begin{enumerate}[({REF}1)]
 \item\label{REF1} $X=Y$ and the continuous map $\operatorname{w}:X\rightarrow X$ is equal to $\id_X$;
 \item\label{REF2} for each $i\in I$ the smooth map $\widetilde{\operatorname{w}}_i$ is an open
   embedding; assuming (\hyperref[REF1]{REF1}), this implies that $[\atlas{X}]=[\atlas{Y}]$.
\end{enumerate}

We say that $\atlasmap{\operatorname{w}}{}$ is a \emph{unit weak equivalence of reduced orbifold atlases}
if and only if it satisfies condition (\hyperref[REF1]{REF1}) and

\begin{enumerate}[({UWE})]
 \item\label{UWE} for each $i\in I$ the chart $(\tX_i,G_i,\pi_i)$ on $X$ is
  compatible with the atlas $\atlas{Y}$; assuming (\hyperref[REF1]{REF1}), this is equivalent
  to imposing that $[\atlas{X}]=[\atlas{Y}]$.
\end{enumerate}

We say that $\atlasmap{\operatorname{w}}{}$ is a \emph{weak equivalence of reduced orbifold
atlases} if and only if it satisfies the following conditions:

\begin{enumerate}[({WE}1)]
 \item\label{WE1} the continuous map $\operatorname{w}:X\rightarrow Y$ is an homeomorphism;
 \item\label{WE2} for each $i\in I$ the chart $(\tX_i,G_i,\operatorname{w}\circ\pi_i)$ on
  $Y$ is compatible with the atlas $\atlas{Y}$; assuming (\hyperref[WE1]{WE1}),
  this is equivalent to imposing that
  $[\operatorname{w}_{\ast}(\atlas{X})]=[\atlas{Y}]$ (see Definition~\ref{def-13}).
\end{enumerate}

We say that $\atlasmap{\operatorname{w}}{}$ is an \emph{open embedding of reduced orbifold
atlases} if and only if it satisfies the following conditions:

\begin{enumerate}[({OE}1)]
 \item\label{OE1} the continuous map $\operatorname{w}:X\rightarrow Y$ is a
  topological open embedding;
 \item\label{OE2} for each $i\in I$ the chart $(\tX_i,G_i,\operatorname{w}\circ\pi_i)$
  on $Y$ is compatible with the atlas $\atlas{Y}$; assuming (\hyperref[OE1]{OE1}),
  this is equivalent to imposing that $[\operatorname{w}_{\ast}(\atlas{X})]=
  [\atlas{Y}|_{\operatorname{w}(X)}]$, where $\atlas{Y}|_{\operatorname{w}(X)}$ is the reduced
  orbifold atlas induced by $\atlas{Y}$ on the open set $\operatorname{w}(X)\subseteq Y$.
\end{enumerate}
\end{defin}

So we have the following chain of inclusions:

\begin{gather*}
\{\textrm{refinements}\}\subset\{\textrm{unit weak equivalences}\}\subset \\
\subset\{\textrm{weak equivalences}\}\subset\{\textrm{open embeddings}\}.
\end{gather*}

\begin{lem}\label{lem-13}
Let us fix any pair of reduced orbifold atlases

\begin{equation}\label{eq-12}
\atlas{X}=\left\{\left(\tX_i,G_i,\pi_i\right)\right\}_{i\in I},\quad\quad
\atlas{Y}=\left\{\left(\tY_j,H_j,\chi_j\right)\right\}_{j\in J}
\end{equation}
and any open embedding $\atlasmap{\operatorname{w}}{}$ as in \eqref{eq-33}.
Then for each $i\in I$ the smooth map $\widetilde{\operatorname{w}}_i:\tX_i\rightarrow
\tY_{\overline{\operatorname{w}}(i)}$ is \'etale \emphatic{(}i.e.\ a local diffeomorphism\emphatic{)}.
\end{lem}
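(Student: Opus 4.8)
The plan is to show that $\widetilde{\operatorname{w}}_i$ is a local diffeomorphism at each point by comparing it, near that point, with a genuine change of charts supplied by condition (\hyperref[OE2]{OE2}), and then invoking the rigidity of lifts recorded in~\cite[Proposition~A.1]{MP}.

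First I would fix $i\in I$ and a point $\tx_i\in\tX_i$, and set $\ty:=\widetilde{\operatorname{w}}_i(\tx_i)\in\tY_{\overline{\operatorname{w}}(i)}$. By (\hyperref[M3]{M3}) we have $\chi_{\overline{\operatorname{w}}(i)}\circ\widetilde{\operatorname{w}}_i=\operatorname{w}\circ\pi_i$, so in particular $(\operatorname{w}\circ\pi_i)(\tx_i)=\chi_{\overline{\operatorname{w}}(i)}(\ty)$. Condition (\hyperref[OE2]{OE2}) asserts that the chart $(\tX_i,G_i,\operatorname{w}\circ\pi_i)$ is compatible with $\atlas{Y}$, hence in particular with $(\tY_{\overline{\operatorname{w}}(i)},H_{\overline{\operatorname{w}}(i)},\chi_{\overline{\operatorname{w}}(i)})$; applying this compatibility to the pair $\tx_i$, $\ty$ (and adjusting by an element of $H_{\overline{\operatorname{w}}(i)}$ as in Definition~\ref{def-14}) yields a change of charts $\mu$ from $(\tX_i,G_i,\operatorname{w}\circ\pi_i)$ to $(\tY_{\overline{\operatorname{w}}(i)},H_{\overline{\operatorname{w}}(i)},\chi_{\overline{\operatorname{w}}(i)})$ with $\tx_i\in\dom\mu$ and $\mu(\tx_i)=\ty$. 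In particular $\mu$ is a diffeomorphism onto the open set $\cod\mu\subseteq\tY_{\overline{\operatorname{w}}(i)}$ and satisfies $\chi_{\overline{\operatorname{w}}(i)}\circ\mu=(\operatorname{w}\circ\pi_i)|_{\dom\mu}$.

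The key step is then to straighten $\widetilde{\operatorname{w}}_i$ by $\mu$. Since $\widetilde{\operatorname{w}}_i$ is continuous and $\widetilde{\operatorname{w}}_i(\tx_i)=\ty\in\cod\mu$, I can choose a connected open neighborhood $U'\subseteq\dom\mu$ of $\tx_i$ with $\widetilde{\operatorname{w}}_i(U')\subseteq\cod\mu$ and set $\rho:=\mu^{-1}\circ\widetilde{\operatorname{w}}_i|_{U'}$, a smooth map $U'\to\tX_i$. For every $\tx\in U'$ one has $\mu(\rho(\tx))=\widetilde{\operatorname{w}}_i(\tx)$; applying $\chi_{\overline{\operatorname{w}}(i)}$ and using $\chi_{\overline{\operatorname{w}}(i)}\circ\mu=\operatorname{w}\circ\pi_i$ together with (\hyperref[M3]{M3}) gives $\operatorname{w}(\pi_i(\rho(\tx)))=\operatorname{w}(\pi_i(\tx))$, whence $\pi_i\circ\rho=\pi_i|_{U'}$ because $\operatorname{w}$ is injective by (\hyperref[OE1]{OE1}). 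Thus $\rho$ is a smooth lift of $\pi_i|_{U'}$ through $\pi_i$ on the connected set $U'$, and~\cite[Proposition~A.1]{MP} applied to the reduced chart $(\tX_i,G_i,\pi_i)$ produces a unique $g\in G_i$ with $\rho=g|_{U'}$. Consequently $\widetilde{\operatorname{w}}_i|_{U'}=\mu\circ g|_{U'}$ is a composition of diffeomorphisms, hence a diffeomorphism onto an open subset of $\tY_{\overline{\operatorname{w}}(i)}$; as $\tx_i$ was arbitrary, this shows $\widetilde{\operatorname{w}}_i$ is \'etale.

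The main obstacle is conceptual rather than computational: a priori $\widetilde{\operatorname{w}}_i$ is only a smooth local lift, so one cannot directly apply the uniqueness of changes of charts to it. The whole force of the argument is that, after composing with $\mu^{-1}$, the map $\rho$ becomes a smooth self-lift of $\pi_i$, for which~\cite[Proposition~A.1]{MP} upgrades mere smoothness to the rigidity $\rho=g$ (in effect ruling out non-smooth phenomena such as folding along reflection hyperplanes of the $G_i$-action); this is exactly what turns $\widetilde{\operatorname{w}}_i$ from a smooth map into a local diffeomorphism. The only point I would still need to record carefully is that the adjustment of $\mu$ by a group element achieving $\mu(\tx_i)=\ty$ is legitimate, which holds because $\mu(\tx_i)$ and $\ty$ lie in the same $H_{\overline{\operatorname{w}}(i)}$-orbit over $(\operatorname{w}\circ\pi_i)(\tx_i)$.
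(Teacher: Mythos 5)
Your proof is correct, and it is a mirror image of the paper's argument rather than a copy of it. Both proofs hinge on the same two ingredients: condition (\hyperref[OE2]{OE2}) furnishing a genuine change of charts from $(\tX_i,G_i,\operatorname{w}\circ\pi_i)$ to $(\tY_{\overline{\operatorname{w}}(i)},H_{\overline{\operatorname{w}}(i)},\chi_{\overline{\operatorname{w}}(i)})$ near the given point, and the rigidity of merely smooth lifts over effective finite group actions. The paper conjugates on the \emph{target} side: it sets $\overline{\lambda}:=\widetilde{\operatorname{w}}_i\circ\lambda^{-1}$ on $\cod\lambda$, checks $\chi_{\overline{\operatorname{w}}(i)}\circ\overline{\lambda}=\chi_{\overline{\operatorname{w}}(i)}|_{\cod\lambda}$, and applies rigidity in the group $H_{\overline{\operatorname{w}}(i)}$ (citing~\cite[Lemma~2.11]{MM}); this needs no normalization of $\lambda$ at the point, never uses injectivity of $\operatorname{w}$, and yields $\widetilde{\operatorname{w}}_i=\overh\circ\lambda$ on all of $\dom\lambda$ at once. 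You conjugate on the \emph{source} side: your $\rho:=\mu^{-1}\circ\widetilde{\operatorname{w}}_i|_{U'}$ is a smooth self-lift of $\pi_i$, and rigidity in $G_i$ gives $\rho=g|_{U'}$; the price is that you must normalize $\mu(\tx_i)=\ty$ (which you correctly justify via the last remark in Definition~\ref{def-14}), shrink to a neighborhood $U'$, and invoke (\hyperref[OE1]{OE1}) to cancel $\operatorname{w}$, a hypothesis the paper's proof does not touch. One point worth recording: the rigidity you need is the version valid for maps that are only smooth, since your $\rho$ (like the paper's $\overline{\lambda}$) is not a priori a diffeomorphism, \'etaleness being the conclusion. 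The statement of \cite[Proposition~A.1]{MP} does cover smooth $\pi$-preserving maps, and its restriction to a connected open subset is exactly the ``straightforward consequence'' the paper itself invokes for claim (i) of the lemma following Definition~\ref{def-03}; alternatively, \cite[Lemma~2.11]{MM}, which the paper uses at the corresponding step of its own proof, applies verbatim to your $\rho$. So your argument stands as written, with the small trade-off that it consumes (\hyperref[OE1]{OE1}) where the paper's target-side version shows the conclusion already follows from the lifting identity \eqref{eq-14} together with (\hyperref[OE2]{OE2}) alone.
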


\begin{proof}
Let us fix any $i\in I$ and any $\tx_i\in\tX_i$. By definition of morphism in $\RedAtl$, we have 

\begin{equation}\label{eq-14}
\operatorname{w}\circ\pi_i=\chi_{\overline{\operatorname{w}}(i)}\circ
\widetilde{\operatorname{w}}_i,
\end{equation}
so $\operatorname{w}\circ\pi_i(\tx_i)$ belongs to $\chi_{\overline{\operatorname{w}}(i)}
(\tY_{\overline{\operatorname{w}}(i)})$. By (\hyperref[OE2]{OE2}), the chart
$(\tX_i,G_i,
\operatorname{w}\circ\pi_i)$ is compatible with the atlas $\atlas{Y}$, so in particular it is
compatible with $(\tY_{\overline{\operatorname{w}}(i)},H_{\overline{\operatorname{w}}(i)},
\chi_{\overline{\operatorname{w}}(i)})$. Therefore there exists a change of charts $\lambda$ from
$(\tX_i,G_i,\operatorname{w}\circ\pi_i)$ to $(\tY_{\overline{\operatorname{w}}(i)},
H_{\overline{\operatorname{w}}(i)}$, $\chi_{\overline{\operatorname{w}}(i)})$, such that $\tx_i
\in\dom\lambda$. By Definition~\ref{def-14}, we have

\begin{equation}\label{eq-15}
\chi_{\overline{\operatorname{w}}(i)}\circ\lambda=\left.\operatorname{w}\circ\pi_i\right|_{\dom
\lambda}.
\end{equation}

Then let us consider the map

\begin{equation}\label{eq-36}
\overline{\lambda}:=\widetilde{\operatorname{w}}_i\circ\lambda^{-1}:\,\cod\lambda\longrightarrow
\tY_{\overline{\operatorname{w}}(i)}.
\end{equation}

For each $\ty\in\cod\lambda$ we have

\[\chi_{\overline{\operatorname{w}}(i)}\circ\overline{\lambda}(\ty)=
\chi_{\overline{\operatorname{w}}(i)}\circ\widetilde{\operatorname{w}}_i\circ\lambda^{-1}(\ty)
\stackrel{\eqref{eq-14}}{=}\operatorname{w}\circ\pi_i\circ\lambda^{-1}(\ty)
\stackrel{\eqref{eq-15}}{=}\chi_{\overline{\operatorname{w}}(i)}(\ty).\]

So for each $\ty$ as before, there exists a (in general non-unique) $h\in
H_{\overline{\operatorname{w}}(i)}$ such that $\overline{\lambda}(\ty)=h(\ty)$. Since $\cod\lambda$
is connected, then by~\cite[Lemma~2.11]{MM} there 
is a unique $\overh\in H_{\overline{\operatorname{w}}(i)}$
such that $\overline{\lambda}=\overh|_{\cod\lambda}$. Therefore,

\[\left.\widetilde{\operatorname{w}}_i\right|_{\dom\lambda}\stackrel{\eqref{eq-36}}{=}
\overline{\lambda}\circ\lambda=\overh\circ\lambda.\]

So we have proved that for each $i\in I$ the map $\widetilde{\operatorname{w}}_i$ coincides
locally with a diffeomorphism.
\end{proof}

\begin{rem}
The previous lemma shows that the morphisms called ``lift of the identity''
in~\cite[Definition~5.8]{Po} coincide with the unit weak equivalences defined before.
\end{rem}

\begin{lem}\label{lem-15}
Let us fix the following data:

\begin{enumerate}[\emphatic{(}a\emphatic{)}]
 \item a pair of reduced orbifold atlases $\atlas{X}:=\{\unifX{i}\}_{i\in I}$ over $X$ and
  $\atlas{Y}:=\{\unifY{j}\}_{j\in J}$ over $Y$;
 \item a topological open embedding $\operatorname{w}:X\hookrightarrow Y$;
 \item a set map $\overline{\operatorname{w}}:I\rightarrow J$;
 \item for each $i\in I$ an \'etale map $\widetilde{\operatorname{w}}_i:\tX_i\rightarrow
 \tY_{\overline{\operatorname{w}}(i)}$ such that $\chi_{\overline{\operatorname{w}}(i)}
  \circ\widetilde{\operatorname{w}}_i=\operatorname{w}\circ\pi_i$.
\end{enumerate}

Then there is a \emph{unique} class $[P_{\operatorname{w}},\nu_{\operatorname{w}}]$, such that the
collection $\atlasmap{\operatorname{w}}{}:=(\operatorname{w},\overline{\operatorname{w}},
\{\widetilde{\operatorname{w}}_i\}_{i\in I},$ $[P_{\operatorname{w}},\nu_{\operatorname{w}}])$ is a
morphism of reduced orbifold atlases. Moreover, in this case $\atlasmap{\operatorname{w}}{}$
is actually an open embedding of reduced orbifold atlases.
\end{lem}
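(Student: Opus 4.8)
The plan is to avoid constructing the pair $(P_{\operatorname{w}},\nu_{\operatorname{w}})$ by hand and instead transport the problem to $\PEEGpd$ through the $2$-functor $\functor{F}^{\red}$, using Lemma~\ref{lem-07}, which says that every groupoid morphism $\groupoidmaptot{\psi}{}\colon\functor{F}^{\red}_0(\atlas{X})\rightarrow\functor{F}^{\red}_0(\atlas{Y})$ is $\functor{F}^{\red}_1$ of a \emph{unique} morphism of atlases. First I would build such a $\groupoidmaptot{\psi}{}=\groupoidmap{\psi}{}$ from the data (a)--(d). Writing $\functor{F}^{\red}_0(\atlas{X})=(\groupname{X}{})$ and $\functor{F}^{\red}_0(\atlas{Y})=(\groupname{Y}{})$ as in Construction~\ref{cons-04}, I set $\psi_0|_{\tX_i}:=\widetilde{\operatorname{w}}_i$ and, for $\germ_{\tx_i}\lambda\in\groupoid{X}_1$ with $\lambda\in\change(\atlas{X},i,i')$, I put $\psi_1(\germ_{\tx_i}\lambda):=\germ_{\lambda(\tx_i)}\widetilde{\operatorname{w}}_{i'}\cdot\germ_{\tx_i}\lambda\cdot(\germ_{\tx_i}\widetilde{\operatorname{w}}_i)^{-1}$, which makes sense because $\widetilde{\operatorname{w}}_i$ is étale. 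The one genuine point to check is that this germ lies in $\groupoid{Y}_1$, i.e.\ is the germ of a change of charts of $\atlas{Y}$; this uses (d), since $\chi_{\overline{\operatorname{w}}(i')}\circ\widetilde{\operatorname{w}}_{i'}\circ\lambda=\operatorname{w}\circ\pi_{i'}\circ\lambda=\operatorname{w}\circ\pi_i|_{\dom\lambda}=\chi_{\overline{\operatorname{w}}(i)}\circ\widetilde{\operatorname{w}}_i|_{\dom\lambda}$, so the local diffeomorphism $\widetilde{\operatorname{w}}_{i'}\circ\lambda\circ(\widetilde{\operatorname{w}}_i)^{-1}$ commutes with the projections of $\atlas{Y}$. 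That $\psi_0,\psi_1$ are smooth and commute with the structure maps of Definition~\ref{def-07} is then a routine germ computation.

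For existence, let $\atlasmap{f}{}$ be the unique morphism with $\functor{F}^{\red}_1(\atlasmap{f}{})=\groupoidmaptot{\psi}{}$ furnished by Lemma~\ref{lem-07}. Its components are read off from the construction there: $\tf_i=\psi_0|_{\tX_i}=\widetilde{\operatorname{w}}_i$; since $\widetilde{\operatorname{w}}_i$ takes values in $\tY_{\overline{\operatorname{w}}(i)}$ and the $\tY_j$ are the disjoint pieces of $\groupoid{Y}_0$, the induced index map is $\overf=\overline{\operatorname{w}}$; and from $f\circ\pi_i=\chi_{\overf(i)}\circ\tf_i=\chi_{\overline{\operatorname{w}}(i)}\circ\widetilde{\operatorname{w}}_i=\operatorname{w}\circ\pi_i$ together with surjectivity of the $\pi_i$ one gets $f=\operatorname{w}$. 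Hence $\atlasmap{f}{}=(\operatorname{w},\overline{\operatorname{w}},\{\widetilde{\operatorname{w}}_i\}_{i\in I},[P_{\operatorname{w}},\nu_{\operatorname{w}}])$ for a suitable class, giving existence.

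The delicate step, and the one I expect to be the main obstacle, is uniqueness: I must show the class is \emph{forced}. Given any morphism $(\operatorname{w},\overline{\operatorname{w}},\{\widetilde{\operatorname{w}}_i\},[P',\nu'])$ with the same first three components, its image $\groupoidmaptot{\psi}{\prime}$ under $\functor{F}^{\red}_1$ has $\psi'_0=\psi_0$. I then claim $\psi_0$ alone determines $\psi_1$, which is exactly where effectiveness of $\groupoidtot{Y}{}$ enters. Taking germs in $s\circ\psi'_1=\psi_0\circ s$ and $t\circ\psi'_1=\psi_0\circ t$ and substituting into \eqref{eq-47} yields, for every $x_1\in\groupoid{X}_1$, $\kappa_{\groupoidtot{Y}{}}(\psi_0 s(x_1),\psi_0 t(x_1),\psi'_1(x_1))=\germ_{t(x_1)}\psi_0\cdot\kappa_{\groupoidtot{X}{}}(s(x_1),t(x_1),x_1)\cdot(\germ_{s(x_1)}\psi_0)^{-1}$, whose right-hand side depends only on $\psi_0$ and $x_1$. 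Since $\groupoidtot{Y}{}$ is proper, effective and étale, Lemma~\ref{lem-21} says $\kappa_{\groupoidtot{Y}{}}(-,-,-)$ is injective in its last argument, so $\psi'_1(x_1)$ is pinned down; thus $\groupoidmaptot{\psi}{\prime}=\groupoidmaptot{\psi}{}$, and the uniqueness clause of Lemma~\ref{lem-07} forces $[P',\nu']=[P_{\operatorname{w}},\nu_{\operatorname{w}}]$.

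Finally, to see $\atlasmap{\operatorname{w}}{}$ is an open embedding I only need (\hyperref[OE2]{OE2}), as (\hyperref[OE1]{OE1}) is hypothesis (b). For this I fix a chart $\unifY{j}$ of $\atlas{Y}$ and points $\tx_i\in\tX_i$, $\ty_j\in\tY_j$ with $\operatorname{w}\circ\pi_i(\tx_i)=\chi_j(\ty_j)$. Since $\chi_{\overline{\operatorname{w}}(i)}(\widetilde{\operatorname{w}}_i(\tx_i))=\operatorname{w}\circ\pi_i(\tx_i)=\chi_j(\ty_j)$ and $\atlas{Y}$ is an atlas, compatibility of $\unifY{\overline{\operatorname{w}}(i)}$ with $\unifY{j}$ yields a change of charts $\eta$ with $\widetilde{\operatorname{w}}_i(\tx_i)\in\dom\eta$. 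Then the composite $\eta\circ\widetilde{\operatorname{w}}_i$, suitably restricted to a connected component of a preimage near $\tx_i$ (exactly as in Lemma~\ref{lem-13} and Remark~\ref{rem-01}), is a change of charts from $(\tX_i,G_i,\operatorname{w}\circ\pi_i)$ to $\unifY{j}$ containing $\tx_i$ in its domain, because $\chi_j\circ\eta\circ\widetilde{\operatorname{w}}_i=\chi_{\overline{\operatorname{w}}(i)}\circ\widetilde{\operatorname{w}}_i=\operatorname{w}\circ\pi_i$. This establishes compatibility with every chart of $\atlas{Y}$, i.e.\ (\hyperref[OE2]{OE2}), so $\atlasmap{\operatorname{w}}{}$ is an open embedding of reduced orbifold atlases.
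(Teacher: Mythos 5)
Your proof is correct, but it takes a genuinely different route from the paper. The paper's proof is direct and elementary: for each $\lambda\in\change(\atlas{X},i,i')$ and each point of $\dom\lambda$ it shrinks to neighborhoods on which the \'etale lifts are invertible, sets $\nu_{\operatorname{w}}(\lambda,\tx_i):=\widetilde{\operatorname{w}}_{i'}\circ\lambda\circ\widetilde{\operatorname{w}}_i^{-1}$ (a change of charts of $\atlas{Y}$ by hypothesis (d)), assembles $P_{\operatorname{w}}$ from a chosen covering family of such restrictions, and gets uniqueness of the class in one line from (\hyperref[M5c]{M5c}): local invertibility of $\widetilde{\operatorname{w}}_i$ forces the germ of $\nu_{\operatorname{w}}(\lambda)$ at every point, and by Definition~\ref{def-03} the class is exactly the germ data. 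You instead transport the whole problem through $\functor{F}^{\red}$: you build the groupoid morphism by germ conjugation, invoke the bijectivity of $\functor{F}^{\red}_1$ on morphisms (Lemma~\ref{lem-07}) for existence, and prove uniqueness by showing $\psi_1$ is determined by $\psi_0$ via the $\kappa$-identity and Lemma~\ref{lem-21} --- essentially the rigidity phenomenon for effective groupoids that the paper exploits elsewhere (Lemmas~\ref{lem-09} and~\ref{lem-20}). What your approach buys: you avoid all the covering/representative bookkeeping (the equivalence class is handled once and for all by the uniqueness clause of Lemma~\ref{lem-07}), and you explicitly verify the open-embedding clause (\hyperref[OE2]{OE2}), which the paper's proof leaves implicit; your closing argument there is sound and parallels the compatibility argument in the proof of Lemma~\ref{lem-10}. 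What it costs: you rely on the heavier machinery of Constructions~\ref{cons-04}--\ref{cons-05} and Lemma~\ref{lem-21}, whereas the paper's argument is self-contained at the level of atlases. Two small points you should make explicit: in the uniqueness computation the factor $(\germ_{x_1}\psi'_1)^{-1}$ only makes sense because $\psi'_1$ is \'etale, which follows from $s\circ\psi'_1=\psi_0\circ s$ together with \'etaleness of $s$ and of $\psi_0$ --- the latter coming precisely from hypothesis (d); and the claim that the conjugated germ $\germ_{\lambda(\tx_i)}\widetilde{\operatorname{w}}_{i'}\cdot\germ_{\tx_i}\lambda\cdot(\germ_{\tx_i}\widetilde{\operatorname{w}}_i)^{-1}$ lies in $\groupoid{Y}_1$ needs the standard shrinking to a $\Stab$-invariant connected neighborhood with disjoint translates so that the projection-commuting local diffeomorphism is genuinely (the restriction of) a change of charts in the sense of Definition~\ref{def-14} --- though the paper's own proof asserts the identical point with the same brevity, so this is a matter of polish, not a gap.
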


\begin{rem}
Combining this with Lemma~\ref{lem-13}, this means that \emph{each open embedding
$\atlasmap{\operatorname{w}}{}$ is completely determined by an underlying topological open embedding
and by a collection of \'etale local liftings}. In particular, each refinement is completely
determined by a collection of open embeddings from each chart of $\atlas{X}$
to some charts of $\atlas{Y}$, commuting with the
projections.
\end{rem}

\begin{proof}[Proof of Lemma~\ref{lem-15}.]
Let us fix any pair $i,i'\in I$, any $\lambda\in\change(\atlas{X},i,i')$ and any point $\tx_i\in
\dom\lambda$. Since $\widetilde{\operatorname{w}}_i$ and $\widetilde{\operatorname{w}}_{i'}$ are
both \'etale, then there are open neighborhoods $\tX_i(\tx_i)$ of $\tx_i$ in $\dom\lambda$,
respectively $\tX_{i'}(\lambda(\tx_i))$ of
$\lambda(\tx_i)$ in $\cod\lambda$, where $\widetilde{\operatorname{w}}_i$, respectively
$\widetilde{\operatorname{w}}_{i'}$, is invertible. Up to restricting $\tX_i(\tx_i)$ to
$\lambda^{-1}(\tX_{i'}(\lambda(\tx_i)))$, we can assume that $\lambda(\tX_i(\tx_i))=
\tX_{i'}(\lambda(\tx_i))$. So it makes sense to consider the diffeomorphism

\[\nu_{\operatorname{w}}(\lambda,\tx_i):=\widetilde{\operatorname{w}}_{i'}\circ\lambda\circ
\widetilde{\operatorname{w}}_i^{-1}:\,\,\widetilde{\operatorname{w}}_i(\tX_i(\tx_i))
\longrightarrow\widetilde{\operatorname{w}}_{i'}(\tX_{i'}(\lambda(\tx_i))).\]

By (d), this is a change of charts of $\atlas{Y}$. Now for each $\lambda\in\change(\atlas{X},i,i')$
we choose a collection of points $\{\tx_{i,t}\}_{t\in T(\lambda)}\subset\dom\lambda$, such that
the family $\{\tX_i(\tx_{i,t})\}_{t\in T(\lambda)}$ covers $\dom\lambda$ (and such that 
$\tX_i(\tx_{i,t})\neq\tX_i(\tx_{i,t'})$ for each $t\neq t'$). Then we define

\[P_{\operatorname{w}}:=\left\{\lambda|_{\tX_i(\tx_{i,t})}\quad\forall\,i\in I,\,\,\,
\forall\,\lambda\in\change(\atlas{X},i,-),\,\,\,\forall\,t\in T(\lambda)\right\};\]
for each $\lambda|_{\tX_i(\tx_{i,t})}$ in such a set, we define $\nu_{\operatorname{w}}
(\lambda|_{\tX_i(\tx_{i,t})}):=\nu_{\operatorname{w}}(\lambda,\tx_{i,t})$. Then it is easy to see that
the class $[P_{\operatorname{w}},\nu_{\operatorname{w}}]$ is such that
$(\operatorname{w},\overline{\operatorname{w}},
\{\widetilde{\operatorname{w}}_i\}_{i\in I},[P_{\operatorname{w}},\nu_{\operatorname{w}}])$ is a
morphism of reduced orbifold atlases. The fact that the class
$[P_{\operatorname{w}},\nu_{\operatorname{w}}]$ is unique is a direct consequence of
(\hyperref[M5c]{M5c}) (see Definition~\ref{def-11}): given any $\lambda$ in $P_{\operatorname{w}}(i,i')$, since
$\widetilde{\operatorname{w}}_i$ is a diffeomorphism if restricted enough in source and target,
then for each $\tx_i\in\dom\lambda$ the value of $\nu_{\operatorname{w}}(\lambda)$ around
$\widetilde{\operatorname{w}}_i(\tx_i)$ is completely determined
by (\hyperref[M5c]{M5c}). In other terms, the class $[P_{\operatorname{w}},\nu_{\operatorname{w}}]$
is uniquely determined.
\end{proof}

\begin{lem}\label{lem-18}
Let us fix the following data:

\begin{itemize}
 \item a finite number of reduced orbifold atlases $\atlas{X}^1,\cdots,\atlas{X}^r$
   over a topological space $X$, all belonging to the same orbifold structure $[\atlas{X}]$;
 \item a reduced orbifold atlas $\atlas{X}'$ over a topological space $X'$;
 \item an homeomorphism $\operatorname{w}:X'\stackrel{\sim}{\rightarrow}X$;
 \item a collection of weak equivalences of reduced orbifold atlases $\atlasmap{\operatorname{w}}{m}:
  \atlas{X}'\rightarrow\atlas{X}^m$ for $m=1,\cdots,r$, all defined over $\operatorname{w}$.
\end{itemize}

Then there are a reduced orbifold atlas $\overline{\atlas{X}}$ over $X$ and a weak equivalence
$\atlasmap{\operatorname{v}}{}:\overline{\atlas{X}}\rightarrow\atlas{X}'$, such that:

\begin{itemize}
 \item $\atlasmap{\operatorname{v}}{}$ is defined over the homeomorphism $\operatorname{w}^{-1}:X
  \rightarrow X'$;
 \item each local lift of $\atlasmap{\operatorname{v}}{}$ is an open embedding;
 \item for each $m=1,\cdots,r$, $\atlasmap{\operatorname{w}}{m}\circ\atlasmap{\operatorname{v}}{}$ 
  is a \emph{refinement}.
\end{itemize}

In particular, if $X'=X$ and $\operatorname{w}=\id_X$, then also $\atlasmap{\operatorname{v}}{}$
is a refinement.
\end{lem}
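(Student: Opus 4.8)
\textbf{Proof of Lemma~\ref{lem-18} (plan).}
The plan is to build $\overline{\atlas{X}}$ chart by chart out of $\atlas{X}'$, by shrinking each chart of $\atlas{X}'$ to pieces on which \emph{all} $r$ of the local lifts of the weak equivalences $\atlasmap{\operatorname{w}}{m}$ become open embeddings simultaneously; the morphism $\atlasmap{\operatorname{v}}{}$ is then assembled from the corresponding inclusions. The two tools I would use are Lemma~\ref{lem-13} (every local lift of an open embedding, hence of a weak equivalence, is \'etale) and Lemma~\ref{lem-15} (an open embedding of reduced orbifold atlases is uniquely determined by an underlying topological open embedding together with a compatible family of \'etale local lifts). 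Write $\atlas{X}'=\{(\tX_k',G_k',\pi_k')\}_{k\in K}$ and, for each $m=1,\dots,r$, let $\widetilde{\operatorname{w}}_k^m\colon\tX_k'\to\tX^m_{\overline{\operatorname{w}}^m(k)}$ be the local lifts of $\atlasmap{\operatorname{w}}{m}$; by Lemma~\ref{lem-13} each $\widetilde{\operatorname{w}}_k^m$ is \'etale, since a weak equivalence is in particular an open embedding.

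The key local step, which I expect to be the main obstacle, is the \emph{simultaneous} shrinking. Fix $k\in K$ and a point $\overline{x}_k\in\tX_k'$. Since each of the \emph{finitely many} maps $\widetilde{\operatorname{w}}_k^1,\dots,\widetilde{\operatorname{w}}_k^r$ is \'etale, each of them is an open embedding on a suitable neighborhood of $\overline{x}_k$, and the (finite) intersection $V$ of these neighborhoods is an open neighborhood of $\overline{x}_k$ on which \emph{all} $\widetilde{\operatorname{w}}_k^m$ are open embeddings; this is precisely where the hypothesis that $r$ is finite is essential. Exactly as in Construction~\ref{cons-01}, I would then shrink $V$ to a connected open set $U_{k,e}\subseteq V$ (for an index $e$ to be used below) that is invariant under $\Stab(G_k',\overline{x}_k)$ and satisfies $g(U_{k,e})\cap U_{k,e}=\varnothing$ for every $g\in G_k'\smallsetminus\Stab(G_k',\overline{x}_k)$. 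Restricting an open embedding keeps it an open embedding, so all $\widetilde{\operatorname{w}}_k^m|_{U_{k,e}}$ remain open embeddings.

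As in Remark~\ref{rem-01} (using \cite[Lemma~2.10]{MM}), such a $U_{k,e}$ is a connected component of $(\pi_k')^{-1}(\pi_k'(U_{k,e}))$ and $\Stab(G_k',\overline{x}_k)$ acts effectively on it, so $(U_{k,e},\Stab(G_k',\overline{x}_k),\operatorname{w}\circ\pi_k'|_{U_{k,e}})$ is a reduced orbifold chart over $X$ (here I use that $\operatorname{w}\colon X'\to X$ is a homeomorphism). For each $k$ I would choose points $\{\overline{x}_k^e\}_{e\in E(k)}\subseteq\tX_k'$ so that the associated sets $\{U_{k,e}\}_{e\in E(k)}$ cover $\tX_k'$, and let $\overline{\atlas{X}}$ be the family of all the charts so obtained. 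This is a reduced orbifold atlas over $X$: the images $\operatorname{w}\circ\pi_k'(U_{k,e})$ cover $X$ because the $U_{k,e}$ cover the $\tX_k'$, the $\pi_k'(\tX_k')$ cover $X'$, and $\operatorname{w}$ is onto; and pairwise compatibility holds because every chart of $\overline{\atlas{X}}$ is a sub-chart of a chart of $\operatorname{w}_\ast(\atlas{X}')$ (see Definition~\ref{def-13}), whose charts are pairwise compatible.

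Finally I would define $\atlasmap{\operatorname{v}}{}\colon\overline{\atlas{X}}\to\atlas{X}'$ over $\operatorname{w}^{-1}$ by taking as local lifts the inclusions $\widetilde{\operatorname{v}}_{k,e}\colon U_{k,e}\hookrightarrow\tX_k'$, which are open embeddings and satisfy $\pi_k'\circ\widetilde{\operatorname{v}}_{k,e}=\operatorname{w}^{-1}\circ(\operatorname{w}\circ\pi_k'|_{U_{k,e}})$. By Lemma~\ref{lem-15} these data determine a unique open embedding of reduced orbifold atlases; since $\operatorname{w}^{-1}$ is a homeomorphism and the induced charts are sub-charts of $\atlas{X}'$, condition (\hyperref[WE2]{WE2}) of Definition~\ref{def-10} holds, so $\atlasmap{\operatorname{v}}{}$ is a weak equivalence all of whose local lifts are open embeddings. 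For each $m$, the composite $\atlasmap{\operatorname{w}}{m}\circ\atlasmap{\operatorname{v}}{}$ is defined over $\operatorname{w}\circ\operatorname{w}^{-1}=\id_X$, so (\hyperref[REF1]{REF1}) holds; and by Construction~\ref{cons-01} its local lifts are the composites $\widetilde{\operatorname{w}}_k^m\circ\widetilde{\operatorname{v}}_{k,e}=\widetilde{\operatorname{w}}_k^m|_{U_{k,e}}$, which are open embeddings by the second paragraph, so (\hyperref[REF2]{REF2}) holds and the composite is a refinement. The ``in particular'' clause is then immediate: if $X'=X$ and $\operatorname{w}=\id_X$, then $\atlasmap{\operatorname{v}}{}$ is over $\id_X$ with open-embedding local lifts, hence is itself a refinement.
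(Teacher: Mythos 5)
Your proposal is correct and follows essentially the same route as the paper's proof: cover each chart of $\atlas{X}'$ by open sets on which all $r$ (finitely many, hence simultaneously shrinkable) \'etale local lifts become open embeddings, refine to sub-charts, twist the projections by $\operatorname{w}$ to get $\overline{\atlas{X}}$, and take inclusions as the local lifts of $\atlasmap{\operatorname{v}}{}$. The only cosmetic difference is that the paper constructs the class $[P_{\operatorname{v}},\nu_{\operatorname{v}}]$ explicitly on all of $\change(\overline{\atlas{X}})$ by cancelling the homeomorphism $\operatorname{w}$ from $\operatorname{w}\circ\pi_{i'}\circ\lambda=\operatorname{w}\circ\pi_i$, whereas you invoke Lemma~\ref{lem-15} to obtain it; both are valid.
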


\begin{proof}
Let us suppose that $\atlas{X}'=\{(\tX'_i,G'_i,\pi'_i)\}_{i\in I}$ and that

\[\atlasmap{\operatorname{w}}{m}:=\Big(\operatorname{w},\overline{\operatorname{w}}^m,
\{\widetilde{\operatorname{w}}^m_i\}_{i\in I},\left[P_{\operatorname{w}^m},\nu_{\operatorname{w}^m}
\right]\Big):\,\atlas{X}'\longrightarrow\atlas{X}^m\quad\textrm{for}\,\,m=1,\cdots,r.\]

By Lemma~\ref{lem-13}, for each $i\in I$ and for each $m=1,\cdots,r$ the smooth map
$\widetilde{\operatorname{w}}_i^m$ is a local diffeomorphism, so for each $i\in I$ there exist a
(non-unique) open covering $\{\tX'_{i,a}\}_{a\in A(i)}$ of $\tX'_i$, such that for each $m=1,\cdots,r$,
the map $\widetilde{\operatorname{w}}_i^m$ is an open embedding if restricted to any $\tX'_{i,a}$. For
each $i\in I$ and for each $a\in A(i)$ there exists a (non-unique) open covering
$\{\tX'_{i,a,b}\}_{b\in B(i,a)}$ of $\tX'_{i,a}$, such that each $\tX'_{i,a,b}$ is the domain of a
chart $(\tX'_{i,a,b},G'_{i,a,b},\pi'_i|_{\tX'_{i,a,b}})$ compatible with $\atlas{X}'$.
Then we define a reduced orbifold atlas on $X$ as follows:

\[\overline{\atlas{X}}:=\left\{\left(\tX'_{i,a,b},G'_{i,a,b},\operatorname{w}\circ
\pi'_i|_{\tX'_{i,a,b}}\right)\right\}_{i\in I,\,a\in A(i),\,b\in B(i,a)}.\]

Now we consider the set map $\overline{\operatorname{v}}$ sending each triple $(i,a,b)$ to $i$;
for each such triple we define $\widetilde{\operatorname{v}}_{i,a,b}$ as the inclusion of
$\tX'_{i,a,b}$ in $\tX'_i$. Now let us fix any change of charts

\[\lambda\in\change\left(\overline{\atlas{X}},(i,a,b),(i',a',b')\right).\]
By definition of change of charts, we have $\operatorname{w}\circ\pi_{i'}\circ
\lambda=\operatorname{w}\circ\pi_i$; since $\operatorname{w}$ is an homeomorphism, this implies
that $\pi_{i'}\circ\lambda=\pi_i$, so $\lambda$ can be considered as a
change of charts in $\change(\atlas{X}',i,i')$; we denote by 
$\nu_{\operatorname{v}}(\lambda)$ such a change of charts.
Then we get easily that the collection

\[\atlasmap{\operatorname{v}}{}:=\Big(\operatorname{w}^{-1},\overline{\operatorname{v}},
\{\widetilde{\operatorname{v}}_{i,a,b}\}_{i,a,b},\left[\change(\overline{\atlas{X}}),
\nu_{\operatorname{v}}\right]\Big):\,\overline{\atlas{X}}\longrightarrow\atlas{X}'\]
is a morphism of reduced orbifold atlases. Now for each triple $(i,a,b)$ and for
each $m=1,\cdots,r$ the morphism
$\widetilde{\operatorname{w}}^m_i\circ\widetilde{\operatorname{v}}_{i,a,b}$ is an open embedding
because by construction $\widetilde{\operatorname{w}}^m_i$ is an open embedding if restricted
to $\tX'_{i,a,b}\subseteq\tX'_{i,a}$.
Moreover, the morphism $\atlasmap{\operatorname{w}}{m}\circ\atlasmap{\operatorname{v}}{}$ is defined
over $\operatorname{w}\circ\operatorname{w}^{-1}=\id_X$. Therefore, 
$\atlasmap{\operatorname{w}}{m}\circ\atlasmap{\operatorname{v}}{}$ is a refinement
for each $m=1,\cdots,r$.
\end{proof}

The following $2$ lemmas are on the same line of~\cite[Propositions~5.3 and~6.2]{Po}; the
significant difference is given by the fact that we consider all the weak equivalences
instead of restricting only the unit weak equivalences considered in~\cite{Po}.

\begin{lem}\label{lem-09}
If $\atlasmap{\operatorname{w}}{}:\atlas{X}\rightarrow\atlas{Y}$ is a weak equivalence of reduced
orbifold atlases, then $\functor{F}^{\red}_1(\atlasmap{\operatorname{w}}{})$ is a Morita equivalence of
proper, effective, \'etale groupoids.
\end{lem}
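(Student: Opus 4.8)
The plan is to verify directly the two defining conditions (\hyperref[ME1]{ME1}) and (\hyperref[ME2]{ME2}) of a Morita equivalence for $\groupoidmap{\psi}{}:=\functor{F}^{\red}_1(\atlasmap{\operatorname{w}}{})$; that the source and target groupoids $\functor{F}^{\red}_0(\atlas{X})$ and $\functor{F}^{\red}_0(\atlas{Y})$ are proper, effective and \'etale is already guaranteed by Lemma~\ref{lem-03}. The key preliminary observation is that a weak equivalence is in particular an open embedding, so Lemma~\ref{lem-13} applies and each local lift $\widetilde{\operatorname{w}}_i:\tX_i\rightarrow\tY_{\overline{\operatorname{w}}(i)}$ is \'etale; hence $\psi_0=\coprod_{i\in I}\widetilde{\operatorname{w}}_i$ is \'etale. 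Moreover, reading $\psi_1$ in the germ charts \eqref{eq-06}: on $\groupoid{X}_1(\lambda)$ with $\lambda\in P_{\operatorname{w}}$ the definition \eqref{eq-29} together with (\hyperref[M5c]{M5c}) shows that $\tau_{\nu_{\operatorname{w}}(\lambda)}\circ\psi_1\circ\tau_\lambda^{-1}$ is just $\widetilde{\operatorname{w}}_i$, so $\psi_1$ is \'etale as well.

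For (\hyperref[ME1]{ME1}) I would first prove surjectivity of $t\circ\pi^1$. Given any $\ty_j\in\tY_j\subseteq\groupoid{Y}_0$, the point $\operatorname{w}^{-1}(\chi_j(\ty_j))$ exists by (\hyperref[WE1]{WE1}) and lies in some $X_i=\pi_i(\tX_i)$, so there is $\tx_i\in\tX_i$ with $\chi_{\overline{\operatorname{w}}(i)}(\widetilde{\operatorname{w}}_i(\tx_i))=\operatorname{w}\circ\pi_i(\tx_i)=\chi_j(\ty_j)$. Compatibility of the charts $\tY_{\overline{\operatorname{w}}(i)}$ and $\tY_j$ inside $\atlas{Y}$ then yields a change of charts $\omega$ with $\widetilde{\operatorname{w}}_i(\tx_i)\in\dom\omega$ and $\omega(\widetilde{\operatorname{w}}_i(\tx_i))=\ty_j$, and the pair $(\germ_{\widetilde{\operatorname{w}}_i(\tx_i)}\omega,\tx_i)$ lies in the fibre product and is sent by $t\circ\pi^1$ to $\ty_j$. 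For the submersion part it suffices to note that $\pi^1$ is the pullback of the \'etale map $\psi_0$ along $s$, hence \'etale, and $t$ is \'etale, so $t\circ\pi^1$ is an \'etale, hence surjective submersion.

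For (\hyperref[ME2]{ME2}) I would show that the comparison map $\Phi:\groupoid{X}_1\rightarrow(\groupoid{X}_0\times\groupoid{X}_0)\times_{\groupoid{Y}_0\times\groupoid{Y}_0}\groupoid{Y}_1$, $x_1\mapsto((s(x_1),t(x_1)),\psi_1(x_1))$, is a diffeomorphism. The codomain is a manifold because $\psi_0\times\psi_0$ is \'etale, and the projection $p$ to $\groupoid{Y}_1$ is \'etale; since $p\circ\Phi=\psi_1$ is \'etale, $\Phi$ is \'etale, so it is enough to prove that $\Phi$ is a bijection. For injectivity, write two germs with equal source, target and $\psi_1$-image as $\germ_{\tx_i}\lambda$ and $\germ_{\tx_i}\lambda'$ with $\lambda,\lambda'\in P_{\operatorname{w}}(i,i')$; then $\germ_{\widetilde{\operatorname{w}}_i(\tx_i)}\nu_{\operatorname{w}}(\lambda)=\germ_{\widetilde{\operatorname{w}}_i(\tx_i)}\nu_{\operatorname{w}}(\lambda')$, and applying (\hyperref[M5c]{M5c}) in the form $\nu_{\operatorname{w}}(\lambda)\circ\widetilde{\operatorname{w}}_i=\widetilde{\operatorname{w}}_{i'}\circ\lambda$ (and likewise for $\lambda'$) together with the local invertibility of $\widetilde{\operatorname{w}}_i$ and $\widetilde{\operatorname{w}}_{i'}$ yields $\germ_{\tx_i}\lambda=\germ_{\tx_i}\lambda'$.

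Surjectivity is the step I expect to be the main obstacle. Given $((\tx_i,\tx'_{i'}),y_1)$ in the fibre product with $y_1=\germ_{\widetilde{\operatorname{w}}_i(\tx_i)}\omega$, I would form the local diffeomorphism $\ell:=\widetilde{\operatorname{w}}_{i'}^{-1}\circ\omega\circ\widetilde{\operatorname{w}}_i$ around $\tx_i$, using the \'etale inverse of $\widetilde{\operatorname{w}}_{i'}$ near $\tx'_{i'}$. A short computation with (\hyperref[M3]{M3}) and the injectivity of $\operatorname{w}$ from (\hyperref[WE1]{WE1}) gives $\pi_{i'}\circ\ell=\pi_i$ near $\tx_i$ and $\pi_i(\tx_i)=\pi_{i'}(\tx'_{i'})$. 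The delicate point is that $\ell$ is a priori only the germ of a local diffeomorphism commuting with the projections, not of a genuine change of charts; to remedy this I would invoke compatibility of $\atlas{X}$ to produce an honest change of charts $\mu$ with $\mu(\tx_i)=\tx'_{i'}$, and then compare $\ell$ with $\mu$: the germ $\germ_{\tx_i}(\mu^{-1}\circ\ell)$ is that of a local diffeomorphism of $\tX_i$ fixing $\tx_i$ and commuting with $\pi_i$, so by~\cite[Proposition~A.1]{MP} it equals $\germ_{\tx_i}g$ for some $g\in\Stab(G_i,\tx_i)$. Then $\lambda:=\mu\circ g\in\change(\atlas{X},i,i')$ satisfies $\germ_{\tx_i}\lambda=\germ_{\tx_i}\ell$, whence $s(\germ_{\tx_i}\lambda)=\tx_i$, $t(\germ_{\tx_i}\lambda)=\tx'_{i'}$, and, once more by (\hyperref[M5c]{M5c}) and the local invertibility of $\widetilde{\operatorname{w}}_i$, $\psi_1(\germ_{\tx_i}\lambda)=\germ_{\widetilde{\operatorname{w}}_i(\tx_i)}\omega=y_1$. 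Thus $\Phi$ is onto; being a bijective \'etale map it is a diffeomorphism, so the square (\hyperref[ME2]{ME2}) is cartesian and $\groupoidmap{\psi}{}$ is a Morita equivalence.
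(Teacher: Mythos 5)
Your proposal is correct, and while your treatment of (\hyperref[ME1]{ME1}) coincides with the paper's (same surjectivity argument via $\operatorname{w}^{-1}$, compatibility in $\atlas{Y}$ and the point $(\germ_{\widetilde{\operatorname{w}}_i(\tx_i)}\omega,\tx_i)$, same \'etaleness observations), your verification of (\hyperref[ME2]{ME2}) takes a genuinely different route. The paper tests the square against an arbitrary manifold $M$: it derives the identity $\kappa_{\groupoidtot{X}{}}(x_0,x'_0,x_1)=(\germ_{x'_0}\psi_0)^{-1}\cdot\kappa_{\groupoidtot{Y}{}}(\psi_0(x_0),\psi_0(x'_0),\psi_1(x_1))\cdot\germ_{x_0}\psi_0$ and then defines the comparison arrow $c:M\rightarrow\groupoid{X}_1$ by inverting $\kappa_{\groupoidtot{X}{}}(a_1(m),a_2(m),-)$, invoking Lemma~\ref{lem-21}; note that the surjectivity half of that bijectivity rests, via Remark~\ref{rem-04}, on the linearization result~\cite[Theorem~2.3]{N}. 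You instead prove the equivalent statement that the canonical map $\Phi:\groupoid{X}_1\rightarrow(\groupoid{X}_0\times\groupoid{X}_0)\times_{\groupoid{Y}_0\times\groupoid{Y}_0}\groupoid{Y}_1$ is a diffeomorphism: the fiber product is a manifold since $\psi_0\times\psi_0$ is \'etale, $\Phi$ is \'etale since $p\circ\Phi=\psi_1$ and $p$ are, injectivity follows from (\hyperref[M5c]{M5c}) plus local invertibility of the lifts, and surjectivity is built by hand, producing a genuine change of charts from the germ $\ell=\widetilde{\operatorname{w}}_{i'}^{-1}\circ\omega\circ\widetilde{\operatorname{w}}_i$ via compatibility of $\atlas{X}$ and~\cite[Proposition~A.1]{MP}. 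What each approach buys: the paper's argument is uniform in the abstract groupoid structure (it uses only that the groupoids are proper, effective and \'etale), whereas yours exploits the fact that both groupoids are atlas groupoids, which makes it more elementary and self-contained (no appeal to~\cite{N}) and has the further merit of making explicit where the injectivity of $\operatorname{w}$ from (\hyperref[WE1]{WE1}) enters, namely in showing $\pi_{i'}\circ\ell=\pi_i$ so that the needed change of charts $\mu$ exists; in the paper's proof this point is absorbed into the unproved assertion that $c$ is a well-defined set map. Your injectivity step does tacitly use that the target chart indices agree, but that is automatic since the two germs have the same image in the fiber product and $\groupoid{X}_0$ is a disjoint union, so no gap results.
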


\begin{proof}
Let us use the notations of \eqref{eq-33} and \eqref{eq-12} and let us set:

\begin{equation}\label{eq-13}
\functor{F}_0^{\red}(\atlas{X}):=\left(\groupname{X}{}\right),\quad\functor{F}_0^{\red}(\atlas{Y}):=
\left(\groupname{Y}{}\right),\quad\functor{F}_1^{\red}(\atlasmap{\operatorname{w}}{}):=
\groupoidmap{\psi}{}.
\end{equation}

By Lemma~\ref{lem-13} each $\widetilde{\operatorname{w}}_i$ is \'etale; therefore also $\psi_0=
\coprod_{i\in I}\widetilde{\operatorname{w}}_i$ is \'etale. So also the induced projection
(see property (\hyperref[ME1]{ME1}))

\[\pi^1:\,\fiber{\groupoid{Y}_1}{s}{\psi_0}{\groupoid{X}_0}\longrightarrow\groupoid{Y}_1\]
is \'etale. Since $t$ is also \'etale, we conclude that $t\circ\pi^1$ is \'etale, so in
particular it is a submersion. Let us prove that it is also surjective. Let us fix any point
$\ty_j\in\tY_j\subseteq\groupoid{Y}_0$; since $\operatorname{w}$ is an homeomorphism
by (\hyperref[WE1]{WE1}), then it makes
sense to define $x:=\operatorname{w}^{-1}(\chi_j(\ty_j))$. Let us choose any chart
$\unifX{i}$ in
$\atlas{X}$ and any $\tx_i\in\tX_i$, such that $\pi_i(\tx_i)=x$. Then by definition of morphism 
in $\RedAtl$ we have:

\[\chi_{\overline{\operatorname{w}}(i)}\circ\widetilde{\operatorname{w}}_i(\tx_i)=
\operatorname{w}\circ\pi_i(\tx_i)=\operatorname{w}(x)=\chi_j(\ty_j).\]

Since $\atlas{Y}$ is a reduced orbifold atlas, there exists a change of charts $\omega$ from
$(\tY_{\overline{\operatorname{w}}(i)},H_{\overline{\operatorname{w}}(i)}$,
$\chi_{\overline{\operatorname{w}}(i)})$ to $\unifY{j}$, such that $\widetilde{\operatorname{w}}_i
(\tx_i)\in\dom\omega$ and $\omega(\widetilde{\operatorname{w}}_i(\tx_i))=\ty_j$. Then we set
$p:=(\germ_{\widetilde{\operatorname{w}}_i(\tx_i)}\omega,\tx_i)$ and we get that $p$ belongs to
the fiber product $\fiber{\groupoid{Y}_1}{s}{\psi_0}{\groupoid{X}_0}$; moreover
$t\circ\pi^1(p)=\ty_j$. So we have proved that $t\circ\pi^1$ is surjective, so
(\hyperref[ME1]{ME1}) holds.\\

In order to prove that $\groupoidmap{\psi}{}$ is a Morita equivalence , we have also to
prove (\hyperref[ME2]{ME2}), i.e.\ we have to show
that the commutative square \eqref{eq-16} has the universal property of fiber products. So let
us fix any smooth manifold $M$ together with any pair of smooth maps $a=(a_1,a_2):M\rightarrow
\groupoid{X}_0\times\groupoid{X}_0$
and $b:M\rightarrow\groupoid{Y}_1$, such that $(s,t)\circ b=(\psi_0\times\psi_0)\circ
(a_1,a_2)$. We have to prove that there is a unique smooth map $c:M\rightarrow\groupoid{X}_1$, making
the following diagram commute:

\begin{equation}\label{eq-17}
\begin{tikzpicture}[xscale=1.5,yscale=-1.0]
    \node (A0_0) at (1, 1) {$M$};
    \node (A2_2) at (2, 2) {$\groupoid{X}_1$};
    \node (A2_4) at (4, 2) {$\groupoid{Y}_1$};
    \node (A4_2) at (2, 4) {$\groupoid{X}_0\times\groupoid{X}_0$};
    \node (A4_4) at (4, 4) {$\groupoid{Y}_0\times\groupoid{Y}_0$.};

    \node (A3_3) at (3, 3) {$\curvearrowright$};
    \node (A1_2) at (2.2, 1.6) {$\curvearrowright$};
    \node (A2_1) at (1.6, 2.2) {$\curvearrowright$};
    
    \path (A4_2) edge [->]node [auto] {$\scriptstyle{(\psi_0\times\psi_0)}$} (A4_4);
    \path (A0_0) edge [->,dashed]node [auto] {$\scriptstyle{c}$} (A2_2);
    \path (A2_2) edge [->]node [auto] {$\scriptstyle{\psi_1}$} (A2_4);
    \path (A2_4) edge [->]node [auto] {$\scriptstyle{(s,t)}$} (A4_4);
    \path (A0_0) edge [->,bend left=15]node [auto,swap] {$\scriptstyle{(a_1,a_2)}$} (A4_2);
    \path (A0_0) edge [->,bend right=15]node [auto] {$\scriptstyle{b}$} (A2_4);
    \path (A2_2) edge [->]node [auto] {$\scriptstyle{(s,t)}$} (A4_2);
\end{tikzpicture}
\end{equation}

We have already said that $\psi_0$ is \'etale; moreover by definition of \'etale groupoid we have
$\psi_0\circ s=s\circ\psi_1$ and the $2$-morphisms $s$ in the previous identity are \'etale; hence $\psi_1$
is \'etale. So if we fix any point $x_1\in\groupoid{X}_1$ and we set $x_0:=s(x_1)$ and $x'_0:=t(x_1)$,
we have:

\begin{gather*}
\kappa_{\groupoidtot{X}{}}(x_0,x'_0,x_1)\stackrel{\eqref{eq-47}}{=}
 \germ_{x_1}t\cdot(\germ_{x_1}s)^{-1}= \\
=\germ_{x_1}t\cdot(\germ_{x_1}\psi_1)^{-1}\cdot
  \germ_{x_1}\psi_1\cdot(\germ_{x_1}s)^{-1}= \\
=(\germ_{x'_0}\psi_0)^{-1}\cdot\germ_{\psi_1(x_1)}t\cdot
 (\germ_{\psi_1(x_1)}s)^{-1}\cdot\germ_{x_0}\psi_0\stackrel{\eqref{eq-47}}{=} \\
\stackrel{\eqref{eq-47}}{=}(\germ_{x'_0}\psi_0)^{-1}\cdot
 \kappa_{\groupoidtot{Y}{}}(\psi_0(x_0),\psi_0(x'_0),\psi_1(x_1))\cdot\germ_{x_0}\psi_0.
\end{gather*}

By Lemma~\ref{lem-21} the set map $\kappa_{\groupoidtot{X}{}}(x_0,x'_0,-)$ is a bijection, so:

\[x_1=\kappa_{\groupoidtot{X}{}}(x_0,x'_0,-)^{-1}\left((\germ_{x'_0}\psi_0)^{-1}\cdot
\kappa_{\groupoidtot{Y}{}}(\psi_0(x_0),\psi_0(x'_0),\psi_1(x_1))\cdot\germ_{x_0}\psi_0\right).\]

So for every point $m\in M$, if $c(m)$ exists making \eqref{eq-17} commute, 
then it is necessarily equal to

\[c(m)=\kappa_{\groupoidtot{X}{}}(a_1(m),a_2(m),-)^{-1}(g(m)),\]
where $g(m)$ is the germ defined as follows:

\[g(m):=(\germ_{a_2(m)}\psi_0)^{-1}\cdot
\kappa_{\groupoidtot{Y}{}}(\psi_0\circ a_1(m),\psi_0\circ a_2(m),b(m))\cdot\germ_{a_1(m)}\psi_0.\]

This proves the uniqueness of a morphism $c$ as in \eqref{eq-17}. Moreover, 
using the definition of $\kappa_{\groupoidtot{X}{}}(-,-,-)$, the previous lines actually give rise
to a well-defined set map $c:M\rightarrow\groupoid{X}_1$, making \eqref{eq-17} commute. In particular,
we have $s\circ c=a_1$; since $a_1$ is smooth by hypothesis and $s$ is \'etale, this implies that $c$
is smooth, so we have proved that property (\hyperref[ME2]{ME2}) holds for $\groupoidmap{\psi}{}$.
\end{proof}

\begin{lem}\label{lem-10}
Let $\atlas{X}$ and $\atlas{Y}$ be reduced orbifold atlases \emphatic{(}for $X$ and $Y$
respectively\emphatic{)}. Let $\groupoidmap{\psi}{}:\functor{F}^{\red}_0(\atlas{X})\rightarrow
\functor{F}^{\red}_0(\atlas{Y})$ be a Morita equivalence and let
$\atlasmap{\operatorname{w}}{}:\atlas{X}\rightarrow\atlas{Y}$ be the unique morphism such that
$\functor{F}^{\red}_1(\atlasmap{\operatorname{w}}{})=\groupoidmap{\psi}{}$ \emphatic{(}see
\emphatic{Lemma~\ref{lem-07}}\emphatic{)}. Then $\atlasmap{\operatorname{w}}{}$ is a weak equivalence
of reduced orbifold atlases.
\end{lem}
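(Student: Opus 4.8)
The plan is to verify directly the two defining conditions (\hyperref[WE1]{WE1}) and (\hyperref[WE2]{WE2}) of a weak equivalence (Definition~\ref{def-10}), relying on the explicit description of $\atlasmap{\operatorname{w}}{}$ produced in the proof of Lemma~\ref{lem-07} together with the characterization of open embeddings in Lemma~\ref{lem-15}. First I would recall that, by the construction in Lemma~\ref{lem-07}, the morphism $\atlasmap{\operatorname{w}}{}$ has underlying data given by a continuous map $\operatorname{w}:X\rightarrow Y$, a set map $\overline{\operatorname{w}}:I\rightarrow J$ with $\psi_0(\tX_i)\subseteq\tY_{\overline{\operatorname{w}}(i)}$, and local lifts $\widetilde{\operatorname{w}}_i:=\psi_0|_{\tX_i}$ satisfying $\chi_{\overline{\operatorname{w}}(i)}\circ\widetilde{\operatorname{w}}_i=\operatorname{w}\circ\pi_i$; moreover $\operatorname{w}=\varphi_{\atlas{Y}}\circ|\groupoidmaptot{\psi}{}|\circ\varphi_{\atlas{X}}^{-1}$, where $\varphi_{\atlas{X}},\varphi_{\atlas{Y}}$ are the canonical homeomorphisms of Corollary~\ref{cor-05}.

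To establish (\hyperref[WE1]{WE1}) I would argue that $\operatorname{w}$ is a homeomorphism. By Lemma~\ref{lem-03} both groupoids $\functor{F}^{\red}_0(\atlas{X})$ and $\functor{F}^{\red}_0(\atlas{Y})$ are proper, effective and \'etale; since $\groupoidmap{\psi}{}$ is a Morita equivalence between them, Lemma~\ref{lem-14} shows that the induced continuous map $|\groupoidmaptot{\psi}{}|$ is a homeomorphism. As a composite of the three homeomorphisms $\varphi_{\atlas{Y}}$, $|\groupoidmaptot{\psi}{}|$ and $\varphi_{\atlas{X}}^{-1}$, the map $\operatorname{w}$ is then a homeomorphism, which is exactly (\hyperref[WE1]{WE1}).

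For (\hyperref[WE2]{WE2}) I would first note that each $\widetilde{\operatorname{w}}_i=\psi_0|_{\tX_i}$ is \'etale: since $\groupoidmap{\psi}{}$ is a Morita equivalence between \'etale groupoids, the map $\psi_0$ is \'etale by~\cite[Exercise~5.16(4)]{MM}, and $\widetilde{\operatorname{w}}_i$ is its restriction to the open set $\tX_i$. Thus the data $\operatorname{w}$ (a topological open embedding, being a homeomorphism), $\overline{\operatorname{w}}$ and the \'etale maps $\{\widetilde{\operatorname{w}}_i\}_{i\in I}$ (with $\chi_{\overline{\operatorname{w}}(i)}\circ\widetilde{\operatorname{w}}_i=\operatorname{w}\circ\pi_i$) satisfy hypotheses (a)--(d) of Lemma~\ref{lem-15}. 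That lemma yields a unique class $[P_{\operatorname{w}},\nu_{\operatorname{w}}]$ turning these data into a morphism of reduced orbifold atlases, and asserts that the resulting morphism is an \emph{open embedding} of reduced orbifold atlases, hence satisfies (\hyperref[OE2]{OE2}). By the uniqueness part of Lemma~\ref{lem-15} this morphism coincides with $\atlasmap{\operatorname{w}}{}$. Finally, since $\operatorname{w}$ is a homeomorphism we have $\operatorname{w}(X)=Y$ and $\atlas{Y}|_{\operatorname{w}(X)}=\atlas{Y}$, so condition (\hyperref[OE2]{OE2}) is literally the same compatibility statement as (\hyperref[WE2]{WE2}); therefore $\atlasmap{\operatorname{w}}{}$ is a weak equivalence.

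I expect the main subtlety to be the identification of the morphism $\atlasmap{\operatorname{w}}{}$ coming from Lemma~\ref{lem-07} with the open embedding coming from Lemma~\ref{lem-15}: this uses that both are morphisms with the same underlying topological map and local lifts, together with the uniqueness of the class $[P_{\operatorname{w}},\nu_{\operatorname{w}}]$ for fixed data. The only genuine verification is that $\operatorname{w}$ is a homeomorphism, and this reduces cleanly to Lemma~\ref{lem-14}; everything else is packaging already established in Lemmas~\ref{lem-13} and~\ref{lem-15}.
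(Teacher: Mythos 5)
Your proof is correct, and on condition (\hyperref[WE1]{WE1}) it is identical to the paper's: both deduce that $\operatorname{w}=\varphi_{\atlas{Y}}\circ|\groupoidmaptot{\psi}{}|\circ\varphi_{\atlas{X}}^{-1}$ is a homeomorphism from Lemma~\ref{lem-14} applied to the Morita equivalence $\groupoidmaptot{\psi}{}$ between the proper, effective, \'etale groupoids supplied by Lemma~\ref{lem-03}. Where you genuinely diverge is (\hyperref[WE2]{WE2}): the paper verifies the compatibility by hand --- given $\tx_i,\ty_j$ with $\operatorname{w}\circ\pi_i(\tx_i)=\chi_j(\ty_j)$, it picks a change of charts $\omega$ of $\atlas{Y}$ at $\widetilde{\operatorname{w}}_i(\tx_i)$, restricts $\widetilde{\operatorname{w}}_i=\psi_0|_{\tX_i}$ (\'etale by \cite[Exercise~5.16(4)]{MM}) to a neighborhood $\tX$ where it is an embedding, and checks that $\overline{\omega}:=\omega\circ\widetilde{\operatorname{w}}_i|_{\tX}$ is a change of charts from $(\tX_i,G_i,\operatorname{w}\circ\pi_i)$ to $(\tY_j,H_j,\chi_j)$ --- whereas you outsource precisely this verification to the ``moreover'' clause of Lemma~\ref{lem-15}, and use its uniqueness statement for $[P_{\operatorname{w}},\nu_{\operatorname{w}}]$ to identify the resulting open embedding with the morphism $\atlasmap{\operatorname{w}}{}$ of Lemma~\ref{lem-07}. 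This is legitimate and non-circular (Lemma~\ref{lem-15} precedes Lemma~\ref{lem-10} and does not depend on it), and it buys brevity plus a clean structural reading: a weak equivalence is exactly an open embedding of atlases whose underlying map is a homeomorphism, the compatibility condition in (\hyperref[OE2]{OE2}) being word-for-word the one in (\hyperref[WE2]{WE2}). The cost is that the geometric content migrates into Lemma~\ref{lem-15}, whose printed proof concentrates on constructing the unique class $[P_{\operatorname{w}},\nu_{\operatorname{w}}]$ and leaves the (\hyperref[OE2]{OE2}) part essentially implicit; the paper's direct argument in Lemma~\ref{lem-10} is in effect the proof of that clause, so your version is a repackaging rather than a logically independent argument. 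One small caveat: in your closing paragraph you attribute part of the packaging to Lemma~\ref{lem-13}, but that lemma runs in the wrong direction here --- its hypothesis is that the morphism is already an open embedding of atlases, so invoking it for \'etaleness would be circular; your actual argument correctly obtains \'etaleness of the $\widetilde{\operatorname{w}}_i$ from $\psi_0$ via \cite[Exercise~5.16(4)]{MM}, so nothing breaks, but the reference to Lemma~\ref{lem-13} should be dropped.
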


\begin{proof}
Let us use the notations of \eqref{eq-12} and \eqref{eq-13} and let us denote by 

\[\atlasmap{\operatorname{w}}{}=\Big(\operatorname{w},\overline{\operatorname{w}},\left\{
\widetilde{\operatorname{w}}_i\right\}_{i\in I},\left[P_{\operatorname{w}},\nu_{\operatorname{w}}
\right]\Big):\,\atlas{X}\longrightarrow\atlas{Y}\]
the unique morphism obtained from $\groupoidmaptot{\psi}{}$ using Lemma~\ref{lem-07}.
We recall that in the
proof of such lemma, we defined the continuous map $\operatorname{w}:X\rightarrow Y$ (denoted by $f$
in the mentioned lemma), as $\operatorname{w}:=\varphi_{\atlas{Y}}\circ|\groupoidmaptot{\psi}{}|
\circ\varphi_{\atlas{X}}^{-1}$, where both $\varphi_{\atlas{X}}$ and $\varphi_{\atlas{Y}}$
are homeomorphisms. Since $\groupoidmaptot{\psi}{}$ is a Morita equivalence of \'etale groupoids, then by
Lemma~\ref{lem-14} we have that also $|\groupoidmaptot{\psi}{}|$ is an homeomorphism, so property
(\hyperref[WE1]{WE1}) is satisfied.\\

In order to prove that $\atlasmap{\operatorname{w}}{}$ is a weak equivalence, we need also to prove
that for each $i\in I$ the chart $(\tX_i,G_i,\operatorname{w}\circ\pi_i)$ on $Y$ is compatible
with the atlas $\atlas{Y}$. So let us fix any index $j\in J$ and any pair of points $(\tx_i,\ty_j)\in\tX_i
\times\tY_j$ such that $\operatorname{w}\circ\pi_i(\tx_i)=\chi_j(\ty_j)$. Then we have

\[\chi_{\overline{\operatorname{w}}(i)}\circ\widetilde{\operatorname{w}}_i(\tx_i)=\operatorname{w}
\circ\pi_i(\tx_i)=\chi_j(\ty_j).\]

Since $\atlas{Y}$ is a reduced orbifold atlas, then
there exists a change of charts $\omega$ from $\unifY{\overline{\operatorname{w}}(i)}$ to $\unifY{j}$,
such that $\widetilde{\operatorname{w}}_i(\tx_i)\in\dom\omega$. Since $\omega$ is a change
of charts, then $\chi_j\circ\omega=\chi_{\overline{\operatorname{w}}(i)}$. Moreover, since $\psi_0$ is
\'etale (see~\cite[Exercise 5.16(4)]{MM}),
then the map $\widetilde{\operatorname{w}}_i=\psi_0|_{\tX_i}$
is locally a diffeomorphism. Therefore there exists
an open neighborhood $\tX$ of $\tx_i$, contained in $\widetilde{\operatorname{w}}_i^{-1}(\dom
\omega)$, such that $\widetilde{\operatorname{w}}_i$ is an embedding if restricted to $\tX$. Then
$\overline{\omega}:=\omega\circ\widetilde{\operatorname{w}}_i|_{\tX}$ is a smooth embedding from $\tX
\subseteq\tX_i$ to $\tY_j$. Up to restricting $\tX$ to a smaller neighborhood of $\tx_i$, we can
always assume that $\tX$ is the domain of a change of charts of $\atlas{X}$. Moreover, we have

\[\chi_j\circ\overline{\omega}=\chi_j\circ\omega\circ\widetilde{\operatorname{w}}_i|_{\tX}=
\chi_{\overline{\operatorname{w}}(i)}\circ\widetilde{\operatorname{w}}	_i|_{\tX}=
\operatorname{w}\circ\pi_i|_{\tX},\]
so $\overline{\omega}$ is a change of charts from $(\tX_i,G_i,\operatorname{w}\circ\pi_i)$
to $\unifY{j}$ with $\tx_i\in\dom\overline{\omega}$. So we have proved that the chart
$(\tX_i,G_i,\operatorname{w}\circ\pi_i)$ is compatible with $\atlas{Y}$ for every $i\in I$,
i.e.\ condition (\hyperref[WE2]{WE2}) holds.
\end{proof}

By combining Lemmas~\ref{lem-09} and~\ref{lem-10} we get:

\begin{prop}\label{prop-04}
Given any $2$ reduced orbifold atlases $\atlas{X},\atlas{Y}$, the bijection

\begin{gather*}
\{\textrm{morphisms}\,\,\atlasmap{f}{}:\atlas{X}\rightarrow\atlas{Y}\,\,\textrm{in}\,\,\RedAtl\}
\longrightarrow \\
\longrightarrow\{\textrm{morphisms}\,\,\groupoidmaptot{\phi}{}:\functor{F}^{\red}_0(\atlas{X})
\rightarrow\functor{F}^{\red}_0(\atlas{Y})\,\,\textrm{in}\,\,\PEEGpd\}
\end{gather*}
of \emphatic{Lemma~\ref{lem-07}} induces a bijection between weak equivalences of reduced
orbifold atlases and Morita equivalences of proper, effective, \'etale groupoids.
\end{prop}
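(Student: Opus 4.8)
The plan is to deduce Proposition~\ref{prop-04} directly from Lemmas~\ref{lem-09} and~\ref{lem-10}, using the bijection supplied by Lemma~\ref{lem-07}. For the fixed ordered pair $(\atlas{X},\atlas{Y})$, let $\Phi$ denote the bijection of Lemma~\ref{lem-07}, which sends each morphism $\atlasmap{f}{}:\atlas{X}\to\atlas{Y}$ in $\RedAtl$ to the morphism $\functor{F}^{\red}_1(\atlasmap{f}{}):\functor{F}^{\red}_0(\atlas{X})\to\functor{F}^{\red}_0(\atlas{Y})$ in $\PEEGpd$; recall that every morphism $\groupoidmaptot{\psi}{}$ of the target set equals $\functor{F}^{\red}_1(\atlasmap{f}{})$ for a unique morphism $\atlasmap{f}{}$ of the source set.

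First I would check that $\Phi$ maps the subset of weak equivalences of reduced orbifold atlases into the subset of Morita equivalences of proper, effective, \'etale groupoids: this is exactly Lemma~\ref{lem-09}. Consequently the restriction of $\Phi$ to weak equivalences is a well-defined map into the Morita equivalences, and it is automatically injective since $\Phi$ is.

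Next I would establish that this restricted map is surjective onto the Morita equivalences. Given any Morita equivalence $\groupoidmaptot{\psi}{}:\functor{F}^{\red}_0(\atlas{X})\to\functor{F}^{\red}_0(\atlas{Y})$, Lemma~\ref{lem-07} produces the unique morphism $\atlasmap{\operatorname{w}}{}:\atlas{X}\to\atlas{Y}$ with $\functor{F}^{\red}_1(\atlasmap{\operatorname{w}}{})=\groupoidmaptot{\psi}{}$, and Lemma~\ref{lem-10} tells us that this $\atlasmap{\operatorname{w}}{}$ is necessarily a weak equivalence. Hence $\groupoidmaptot{\psi}{}=\Phi(\atlasmap{\operatorname{w}}{})$ with $\atlasmap{\operatorname{w}}{}$ a weak equivalence, so every Morita equivalence lies in the image. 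Combining the two steps, $\Phi$ restricts to a bijection between the weak equivalences and the Morita equivalences, which is the assertion of the proposition.

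Because the two cited lemmas already carry all the geometric content (the openness and surjectivity argument behind (\hyperref[ME1]{ME1}) and the cartesian-square/germ computation behind (\hyperref[ME2]{ME2}), and conversely the verification of (\hyperref[WE1]{WE1}) and (\hyperref[WE2]{WE2})), I do not expect any genuine obstacle here. The only thing requiring care is the elementary set-theoretic remark that a bijection restricts to a bijection between two distinguished subsets precisely when it sends the first subset into the second and its inverse sends the second into the first, which is exactly what Lemmas~\ref{lem-09} and~\ref{lem-10} provide, respectively.
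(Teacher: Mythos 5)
Your proposal is correct and matches the paper's own argument exactly: the paper derives Proposition~\ref{prop-04} precisely by combining Lemma~\ref{lem-09} (weak equivalences are sent to Morita equivalences) with Lemma~\ref{lem-10} (the unique preimage under the bijection of Lemma~\ref{lem-07} of any Morita equivalence is a weak equivalence). Your added set-theoretic remark about restricting a bijection to distinguished subsets is the implicit glue the paper leaves unstated, and it is handled correctly.
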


Then we are able to compute the right saturation (see
Definition~\ref{def-12}) of the class $\WRedAtl$ of all refinements.

\begin{lem}\label{lem-19}
The right saturation $\WRedAtlsat$ is the class of all weak equivalences of reduced orbifold atlases.
\end{lem}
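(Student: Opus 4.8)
The plan is to prove the two inclusions $\WRedAtlsat\subseteq\{\textrm{weak equivalences}\}$ and $\{\textrm{weak equivalences}\}\subseteq\WRedAtlsat$ separately. Two facts will be used throughout: a refinement is in particular a weak equivalence (the chain of inclusions recorded after Definition~\ref{def-10}), and, by Proposition~\ref{prop-04}, a morphism $\atlasmap{f}{}$ is a weak equivalence if and only if $\functor{F}^{\red}_1(\atlasmap{f}{})$ is a Morita equivalence between proper, effective, \'etale groupoids.

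For the inclusion $\WRedAtlsat\subseteq\{\textrm{weak equivalences}\}$, I would start from an arbitrary $\atlasmap{f}{}\in\WRedAtlsat$ and unwind Definition~\ref{def-12}: there are morphisms $\atlasmap{g}{}$, $\atlasmap{h}{}$ such that both $\atlasmap{f}{}\circ\atlasmap{g}{}$ and $\atlasmap{g}{}\circ\atlasmap{h}{}$ are refinements. Since these two compositions are weak equivalences, Lemma~\ref{lem-09} turns them into elements of $\WEGpd$. Using that $\functor{F}^{\red}$ is a $2$-functor (Theorem~\ref{theo-01}) I would rewrite these as $\functor{F}^{\red}_1(\atlasmap{f}{})\circ\functor{F}^{\red}_1(\atlasmap{g}{})$ and $\functor{F}^{\red}_1(\atlasmap{g}{})\circ\functor{F}^{\red}_1(\atlasmap{h}{})$; by Definition~\ref{def-12} applied to the class $\WEGpd$ (with $\functor{F}^{\red}_1(\atlasmap{g}{})$ and $\functor{F}^{\red}_1(\atlasmap{h}{})$ in the roles of $g$ and $h$), this says exactly that $\functor{F}^{\red}_1(\atlasmap{f}{})$ lies in the right saturation of $\WEGpd$. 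As $\WEGpd$ is right saturated by Lemma~\ref{lem-17}, this saturation equals $\WEGpd$, so $\functor{F}^{\red}_1(\atlasmap{f}{})$ is a Morita equivalence and Proposition~\ref{prop-04} delivers that $\atlasmap{f}{}$ is a weak equivalence.

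For the reverse inclusion, which I expect to be the main obstacle, I would take a weak equivalence $\atlasmap{\operatorname{w}}{}:\atlas{X}\rightarrow\atlas{Y}$ over a homeomorphism $\operatorname{w}:X\stackrel{\sim}{\rightarrow}Y$ and produce, by hand, the two morphisms required by Definition~\ref{def-12}. The tool is Lemma~\ref{lem-18}, applied twice with $r=1$. A first application, with $\atlas{X}$ as source atlas, $\atlas{Y}$ as the single target atlas and $\operatorname{w}$ as the homeomorphism, yields an atlas $\overline{\atlas{X}}$ and a weak equivalence $\atlasmap{\operatorname{v}}{}:\overline{\atlas{X}}\rightarrow\atlas{X}$ over $\operatorname{w}^{-1}$ such that $\atlasmap{\operatorname{w}}{}\circ\atlasmap{\operatorname{v}}{}$ is a refinement. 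A second application, now to $\atlasmap{\operatorname{v}}{}$ (a weak equivalence over $\operatorname{w}^{-1}$), yields an atlas $\overline{\atlas{X}}'$ and a weak equivalence $\atlasmap{\operatorname{v}}{\prime}:\overline{\atlas{X}}'\rightarrow\overline{\atlas{X}}$ over $\operatorname{w}$ such that $\atlasmap{\operatorname{v}}{}\circ\atlasmap{\operatorname{v}}{\prime}$ is a refinement. Taking $\atlasmap{g}{}:=\atlasmap{\operatorname{v}}{}$ and $\atlasmap{h}{}:=\atlasmap{\operatorname{v}}{\prime}$ then exhibits $\atlasmap{\operatorname{w}}{}$ as an element of $\WRedAtlsat$ via Definition~\ref{def-12}.

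The delicate point of this second inclusion is the bookkeeping of the homeomorphisms: a single application of Lemma~\ref{lem-18} only makes $\atlasmap{\operatorname{w}}{}\circ\atlasmap{\operatorname{v}}{}$ a refinement, while $\atlasmap{\operatorname{v}}{}$ itself is merely a weak equivalence over $\operatorname{w}^{-1}$, so one must invoke the lemma a second time — with the inverse homeomorphism — precisely so that the composite $\atlasmap{\operatorname{v}}{}\circ\atlasmap{\operatorname{v}}{\prime}$ becomes a refinement as well. If instead $\operatorname{w}=\id_X$, the final clause of Lemma~\ref{lem-18} shows $\atlasmap{\operatorname{v}}{}$ is already a refinement and the argument shortens; the two-step version is needed exactly to absorb a non-trivial homeomorphism into the saturation. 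Combining the two inclusions identifies $\WRedAtlsat$ with the class of all weak equivalences.
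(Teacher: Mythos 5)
Your proof is correct and takes essentially the same route as the paper: the inclusion $\WRedAtlsat\subseteq\{\textrm{weak equivalences}\}$ via Proposition~\ref{prop-04} (equivalently Lemma~\ref{lem-09}) together with the right saturatedness of $\WEGpd$ from Lemma~\ref{lem-17}, and the reverse inclusion by applying Lemma~\ref{lem-18} twice to produce the two composites demanded by Definition~\ref{def-12}. The only difference is cosmetic — you spell out the $2$-functoriality of $\functor{F}^{\red}$ and the $r=1$ instantiation of Lemma~\ref{lem-18} with the inverse homeomorphism, which the paper leaves implicit.
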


\begin{proof}
Let us fix any morphism $\atlasmap{f}{}:\atlas{Y}\rightarrow\atlas{X}$ in $\WRedAtlsat$. 
By Definition~\ref{def-12}, this
implies that there are a pair of reduced orbifold atlases
$\atlas{U},\atlas{Z}$ and a pair of morphisms
$\atlasmap{h}{}:\atlas{U}\rightarrow\atlas{Z}$ and $\atlasmap{g}{}:\,\atlas{Z}
\rightarrow\atlas{Y}$,
such that both $\atlasmap{f}{}\circ\atlasmap{g}{}$ and $\atlasmap{g}{}\circ\atlasmap{h}{}$ are
refinements (hence weak equivalences of reduced orbifold atlases).  So by
Proposition~\ref{prop-04} we have that both $\functor{F}_2^{\red}(\atlasmap{f}{})\circ
\functor{F}_2^{\red}
(\atlasmap{g}{})$ and $\functor{F}_2^{\red}(\atlasmap{g}{})\circ\functor{F}_2^{\red}(\atlasmap{h}{})$
are Morita equivalences of \'etale groupoids. In other terms, the morphism 
$\functor{F}_2^{\red}(\atlasmap{f}{})$ belongs to the right saturation of the class
$\WEGpd$ of Morita equivalences between \'etale groupoids. So by Lemma~\ref{lem-17}
we conclude that actually $\functor{F}_2^{\red}(\atlasmap{f}{})$ is a Morita equivalence. Again by
Proposition~\ref{prop-04}, this implies that $\atlasmap{f}{}$ is a weak equivalence of reduced
orbifold atlases. So we have proved
that $\WRedAtlsat$ is contained in the set of all weak equivalences of reduced orbifold atlases.
Conversely, let us suppose that $\atlasmap{f}{}:\atlas{Y}\rightarrow\atlas{X}$ is a
weak equivalence. Then by Lemma~\ref{lem-18} there are a reduced orbifold atlas $\atlas{Z}$
and a weak equivalence $\atlasmap{g}{}:\atlas{Z}\rightarrow\atlas{Y}$ such that
$\atlasmap{f}{}\circ\atlasmap{g}{}$ is a refinement. Applying 
Lemma~\ref{lem-18} a second time on $\atlasmap{g}{}$,
there are a reduced orbifold atlas $\atlas{U}$
and a weak equivalence $\atlasmap{h}{}:\atlas{U}\rightarrow\atlas{Z}$ such that
$\atlasmap{g}{}\circ\atlasmap{h}{}$ is a refinement.
Therefore, the morphism $\atlasmap{f}{}$ belongs to the right saturation
$\WRedAtlsat$. This suffices to conclude.
\end{proof}

\begin{lem}\label{lem-11}
Let us fix any proper, effective, \'etale groupoid $(\groupname{X}{})$. Then there are a reduced
orbifold atlas $\atlas{X}$ and a Morita equivalence $\groupoidmap{\psi}{}:
\functor{F}^{\red}_0(\atlas{X})\rightarrow(\groupname{X}{})$.
\end{lem}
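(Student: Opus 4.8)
The plan is to realize the given proper, effective, \'etale groupoid directly as an orbifold atlas on its own orbit space. Set $M:=|\groupoidtot{X}{}|$ with the quotient topology. First I would check that $M$ is an admissible base, i.e.\ paracompact, second countable and Hausdorff: second countability descends from $\groupoid{X}_0$ through the open continuous surjection $\pr_{\groupoidtot{X}{}}$ (Remark~\ref{rem-06}), Hausdorffness is the standard consequence of properness of $(s,t)$, and paracompactness then follows. The charts of $\atlas{X}$ will be produced from the local isotropy data via the germ map $\kappa_{\groupoidtot{X}{}}$ of Remark~\ref{rem-04}, and $\groupoidmap{\psi}{}$ will be defined on objects by the chart inclusions and on arrows through $\kappa_{\groupoidtot{X}{}}$.

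For the charts, fix $p\in M$ and a point $x_0\in\pr_{\groupoidtot{X}{}}^{-1}(p)$. Properness makes the isotropy $\Stab:=(s,t)^{-1}(x_0,x_0)$ finite, and by Remark~\ref{rem-04} together with Lemma~\ref{lem-21} the map $\kappa_{\groupoidtot{X}{}}(x_0,x_0,-)$ identifies $\Stab$ bijectively with a finite group of germs of $\pr_{\groupoidtot{X}{}}$-preserving diffeomorphisms fixing $x_0$; effectiveness (Definition~\ref{def-09}) guarantees this group acts effectively. Invoking the local structure theorem for proper \'etale groupoids (see \cite{MM}), I would choose a $\Stab$-invariant open neighbourhood $\tX_{x_0}$ of $x_0$, diffeomorphic to an open connected subset of $\mathbb{R}^n$, on which the finite group $G_{x_0}:=\Stab$ acts smoothly and effectively and such that the restriction of $\groupoidtot{X}{}$ to $\tX_{x_0}$ is isomorphic to the action groupoid $G_{x_0}\ltimes\tX_{x_0}$. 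Then $\pi_{x_0}:=\pr_{\groupoidtot{X}{}}|_{\tX_{x_0}}$ induces a homeomorphism $\tX_{x_0}/G_{x_0}\stackrel{\sim}{\rightarrow}\pr_{\groupoidtot{X}{}}(\tX_{x_0})$, so $(\tX_{x_0},G_{x_0},\pi_{x_0})$ is a reduced chart. Choosing a locally finite subfamily $\{x_p\}_p$ whose images cover $M$, I set $\atlas{X}:=\{(\tX_{x_p},G_{x_p},\pi_{x_p})\}_p$. Compatibility of any two of these charts is exactly the surjectivity half of Lemma~\ref{lem-21}: two points with the same image in $M$ are joined by an arrow $a\in\groupoid{X}_1$, and $\kappa_{\groupoidtot{X}{}}(s(a),t(a),a)$ restricts to a change of charts through the given point. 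Hence $\atlas{X}$ is a reduced orbifold atlas on $M$.

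Now write $\groupoidtot{Z}{}:=\functor{F}^{\red}_0(\atlas{X})$ (Construction~\ref{cons-04}), so that $\groupoid{Z}_0=\coprod_p\tX_{x_p}$ and $\groupoid{Z}_1$ consists of germs of changes of charts of $\atlas{X}$. I would define $\psi_0$ as the disjoint union of the open embeddings $\tX_{x_p}\hookrightarrow\groupoid{X}_0$, which is \'etale, and $\psi_1$ by sending $\germ_{\tx}\lambda$ to the unique arrow $a$ with $\kappa_{\groupoidtot{X}{}}(\tx,\lambda(\tx),a)=\germ_{\tx}\lambda$ (well defined by the bijectivity in Lemma~\ref{lem-21}). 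A routine verification that $\kappa_{\groupoidtot{X}{}}$ is compatible with $s,t,m,e,i$ shows that $\groupoidmap{\psi}{}$ satisfies Definition~\ref{def-07}, i.e.\ it is a morphism of Lie groupoids from $\groupoidtot{Z}{}=\functor{F}^{\red}_0(\atlas{X})$ to $\groupoidtot{X}{}$.

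It then remains to verify (\hyperref[ME1]{ME1}) and (\hyperref[ME2]{ME2}). Since $\psi_0$ and $t$ are \'etale, the map $t\circ\pi^1$ is a submersion, and it is surjective because the sets $\pr_{\groupoidtot{X}{}}(\tX_{x_p})$ cover $M$, so every point of $\groupoid{X}_0$ receives an arrow from a point in the image of $\psi_0$; this gives (\hyperref[ME1]{ME1}). For the cartesian square of (\hyperref[ME2]{ME2}), given $(z,z')\in\groupoid{Z}_0\times\groupoid{Z}_0$ and an arrow $a\in\groupoid{X}_1$ with $(s,t)(a)=(\psi_0(z),\psi_0(z'))$, pulling the germ $\kappa_{\groupoidtot{X}{}}(\psi_0(z),\psi_0(z'),a)$ back through the open embedding $\psi_0$ produces a germ of change of charts, hence a unique $b\in\groupoid{Z}_1$ over $(z,z')$ with $\psi_1(b)=a$; uniqueness is the bijectivity of $\kappa_{\groupoidtot{Z}{}}$ (Lemma~\ref{lem-21} applied to $\groupoidtot{Z}{}$, which is proper, effective and \'etale by Lemma~\ref{lem-03}). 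This reproduces the $\kappa$-computation already carried out in the proof of Lemma~\ref{lem-09}, so $\groupoidmap{\psi}{}$ is a Morita equivalence. I expect the genuine difficulty to lie in the second step: passing from the germ-level isotropy data to an honest finite group acting effectively on a Euclidean chart, with the groupoid linearizing to an action groupoid, and then checking that these charts glue compatibly. Both points rest precisely on the bijectivity of $\kappa$ for proper, effective, \'etale groupoids furnished by Lemma~\ref{lem-21}.
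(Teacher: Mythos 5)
Your proposal is correct and takes essentially the same route as the paper, whose proof is given purely by citation: the atlas construction you carry out (local linearization of the proper, effective, \'etale groupoid to action groupoids $G_{x_0}\ltimes\tX_{x_0}$ of its finite isotropy groups) is exactly the last part of the proof of Theorem~4.1 in~\cite{MP}, and your construction of the comparison morphism (chart inclusions on objects, the bijection $\kappa_{\groupoidtot{X}{}}$ of Remark~\ref{rem-04} and Lemma~\ref{lem-21} on arrows) together with the verification of (\hyperref[ME1]{ME1}) and (\hyperref[ME2]{ME2}) is precisely the content of~\cite[Lemmas~4.7, 4.8 and 4.9]{T1} that the paper invokes. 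The one step you state loosely---shrinking the local diffeomorphisms commuting with $\pr_{\groupoidtot{X}{}}$ so that their domains become connected components of saturated preimages, as Definition~\ref{def-14} requires---is handled by the standard $\Stab$-invariant neighborhood trick already used in Construction~\ref{cons-01}, so nothing essential is missing.
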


\begin{proof}
Given $(\groupname{X}{})$, the reduced orbifold atlas $\atlas{X}$ is obtained as in the last part of the
proof of Theorem 4.1 in~\cite{MP}. In~\cite[Lemmas~4.7, 4.8 and 4.9]{T1} we proved that we can
define a weak equivalence as required. The proofs in~\cite{T1} were done in the category of complex
manifolds, but they can be easily adapted to the case of smooth manifolds, so we omit the details.
\end{proof}

\begin{lem}\label{lem-12}
Let us fix any pair of reduced orbifold atlases $\atlas{X},\atlas{Y}$ and any pair of open
embeddings $\atlasmap{\operatorname{w}}{m}:\atlas{X}\rightarrow\atlas{Y}$ for $m=1,2$. Then the
following facts are equivalent:

\begin{enumerate}[\emphatic{(}a\emphatic{)}]
 \item the underlying topological maps $\operatorname{w}^1$ and $\operatorname{w}^2$ coincide;
 \item there exists a $2$-morphism $[\delta]:\atlasmap{\operatorname{w}}{1}\Rightarrow
  \atlasmap{\operatorname{w}}{2}$ in $\RedAtl$;
 \item there exists a \emph{unique} $2$-morphism $[\delta]:\atlasmap{\operatorname{w}}{1}\Rightarrow
  \atlasmap{\operatorname{w}}{2}$ in $\RedAtl$.
\end{enumerate}

In particular, \emphatic{(}c\emphatic{)} holds if we consider any pair of refinements
\emphatic{(}or more generally, any pair of unit weak equivalences\emphatic{)}.
\end{lem}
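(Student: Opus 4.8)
The plan is to prove the cyclic chain (c)$\Rightarrow$(b)$\Rightarrow$(a)$\Rightarrow$(c), in which only the last implication has real content. The implication (c)$\Rightarrow$(b) is immediate. For (b)$\Rightarrow$(a) I would only recall that, by Definition~\ref{def-05}, a $2$-morphism in $\RedAtl$ is defined exclusively between two morphisms lying over \emph{the same} continuous map; hence the mere existence of $[\delta]:\atlasmap{\operatorname{w}}{1}\Rightarrow\atlasmap{\operatorname{w}}{2}$ already forces $\operatorname{w}^1=\operatorname{w}^2$, which is statement (a). (Consequently, if $\operatorname{w}^1\neq\operatorname{w}^2$ the three statements are simultaneously false.)

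For (a)$\Rightarrow$(c) I would transport the problem to $\PEEGpd$ along $\functor{F}^{\red}$. Write $\operatorname{w}:=\operatorname{w}^1=\operatorname{w}^2$ and $\groupoidmaptot{\psi}{m}:=\functor{F}^{\red}_1(\atlasmap{\operatorname{w}}{m})$. By Lemmas~\ref{lem-07} and~\ref{lem-08} the functor $\functor{F}^{\red}(\atlas{X},\atlas{Y})$ is a bijection on morphisms and on $2$-morphisms, so a $2$-morphism $\atlasmap{\operatorname{w}}{1}\Rightarrow\atlasmap{\operatorname{w}}{2}$ exists (resp.\ is unique) if and only if a natural transformation $\groupoidmaptot{\psi}{1}\Rightarrow\groupoidmaptot{\psi}{2}$ in $\PEEGpd$ exists (resp.\ is unique). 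Since both morphisms lie over $\operatorname{w}$, Corollary~\ref{cor-05} gives $|\groupoidmaptot{\psi}{1}|=|\groupoidmaptot{\psi}{2}|$, and by Lemma~\ref{lem-13} each $\psi_0^m=\coprod_i\widetilde{\operatorname{w}}_i^m$ is \'etale. This is exactly the setting of Lemma~\ref{lem-20}, save for one point: the $\groupoidmaptot{\psi}{m}$ need not be Morita equivalences, because an open embedding is not surjective on orbit spaces in general.

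To remove this obstruction I would restrict the target to the image. Put $Y':=\operatorname{w}(X)$, let $\atlas{Y}':=\atlas{Y}|_{Y'}$ be the induced atlas and $\iota:\atlas{Y}'\hookrightarrow\atlas{Y}$ the corresponding open-embedding inclusion. Condition (\hyperref[OE2]{OE2}) reads $[\operatorname{w}_{\ast}(\atlas{X})]=[\atlas{Y}']$, so (after the evident reindexing) each $\atlasmap{\operatorname{w}}{m}$ factors as $\iota\circ\atlasmap{\operatorname{v}}{m}$ with $\atlasmap{\operatorname{v}}{m}:\atlas{X}\to\atlas{Y}'$ a weak equivalence over the homeomorphism $\operatorname{w}:X\stackrel{\sim}{\rightarrow}Y'$. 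By Lemma~\ref{lem-09} the morphisms $\groupoidmaptot{v}{m}:=\functor{F}^{\red}_1(\atlasmap{\operatorname{v}}{m})$ are now \emph{Morita equivalences} between proper, effective, \'etale groupoids, and they still share the same underlying map by Corollary~\ref{cor-05}; hence Lemma~\ref{lem-20} produces a \emph{unique} natural transformation $\alpha:\groupoidmaptot{v}{1}\Rightarrow\groupoidmaptot{v}{2}$. Whiskering $\alpha$ with $\functor{F}^{\red}_1(\iota)$ (and using functoriality, Lemma~\ref{lem-04}) gives a natural transformation $\groupoidmaptot{\psi}{1}\Rightarrow\groupoidmaptot{\psi}{2}$, which I then pull back through $\functor{F}^{\red}_2$ to the sought $2$-morphism $[\delta]$.

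The main obstacle I foresee is the bookkeeping of this last whiskering step, specifically the claim that it induces a \emph{bijection} between natural transformations $\groupoidmaptot{v}{1}\Rightarrow\groupoidmaptot{v}{2}$ and natural transformations $\groupoidmaptot{\psi}{1}\Rightarrow\groupoidmaptot{\psi}{2}$ (needed for the uniqueness half of (c)). I would settle it by observing that $\functor{F}^{\red}_1(\iota)$ is an \'etale embedding that is full on arrows between objects of its image: any arrow of $\functor{F}^{\red}_0(\atlas{Y})$ whose source and target project into $Y'$ is the germ of a change of charts of $\atlas{Y}$ that restricts to a change of charts of $\atlas{Y}'$, hence lies in the image of $\functor{F}^{\red}_1(\iota)$. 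Since the source and target of $\gamma(x_0)$, for any natural transformation $\gamma:\groupoidmaptot{\psi}{1}\Rightarrow\groupoidmaptot{\psi}{2}$, are $\psi_0^1(x_0)$ and $\psi_0^2(x_0)$ and both land in the image of $\iota$, every such $\gamma$ factors uniquely through $\functor{F}^{\red}_1(\iota)$, reducing existence and uniqueness to the case of $\alpha$ already settled by Lemma~\ref{lem-20}. Finally, the ``in particular'' statement is free: by (\hyperref[REF1]{REF1}) every refinement and every unit weak equivalence has underlying map $\id_X$, so (a) holds automatically for any pair of them and (c) follows at once.
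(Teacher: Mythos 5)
Your proposal is correct, but it reaches the conclusion by a genuinely different route than the paper. The paper's own proof never leaves $\RedAtl$: for (a)$\Rightarrow$(b) it uses Lemma~\ref{lem-13} to choose a covering $\{\tX_i^a\}_{a\in A(i)}$ on which both lifts are invertible and then writes down the representative explicitly as $\delta_i^a:=\widetilde{\operatorname{w}}^2_i\circ\bigl(\widetilde{\operatorname{w}}^1_i|_{\tX_i^a}\bigr)^{-1}$, checking (\hyperref[2Ma]{2Ma})--(\hyperref[2Me]{2Me}) directly; for (b)$\Rightarrow$(c) it observes via (\hyperref[2Mc]{2Mc}) that any two representatives agree on the sets $\widetilde{\operatorname{w}}^1_i(\tX_i^a\cap\tX_i^{\overline{a}})$, which are \emph{open} precisely because $\widetilde{\operatorname{w}}^1_i$ is \'etale, so the germ condition of Definition~\ref{def-06} forces $[\delta]=[\,\overline{\delta}\,]$. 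You instead transport the whole problem along $\functor{F}^{\red}$ and reduce to Lemma~\ref{lem-20}, which requires the genuinely nontrivial intermediate step of factoring each open embedding as $\iota\circ\atlasmap{\operatorname{v}}{m}$ through the restricted atlas $\atlas{Y}|_{\operatorname{w}(X)}$ so as to land in the Morita-equivalence hypotheses, followed by the whiskering-bijection argument for $\functor{F}^{\red}_1(\iota)$; both of these steps are sound as you sketch them (fullness and injectivity of $\iota$ on arrows between objects over $\operatorname{w}(X)$ do hold, since any germ of a change of charts of $\atlas{Y}$ between points over $\operatorname{w}(X)$ restricts to a change of charts of the induced atlas). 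What your route buys is conceptual: it exhibits Lemma~\ref{lem-12} as the exact atlas-side shadow of Lemma~\ref{lem-20}, a parallelism the paper exploits only later (compare Lemmas~\ref{lem-25} and~\ref{lem-16}); what it costs is the restriction-atlas construction and the fullness bookkeeping, which the paper's two-line direct construction avoids entirely. One small caution: Lemma~\ref{lem-08} is stated with a single index map $\overf$ shared by both morphisms, whereas your open embeddings may have $\overline{\operatorname{w}}^1\neq\overline{\operatorname{w}}^2$; the proof of that lemma goes through verbatim in this generality (and the paper itself uses it so), but you are silently relying on the stronger form. There is no circularity: Lemmas~\ref{lem-08}, \ref{lem-09}, \ref{lem-13}, \ref{lem-15}, \ref{lem-20} and Corollary~\ref{cor-05} are all established independently of Lemma~\ref{lem-12}.
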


\begin{proof}
Clearly (b) implies (a) by Definitions~\ref{def-05} and~\ref{def-06} of $2$-morphism of reduced
orbifold atlases, so
let us prove that (a) implies (b). Let

\[\atlas{X}:=\left\{\left(\tX_i,G_i,\pi_i\right)\right\}_{i\in I},\quad\atlas{Y}:=\left\{\left(
\tY_j,H_j,\chi_j\right)\right\}_{j\in J}\]
and let

\[\atlasmaprep{\operatorname{w}}{m}:=\Big(\operatorname{w}^m,\overline{\operatorname{w}}^m,
\{\widetilde{\operatorname{w}}^m_i\}_{i\in I},P_{\operatorname{w}^m},\nu_{\operatorname{w}^m}\Big)
\quad\textrm{for}\,\,m=1,2\]
be representatives for $\atlasmap{\operatorname{w}}{1}$ and $\atlasmap{\operatorname{w}}{2}$
respectively, with $\operatorname{w}^1=\operatorname{w}^2$. By Lemma~\ref{lem-13}, for each $i\in I$
both $\widetilde{\operatorname{w}}_i^1$ and $\widetilde{\operatorname{w}}_i^2$ are \'etale. Therefore
for each $i\in I$ there exists an open covering $\{\tX_i^a\}_{a\in A(i)}$ of $\tX_i$ such that
both $\widetilde{\operatorname{w}}_i^1$ and $\widetilde{\operatorname{w}}_i^2$ are diffeomorphisms
if restricted to any $\tX_i^a$. Up to replacing $\{\tX_i^a\}_{a\in A(i)}$ by a finer covering, we
can assume that each $\tX_i^a$ is the domain of a change of charts of $\atlas{X}$. Since both
$\atlasmap{\operatorname{w}}{1}$ and $\atlasmap{\operatorname{w}}{2}$ are morphisms in
$\RedAtl$, then for each $i\in I$ we have

\begin{equation}\label{eq-20}
\chi_{\overline{\operatorname{w}}^1(i)}\circ\widetilde{\operatorname{w}}_i^1=\operatorname{w}^1
\circ\pi_i\quad\textrm{and}\quad\chi_{\overline{\operatorname{w}}^2(i)}\circ
\widetilde{\operatorname{w}}_i^2=\operatorname{w}^2\circ\pi_i=\operatorname{w}^1\circ\pi_i.
\end{equation}

Then for each $i\in I$ and for each $a\in A(i)$ we set:

\[\delta_i^a:=\widetilde{\operatorname{w}}^2_i\circ\left(\widetilde{
\operatorname{w}}_i^1|_{\tX_i^a}\right)^{-1}.\]

By construction, each $\delta_i^a$ is a diffeomorphisms; moreover, by \eqref{eq-20} each
$\delta_i^a$ is a change of charts in $\change(\atlas{Y},\overline{\operatorname{w}}^1(i),
\overline{\operatorname{w}}^2(i))$. Then it is easy to see that the family $\delta:=\{(\tX_i^a,
\delta_i^a)\}_{i\in I,a\in A(i)}$ satisfies properties (\hyperref[2Ma]{2Ma}) -- (\hyperref[2Me]{2Me}), so
$[\delta]$ is a $2$-morphism from $\atlasmap{\operatorname{w}}{1}$ to
$\atlasmap{\operatorname{w}}{2}$. Therefore (a) implies (b).\\

Since (c) implies (b), we need only to prove the opposite implication. Let us suppose that there
exists another $2$-morphism $[\,\overline{\delta}\,]:\atlasmap{\operatorname{w}}{1}\Rightarrow
\atlasmap{\operatorname{w}}{2}$ with representative

\[\overline{\delta}=\left\{\left(\tX_i^{\overline{a}},\overline{\delta}^{\overline{a}}_i\right)
\right\}_{i\in I,\overline{a}\in\overline{A}(i)}.\]

Let us fix any $i\in I$ and any pair $(a,\overline{a})\in A(i)\times\overline{A}(i)$ such that
$\tX_i^a\cap\tX_i^{\overline{a}}\neq\varnothing$. Then by property (\hyperref[2Mc]{2Mc}) for $\delta$
and $\overline{\delta}$, we get that $\delta_i^a$ coincides with $\overline{\delta}_i^{\overline{a}}$
on the set $\widetilde{\operatorname{w}}_i^1(\tX_i^a\cap\tX_i^{\overline{a}})$; such a set is open
because $\widetilde{\operatorname{w}}_i^1$ is \'etale by Lemma~\ref{lem-13}.
Therefore, for each $\tx_i\in\tX_i^a\cap\tX_i^{\overline{a}}$ we have

\[\germ_{\widetilde{\operatorname{w}}^1_i(\tx_i)}\delta_i^a=
\germ_{\widetilde{\operatorname{w}}^1_i(\tx_i)}\overline{\delta}_i^{\operatorname{a}},\]
so $[\delta]=[\,\overline{\delta}\,]$ by Definition~\ref{def-06}.
\end{proof}

\section{The bicategories $\RedOrb$ and $\PEEGpd\left[\WPEEGpdinv\right]$}
In this section we
will prove that the pair $(\RedAtl,\WRedAtl)$ admits a right bicalculus of fractions
and we will give a simple description of the associated bicategory of fractions
$\RedOrb$. We will also recall briefly the description of the bicategory
$\PEEGpd\left[\WPEEGpdinv\right]$. In the next section we will prove that such 
$2$ bicategories are equivalent.

\begin{prop}\label{prop-05}
The pair $(\RedAtl,\WRedAtl)$ admits a right bicalculus of fractions, so there are
a bicategory $\RedOrb:=\RedAtl\left[\WRedAtlinv\right]$ and a pseudofunctor

\[\functor{U}_{\WRedAtl}:\,\RedAtl\longrightarrow\RedOrb\]
that sends every refinement of reduced orbifold atlases to an internal equivalence and that is
universal with respect to such a property \emphatic{(}see \emphatic{Remark~\ref{rem-03})}.
\end{prop}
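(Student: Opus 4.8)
The plan is to verify directly that the class $\WRedAtl$ of refinements satisfies the five axioms (\hyperref[BF1]{BF1})--(\hyperref[BF5]{BF5}) and then to invoke Theorem~\ref{theo-04}. Two structural features of $\RedAtl$ (which is a strict $2$-category by Proposition~\ref{prop-01}, so that $\ast$ is strictly associative) will carry most of the burden. First, by Lemma~\ref{lem-01}(a) every $2$-morphism is invertible with respect to $\odot$; hence the invertibility clause (\hyperref[BF4b]{BF4b}) is automatic and none of the $2$-morphisms produced below need to be checked for invertibility. Second, the $2$-functor $\functor{F}^{\red}$ is a bijection on the $1$- and $2$-morphisms of each hom-category (Lemmas~\ref{lem-07} and~\ref{lem-08}), it is surjective up to Morita equivalence on objects (Lemma~\ref{lem-11}), and it matches weak equivalences of atlases with Morita equivalences of groupoids (Proposition~\ref{prop-04}). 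This allows me to transport every \emph{existence} statement to $\PEEGpd$, where the corresponding axioms hold by Proposition~\ref{prop-03}, and to check every \emph{equation} of $2$-morphisms after applying the faithful $\functor{F}^{\red}$.

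The three ``easy'' axioms I would settle by hand. Axiom (\hyperref[BF1]{BF1}) holds since $\id_{\atlas{X}}$ is a refinement, and (\hyperref[BF2]{BF2}) holds because the local lifts of a composite are composites $\tg_{\overf(i)}\circ\tf_i$ of open embeddings, hence open embeddings, while the underlying maps compose to $\id_X$. For (\hyperref[BF5]{BF5}), let $\atlasmap{\operatorname{w}}{}$ be a refinement and $\alpha\colon\atlasmap{\operatorname{v}}{}\Rightarrow\atlasmap{\operatorname{w}}{}$ an invertible $2$-morphism; by Definitions~\ref{def-05} and~\ref{def-06} the morphism $\atlasmap{\operatorname{v}}{}$ lies over $\id_X$, so (\hyperref[REF1]{REF1}) holds, and for a representative $\{(\tX_i^a,\delta_i^a)\}$ of $\alpha$, property (\hyperref[2Mc]{2Mc}) gives $\widetilde{\operatorname{v}}_i|_{\tX_i^a}=(\delta_i^a)^{-1}\circ\widetilde{\operatorname{w}}_i|_{\tX_i^a}$, an open embedding; local injectivity is promoted to global injectivity by comparing germs through (\hyperref[2Md]{2Md}), so each $\widetilde{\operatorname{v}}_i$ is an open embedding and (\hyperref[REF2]{REF2}) holds.

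For the existence axioms (\hyperref[BF3]{BF3}), (\hyperref[BF4a]{BF4a}) and (\hyperref[BF4c]{BF4c}) I would run a uniform three-step scheme. First, apply $\functor{F}^{\red}$ and solve the relevant instance in $\PEEGpd$ using Proposition~\ref{prop-03}; by Lemma~\ref{lem-11} the groupoid it produces is Morita equivalent to one of the form $\functor{F}^{\red}_0(\atlas{D})$. Second, pull the resulting morphisms and $2$-morphisms back to $\RedAtl$ via the bijections of Lemmas~\ref{lem-07} and~\ref{lem-08}; by Proposition~\ref{prop-04} the morphism that was a Morita equivalence becomes a \emph{weak equivalence} $\atlasmap{\operatorname{w}}{\prime}$ of atlases (and, where an axiom demands a member of the saturation, Lemma~\ref{lem-17} together with Lemma~\ref{lem-19} guarantees the relevant composite is again a weak equivalence). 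Third, and crucially, upgrade the weak equivalence to an honest refinement by Lemma~\ref{lem-18}: precomposing $\atlasmap{\operatorname{w}}{\prime}$ with a suitable weak equivalence $\atlasmap{\operatorname{v}}{}\colon\overline{\atlas{D}}\to\atlas{D}$ makes $\atlasmap{\operatorname{w}}{\prime}\circ\atlasmap{\operatorname{v}}{}$ a refinement, and I take this composite as the morphism the axiom requires. The accompanying $2$-morphisms are obtained by whiskering the transported ones with $i_{\atlasmap{\operatorname{v}}{}}$, and the defining equations survive; for instance in (\hyperref[BF4a]{BF4a}) one has $\alpha\ast i_{\atlasmap{\operatorname{w}}{\prime}\circ\atlasmap{\operatorname{v}}{}}=(\alpha\ast i_{\atlasmap{\operatorname{w}}{\prime}})\ast i_{\atlasmap{\operatorname{v}}{}}=(i_{\atlasmap{\operatorname{w}}{}}\ast\beta_0)\ast i_{\atlasmap{\operatorname{v}}{}}=i_{\atlasmap{\operatorname{w}}{}}\ast(\beta_0\ast i_{\atlasmap{\operatorname{v}}{}})$, using strict associativity of $\ast$.

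The main obstacle I expect is (\hyperref[BF4c]{BF4c}). There one is handed two solutions and must produce a common object $E$, morphisms $\operatorname{u},\operatorname{u}'$ and an invertible $\zeta$ with $\operatorname{v}\circ\operatorname{u}\in\WRedAtl$; after solving in $\PEEGpd$ and transporting, Lemma~\ref{lem-18} again supplies the upgrade of the required composite from a weak equivalence to a refinement, but the compatibility identity $\bigl(\beta'\ast i_{\operatorname{u}'}\bigr)\odot\bigl(i_{f^1}\ast\zeta\bigr)=\bigl(i_{f^2}\ast\zeta\bigr)\odot\bigl(\beta\ast i_{\operatorname{u}}\bigr)$ between the two $\odot$-composites must be shown to be preserved once everything is whiskered by the weak equivalence coming from Lemma~\ref{lem-18}. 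This is where the interchange law of Lemma~\ref{lem-02} and the faithfulness of $\functor{F}^{\red}$ on $2$-morphisms (Lemma~\ref{lem-08}) have to be combined with care. Once all five axioms are verified, Theorem~\ref{theo-04} applied to $(\RedAtl,\WRedAtl)$ furnishes the bicategory $\RedOrb:=\RedAtl\left[\WRedAtlinv\right]$ and the universal pseudofunctor $\functor{U}_{\WRedAtl}$, which completes the proof.
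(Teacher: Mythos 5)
Your proposal follows essentially the same route as the paper's proof: direct verification of (\hyperref[BF1]{BF1}), (\hyperref[BF2]{BF2}) and (\hyperref[BF5]{BF5}) (the paper's (\hyperref[REF2]{REF2})-argument via (\hyperref[2Mc]{2Mc}) is exactly yours), and for (\hyperref[BF3]{BF3})/(\hyperref[BF4]{BF4}) the same transport scheme---push to $\PEEGpd$ via $\functor{F}^{\red}$, solve there by Proposition~\ref{prop-03}, realize the resulting groupoid by an atlas via Lemma~\ref{lem-11}, pull back through the bijections of Lemmas~\ref{lem-07} and~\ref{lem-08}, identify the transported morphism as a weak equivalence by Proposition~\ref{prop-04}, and upgrade it to a refinement by Lemma~\ref{lem-18} with whiskering by $i_{\atlasmap{\operatorname{u}}{}}$. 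The paper even dispatches (\hyperref[BF4]{BF4}) with ``same ideas as (\hyperref[BF3]{BF3})'', so your more explicit sketch of (\hyperref[BF4c]{BF4c}) is, if anything, slightly more detailed than the original.
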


\begin{proof}
Condition (\hyperref[BF1]{BF1}) is obviously satisfied and (\hyperref[BF2]{BF2}) is an easy
consequence of the definition of compositions (see Construction~\ref{cons-01}).
Let us
consider (\hyperref[BF3]{BF3}), so let us fix any triple of reduced orbifold atlases $\atlas{X},
\atlas{Y},\atlas{Z}$, any refinement $\atlasmap{\operatorname{w}}{}:\atlas{X}\rightarrow
\atlas{Y}$ and any morphism $\atlasmap{f}{}:\atlas{Z}\rightarrow\atlas{Y}$. Using
Lemmas~\ref{lem-09} and~\ref{lem-03} we
have that $\functor{F}^{\red}_1(\atlasmap{\operatorname{w}}{})$ is a Morita equivalence between proper,
effective and \'etale
groupoids; moreover we have proved in Proposition~\ref{prop-03} that the set $\WPEEGpd$ satisfies
(\hyperref[BF3]{BF3}). Therefore there exist a proper, effective, \'etale Lie groupoid
$\groupoidtot{U}{}$, a Morita equivalence $\groupoidmaptot{\psi}{\prime}$, a morphism
$\groupoidmaptot{\phi}{\prime}$ and a natural transformation $\alpha$ as follows:

\[
\begin{tikzpicture}[xscale=2.8,yscale=-0.8]
    \node (A0_1) at (1, 0) {$\groupoidtot{U}{}$};
    \node (A2_0) at (0, 2) {$\functor{F}^{\red}_0(\atlas{Z})$};
    \node (A2_1) at (1, 2) {$\functor{F}^{\red}_0(\atlas{Y})$};
    \node (A2_2) at (2, 2) {$\functor{F}^{\red}_0(\atlas{X})$.};

    \node (A1_1) at (1, 1.4) {$\Rightarrow$};
    \node (B1_1) at (1, 1) {$\alpha$};
    
    \path (A0_1) edge [->]node [auto,swap] {$\scriptstyle{\groupoidmaptot{\psi}{\prime}}$} (A2_0);
    \path (A2_2) edge [->]node [auto] {$\scriptstyle{\functor{F}_1^{\red}
      (\atlasmap{\operatorname{w}}{})}$} (A2_1);
    \path (A2_0) edge [->]node [auto,swap] {$\scriptstyle{\functor{F}_1^{\red}
      (\atlasmap{f}{})}$} (A2_1);
    \path (A0_1) edge [->]node [auto] {$\scriptstyle{\groupoidmaptot{\phi}{\prime}}$} (A2_2);
\end{tikzpicture}
\]

By Lemma~\ref{lem-11} there exist a reduced orbifold atlas $\atlas{U}$ and a Morita
equivalence $\groupoidmaptot{\psi}{\prime\prime}:\functor{F}^{\red}_0(\atlas{U})
\rightarrow\groupoidtot{U}{}$. Lemmas~\ref{lem-07} and~\ref{lem-08} prove that there are a pair
of morphisms $\atlasmap{\operatorname{v}}{}$, $\atlasmap{g}{}$ and a $2$-morphism $[\theta]$
in $\RedAtl$ as follows

\[
\begin{tikzpicture}[xscale=2.8,yscale=-0.8]
    \node (A0_1) at (1, 0) {$\atlas{U}$};
    \node (A2_0) at (0, 2) {$\atlas{Z}$};
    \node (A2_1) at (1, 2) {$\atlas{Y}$};
    \node (A2_2) at (2, 2) {$\atlas{X}$,};

    \node (A1_1) at (1, 1.4) {$\Rightarrow$};
    \node (B1_1) at (1, 0.9) {$[\theta]$};
    
    \path (A0_1) edge [->]node [auto,swap] {$\scriptstyle{\atlasmap{\operatorname{v}}{}}$} (A2_0);
    \path (A2_2) edge [->]node [auto] {$\scriptstyle{\atlasmap{\operatorname{w}}{}}$} (A2_1);
    \path (A2_0) edge [->]node [auto,swap] {$\scriptstyle{\atlasmap{f}{}}$} (A2_1);
    \path (A0_1) edge [->]node [auto] {$\scriptstyle{\atlasmap{g}{}}$} (A2_2);
\end{tikzpicture}
\]
such that the $2$-functor $\functor{F}^{\red}$ maps such a diagram to

\[
\begin{tikzpicture}[xscale=2.8,yscale=-0.8]
    \node (A0_1) at (1, 0) {$\functor{F}^{\red}_0(\atlas{U})$};
    \node (A2_0) at (0, 2) {$\functor{F}^{\red}_0(\atlas{Z})$};
    \node (A2_1) at (1, 2) {$\functor{F}^{\red}_0(\atlas{Y})$};
    \node (A2_2) at (2, 2) {$\functor{F}^{\red}_0(\atlas{X})$.};

    \node (A1_1) at (1, 1.4) {$\Rightarrow$};
    \node (B1_1) at (1, 1) {$\alpha\ast i_{\groupoidmaptot{\psi}{\prime\prime}}$};
    
    \path (A0_1) edge [->]node [auto,swap] {$\scriptstyle{\groupoidmaptot{\psi}{\prime}
      \circ\groupoidmaptot{\psi}{\prime\prime}}$} (A2_0);
    \path (A2_2) edge [->]node [auto] {$\scriptstyle{\functor{F}_1^{\red}
      (\atlasmap{\operatorname{w}}{})}$} (A2_1);
    \path (A2_0) edge [->]node [auto,swap] {$\scriptstyle{\functor{F}_1^{\red}
      (\atlasmap{f}{})}$} (A2_1);
    \path (A0_1) edge [->]node [auto] {$\scriptstyle{\groupoidmaptot{\phi}{\prime}
      \circ\groupoidmaptot{\psi}{\prime\prime}}$} (A2_2);
\end{tikzpicture}
\]

Now $\groupoidmaptot{\psi}{\prime}\circ\groupoidmaptot{\psi}{\prime\prime}$ is a Morita equivalence
(see condition (\hyperref[BF2]{BF2}) for the class $\WEGpd$).
So by Lemma~\ref{lem-10} we have that
$\atlasmap{\operatorname{v}}{}$ is a weak equivalence of reduced orbifold atlases. By
Lemma~\ref{lem-18} there is a reduced orbifold atlas $\atlas{V}$ and a weak equivalence
$\atlasmap{\operatorname{u}}{}:\atlas{V}\rightarrow\atlas{U}$, such that 
$\atlasmap{\operatorname{v}}{}\circ\atlasmap{\operatorname{u}}{}$ is a refinement.
Then we set

\[\atlasmap{\operatorname{w}}{\prime}:=\atlasmap{\operatorname{v}}{}\circ
\atlasmap{\operatorname{u}}{},\quad\quad\atlasmap{f}{\prime}:=\atlasmap{g}{}\circ
\atlasmap{\operatorname{u}}{},\quad\quad[\delta]:=
[\theta]\ast i_{\atlasmap{\operatorname{u}}{}}.\]

By Lemma~\ref{lem-01}(a) we get that $[\delta]$ is an invertible $2$-morphism, so the data $(\atlas{V},
\atlasmap{\operatorname{w}}{\prime},\atlasmap{f}{\prime},$ $[\delta])$ prove that
(\hyperref[BF3]{BF3}) holds for $\WRedAtl$.\\

The proof that (\hyperref[BF4]{BF4}) holds follows the same ideas described for (\hyperref[BF3]{BF3}),
so we omit it. Lastly, let us prove condition (\hyperref[BF5]{BF5}), so let us fix any pair of
reduced orbifold atlases

\[\atlas{X}:=\Big\{\Big(\tX_i,G_i,\pi_i\Big)\Big\}_{i\in I},\quad\atlas{Y}:=\Big\{\Big(
\tY_j,H_j,\chi_j\Big)\Big\}_{j\in J}\]
over $X$ and $Y$ respectively, any pair of morphisms

\[\atlasmap{\operatorname{w}}{m}:=\Big(\operatorname{w}^m,\overline{\operatorname{w}}^m,
\{\widetilde{\operatorname{w}}^m_i\}_{i\in I},\left[P_{\operatorname{w}^m},
\nu_{\operatorname{w}^m}\right]\Big):\,\,\atlas{X}\longrightarrow\atlas{Y},\quad m=1,2\]
and any $2$-morphism

\[[\delta]:=\left[\left\{\left(\tX_i^a,\delta_i^a\right)\right\}_{i\in I,a\in A(i)}\right]:
\atlasmap{\operatorname{w}}{1}\Longrightarrow
\atlasmap{\operatorname{w}}{2}\]
in $\RedAtl$. Moreover, let us suppose that 
$\atlasmap{\operatorname{w}}{2}$ is a refinement. This implies that $X=Y$,
$\operatorname{w}^2=\id_X$ and that every smooth map $\widetilde{\operatorname{w}}^2_i$ is an
open embedding. Now let us fix any $i\in I$, any $a\in A(i)$ and any
point $\tx_i$ in the open set $\tX_i^a$. By (\hyperref[2Mc]{2Mc}) we have that

\[\widetilde{\operatorname{w}}^1_i(\tx_i)=(\delta_i^a)^{-1}\circ\widetilde{\operatorname{w}}^2_i
(\tx_i)\]
so $\widetilde{\operatorname{w}}^1_i$ locally coincides with
an open embedding, hence $\widetilde{\operatorname{w}}^1_i$ is an \'etale map. Again by 
(\hyperref[2Mc]{2Mc}) we get

\[\tx_i=(\widetilde{\operatorname{w}}^2_i)^{-1}\circ\delta_i^a\circ
\widetilde{\operatorname{w}}^1_i(\tx_i).\]

If we fix any other index $a'\in A(i)$ and any other point $\tx'_i\in\tX_i^{a'}$, then we have also

\[\tx'_i=(\widetilde{\operatorname{w}}^2_i)^{-1}\circ\delta_i^{a'}\circ
\widetilde{\operatorname{w}}^1_i(\tx'_i).\]

Therefore, if $\widetilde{\operatorname{w}}^1_i(\tx_i)=\widetilde{\operatorname{w}}^1_i(\tx'_i)$,
then $\tx_i=\tx'_i$, i.e.\ $\widetilde{\operatorname{w}}^1_i$ is injective. So we conclude that
the smooth map $\widetilde{\operatorname{w}}^1_i$ is an open embedding for each $i\in I$,
i.e.\ condition (\hyperref[REF2]{REF2}) holds for $\atlasmap{\operatorname{w}}{1}$.\\

By Lemma~\ref{lem-12} the topological map $\operatorname{w}^1$ coincides with $\operatorname{w}^2=
\id_X$, so (\hyperref[REF1]{REF1}) holds. Therefore, we have proved that
(\hyperref[BF5]{BF5}) holds for $\WRedAtl$.
\end{proof}

\begin{rem}
In a similar way we can prove that also the pairs

\begin{enumerate}[(a)]
 \item $\RedAtl$ together with the class of all unit weak equivalences,
 \item $\RedAtl$ together with the class of all weak equivalences
\end{enumerate}
satisfy axioms (\hyperref[BF1]{BF1}) --
(\hyperref[BF5]{BF5}) (actually, (b) satisfies also the stronger axiom
(\hyperref[BF1prime]{BF1})$'$, see Remark~\ref{rem-03}), so a bicalculus of fractions exists also
for (a) and (b), and the resulting bicategories of fractions are equivalent to $\RedOrb$
(this is an easy consequence of~\cite[Proposition~2.10]{T3} and Lemma~\ref{lem-19}).
We prefer to use the class $\WRedAtl$ of refinements
instead of (a) or (b) because this leads to a bicategory $\RedOrb$ that is easier
to describe and that is more close to the geometric intuition (see below for details).
\end{rem}

\begin{lem}\label{lem-24}
The set $\WRedAtl$ satisfies a ``weak $2$-out-of-$3$-property'', i.e.\ given any pair of morphisms
$\atlasmap{f}{}:\atlas{X}\rightarrow\atlas{Y}$ and $\atlasmap{g}{}:\atlas{Y}\rightarrow\atlas{Z}$
in $\RedAtl$, we have:

\begin{enumerate}[\emphatic{(}i\emphatic{)}]
 \item if $\atlasmap{f}{}$ and $\atlasmap{g}{}$ belong to $\WRedAtl$, then so does
  $\atlasmap{g}{}\circ\atlasmap{f}{}$;
 \item if $\atlasmap{g}{}$ and $\atlasmap{g}{}\circ\atlasmap{f}{}$ belong to $\WRedAtl$, then so does
  $\atlasmap{f}{}$.
\end{enumerate}

The sets of unit weak equivalences and of all weak equivalences satisfy both
the \emphatic{(}strong\emphatic{)} ``\,$2$-out-of-$3$'' property.
\end{lem}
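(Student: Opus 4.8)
The plan is to verify (i) and (ii) directly from the two defining conditions (\hyperref[REF1]{REF1}) and (\hyperref[REF2]{REF2}) of a refinement in Definition~\ref{def-10}, using the explicit description of the composed local lifts from Construction~\ref{cons-01}, and then to obtain the strong $2$-out-of-$3$ statements for unit weak equivalences and weak equivalences by transporting the known property of Morita equivalences through $\functor{F}^{\red}$. For (i), since both $\atlasmap{f}{}$ and $\atlasmap{g}{}$ satisfy (\hyperref[REF1]{REF1}), the atlases $\atlas{X},\atlas{Y},\atlas{Z}$ all lie over a single space $X$ and the underlying maps of $\atlasmap{f}{}$ and $\atlasmap{g}{}$ are both $\id_X$; hence the underlying map of $\atlasmap{g}{}\circ\atlasmap{f}{}$ is $\id_X$, giving (\hyperref[REF1]{REF1}). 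By Construction~\ref{cons-01} the local lifts of the composite are the maps $\tg_{\overf(i)}\circ\tf_i$, each a composite of two open embeddings and hence itself an open embedding, giving (\hyperref[REF2]{REF2}); thus $\atlasmap{g}{}\circ\atlasmap{f}{}\in\WRedAtl$.

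For (ii) the crucial ingredient is a factorization argument. From $\atlasmap{g}{}$ being a refinement I get (by (\hyperref[REF1]{REF1})) that $\atlas{Y}$ and $\atlas{Z}$ lie over one space with $g=\id$, and that each $\tg_j$ is an open embedding; from $\atlasmap{g}{}\circ\atlasmap{f}{}$ being a refinement I get that $\atlas{X}$ and $\atlas{Z}$ lie over one space with $g\circ f=\id$. Combining these gives $f=g\circ f=\id$, so $\atlasmap{f}{}$ satisfies (\hyperref[REF1]{REF1}). For (\hyperref[REF2]{REF2}) I would fix $i\in I$ and use the identity $\tf_i=\bigl(\tg_{\overf(i)}\bigr)^{-1}\circ\bigl(\tg_{\overf(i)}\circ\tf_i\bigr)$: here $\tg_{\overf(i)}$ is an open embedding, hence a diffeomorphism onto its open image and invertible there, while the local lift $\tg_{\overf(i)}\circ\tf_i$ of the composite is an open embedding whose image lies inside that of $\tg_{\overf(i)}$. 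A composite of an open embedding with the diffeomorphism $\bigl(\tg_{\overf(i)}\bigr)^{-1}$ is again an open embedding, so every $\tf_i$ is an open embedding; therefore $\atlasmap{f}{}$ is a genuine refinement, i.e.\ $\atlasmap{f}{}\in\WRedAtl$.

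For the final sentence I would treat weak equivalences first. By Proposition~\ref{prop-04} the assignment $\atlasmap{f}{}\mapsto\functor{F}^{\red}_1(\atlasmap{f}{})$ restricts to a bijection between weak equivalences of reduced orbifold atlases and Morita equivalences of proper, effective, \'etale groupoids, and by Lemma~\ref{lem-04} this assignment preserves composition; since the class $\WEGpd$ satisfies the $2$-out-of-$3$ property (Lemma~\ref{lem-17}), all three implications transfer at once to weak equivalences. For unit weak equivalences I would observe that, by Definition~\ref{def-10}, they are exactly the weak equivalences whose underlying topological map is an identity. The class of all morphisms with identity underlying map obviously satisfies $2$-out-of-$3$, and an intersection of two classes each satisfying $2$-out-of-$3$ again satisfies it, so unit weak equivalences inherit the full property.

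I expect no serious obstacle: every step is routine, the only one requiring care being the factorization in (ii). Conceptually the illuminating point is \emph{why} only the weak form holds for $\WRedAtl$: the absent implication is that $\atlasmap{f}{}$ and $\atlasmap{g}{}\circ\atlasmap{f}{}$ being refinements should force $\atlasmap{g}{}$ to be one. This fails because the hypotheses constrain $\tg_{\overf(i)}$ only on the image $\tf_i(\tX_i)$, and say nothing at all about charts $\tY_j$ with $j$ outside the range of $\overf$, so $\tg_{\overf(i)}$ need not be an open embedding on all of $\tY_{\overf(i)}$. This asymmetry is precisely what accounts for the word ``weak'', and it disappears for weak and unit weak equivalences because there the defining condition is a property of the underlying topological map and of the induced groupoid morphism rather than of the individual local lifts.
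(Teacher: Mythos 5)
Your proposal is correct and follows essentially the same route as the paper: part (i) is the paper's appeal to (\hyperref[BF2]{BF2}), i.e.\ the definition of composition in Construction~\ref{cons-01}, and your factorization $\tf_i=\bigl(\tg_{\overf(i)}\bigr)^{-1}\circ\bigl(\tg_{\overf(i)}\circ\tf_i\bigr)$ is exactly the step the paper's terse proof of (ii) leaves implicit. For the final sentence, your transfer of $2$-out-of-$3$ for weak equivalences through $\functor{F}^{\red}$ via Proposition~\ref{prop-04} and Lemma~\ref{lem-17} is precisely the alternative the paper indicates (Lemmas~\ref{lem-17} and~\ref{lem-19}), and your intersection argument for unit weak equivalences is a clean instance of the direct verification the paper omits.
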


Note that if both $\atlasmap{f}{}$ and $\atlasmap{g}{}\circ\atlasmap{f}{}$ belong to $\WRedAtl$, then
in general it is not true that $\atlasmap{g}{}$ belong to $\WRedAtl$ (in general one can only prove
that $\atlasmap{g}{}$ is a unit weak equivalence).

\begin{proof}
(i) is simply (\hyperref[BF2]{BF2}) for the class $\WRedAtl$ (see Proposition~\ref{prop-05}). For
(ii), let us suppose that $\atlasmap{f}{}$ and $\atlasmap{g}{}$ are as in \eqref{eq-45}. Since
$\atlasmap{g}{}$ is a refinement, then $Y=Z$, $g=\id_Z$ and $\tg_j$ is an open embedding for each
$j\in J$. Since $\atlasmap{g}{}\circ\atlasmap{f}{}$ is a refinement,
then $X=Z$, $g\circ f=\id_Z$ and
$\tg_{\overline{f}(i)}\circ\tf_i$ is an open embedding for each $i\in I$. From this we get
that $f=\id_X$ and that $\tf_i$ is an open embedding for each $i\in I$, i.e.\ $\atlasmap{f}{}$
is a refinement.\\

The last part of the statement can be easily verified directly. For the class of all weak
equivalences, one can also prove it using Lemmas~\ref{lem-17} and~\ref{lem-19}.
\end{proof}

\begin{cor}\label{cor-01}
Let us fix any triple of reduced orbifold atlases $\atlas{X},\atlas{Y},\atlas{Z}$ over \emph{the
same topological space} $X$ and any pair of refinements

\[\begin{tikzpicture}[xscale=2.4,yscale=-1.2]
    \node (A0_0) at (0, 0) {$\atlas{X}$};
    \node (A0_1) at (1, 0) {$\atlas{Y}$};
    \node (A0_2) at (2, 0) {$\atlas{Z}$.};
    \path (A0_0) edge [->]node [auto] {$\scriptstyle{\atlasmap{\operatorname{w}}{1}}$} (A0_1);
    \path (A0_2) edge [->]node [auto,swap] {$\scriptstyle{\atlasmap{\operatorname{w}}{2}}$} (A0_1);
\end{tikzpicture}\]

Then there exists data in $\RedAtl$ as in the following diagram

\begin{equation}\label{eq-34}
\begin{tikzpicture}[xscale=2.4,yscale=-0.8]
    \node (A0_1) at (1, 0) {$\atlas{U}$};
    \node (A2_0) at (0, 2) {$\atlas{X}$};
    \node (A2_1) at (1, 2) {$\atlas{Y}$};
    \node (A2_2) at (2, 2) {$\atlas{Z}$.};

    \node (A1_1) at (1, 1.4) {$\Rightarrow$};
    \node (B1_1) at (1, 0.9) {$[\delta]$};
    
    \path (A0_1) edge [->]node [auto,swap] {$\scriptstyle{\atlasmap{\operatorname{v}}{1}}$} (A2_0);
    \path (A2_2) edge [->]node [auto] {$\scriptstyle{\atlasmap{\operatorname{w}}{2}}$} (A2_1);
    \path (A2_0) edge [->]node [auto,swap] {$\scriptstyle{\atlasmap{\operatorname{w}}{1}}$} (A2_1);
    \path (A0_1) edge [->]node [auto] {$\scriptstyle{\atlasmap{\operatorname{v}}{2}}$} (A2_2);
\end{tikzpicture}
\end{equation}
such that also $\atlasmap{\operatorname{v}}{1}$ and $\atlasmap{\operatorname{v}}{2}$ are refinements.
\end{cor}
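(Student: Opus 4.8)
The plan is to obtain the desired data directly from the right bicalculus of fractions established in Proposition~\ref{prop-05}, rather than building $\atlas{U}$ by hand. Since $\atlasmap{\operatorname{w}}{2}:\atlas{Z}\rightarrow\atlas{Y}$ is a refinement, it belongs to $\WRedAtl$, while $\atlasmap{\operatorname{w}}{1}:\atlas{X}\rightarrow\atlas{Y}$ is in particular a morphism of $\RedAtl$ with the same codomain. First I would apply axiom (\hyperref[BF3]{BF3}) (which holds for $\WRedAtl$ by Proposition~\ref{prop-05}) to the pair $(\atlasmap{\operatorname{w}}{2},\atlasmap{\operatorname{w}}{1})$. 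This yields a reduced orbifold atlas $\atlas{U}$, a refinement $\atlasmap{\operatorname{v}}{1}:\atlas{U}\rightarrow\atlas{X}$ (the morphism in $\WRedAtl$ produced by (\hyperref[BF3]{BF3})), a morphism $\atlasmap{\operatorname{v}}{2}:\atlas{U}\rightarrow\atlas{Z}$, and an invertible $2$-morphism $[\delta]:\atlasmap{\operatorname{w}}{1}\circ\atlasmap{\operatorname{v}}{1}\Rightarrow\atlasmap{\operatorname{w}}{2}\circ\atlasmap{\operatorname{v}}{2}$. These are exactly the data displayed in \eqref{eq-34}; moreover $\atlas{U}$ automatically lies over $X$, since $\atlasmap{\operatorname{v}}{1}$ is a refinement and (\hyperref[REF1]{REF1}) forces its base to be $X$.

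The only thing left to verify is that $\atlasmap{\operatorname{v}}{2}$ is itself a refinement, and not merely a morphism; this is the heart of the argument. I would proceed in two steps. First, $\atlasmap{\operatorname{w}}{1}\circ\atlasmap{\operatorname{v}}{1}$ is a composite of two refinements, hence a refinement by (\hyperref[BF2]{BF2}) (equivalently Lemma~\ref{lem-24}(i)). Second, since every $2$-morphism of $\RedAtl$ is invertible (Lemma~\ref{lem-01}(a)), the inverse of $[\delta]$ is an invertible $2$-morphism $\atlasmap{\operatorname{w}}{2}\circ\atlasmap{\operatorname{v}}{2}\Rightarrow\atlasmap{\operatorname{w}}{1}\circ\atlasmap{\operatorname{v}}{1}$ whose target lies in $\WRedAtl$; axiom (\hyperref[BF5]{BF5}) then shows that its source $\atlasmap{\operatorname{w}}{2}\circ\atlasmap{\operatorname{v}}{2}$ is a refinement as well.

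Finally, I would combine this with the weak $2$-out-of-$3$ property. We now know that both $\atlasmap{\operatorname{w}}{2}$ and $\atlasmap{\operatorname{w}}{2}\circ\atlasmap{\operatorname{v}}{2}$ are refinements, so Lemma~\ref{lem-24}(ii) applies and gives that $\atlasmap{\operatorname{v}}{2}$ is a refinement. Together with the refinement $\atlasmap{\operatorname{v}}{1}$ and the $2$-morphism $[\delta]$ coming from (\hyperref[BF3]{BF3}), this produces the diagram \eqref{eq-34} with all the required properties. I expect the delicate point to be precisely the passage from ``$\atlasmap{\operatorname{v}}{2}$ is a morphism'' to ``$\atlasmap{\operatorname{v}}{2}$ is a refinement'': the bicalculus of fractions guarantees only that one of the two legs produced by (\hyperref[BF3]{BF3}) lies in $\WRedAtl$, and it is the interplay of (\hyperref[BF5]{BF5}) with the weak $2$-out-of-$3$ property of Lemma~\ref{lem-24} that upgrades the second leg to a refinement.
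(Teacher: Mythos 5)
Your proposal is correct and follows essentially the same route as the paper's own proof: apply (\hyperref[BF3]{BF3}) for $\WRedAtl$ to the pair $(\atlasmap{\operatorname{w}}{1},\atlasmap{\operatorname{w}}{2})$ to get $\atlas{U}$, a refinement $\atlasmap{\operatorname{v}}{1}$, a morphism $\atlasmap{\operatorname{v}}{2}$ and an invertible $[\delta]$, note via (\hyperref[BF2]{BF2}) that $\atlasmap{\operatorname{w}}{1}\circ\atlasmap{\operatorname{v}}{1}$ is a refinement, use (\hyperref[BF5]{BF5}) on $[\delta]^{-1}$ to conclude that $\atlasmap{\operatorname{w}}{2}\circ\atlasmap{\operatorname{v}}{2}$ is a refinement, and finish with the weak $2$-out-of-$3$ property of Lemma~\ref{lem-24}(ii). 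The paper's proof uses exactly this sequence of steps, so there is nothing to add.
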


\begin{proof}
Let us apply (\hyperref[BF3]{BF3}) for $\WRedAtl$ to the pair $(\atlasmap{\operatorname{w}}{1},
\atlasmap{\operatorname{w}}{2})$ (where $\atlasmap{\operatorname{w}}{1}$ is simply considered as a
morphism of reduced orbifold atlases). Then we get a diagram as \eqref{eq-34}, with 
$\atlasmap{\operatorname{v}}{1}$ refinement. Using (\hyperref[BF2]{BF2}) for $(\RedAtl,\WRedAtl)$
we get that $\atlasmap{\operatorname{w}}{1}\circ\atlasmap{\operatorname{v}}{1}$ is a refinement,
hence by (\hyperref[BF5]{BF5}) (applied to $[\delta]^{-1}$) we get that also
$\atlasmap{\operatorname{w}}{2}\circ\atlasmap{\operatorname{v}}{2}$ is a refinement. By
Lemma~\ref{lem-24}(ii), this implies that also $\atlasmap{\operatorname{v}}{2}$ is a refinement.
\end{proof}

\begin{cor}\label{cor-02}
Given any pair of reduced orbifold atlases $\atlas{X}^1$ and $\atlas{X}^2$ \emph{over the same
topological space $X$}, the following facts are equivalent:

\begin{enumerate}[\emphatic{(}1\emphatic{)}]
 \item there are a topological space $Y$, a reduced orbifold atlas $\atlas{X}$ over $Y$ and
  weak equivalences

  \begin{equation}\label{eq-35}
  \begin{tikzpicture}[xscale=2.4,yscale=-1.2]
    \node (A0_0) at (0, 0) {$\atlas{X}^1$};
    \node (A0_1) at (1, 0) {$\atlas{X}$};
    \node (A0_2) at (2, 0) {$\atlas{X}^2$,};
    \path (A0_1) edge [->]node [auto,swap] {$\scriptstyle{\atlasmap{\operatorname{w}}{1}}$} (A0_0);
    \path (A0_1) edge [->]node [auto] {$\scriptstyle{\atlasmap{\operatorname{w}}{2}}$} (A0_2);
  \end{tikzpicture}
  \end{equation}
  such that the underlying topological maps $\operatorname{w}^1$ and $\operatorname{w}^2$ 
  coincide;
 \item there are a reduced orbifold atlas $\atlas{X}$ over $X$ and \emph{refinements}
  as in \eqref{eq-35};
 \item $\atlas{X}^1$ is equivalent to $\atlas{X}^2$.
\end{enumerate}
\end{cor}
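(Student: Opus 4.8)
The plan is to prove the three conditions equivalent through the cycle (2)$\Rightarrow$(1)$\Rightarrow$(3)$\Rightarrow$(2), concentrating all the real work in the last step. The implication (2)$\Rightarrow$(1) is immediate: by (\hyperref[REF1]{REF1}) each refinement has underlying topological map $\id_X$, and by the chain of inclusions displayed just after Definition~\ref{def-10} every refinement is in particular a weak equivalence; hence a pair of refinements $\atlas{X}\rightarrow\atlas{X}^m$ is already a pair of weak equivalences as in \eqref{eq-35} with $Y=X$ and $\operatorname{w}^1=\operatorname{w}^2=\id_X$.

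For (1)$\Rightarrow$(3) I would argue directly from (\hyperref[WE2]{WE2}). Setting $\operatorname{w}:=\operatorname{w}^1=\operatorname{w}^2$ (a homeomorphism by (\hyperref[WE1]{WE1})), condition (\hyperref[WE2]{WE2}) applied to each $\atlasmap{\operatorname{w}}{m}$ says precisely that $[\operatorname{w}_{\ast}(\atlas{X})]=[\atlas{X}^m]$, with $\operatorname{w}_{\ast}(\atlas{X})$ as in Definition~\ref{def-13}. Since the push-forward $\operatorname{w}_{\ast}(\atlas{X})$ depends only on $\atlas{X}$ and on the single map $\operatorname{w}$, not on $m$, the two equalities combine into $[\atlas{X}^1]=[\operatorname{w}_{\ast}(\atlas{X})]=[\atlas{X}^2]$, which is exactly the equivalence of $\atlas{X}^1$ and $\atlas{X}^2$ in the sense of Definition~\ref{def-02}, i.e.\ (3).

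The substance of the corollary is (3)$\Rightarrow$(2), which I would prove by constructing a common geometric refinement by hand. Writing $\atlas{X}^m=\{(\tX^m_k,G^m_k,\pi^m_k)\}$, for each $x\in X$ I would pick a chart $(\tX^1_k,G^1_k,\pi^1_k)$ of $\atlas{X}^1$ and a chart $(\tX^2_l,G^2_l,\pi^2_l)$ of $\atlas{X}^2$ whose images contain $x$, together with a point $\tx^1$ over $x$ in the first chart; since $[\atlas{X}^1]=[\atlas{X}^2]$, the two charts are compatible, so there is a change of charts $\mu$ from $\tX^1_k$ to $\tX^2_l$ with $\tx^1\in\dom\mu$. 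Shrinking $\dom\mu$ around $\tx^1$ to a connected, $\Stab(G^1_k,\tx^1)$-invariant open set $\tW$ that is a chart domain (Remark~\ref{rem-01}), I obtain a chart $(\tW,G,\pi^1_k|_{\tW})$ equipped with two open embeddings commuting with the projections: the inclusion $\tW\hookrightarrow\tX^1_k$ and the restriction $\mu|_{\tW}\colon\tW\rightarrow\tX^2_l$. Ranging over a family of such $\tW$'s whose images cover $X$ gives a candidate family $\atlas{X}$, and the two collections of open embeddings are, by Lemma~\ref{lem-15} (with $\operatorname{w}=\id_X$), the local lifts of two well-defined morphisms $\atlasmap{\operatorname{w}}{m}\colon\atlas{X}\rightarrow\atlas{X}^m$; since their underlying map is $\id_X$ and their local lifts are open embeddings, both satisfy (\hyperref[REF1]{REF1}) and (\hyperref[REF2]{REF2}), i.e.\ are refinements, which is (2).

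I expect the main obstacle to be verifying that the hand-built family $\atlas{X}$ is a genuine reduced orbifold atlas, i.e.\ that any two of its charts $\tW,\tW'$ are compatible (the covering condition being built into the construction). The plan here is to transport a change of charts of $\atlas{X}^1$ back to $\atlas{X}$: given $\tw\in\tW$ and $\tw'\in\tW'$ with the same image in $X$, their images under the respective open embeddings into $\atlas{X}^1$ have the same image under the corresponding projections, so compatibility of $\atlas{X}^1$ supplies a change of charts $\lambda$ of $\atlas{X}^1$ carrying one to the other; composing $\lambda$ with the two locally invertible embeddings into $\atlas{X}^1$ produces, after suitable domain restriction, a change of charts between $\tW$ and $\tW'$ through $\tw$. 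This is the same local-inversion manipulation already carried out in the proof of Lemma~\ref{lem-15}, and once it is in place the appeal to Lemma~\ref{lem-15} spares me the explicit construction of the data $[P_{\operatorname{w}^m},\nu_{\operatorname{w}^m}]$.
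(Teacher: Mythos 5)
Your proposal is correct, but it proves the crucial implication by a genuinely different route than the paper. The paper's proof is almost entirely categorical: for (1)$\Rightarrow$(2) it invokes Lemma~\ref{lem-18}, for (2)$\Rightarrow$(3) it reads off $[\atlas{X}^1]=[\atlas{X}]=[\atlas{X}^2]$ from (\hyperref[REF2]{REF2}), and for the key implication (3)$\Rightarrow$(1) it considers the two inclusions $\iota_{\atlas{X}^m}:\atlas{X}^m\hookrightarrow\atlas{X}^{\max}$ into the common maximal atlas (Definition~\ref{def-04}), observes that these are refinements, and applies Corollary~\ref{cor-01} to ``invert'' them into a pair of refinements $\atlas{X}\rightarrow\atlas{X}^m$ --- note that Corollary~\ref{cor-01} rests on axioms (\hyperref[BF2]{BF2}), (\hyperref[BF3]{BF3}), (\hyperref[BF5]{BF5}) for $\WRedAtl$ and Lemma~\ref{lem-24}, and (\hyperref[BF3]{BF3}) was itself established in Proposition~\ref{prop-05} by passing through $\functor{F}^{\red}$, Morita equivalences and Lemmas~\ref{lem-07}--\ref{lem-11}, i.e.\ through the whole groupoid machinery. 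You instead orient the cycle as (2)$\Rightarrow$(1)$\Rightarrow$(3)$\Rightarrow$(2), with (1)$\Rightarrow$(3) done directly via (\hyperref[WE1]{WE1})/(\hyperref[WE2]{WE2}) and the pushforward $\operatorname{w}_{\ast}(\atlas{X})$ of Definition~\ref{def-13} (which correctly exploits the hypothesis $\operatorname{w}^1=\operatorname{w}^2$, the part of (1) that makes the statement true), and you prove (3)$\Rightarrow$(2) by constructing the common refinement $\atlas{X}$ by hand: shrinking domains of changes of charts $\mu$ between compatible charts of the two atlases to connected, $\Stab(G^1_k,\tx^1)$-invariant chart domains $\tW$ (the same shrinking used in Construction~\ref{cons-01} and Remark~\ref{rem-01}), and then appealing to Lemma~\ref{lem-15} to upgrade the two families of open embeddings into refinements. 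This buys a self-contained, purely atlas-theoretic proof that makes the classical fact ``equivalent atlases admit a common refinement'' explicit and avoids any dependence on the bicalculus of fractions or on Lemma~\ref{lem-11}; the price is the verification, which you correctly identify as the real work, that the hand-built family $\atlas{X}$ is an atlas, i.e.\ that any two charts $\tW,\tW'$ are compatible. Your sketch for this is sound --- transporting a change of charts $\lambda$ of $\atlas{X}^1$ through the two embeddings and restricting to a connected component of the relevant preimage so that the composite again satisfies Definition~\ref{def-14} --- and these pseudogroup manipulations are exactly the ones the paper performs in Construction~\ref{cons-02} and in the proof of Lemma~\ref{lem-15}, so no genuine gap remains; just be sure to spell out that the restricted domain is a connected component of $(\pi^1_k|_{\tW})^{-1}$ of an open subset of $X$, which follows from the invariance/disjointness conditions imposed on $\tW$.
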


\begin{proof}
Let us suppose that (1) holds; then (2) holds by Lemma~\ref{lem-18}. If (2) holds, then by
(\hyperref[REF2]{REF2}) we have $[\atlas{X}^1]=
[\atlas{X}]=[\atlas{X}^2]$, so (3) holds.\\

Now let us assume that (3) holds and let us prove (1). So let us assume that
$\atlas{X}^1$ and $\atlas{X}^2$ are equivalent atlases. Therefore, for $m=1,2$ we can
consider the inclusion $\iota_{\atlas{X}^m}$ of $\atlas{X}^m$ in the common maximal
atlas $\atlas{X}^{\max}$ (see Definition~\ref{def-04}). Both maps are refinements,
therefore by Corollary~\ref{cor-01} there are a reduced orbifold atlas $\atlas{X}$ and a pair of
refinements (hence, of weak equivalences) as in \eqref{eq-35}, so (1) holds.
\end{proof}

\begin{rem}
For the explicit description of the bicategory of fractions $\RedOrb$ and of the pseudofunctor
$\functor{U}_{\WRedAtl}$, we refer mainly to the original construction in~\cite{Pr} or to
our previous paper~\cite{T3}, where we have explained how to simplify the construction
of associators and compositions of $2$-morphisms in any bicategory of fractions.
As it is stated in~\cite{Pr}, bicategories
of fractions are unique only up to weak equivalences of bicategories. In order to describe
explicitly one such bicategory, one has to make some choices as in the following description.
By~\cite[Theorem~21]{Pr}, \emph{different choices will give equivalent bicategories of
fractions where objects, $1$-morphisms and $2$-morphisms are the same, but compositions of
$1$-morphisms and $2$-morphisms are \emphatic{(}possibly\emphatic{)} different.}
\end{rem}

\begin{descrip}\label{descrip-01}
Following~\cite[\S~2.2, 2.3 and~2.4]{Pr}, the bicategory $\RedOrb$ and
the pseudofunctor $\functor{U}_{\WRedAtl}$ can be described as follows.

\begin{itemize}
 \item The \textbf{objects} of $\RedOrb$ are exactly the objects of $\RedAtl$, i.e.\ all the
  reduced orbifold atlases according to Definition~\ref{def-01}.
 \item Given any pair of reduced orbifold atlases $\atlas{X},\atlas{Y}$, the \textbf{$1$-morphisms}
  in $\RedOrb$ from the first atlas to the second one consist of all the triples $(\atlas{X}',
  \atlasmap{\operatorname{w}}{},\atlasmap{f}{})$ where $\atlas{X}'$ is any reduced orbifold atlas,
  $\atlasmap{\operatorname{w}}{}$ is any \emph{refinement} (see Definition~\ref{def-10})
  and $\atlasmap{f}{}$ is any morphism of reduced orbifold atlases (see Definition~\ref{def-03}), as
  follows

  \[
  \begin{tikzpicture}[xscale=2.4,yscale=-1.2]
    \node (A0_0) at (0, 0) {$\atlas{X}$};
    \node (A0_1) at (1, 0) {$\atlas{X}'$};
    \node (A0_2) at (2, 0) {$\atlas{Y}$};
    
    \path (A0_1) edge [->]node [auto,swap] {$\scriptstyle{\atlasmap{\operatorname{w}}{}}$} (A0_0);
    \path (A0_1) edge [->]node [auto] {$\scriptstyle{\atlasmap{f}{}}$} (A0_2);
  \end{tikzpicture}
  \]
  (in particular, using Corollary~\ref{cor-02}, we have that $\atlas{X}'$ \emph{is
  equivalent to} $\atlas{X}$). In other terms, a morphism in $\RedOrb$ from $\atlas{X}$ to $\atlas{Y}$
  consists firstly in replacing $\atlas{X}$ with a ``refined'' atlas $\atlas{X}'$ (\emph{keeping track of
  the refinement}), then by considering a usual morphism of $\RedAtl$ from $\atlas{X}'$ to $\atlas{Y}$.
  
 \item Given any pair of objects $\atlas{X},\atlas{Y}$ and any pair of morphisms
  $(\atlas{X}^m,\atlasmap{\operatorname{w}}{m},\atlasmap{f}{m}):\atlas{X}\rightarrow\atlas{Y}$ for
  $m=1,2$, according to the construction of bicategories of fractions in~\cite[\S~2.3]{Pr}, a
  \textbf{$2$-morphism} from $(\atlas{X}^1,\atlasmap{\operatorname{w}}{1},\atlasmap{f}{1})$
  to $(\atlas{X}^2,\atlasmap{\operatorname{w}}{2},\atlasmap{f}{2})$ is an equivalence class of data
  $(\atlas{X}^3,\atlasmap{\operatorname{v}}{1},\atlasmap{\operatorname{v}}{2},[\mu],[\delta])$ in
  $\RedAtl$ as follows:
  
  \begin{equation}\label{eq-51}
  \begin{tikzpicture}[xscale=1.8,yscale=-0.8]
    \node (A2_0) at (0, 2) {$\atlas{X}$};
    \node (A0_2) at (2, 0) {$\atlas{X}^1$};
    \node (A2_2) at (2, 2) {$\atlas{X}^3$};
    \node (A2_4) at (4, 2) {$\atlas{Y}$,};
    \node (A4_2) at (2, 4) {$\atlas{X}^2$};
    
    \node (A2_3) at (2.8, 2) {$\Downarrow\,[\delta]$};
    \node (A2_1) at (1.2, 2) {$\Downarrow\,[\mu]$};

    \path (A4_2) edge [->]node [auto] {$\scriptstyle{\atlasmap{\operatorname{w}}{2}}$} (A2_0);
    \path (A0_2) edge [->]node [auto,swap] {$\scriptstyle{\atlasmap{\operatorname{w}}{1}}$} (A2_0);
    \path (A4_2) edge [->]node [auto,swap] {$\scriptstyle{\atlasmap{f}{2}}$} (A2_4);
    \path (A0_2) edge [->]node [auto] {$\scriptstyle{\atlasmap{f}{1}}$} (A2_4);
    \path (A2_2) edge [->]node [auto,swap] {$\scriptstyle{\atlasmap{\operatorname{v}}{1}}$} (A0_2);
    \path (A2_2) edge [->]node [auto] {$\scriptstyle{\atlasmap{\operatorname{v}}{2}}$} (A4_2);
  \end{tikzpicture}
  \end{equation}
  such that $\atlasmap{\operatorname{w}}{1}\circ\atlasmap{\operatorname{v}}{1}$ is a refinement
  (in~\cite{Pr} it is also required that $[\mu]$ is invertible, but this property is automatically
  satisfied by Lemma~\ref{lem-01}). Any other set of data
  
  \[
  \begin{tikzpicture}[xscale=1.8,yscale=-0.8]
    \node (A2_0) at (0, 2) {$\atlas{X}$};
    \node (A0_2) at (2, 0) {$\atlas{X}^1$};
    \node (A2_2) at (2, 2) {$\atlas{X}^{\prime 3}$};
    \node (A2_4) at (4, 2) {$\atlas{Y}$,};
    \node (A4_2) at (2, 4) {$\atlas{X}^2$};
    
    \node (A2_3) at (2.8, 2) {$\Downarrow\,[\delta']$};
    \node (A2_1) at (1.2, 2) {$\Downarrow\,[\mu']$};

    \path (A4_2) edge [->]node [auto] {$\scriptstyle{\atlasmap{\operatorname{w}}{2}}$} (A2_0);
    \path (A0_2) edge [->]node [auto,swap] {$\scriptstyle{\atlasmap{\operatorname{w}}{1}}$} (A2_0);
    \path (A4_2) edge [->]node [auto,swap] {$\scriptstyle{\atlasmap{f}{2}}$} (A2_4);
    \path (A0_2) edge [->]node [auto] {$\scriptstyle{\atlasmap{f}{1}}$} (A2_4);
    \path (A2_2) edge [->]node [auto,swap] {$\scriptstyle{\atlasmap{\operatorname{v}}{\prime 1}}$} (A0_2);
    \path (A2_2) edge [->]node [auto]
      {$\scriptstyle{\atlasmap{\operatorname{v}}{\prime 2}}$} (A4_2);
  \end{tikzpicture}
  \]
  (with $\atlasmap{\operatorname{w}}{1}\circ\atlasmap{\operatorname{v}}{\prime 1}$ refinement)
  represents the same $2$-morphism in $\RedOrb$ if and only if there is a set of data $(\atlas{X}^4,
  \atlasmap{\operatorname{z}}{},\atlasmap{\operatorname{z}}{\prime},[\sigma^1],[\sigma^2])$ as
  follows:
    
  \begin{equation}\label{eq-21}
  \begin{tikzpicture}[xscale=1.8,yscale=-0.8]

    \node (A0_4) at (4, 0) {$\atlas{X}^1$};
    \node (A2_2) at (3, 2) {$\atlas{X}^{\prime 3}$};
    \node (A2_4) at (4, 2) {$\atlas{X}^4$};
    \node (A2_6) at (5, 2) {$\atlas{X}^3$,};
    \node (A4_4) at (4, 4) {$\atlas{X}^2$};
    
    \node (A3_4) at (4, 3.1) {$\Leftarrow$};
    \node (B3_4) at (4, 2.6) {$[\sigma^2]$};
    \node (A1_4) at (4, 0.9) {$\Rightarrow$};
    \node (B1_4) at (4, 1.4) {$[\sigma^1]$};
    
    \path (A2_2) edge [->]node [auto,swap]
      {$\scriptstyle{\atlasmap{\operatorname{v}}{\prime 2}}$} (A4_4);
    \path (A2_4) edge [->]node [auto] {$\scriptstyle{\atlasmap{\operatorname{z}}{}}$} (A2_6);
    \path (A2_6) edge [->]node [auto] {$\scriptstyle{\atlasmap{\operatorname{v}}{2}}$} (A4_4);
    \path (A2_2) edge [->]node [auto] {$\scriptstyle{\atlasmap{\operatorname{v}}{\prime 1}}$} (A0_4);
    \path (A2_4) edge [->]node [auto,swap]
      {$\scriptstyle{\atlasmap{\operatorname{z}}{\prime}}$} (A2_2);
    \path (A2_6) edge [->]node [auto,swap] {$\scriptstyle{\atlasmap{\operatorname{v}}{1}}$} (A0_4);
  \end{tikzpicture}
  \end{equation}
  
  such that $\atlasmap{\operatorname{w}}{1}\circ\atlasmap{\operatorname{v}}{1}\circ
  \atlasmap{\operatorname{z}}{}$ is a refinement,
  
  \begin{equation}\label{eq-23}
  \Big(i_{\atlasmap{f}{2}}\ast[\sigma^2]\Big)\odot\Big([\delta]\ast i_{\atlasmap{\operatorname{z}}{}}
  \Big)\odot\Big(i_{\atlasmap{f}{1}}\ast[\sigma^1]\Big)=[\delta']\ast
  i_{\atlasmap{\operatorname{z}}{\prime}}
  \end{equation}
  and
  
  \begin{equation}\label{eq-53}
  \Big(i_{\atlasmap{\operatorname{w}}{2}}\ast[\sigma^2]\Big)\odot\Big([\mu]\ast
  i_{\atlasmap{\operatorname{z}}{}}\Big)\odot\Big(i_{\atlasmap{\operatorname{w}}{1}}
  \ast[\sigma^1]\Big)=[\mu']\ast i_{\atlasmap{\operatorname{z}}{\prime}}.
  \end{equation}
  
  We denote by
 
  \begin{equation}\label{eq-28}
  \Big[\atlas{X}^3,\atlasmap{\operatorname{v}}{1},\atlasmap{\operatorname{v}}{2},[\mu],[\delta]
  \Big]:\Big(\atlas{X}^1,\atlasmap{\operatorname{w}}{1},\atlasmap{f}{1}\Big)\Longrightarrow\Big(
  \atlas{X}^2,\atlasmap{\operatorname{w}}{2},\atlasmap{f}{2}\Big)
  \end{equation}
  the class of any such data (we refer to Lemma~\ref{lem-25} below for a slightly simplified
  description of $2$-morphisms).
  
 \item For the \textbf{composition of $1$-morphisms} in $\RedOrb$ we have to do a preliminary
  step as follows: for every pair of morphisms in $\RedAtl$

  \begin{equation}\label{eq-27}
  \begin{tikzpicture}[xscale=1.5,yscale=-1.2]
    \node (A0_0) at (0, 0) {$\atlas{X}'$};
    \node (A0_1) at (1, 0) {$\atlas{Y}$};
    \node (A0_2) at (2, 0) {$\atlas{Y}'$};
    \path (A0_0) edge [->]node [auto] {$\scriptstyle{\atlasmap{f}{}}$} (A0_1);
    \path (A0_2) edge [->]node [auto,swap] {$\scriptstyle{\atlasmap{\operatorname{v}}{}}$} (A0_1);
  \end{tikzpicture}
  \end{equation}
  with $\atlasmap{\operatorname{v}}{}$ refinement, using the axiom of choice
  we \emph{choose} any reduced orbifold atlas
  $\atlas{X}''$, any pair of morphisms $\atlasmap{\operatorname{v}}{\prime}$, $\atlasmap{f}{\prime}$
  in $\RedAtl$ with $\atlasmap{\operatorname{v}}{\prime}$ refinement and any $2$-morphism $[\delta]$
  in $\RedAtl$ as follows

  \begin{equation}\label{eq-32}
  \begin{tikzpicture}[xscale=1.5,yscale=-0.8]
    \node (A0_1) at (1, 0) {$\atlas{X}''$};
    \node (A1_0) at (0, 2) {$\atlas{X}'$};
    \node (A1_2) at (2, 2) {$\atlas{Y}'$.};
    \node (A2_1) at (1, 2) {$\atlas{Y}$};

    \node (A1_1) at (1, 0.9) {$[\delta]$};
    \node (B1_1) at (1, 1.4) {$\Rightarrow$};
    
    \path (A1_2) edge [->]node [auto] {$\scriptstyle{\atlasmap{\operatorname{v}}{}}$} (A2_1);
    \path (A0_1) edge [->]node [auto] {$\scriptstyle{\atlasmap{f}{\prime}}$} (A1_2);
    \path (A1_0) edge [->]node [auto,swap] {$\scriptstyle{\atlasmap{f}{}}$} (A2_1);
    \path (A0_1) edge [->]node [auto,swap]
      {$\scriptstyle{\atlasmap{\operatorname{v}}{\prime}}$} (A1_0);
  \end{tikzpicture} 
  \end{equation}
  Such a choice is always possible by (\hyperref[BF3]{BF3}) (see
  Proposition~\ref{prop-05}) but in general it is not unique. By~\cite[\S~2.2]{Pr} we only have to
  force such a choice in the following special cases:

  \begin{enumerate}[(a)]
  \item whenever \eqref{eq-27} is such that $\atlas{Y}=\atlas{X}'$ and $\atlasmap{f}{}=
   \id_{\atlas{Y}}$, then we have to choose $\atlas{X}'':=
   \atlas{Y}'$, $\atlasmap{f}{\prime}:=\id_{\atlas{Y}'}$, $\atlasmap{\operatorname{v}}{\prime}:=
   \atlasmap{\operatorname{v}}{}$ and $[\delta]:=i_{\atlasmap{\operatorname{v}}{}}$;
  \item whenever \eqref{eq-27} is such that $\atlas{Y}=\atlas{Y}'$ and
   $\atlasmap{\operatorname{v}}{}=\id_{\atlas{Y}}$, then we have to choose 
   $\atlas{X}'':=\atlas{X}'$, $\atlasmap{f}{\prime}:=\atlasmap{f}{}$,
   $\atlasmap{\operatorname{v}}{\prime}:=\id_{\atlas{X}'}$ and $[\delta]:=i_{\atlasmap{f}{}}$.
  \end{enumerate}
  
  Having fixed any set of such choices, given any pair of morphisms in $\RedOrb$ as follows:

  \[\begin{tikzpicture}[xscale=1.5,yscale=-1.2]
    \node (A0_0) at (0, 0) {$\atlas{X}$};
    \node (A0_1) at (1, 0) {$\atlas{X}'$};
    \node (A0_2) at (2, 0) {$\atlas{Y}$};
    \path (A0_1) edge [->]node [auto,swap] {$\scriptstyle{\atlasmap{\operatorname{w}}{}}$} (A0_0);
    \path (A0_1) edge [->]node [auto] {$\scriptstyle{\atlasmap{f}{}}$} (A0_2);
    \node (B0_0) at (3, 0) {$\atlas{Y}$};
    \node (B0_1) at (4, 0) {$\atlas{Y}'$};
    \node (B0_2) at (5, 0) {$\atlas{Z}$};
    \path (B0_1) edge [->]node [auto,swap] {$\scriptstyle{\atlasmap{\operatorname{v}}{}}$} (B0_0);
    \path (B0_1) edge [->]node [auto] {$\scriptstyle{\atlasmap{g}{}}$} (B0_2);
  \end{tikzpicture}\]
  (with both $\atlasmap{\operatorname{w}}{}$ and $\atlasmap{\operatorname{v}}{}$ refinements),
  we use the fixed choice \eqref{eq-32} and we set

  \[\Big(\atlas{Y}',\atlasmap{\operatorname{v}}{},\atlasmap{g}{}\Big)\circ\Big(\atlas{X}',
  \atlasmap{\operatorname{w}}{},\atlasmap{f}{}\Big):=\Big(\atlas{X}'',\atlasmap{\operatorname{w}}{}
  \circ\atlasmap{\operatorname{v}}{\prime},\atlasmap{g}{}\circ\atlasmap{f}{\prime}\Big):\atlas{X}
  \longrightarrow\atlas{Z}.\]
  In this way in general the composition of morphisms in $\RedOrb$ is associative only up to
  canonical $2$-isomorphisms, so $\RedOrb$ is a bicategory but not a $2$-category.
 \item We omit the construction of the \textbf{vertical and horizontal compositions for
  $2$-morphisms} (for details we refer to the original constructions in any bicategory of fractions, as
  described in~\cite[\S~2.3]{Pr}, or to the simplified version given in our
  previous paper~\cite{T3}).
  A priori the construction of such
  compositions depends on some additional choices involving axiom (\hyperref[BF4]{BF4});
  by~\cite[Theorem~0.5]{T3} actually
  the choices of \eqref{eq-32} completely determine all the structure of $\RedOrb$.
  We only remark that since each $2$-morphism is invertible
  in $\RedAtl$, then it is not difficult to prove that the same property holds in $\RedOrb$.
  In particular, the inverse of any $2$-morphism as in \eqref{eq-28} is given by $[\atlas{X}^3,
  \atlasmap{\operatorname{v}}{2},\atlasmap{\operatorname{v}}{1},[\mu]^{-1},[\delta]^{-1}]$.
 \item The pseudofunctor $\atlas{U}_{\WRedAtl}$ sends each reduced orbifold atlas $\atlas{X}$ to
  the same object in $\RedOrb$. For every morphism $\atlasmap{f}{}:\atlas{X}\rightarrow\atlas{Y}$
  we have $\atlas{U}_{\WRedAtl}(\atlasmap{f}{})=(\atlas{X},\id_{\atlas{X}},\atlasmap{f}{})$. For
  every pair of morphisms $\atlasmap{f}{m}:\atlas{X}\rightarrow\atlas{Y}$ for $m=1,2$ and for
  every $2$-morphism $[\delta]:\atlasmap{f}{1}\Rightarrow\atlasmap{f}{2}$ in $\RedAtl$ we have
  
  \[\atlas{U}_{\WRedAtl}([\delta])=\Big[\atlas{X},\id_{\atlas{X}},\id_{\atlas{X}},i_{\id_{\atlas{X}}},
  [\delta]\Big]:\,\Big(\atlas{X},\id_{\atlas{X}},\atlasmap{f}{1}\Big)\Longrightarrow
  \Big(\atlas{X},\id_{\atlas{X}},\atlasmap{f}{2}\Big).\]
\end{itemize}
\end{descrip}
  
As we mentioned above, we can simplify a bit the description of $2$-morphisms in $\RedOrb$
as follows.

\begin{lem}\label{lem-25}
Let us fix any pair of reduced orbifold atlases $\atlas{X},\atlas{Y}$ and any pair of morphisms
$\underline{f}^m:=(\atlas{X}^m,\atlasmap{\operatorname{w}}{m},\atlasmap{f}{m}):\atlas{X}
\rightarrow\atlas{Y}$ in $\RedOrb$ for $m=1,2$. Then any $2$-morphism from $\underline{f}^1$ to
$\underline{f}^2$ in $\RedOrb$ is completely determined by a set of data as follows:

\begin{enumerate}[\emphatic{(}a\emphatic{)}]
 \item a reduced orbifold atlas $\atlas{X}^3$,
 \item a pair of refinements $\atlasmap{\operatorname{v}}{m}:\atlas{X}^3\rightarrow\atlas{X}^m$
   for $m=1,2$,
 \item a $2$-morphism $[\delta]$ in $\RedAtl$ as follows:

  \begin{equation}\label{eq-22}
  \begin{tikzpicture}[xscale=1.8,yscale=-0.8]
    \node (A0_2) at (2, 0) {$\atlas{X}^1$};
    \node (A2_2) at (2, 2) {$\atlas{X}^3$};
    \node (A2_4) at (4, 2) {$\atlas{Y}$.};
    \node (A4_2) at (2, 4) {$\atlas{X}^2$};

    \node (A2_3) at (2.8, 2) {$\Downarrow\,[\delta]$};

    \path (A4_2) edge [->]node [auto,swap] {$\scriptstyle{\atlasmap{f}{2}}$} (A2_4);
    \path (A0_2) edge [->]node [auto] {$\scriptstyle{\atlasmap{f}{1}}$} (A2_4);
    \path (A2_2) edge [->]node [auto] {$\scriptstyle{\atlasmap{\operatorname{v}}{1}}$} (A0_2);
    \path (A2_2) edge [->]node [auto,swap] {$\scriptstyle{\atlasmap{\operatorname{v}}{2}}$} (A4_2);
  \end{tikzpicture}
  \end{equation}
\end{enumerate}

Moreover, any set of data as above determines a $2$-morphism in $\RedOrb$; any other set of data 

\begin{equation}\label{eq-46}
\begin{tikzpicture}[xscale=1.8,yscale=-0.8]
    \node (A0_2) at (2, 0) {$\atlas{X}^1$};
    \node (A2_2) at (2, 2) {$\atlas{X}^{\prime 3}$};
    \node (A2_4) at (4, 2) {$\atlas{Y}$,};
    \node (A4_2) at (2, 4) {$\atlas{X}^2$};
    
    \node (A2_3) at (2.8, 2) {$\Downarrow\,[\delta']$};

    \path (A4_2) edge [->]node [auto,swap] {$\scriptstyle{\atlasmap{f}{2}}$} (A2_4);
    \path (A0_2) edge [->]node [auto] {$\scriptstyle{\atlasmap{f}{1}}$} (A2_4);
    \path (A2_2) edge [->]node [auto] {$\scriptstyle{\atlasmap{\operatorname{v}}{\prime 1}}$} (A0_2);
    \path (A2_2) edge [->]node [auto,swap]
      {$\scriptstyle{\atlasmap{\operatorname{v}}{\prime 2}}$} (A4_2);
\end{tikzpicture}
\end{equation}
\emphatic{(}with $\atlasmap{\operatorname{v}}{\prime 1}$ and $\atlasmap{\operatorname{v}}{\prime 2}$
refinements\emphatic{)}
determines the same $2$-morphism as \eqref{eq-22} if and only if there are a
reduced orbifold atlas $\atlas{X}^4$ and a pair of refinements 
  
\begin{equation}\label{eq-48}
\begin{tikzpicture}[xscale=2.4,yscale=-1.2]
    \node (A0_0) at (0, 0) {$\atlas{X}^{\prime 3}$};
    \node (A0_1) at (1, 0) {$\atlas{X}^4$};
    \node (A0_2) at (2, 0) {$\atlas{X}^3$,};
    
    \path (A0_1) edge [->]node [auto,swap]
      {$\scriptstyle{\atlasmap{\operatorname{z}}{\prime}}$} (A0_0);
    \path (A0_1) edge [->]node [auto] {$\scriptstyle{\atlasmap{\operatorname{z}}{}}$} (A0_2);
\end{tikzpicture}
\end{equation}
such that

\begin{equation}\label{eq-52}
\Big(i_{\atlasmap{f}{2}}\ast[\sigma^2]\Big)\odot\Big([\delta]\ast i_{\atlasmap{\operatorname{z}}{}}
\Big)\odot\Big(i_{\atlasmap{f}{1}}\ast[\sigma^1]\Big)=[\delta']\ast
i_{\atlasmap{\operatorname{z}}{\prime}},
\end{equation}
where $[\sigma^1],[\sigma^2]$ are the unique $2$-morphisms filling diagram \eqref{eq-21}
\emphatic{(}existence and uniqueness are a consequence of \emphatic{Lemma~\ref{lem-12})}.
Therefore, from now on we will denote each $2$-morphism in $\RedOrb$ from $\underline{f}^1$
to $\underline{f}^2$ as a class $[\atlas{X}^3,\atlasmap{\operatorname{v}}{1},
\atlasmap{\operatorname{v}}{2},[\delta]]$,
where the class of equivalence is the one induced by saying that

\[\Big(\atlas{X}^3,\atlasmap{\operatorname{v}}{1},\atlasmap{\operatorname{v}}{2},[\delta]\Big)
\sim\Big(\atlas{X}^{\prime 3},\atlasmap{\operatorname{v}}{\prime 1},
\atlasmap{\operatorname{v}}{\prime 2},[\delta']\Big)\]
if and only if there are data $(\atlas{X}^4,\atlasmap{\operatorname{z}}{},
\atlasmap{\operatorname{z}}{\prime})$ as above, such that \eqref{eq-52} holds.
\end{lem}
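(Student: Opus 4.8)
The plan is to prove this by comparing the proposed data with the definition of $2$-morphisms of $\RedOrb$ already given in Description~\ref{descrip-01}, namely as equivalence classes of quintuples $(\atlas{X}^3,\atlasmap{\operatorname{v}}{1},\atlasmap{\operatorname{v}}{2},[\mu],[\delta])$ satisfying \eqref{eq-23} and \eqref{eq-53}, with the equivalence relation \eqref{eq-21}--\eqref{eq-53}. Relative to that description, the simplification claims three things: that $\atlasmap{\operatorname{v}}{1},\atlasmap{\operatorname{v}}{2}$ may be taken to be refinements, that the datum $[\mu]$ is superfluous, and that in the equivalence relation only \eqref{eq-23} (which is literally \eqref{eq-52}) is needed while \eqref{eq-53} can be dropped. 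I would establish these three points in turn, each time invoking the uniqueness of $2$-morphisms between refinements supplied by Lemma~\ref{lem-12}.

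First I would verify point (b). In any quintuple as in Description~\ref{descrip-01} the composite $\atlasmap{\operatorname{w}}{1}\circ\atlasmap{\operatorname{v}}{1}$ is a refinement and $\atlasmap{\operatorname{w}}{1}$ is a refinement, so Lemma~\ref{lem-24}(ii) forces $\atlasmap{\operatorname{v}}{1}$ to be a refinement. Since every $2$-morphism of $\RedAtl$ is invertible (Lemma~\ref{lem-01}(a)), the filler $[\mu]:\atlasmap{\operatorname{w}}{1}\circ\atlasmap{\operatorname{v}}{1}\Rightarrow\atlasmap{\operatorname{w}}{2}\circ\atlasmap{\operatorname{v}}{2}$ is invertible, so axiom (\hyperref[BF5]{BF5}) applied to $[\mu]^{-1}$ shows $\atlasmap{\operatorname{w}}{2}\circ\atlasmap{\operatorname{v}}{2}$ is a refinement, and a second use of Lemma~\ref{lem-24}(ii) makes $\atlasmap{\operatorname{v}}{2}$ a refinement. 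Conversely, if $\atlasmap{\operatorname{v}}{1},\atlasmap{\operatorname{v}}{2}$ are refinements then $\atlasmap{\operatorname{w}}{1}\circ\atlasmap{\operatorname{v}}{1}$ is a refinement by (\hyperref[BF2]{BF2}), so the constraint of Description~\ref{descrip-01} holds; thus requirement (b) is equivalent to the original constraint.

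Next I would show $[\mu]$ carries no information. By (\hyperref[REF1]{REF1}) every refinement is defined over $\id_X$, so $\atlasmap{\operatorname{w}}{1}\circ\atlasmap{\operatorname{v}}{1}$ and $\atlasmap{\operatorname{w}}{2}\circ\atlasmap{\operatorname{v}}{2}$ are refinements sharing the underlying topological map $\id_X$; by Lemma~\ref{lem-12}(c) there is then exactly one $2$-morphism between them in $\RedAtl$, which is necessarily $[\mu]$. Hence $[\mu]$ is determined by $(\atlas{X}^3,\atlasmap{\operatorname{v}}{1},\atlasmap{\operatorname{v}}{2})$, and conversely any quadruple $(\atlas{X}^3,\atlasmap{\operatorname{v}}{1},\atlasmap{\operatorname{v}}{2},[\delta])$ with $\atlasmap{\operatorname{v}}{1},\atlasmap{\operatorname{v}}{2}$ refinements determines a unique admissible $[\mu]$. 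So passing from quintuples to quadruples is a bijection on representatives, and any quadruple does define a $2$-morphism of $\RedOrb$.

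Finally I would compare the equivalence relations, which is where the only genuine bookkeeping lies. Given data \eqref{eq-21}, the same mechanism as above (Lemma~\ref{lem-24}(ii) applied to the refinement $\atlasmap{\operatorname{w}}{1}\circ\atlasmap{\operatorname{v}}{1}\circ\atlasmap{\operatorname{z}}{}$, then transport across the invertible $[\sigma^1]$ via (\hyperref[BF5]{BF5})) shows $\atlasmap{\operatorname{z}}{}$ and $\atlasmap{\operatorname{z}}{\prime}$ are refinements as in \eqref{eq-48}; consequently $\atlasmap{\operatorname{v}}{1}\circ\atlasmap{\operatorname{z}}{}$, $\atlasmap{\operatorname{v}}{\prime 1}\circ\atlasmap{\operatorname{z}}{\prime}$ and their index-$2$ analogues are refinements, and Lemma~\ref{lem-12}(c) produces the unique fillers $[\sigma^1],[\sigma^2]$, so these need not be carried as data. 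The hard (but ultimately formal) step is to see that \eqref{eq-53} is then automatic: after whiskering and vertical composition both sides of \eqref{eq-53} are $2$-morphisms of $\RedAtl$ with common source $\atlasmap{\operatorname{w}}{1}\circ\atlasmap{\operatorname{v}}{\prime 1}\circ\atlasmap{\operatorname{z}}{\prime}$ and common target $\atlasmap{\operatorname{w}}{2}\circ\atlasmap{\operatorname{v}}{\prime 2}\circ\atlasmap{\operatorname{z}}{\prime}$, and both of these composites are refinements over $\id_X$ by (\hyperref[BF2]{BF2}); hence the uniqueness in Lemma~\ref{lem-12}(c) forces the two sides to agree, so \eqref{eq-53} holds for free. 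Thus only \eqref{eq-52} survives, the two equivalence relations coincide, and the simplified description of the statement matches Description~\ref{descrip-01}. The main obstacle I anticipate is purely the careful tracking of sources and targets of the whiskered $2$-morphisms in \eqref{eq-53} so that each vertical composite is well defined; once that is checked, the collapse of both the data and the relation is forced by the uniqueness provided by Lemma~\ref{lem-12}.
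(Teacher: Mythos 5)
Your proposal is correct and follows essentially the same route as the paper's own proof: the same use of Lemma~\ref{lem-24}(ii) together with (\hyperref[BF2]{BF2}) and (\hyperref[BF5]{BF5}) (applied to the invertible fillers) to show that $\atlasmap{\operatorname{v}}{1},\atlasmap{\operatorname{v}}{2}$ and $\atlasmap{\operatorname{z}}{},\atlasmap{\operatorname{z}}{\prime}$ are forced to be refinements, the same appeal to Lemma~\ref{lem-12} to show that $[\mu]$, $[\mu']$, $[\sigma^1]$, $[\sigma^2]$ exist uniquely and hence carry no data, and the same observation that \eqref{eq-53} holds automatically because both sides are $2$-morphisms between refinements, where Lemma~\ref{lem-12} grants uniqueness. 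Your extra bookkeeping of the common source $\atlasmap{\operatorname{w}}{1}\circ\atlasmap{\operatorname{v}}{\prime 1}\circ\atlasmap{\operatorname{z}}{\prime}$ and target $\atlasmap{\operatorname{w}}{2}\circ\atlasmap{\operatorname{v}}{\prime 2}\circ\atlasmap{\operatorname{z}}{\prime}$ in \eqref{eq-53} is accurate and merely makes explicit what the paper leaves implicit.
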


\begin{proof}
Let us fix any $2$-morphism in $\RedOrb$ represented by a set of data as in \eqref{eq-51}. Since
each $\underline{f}^m$ is a morphism in $\RedOrb$, then $\atlasmap{\operatorname{w}}{1}$
and $\atlasmap{\operatorname{w}}{2}$
are refinements. Moreover also $\atlasmap{\operatorname{w}}{1}\circ\atlasmap{\operatorname{v}}{1}$
is a refinement,
so by Lemma~\ref{lem-24} we have that $\atlasmap{\operatorname{v}}{1}$ is a refinement. Using
(\hyperref[BF5]{BF5}) for $\RedAtl$ on $[\mu]^{-1}$, also $\atlasmap{\operatorname{w}}{2}\circ
\atlasmap{\operatorname{v}}{2}$ is also a refinement, so again by Lemma~\ref{lem-24} we conclude that
$\atlasmap{\operatorname{v}}{2}$ is a refinement, so we have obtained a set of data as in (a) -- (c).
Conversely, let us fix any set of data as in
(a) -- (c). Then by (\hyperref[BF2]{BF2}) for $\RedAtl$ 
we get that $\atlasmap{\operatorname{w}}{m}\circ\atlasmap{\operatorname{v}}{m}$
is a refinement for each $m=1,2$. So by Lemma~\ref{lem-12} there is a unique $2$-morphism

\[[\mu]:\,\,\atlasmap{\operatorname{w}}{1}\circ\atlasmap{\operatorname{v}}{1}\Longrightarrow
\atlasmap{\operatorname{w}}{2}\circ\atlasmap{\operatorname{v}}{2}.\]

This proves that each set of data (a) -- (c) completely determines a set of data as in \eqref{eq-51},
hence a $2$-morphism from $\underline{f}^1$ to $\underline{f}^2$ in $\RedOrb$.\\

Now let us fix any other set of data as in \eqref{eq-46} and let us denote by 

\[[\mu']:\,\,\atlasmap{\operatorname{w}}{1}\circ\atlasmap{\operatorname{v}}{\prime 1}\Longrightarrow
\atlasmap{\operatorname{w}}{2}\circ\atlasmap{\operatorname{v}}{\prime 2}\]
the unique $2$-morphism in $\RedAtl$ determined by Lemma~\ref{lem-12}. By
Description~\ref{descrip-01}, the $2$-morphisms

\[\Big[\atlas{X}^3,\atlasmap{\operatorname{v}}{1},
\atlasmap{\operatorname{v}}{2},[\mu],[\delta]\Big]\quad\textrm{and}\quad\Big[\atlas{X}^{\prime 3},
\atlasmap{\operatorname{v}}{\prime 1},\atlasmap{\operatorname{v}}{\prime 2},[\mu'],[\delta']\Big]\]
coincide if and only if there are data $(\atlas{X}^4,\atlasmap{\operatorname{z}}{},
\atlasmap{\operatorname{z}}{\prime},[\sigma^1],[\sigma^2])$ as in \eqref{eq-21}, such that 
$\atlasmap{\operatorname{w}}{1}\circ\atlasmap{\operatorname{v}}{1}\circ\atlasmap{\operatorname{z}}{}$
is a refinement and \eqref{eq-23} and \eqref{eq-53} hold. Since 
$\atlasmap{\operatorname{w}}{1}\circ\atlasmap{\operatorname{v}}{1}$ is a refinement, by
Lemma~\ref{lem-24}
we conclude that $\atlasmap{\operatorname{z}}{}$ is a refinement. Using (\hyperref[BF5]{BF5})
for $\RedAtl$ on $[\sigma^1]$, we conclude that also $\atlasmap{\operatorname{v}}{\prime 1}
\circ\atlasmap{\operatorname{z}}{\prime}$ is a refinement. Again by Lemma~\ref{lem-24}, this implies
that $\atlasmap{\operatorname{z}}{\prime}$ is a refinement.\\

Conversely, let us suppose that there are a reduced orbifold atlas $\atlas{X}^4$ and a pair
of refinements $\atlasmap{\operatorname{z}}{}$ and $\atlasmap{\operatorname{z}}{\prime}$ as in
\eqref{eq-48}, such that \eqref{eq-52} holds. Then \eqref{eq-53} is automatically satisfied:
indeed it is an equality between $2$-morphisms having sources and target given by refinements, and
Lemma~\ref{lem-12} applies. This suffices to conclude.
\end{proof}

\begin{descrip}\label{descrip-02}
A description analogous to Description~\ref{descrip-01} holds for the bicategory $\PEEGpd\left[
\WPEEGpdinv\right]$ and for the pseudofunctor $\functor{U}_{\WPEEGpd}$ defined in \eqref{eq-11}.
The objects of that bicategory
are proper, effective, \'etale Lie groupoids; given $\groupoidtot{X}{}$ and
$\groupoidtot{Y}{}$, a morphism from the first object to the second one is given by any data
as follows, with $\groupoidmaptot{\psi}{}$ Morita equivalence:

\[\begin{tikzpicture}[xscale=2.9,yscale=-1.2]
    \node (A0_0) at (0, 0) {$\groupoidtot{X}{}$};
    \node (A0_1) at (1, 0) {$\groupoidtot{X}{\prime}$};
    \node (A0_2) at (2, 0) {$\groupoidtot{Y}{}$.};
    
    \path (A0_1) edge [->]node [auto,swap] {$\scriptstyle{\groupoidmaptot{\psi}{}}$} (A0_0);
    \path (A0_1) edge [->]node [auto] {$\scriptstyle{\groupoidmaptot{\phi}{}}$} (A0_2);
\end{tikzpicture}\]

Given any pair of morphisms in $\PEEGpd\left[\WPEEGpdinv\right]$

\[\Big(\groupoidtot{X}{m},\groupoidmaptot{\psi}{m},\groupoidmaptot{\phi}{m}\Big):\,
\groupoidtot{X}{}\longrightarrow\groupoidtot{Y}{}\quad\textrm{for}\,\,m=1,2,\]
using~\cite[Lemma~8.1]{PS} a
$2$-morphism from the first morphism to the second one is any equivalence class of data
$(\groupoidtot{X}{3},\groupoidmaptot{\xi}{1},\groupoidmaptot{\xi}{2},\mu,\delta)$ in
$\PEEGpd$ as follows

\begin{equation}\label{eq-50}
\begin{tikzpicture}[xscale=1.8,yscale=-0.8]
    \node (A2_0) at (0, 2) {$\groupoidtot{X}{}$};
    \node (A0_2) at (2, 0) {$\groupoidtot{X}{1}$};
    \node (A2_2) at (2, 2) {$\groupoidtot{X}{3}$};
    \node (A2_4) at (4, 2) {$\groupoidtot{Y}{}$,};
    \node (A4_2) at (2, 4) {$\groupoidtot{X}{2}$};

    \node (A2_3) at (2.8, 2) {$\Downarrow\,\delta$};
    \node (A2_1) at (1.2, 2) {$\Downarrow\,\mu$};

    \path (A4_2) edge [->]node [auto,swap] {$\scriptstyle{\groupoidmaptot{\phi}{2}}$} (A2_4);
    \path (A0_2) edge [->]node [auto] {$\scriptstyle{\groupoidmaptot{\phi}{1}}$} (A2_4);
    \path (A2_2) edge [->]node [auto,swap] {$\scriptstyle{\groupoidmaptot{\xi}{1}}$} (A0_2);
    \path (A2_2) edge [->]node [auto] {$\scriptstyle{\groupoidmaptot{\xi}{2}}$} (A4_2);
    \path (A0_2) edge [->]node [auto,swap] {$\scriptstyle{\groupoidmaptot{\psi}{1}}$} (A2_0);
    \path (A4_2) edge [->]node [auto] {$\scriptstyle{\groupoidmaptot{\psi}{2}}$} (A2_0);
\end{tikzpicture}
\end{equation}
such that both $\groupoidmaptot{\xi}{1}$ and $\groupoidmaptot{\xi}{2}$ are Morita equivalences
(a priori we should also impose that $\mu$ is invertible, but this is always verified since
each natural transformation is invertible in $\LieGpd$). The equivalence relation on the set of data of the form
$(\groupoidtot{X}{3},\groupoidmaptot{\xi}{1},\groupoidmaptot{\xi}{2},\mu,\delta)$ is analogous to the
one given in Description~\ref{descrip-01}, so we omit it.
\end{descrip}

In the next pages we will also need the following simplified description of $2$-morphisms in
$\PEEGpd\left[\WPEEGpdinv\right]$.

\begin{lem}\label{lem-16}
Let us fix any $2$ proper, effective, \'etale groupoids $\groupoidtot{X}{},
\groupoidtot{Y}{}$ and any pair of morphisms
$\underline{g}^m:=(\groupoidtot{X}{m},\groupoidmaptot{\psi}{m},\groupoidmaptot{\phi}{m}):
\groupoidtot{X}{}
\rightarrow\groupoidtot{Y}{}$ in $\PEEGpd\left[\WPEEGpdinv\right]$ for $m=1,2$.
Then any $2$-morphism from $\underline{g}^1$ to
$\underline{g}^2$ in $\PEEGpd\left[\WPEEGpdinv\right]$
is completely determined by a set of data as follows:

\begin{enumerate}[\emphatic{(}a\emphatic{)}]
 \item a proper, effective and \'etale groupoid $\groupoidtot{X}{3}$,
 \item a pair of Morita equivalences $\groupoidmaptot{\xi}{m}:\groupoidtot{X}{3}
  \rightarrow\groupoidtot{X}{m}$ for $m=1,2$, such that $|\groupoidmaptot{\psi}{1}|\circ
  |\groupoidmaptot{\xi}{1}|=|\groupoidmaptot{\psi}{2}|\circ
  |\groupoidmaptot{\xi}{2}|$ \emphatic{(}see \emphatic{Remark~\ref{rem-06}}\emphatic{)},
 \item a $2$-morphism $\alpha$ in $\PEEGpd$ as follows:

  \begin{equation}\label{eq-44}
  \begin{tikzpicture}[xscale=1.8,yscale=-0.8]
    \node (A0_2) at (2, 0) {$\groupoidtot{X}{1}$};
    \node (A2_2) at (2, 2) {$\groupoidtot{X}{3}$};
    \node (A2_4) at (4, 2) {$\groupoidtot{Y}{}$.};
    \node (A4_2) at (2, 4) {$\groupoidtot{X}{}$};

    \node (A2_3) at (2.8, 2) {$\Downarrow\,\alpha$};

    \path (A4_2) edge [->]node [auto,swap] {$\scriptstyle{\groupoidmaptot{\phi}{2}}$} (A2_4);
    \path (A0_2) edge [->]node [auto] {$\scriptstyle{\groupoidmaptot{\phi}{1}}$} (A2_4);
    \path (A2_2) edge [->]node [auto] {$\scriptstyle{\groupoidmaptot{\xi}{1}}$} (A0_2);
    \path (A2_2) edge [->]node [auto,swap] {$\scriptstyle{\groupoidmaptot{\xi}{2}}$} (A4_2);
  \end{tikzpicture}
  \end{equation}
\end{enumerate}

Moreover, any set of data as above determines a $2$-morphism in the bicategory $\PEEGpd\left[\WPEEGpdinv
\right]$; any other set of data 

  \[
  \begin{tikzpicture}[xscale=1.8,yscale=-0.8]
    \node (A0_2) at (2, 0) {$\groupoidtot{X}{1}$};
    \node (A2_2) at (2, 2) {$\groupoidtot{X}{\prime 3}$};
    \node (A2_4) at (4, 2) {$\groupoidtot{Y}{}$.};
    \node (A4_2) at (2, 4) {$\groupoidtot{X}{}$};

    \node (A2_3) at (2.8, 2) {$\Downarrow\,\alpha'$};

    \path (A4_2) edge [->]node [auto,swap] {$\scriptstyle{\groupoidmaptot{\phi}{2}}$} (A2_4);
    \path (A0_2) edge [->]node [auto] {$\scriptstyle{\groupoidmaptot{\phi}{1}}$} (A2_4);
    \path (A2_2) edge [->]node [auto] {$\scriptstyle{\groupoidmaptot{\xi}{\prime 1}}$} (A0_2);
    \path (A2_2) edge [->]node [auto,swap] {$\scriptstyle{\groupoidmaptot{\xi}{\prime 2}}$} (A4_2);
  \end{tikzpicture}
  \]
\emphatic{(}with $\groupoidmaptot{\xi}{\prime 1}$ and $\groupoidmaptot{\xi}{\prime 2}$
Morita equivalences such that
$|\groupoidmaptot{\psi}{1}|\circ
|\groupoidmaptot{\xi}{\prime 1}|=|\groupoidmaptot{\psi}{2}|\circ
|\groupoidmaptot{\xi}{\prime 2}|$\emphatic{)}
determines the same $2$-morphism as \eqref{eq-44} if and only if there are a
proper, effective and \'etale groupoid $\groupoidtot{X}{4}$ and a pair of Morita equivalences
  
\begin{equation}\label{eq-37}
\begin{tikzpicture}[xscale=2.4,yscale=-1.2]
    \node (A0_0) at (0, 0) {$\groupoidtot{X}{\prime 3}$};
    \node (A0_1) at (1, 0) {$\groupoidtot{X}{4}$};
    \node (A0_2) at (2, 0) {$\groupoidtot{X}{3}$,};
    
    \path (A0_1) edge [->]node [auto,swap]
      {$\scriptstyle{\groupoidmaptot{\gamma}{\prime}}$} (A0_0);
    \path (A0_1) edge [->]node [auto] {$\scriptstyle{\groupoidmaptot{\gamma}{}}$} (A0_2);
\end{tikzpicture}
\end{equation}
such that $|\groupoidmaptot{\xi}{1}|\circ|\groupoidmaptot{\gamma}{}|=
|\groupoidmaptot{\xi}{\prime 1}|\circ|\groupoidmaptot{\gamma}{\prime}|$ and

\begin{equation}\label{eq-38}
\Big(i_{\groupoidmaptot{\phi}{2}}\ast\beta^2\Big)\odot\Big(\alpha\ast
i_{\groupoidmaptot{\gamma}{}}
\Big)\odot\Big(i_{\groupoidmaptot{\phi}{1}}\ast\beta^1\Big)=\alpha\ast
i_{\groupoidmaptot{\gamma}{\prime}},
\end{equation}
where each $\beta^1$ is the unique natural transformation from $\groupoidmaptot{\xi}{\prime 1}\circ
\groupoidmaptot{\gamma}{\prime}$ to $\groupoidmaptot{\xi}{1}\circ\groupoidmaptot{\gamma}{}$ and
$\beta^2$ is the unique natural transformation from $\groupoidmaptot{\xi}{2}\circ
\groupoidmaptot{\gamma}{}$ to $\groupoidmaptot{\xi}{\prime 2}\circ\groupoidmaptot{\gamma}{\prime}$.
Therefore, from now on we will denote each $2$-morphism in $\PEEGpd\left[\WPEEGpdinv\right]$
from $\underline{g}^1$
to $\underline{g}^2$ as the class $[\groupoidtot{X}{3},\groupoidmaptot{\xi}{1},
\groupoidmaptot{\xi}{2},\alpha]$,
where the class of equivalence is the one induced by saying that

\[\Big(\groupoidtot{X}{3},\groupoidmaptot{\xi}{1},\groupoidmaptot{\xi}{2},\alpha\Big)
\sim\Big(\groupoidtot{X}{\prime 3},\groupoidmaptot{\xi}{\prime 1},
\groupoidmaptot{\xi}{\prime 2},\alpha'\Big)\]
if and only if there are data $(\groupoidtot{X}{4},\groupoidmaptot{\gamma}{},
\groupoidmaptot{\gamma}{\prime})$ as above,
such that \eqref{eq-38} holds.
\end{lem}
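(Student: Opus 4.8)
The plan is to mirror, step by step, the proof of Lemma~\ref{lem-25}, translating the language of refinements into that of Morita equivalences: the class $\WRedAtl$ is replaced by $\WPEEGpd$, the weak $2$-out-of-$3$ property of Lemma~\ref{lem-24} by the genuine $2$-out-of-$3$ property of Lemma~\ref{lem-17}, and the existence/uniqueness statement of Lemma~\ref{lem-12} by its groupoid counterpart Lemma~\ref{lem-20}. The essential difference is that refinements are all defined over the identity of the base, so in Lemma~\ref{lem-25} the relevant underlying continuous maps coincide automatically; here instead the Morita equivalences induce (possibly nontrivial) homeomorphisms on underlying spaces, so the coincidence of underlying maps must be recorded explicitly as the extra hypothesis in (b).

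First I would handle the passage between a single datum of the full form \eqref{eq-50} and a datum of the form (a)--(c). Starting from $(\groupoidtot{X}{3},\groupoidmaptot{\xi}{1},\groupoidmaptot{\xi}{2},\mu,\delta)$ as in Description~\ref{descrip-02}, both $\groupoidmaptot{\xi}{1}$ and $\groupoidmaptot{\xi}{2}$ are already Morita equivalences, so I simply forget $\mu$; since $\groupoidmaptot{\psi}{1}\circ\groupoidmaptot{\xi}{1}$ and $\groupoidmaptot{\psi}{2}\circ\groupoidmaptot{\xi}{2}$ are Morita equivalences between proper, effective, \'etale groupoids (by (\hyperref[BF2]{BF2}) for $\WPEEGpd$) and $\mu$ is a natural transformation between them, Lemma~\ref{lem-20} (implication (b)$\Rightarrow$(a)) together with the functoriality of $|-|$ (Remark~\ref{rem-06}) yields $|\groupoidmaptot{\psi}{1}|\circ|\groupoidmaptot{\xi}{1}|=|\groupoidmaptot{\psi}{2}|\circ|\groupoidmaptot{\xi}{2}|$, which is condition (b). Conversely, given data (a)--(c), these same two composites are Morita equivalences with equal underlying maps, so Lemma~\ref{lem-20} (implication (a)$\Rightarrow$(c)) produces a \emph{unique} natural transformation $\mu:\groupoidmaptot{\psi}{1}\circ\groupoidmaptot{\xi}{1}\Rightarrow\groupoidmaptot{\psi}{2}\circ\groupoidmaptot{\xi}{2}$, automatically invertible in $\LieGpd$; this recovers the full datum \eqref{eq-50}.

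Next I would treat the equivalence relation. Given two full data related by $(\groupoidtot{X}{4},\groupoidmaptot{\gamma}{},\groupoidmaptot{\gamma}{\prime},\beta^1,\beta^2)$ as in the groupoid analogue of \eqref{eq-21}, the hypothesis that $\groupoidmaptot{\psi}{1}\circ\groupoidmaptot{\xi}{1}\circ\groupoidmaptot{\gamma}{}$ is a Morita equivalence, combined with Lemma~\ref{lem-17} and the fact that $\groupoidmaptot{\psi}{1}\circ\groupoidmaptot{\xi}{1}$ is a Morita equivalence, forces $\groupoidmaptot{\gamma}{}$ to be a Morita equivalence; applying (\hyperref[BF5]{BF5}) for $\PEEGpd$ to the invertible $\beta^1$ and then Lemma~\ref{lem-17} once more forces $\groupoidmaptot{\gamma}{\prime}$ to be a Morita equivalence as well. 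Existence of $\beta^1$ and Lemma~\ref{lem-20} (b)$\Rightarrow$(a) give $|\groupoidmaptot{\xi}{1}|\circ|\groupoidmaptot{\gamma}{}|=|\groupoidmaptot{\xi}{\prime 1}|\circ|\groupoidmaptot{\gamma}{\prime}|$, so that only data as in \eqref{eq-37} and the condition \eqref{eq-38} survive. The remaining $\mu$-condition (the groupoid analogue of \eqref{eq-53}) is then automatic: it is an equality of natural transformations whose common source and target are Morita equivalences, and such a natural transformation is unique by Lemma~\ref{lem-20}. In the reverse direction, given simplified data $(\groupoidtot{X}{4},\groupoidmaptot{\gamma}{},\groupoidmaptot{\gamma}{\prime})$ together with \eqref{eq-38}, the transformations $\beta^1,\beta^2$ are produced uniquely by Lemma~\ref{lem-20} from the underlying-map hypotheses, the required composite is a Morita equivalence by (\hyperref[BF2]{BF2}), and the $\mu$-condition holds automatically, so the two full data are equivalent.

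I expect the only genuine bookkeeping difficulty — the rest being mechanical — to be the careful tracking of the induced maps $|-|$ on underlying spaces and the repeated verification that the composites appearing as sources and targets of the various natural transformations are genuinely Morita equivalences between proper, effective, \'etale groupoids; this is exactly what Lemma~\ref{lem-20} requires, and it is the single point at which the present argument departs from the purely formal translation of Lemma~\ref{lem-25}, where all maps were over the identity.
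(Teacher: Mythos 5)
Your proposal takes exactly the paper's route: the paper's own proof consists of the single remark that one follows Lemma~\ref{lem-25} with Lemma~\ref{lem-20} replacing Lemma~\ref{lem-12}, plus one explicit computation, and your translation dictionary (refinements to Morita equivalences, the weak $2$-out-of-$3$ property of Lemma~\ref{lem-24} to the saturation statement of Lemma~\ref{lem-17}, the automatic coincidence of underlying continuous maps to the explicit hypothesis (b)) is the correct one. The forward direction, the passage between the full data of the form \eqref{eq-50} and the simplified data (a)--(c), and the observation that the $\mu$-condition is automatic because both of its sides are natural transformations between the same pair of Morita equivalences (so that Lemma~\ref{lem-20} forces them to agree) all match the intended argument.

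There is, however, one elision, and it falls exactly on the one step the paper writes out in full. In the reverse direction you assert that $\beta^1,\beta^2$ ``are produced uniquely by Lemma~\ref{lem-20} from the underlying-map hypotheses''. For $\beta^1$ this is fine, since $|\groupoidmaptot{\xi}{1}|\circ|\groupoidmaptot{\gamma}{}|=|\groupoidmaptot{\xi}{\prime 1}|\circ|\groupoidmaptot{\gamma}{\prime}|$ is assumed in \eqref{eq-37}. But the corresponding identity for the second components, which is what Lemma~\ref{lem-20} needs in order to produce $\beta^2$, is \emph{not} among the hypotheses and has to be derived: one chains
\[
|\groupoidmaptot{\psi}{2}|\circ|\groupoidmaptot{\xi}{2}|\circ|\groupoidmaptot{\gamma}{}|
=|\groupoidmaptot{\psi}{1}|\circ|\groupoidmaptot{\xi}{1}|\circ|\groupoidmaptot{\gamma}{}|
=|\groupoidmaptot{\psi}{1}|\circ|\groupoidmaptot{\xi}{\prime 1}|\circ|\groupoidmaptot{\gamma}{\prime}|
=|\groupoidmaptot{\psi}{2}|\circ|\groupoidmaptot{\xi}{\prime 2}|\circ|\groupoidmaptot{\gamma}{\prime}|
\]
(the outer equalities by hypothesis (b) for the two sets of data, the middle one by the assumed $\xi^1$-identity), and then cancels $|\groupoidmaptot{\psi}{2}|$ on the left. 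That cancellation is legitimate only because $\groupoidmaptot{\psi}{2}$ is a Morita equivalence of proper \'etale groupoids, so that $|\groupoidmaptot{\psi}{2}|$ is a homeomorphism by Lemma~\ref{lem-14} --- a lemma your proposal never invokes. This is a short but genuinely necessary step, and it is the entire explicit content of the paper's proof beyond the reference to Lemma~\ref{lem-25}; once $|\groupoidmaptot{\xi}{2}|\circ|\groupoidmaptot{\gamma}{}|=|\groupoidmaptot{\xi}{\prime 2}|\circ|\groupoidmaptot{\gamma}{\prime}|$ is established, Lemma~\ref{lem-20} yields $\beta^2$ and the rest of your argument goes through unchanged.
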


\begin{proof}
The proof of this result follows the same lines of the proof of Lemma~\ref{lem-25}.
The only significant difference is that we use Lemma~\ref{lem-20} instead of Lemma~\ref{lem-12}: this
allows to prove the existence and uniqueness of a $2$-morphism $\beta^1:\groupoidmaptot{\xi}{\prime 1}\circ
\groupoidmaptot{\gamma}{\prime }\Rightarrow\groupoidmaptot{\xi}{1}\circ\groupoidmaptot{\gamma}{}$
as desired. For the existence and uniqueness of the $2$-morphism $\beta^2$, one proceeds as follows:
using Lemma~\ref{lem-20} and (b), we have that

\[|\groupoidmaptot{\psi}{2}|\circ|\groupoidmaptot{\xi}{2}|\circ|\groupoidmaptot{\gamma}{}|=
|\groupoidmaptot{\psi}{1}|\circ|\groupoidmaptot{\xi}{1}|\circ|\groupoidmaptot{\gamma}{}|;\]
moreover, using the hypothesis we have

\[|\groupoidmaptot{\psi}{1}|\circ|\groupoidmaptot{\xi}{1}|\circ|\groupoidmaptot{\gamma}{}|=
|\groupoidmaptot{\psi}{1}|\circ|\groupoidmaptot{\xi}{\prime 1}|\circ|\groupoidmaptot{\gamma}{\prime}|\]
and

\[|\groupoidmaptot{\psi}{1}|\circ|\groupoidmaptot{\xi}{\prime 1}|\circ|\groupoidmaptot{\gamma}{\prime}|=
|\groupoidmaptot{\psi}{2}|\circ|\groupoidmaptot{\xi}{\prime 2}|\circ|\groupoidmaptot{\gamma}{\prime}|.\]

Since $|\groupoidmaptot{\psi}{2}|$ is an homeomorphism (see Lemma~\ref{lem-14}), then the previous $3$
identities imply that $|\groupoidmaptot{\xi}{2}|\circ|\groupoidmaptot{\gamma}{}|=
|\groupoidmaptot{\xi}{\prime 2}|\circ|\groupoidmaptot{\gamma}{\prime}|$; then the existence and uniqueness
of $\beta^2$ is again a consequence of Lemma~\ref{lem-20}.
\end{proof}

\section{The pseudofunctor $\functor{G}^{\red}$}
Now we are almost ready to describe the pseudofunctor $\functor{G}^{\red}$ mentioned in the Introduction.
For that, we will only need the following result.

\begin{theo}\label{theo-02}
\cite[Theorem~0.3 and Remark~3.2 in the case when $\CATA$ and $\CATB$ are $2$-categories]{T4}
Let us fix any pair of $2$-categories $\CATA,\CATB$ and any pair of classes $\SETWA$ and $\SETWB$
of morphisms in $\CATA$ and $\CATB$ respectively, such that both $(\CATA,\SETWA)$ and
$(\CATB,\SETWB)$ satisfy conditions
\emphatic{(\hyperref[BF]{BF})}. Let us also fix any pseudofunctor $\functor{F}:\CATA\rightarrow
\CATB$ such that $\functor{F}_1(\SETWA)\subseteq\SETWBsat$ and let us 
assume the axiom of choice. Then there is a pseudofunctor

\[\widetilde{\functor{G}}:\CATA\Big[\SETWAinv\Big]\longrightarrow\CATB\Big[\SETWBsatinv\Big]\]
 such that:

\begin{itemize}
 \item $\functor{U}_{\SETWBsat}\circ\functor{F}=\widetilde{\functor{G}}\circ\functor{U}_{\SETWA}$;
 
 \item for each object $A_{\CATA}$, we have $\widetilde{\functor{G}}_0(A_{\CATA})=
  \functor{F}_0(A_{\CATA})$;

 \item for each morphism $(A'_{\CATA},\operatorname{w}_{\CATA},f_{\CATA}):A_{\CATA}\rightarrow
  B_{\CATA}$ in $\CATA\left[\SETWAinv\right]$, we have 
  
  \[\widetilde{\functor{G}}_1\Big(A'_{\CATA},\operatorname{w}_{\CATA},f_{\CATA}\Big)=\Big(
  \functor{F}_0(A'_{\CATA}),\functor{F}_1(\operatorname{w}_{\CATA}),\functor{F}_1(f_{\CATA})
  \Big);\]
  
 \item for each $2$-morphism

  \[\Big[A^3_{\CATA},\operatorname{v}^1_{\CATA},\operatorname{v}^2_{\CATA},\mu_{\CATA},
  \delta_{\CATA}\Big]:\Big(A^1_{\CATA},\operatorname{w}^1_{\CATA},f^1_{\CATA}\Big)\Longrightarrow
  \Big(A^2_{\CATA},\operatorname{w}^2_{\CATA},f^2_{\CATA}\Big)\]
  in $\CATA\left[\SETWAinv\right]$, we have
  
  \[\widetilde{\functor{G}}_2\Big(\Big[A^3_{\CATA},\operatorname{v}^1_{\CATA},
  \operatorname{v}^2_{\CATA},\mu_{\CATA},\delta_{\CATA}\Big]\Big)=
  \Big[\functor{F}_0(A^3_{\CATA}),\functor{F}_1(\operatorname{v}^1_{\CATA}),\functor{F}_1
  (\operatorname{v}^2_{\CATA}),\functor{F}_2(\mu_{\CATA}),
  \functor{F}_2(\delta_{\CATA})\Big].\]
\end{itemize}  
\end{theo}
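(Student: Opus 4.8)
The plan is to build $\widetilde{\functor{G}}$ \emph{directly} from the explicit formulas in the statement, rather than to invoke only the universal property of Theorem~\ref{theo-04}: that property would yield the factorization merely up to pseudonatural equivalence, whereas here we want the strict identity $\functor{U}_{\SETWBsat}\circ\functor{F}=\widetilde{\functor{G}}\circ\functor{U}_{\SETWA}$ together with the prescribed values on cells. The conceptual reason the construction succeeds is nevertheless the universal property. By Definition~\ref{def-12} the class $\SETWBsat$ is saturated, and the saturation of a class satisfying conditions (\hyperref[BF]{BF}) again satisfies (\hyperref[BF]{BF}) (this is the saturation half of the results quoted around Lemma~\ref{lem-17}), so $\CATB\left[\SETWBsatinv\right]$ and the projection $\functor{U}_{\SETWBsat}$ of Theorem~\ref{theo-04} exist, and $\functor{U}_{\SETWBsat}$ turns every arrow of $\SETWBsat$ into an internal equivalence. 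Since $\functor{F}_1(\SETWA)\subseteq\SETWBsat$ by hypothesis, the composite $\functor{U}_{\SETWBsat}\circ\functor{F}$ already sends every element of $\SETWA$ to an internal equivalence, which is precisely the obstruction any factorization through $\functor{U}_{\SETWA}$ must clear.

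First I would check that the formulas are well defined. On a $1$-morphism $(A',\operatorname{w},f)$ of $\CATA\left[\SETWAinv\right]$ one has $\operatorname{w}\in\SETWA$, hence $\functor{F}_1(\operatorname{w})\in\SETWBsat$, so $(\functor{F}_0(A'),\functor{F}_1(\operatorname{w}),\functor{F}_1(f))$ is a genuine $1$-morphism of $\CATB\left[\SETWBsatinv\right]$. For a $2$-morphism represented by $(A^3,\operatorname{v}^1,\operatorname{v}^2,\mu,\delta)$, the composite $\operatorname{w}^1\circ\operatorname{v}^1$ lies in $\SETWA$; since in the quoted case $\functor{F}$ is a \emph{strict} $2$-functor, $\functor{F}_1(\operatorname{w}^1\circ\operatorname{v}^1)=\functor{F}_1(\operatorname{w}^1)\circ\functor{F}_1(\operatorname{v}^1)\in\SETWBsat$, so the image datum is admissible. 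Independence of the representative is then obtained by transporting the witnessing data $(A^4,\operatorname{z},\operatorname{z}',\sigma^1,\sigma^2)$ of Description~\ref{descrip-01} through $\functor{F}$: because $\functor{F}$ preserves vertical and horizontal compositions of $2$-morphisms, their identities and their invertibility, the image $(\functor{F}_0(A^4),\functor{F}_1(\operatorname{z}),\functor{F}_1(\operatorname{z}'),\functor{F}_2(\sigma^1),\functor{F}_2(\sigma^2))$ satisfies the analogues of the defining equations \eqref{eq-23} and \eqref{eq-53}, and $\functor{F}_1(\operatorname{w}^1\circ\operatorname{v}^1\circ\operatorname{z})\in\SETWBsat$ again because its argument lies in $\SETWA$.

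Next I would equip $\widetilde{\functor{G}}$ with its structure $2$-cells and verify the strict identity. Composition of $1$-morphisms in either bicategory of fractions is defined through chosen fillers for (\hyperref[BF3]{BF3}) (diagrams of the shape \eqref{eq-32}); applying $\functor{F}$ to a filler chosen in $\CATA$ produces a valid filler for $(\CATB,\SETWBsat)$, and the comparison between this image filler and the filler chosen in $\CATB$ yields, via (\hyperref[BF4]{BF4}), the invertible $2$-cell relating $\widetilde{\functor{G}}(g)\circ\widetilde{\functor{G}}(f)$ and $\widetilde{\functor{G}}(g\circ f)$; the axiom of choice enters precisely in fixing these fillers coherently. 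The unitors come for free from the normalizing choices (a) and (b) after \eqref{eq-32}. Finally, the equality $\functor{U}_{\SETWBsat}\circ\functor{F}=\widetilde{\functor{G}}\circ\functor{U}_{\SETWA}$ is checked by evaluating both sides on objects, morphisms and $2$-morphisms, where strictness of the $2$-functor $\functor{F}$ (so that $\functor{F}_1(\id)=\id$ on the nose) gives equality rather than mere isomorphism.

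I expect the main obstacle to be the verification of the pseudofunctor coherence axioms for $\widetilde{\functor{G}}$ relative to the \emph{non-strict} compositions of the two bicategories of fractions: one must show that the structure $2$-cells assembled from the (\hyperref[BF4]{BF4}) comparisons satisfy the required associativity and unit coherence diagrams. The cleanest way to keep this under control is to use the simplified construction of bicategories of fractions of~\cite{T3}, which reduces all associators and all compositions of $2$-morphisms to the single chosen datum \eqref{eq-32}; with that reduction in hand, the coherence equalities for $\widetilde{\functor{G}}$ follow from the $2$-functoriality of $\functor{F}$ together with the coherence already established in $\CATA\left[\SETWAinv\right]$ and $\CATB\left[\SETWBsatinv\right]$.
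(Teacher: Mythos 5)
You should note first that the paper itself offers no proof of Theorem~\ref{theo-02}: it is imported verbatim from \cite{T4} (Theorem~0.3 and Remark~3.2 there), so your proposal is in effect a reconstruction of the cited argument rather than an alternative to anything in this paper. Its skeleton is the correct one and matches \cite{T4}: define $\widetilde{\functor{G}}$ directly by the stated formulas (the universal property of Theorem~\ref{theo-04} alone would only give the factorization up to pseudonatural equivalence, which is precisely the motivation of the cited result); use that the saturation $\SETWBsat$ again satisfies conditions (\hyperref[BF]{BF}) --- this is indeed proved in \cite{T4}, the same source quoted in Lemma~\ref{lem-17} --- so that $\CATB\left[\SETWBsatinv\right]$ and $\functor{U}_{\SETWBsat}$ exist; transport the witness data for the equivalence relation on $2$-morphisms through $\functor{F}$; and build the compositors of $\widetilde{\functor{G}}$ from (\hyperref[BF4]{BF4})-comparisons between the $\functor{F}$-images of the chosen (\hyperref[BF3]{BF3})-fillers \eqref{eq-32} in $\CATA$ and the fillers chosen in $\CATB$, with coherence controlled by the reduction of \cite{T3} to the single datum \eqref{eq-32}.

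There are, however, two genuine soft spots. First, you assume $\functor{F}$ is a strict $2$-functor, but the statement fixes \emph{any} pseudofunctor: the phrase ``in the case when $\CATA$ and $\CATB$ are $2$-categories'' restricts the (bi)categories, not $\functor{F}$. In the non-strict case your admissibility argument needs repair: from $\operatorname{w}^1_{\CATA}\circ\operatorname{v}^1_{\CATA}\in\SETWA$ one gets $\functor{F}_1(\operatorname{w}^1_{\CATA}\circ\operatorname{v}^1_{\CATA})\in\SETWBsat$, and then $\functor{F}_1(\operatorname{w}^1_{\CATA})\circ\functor{F}_1(\operatorname{v}^1_{\CATA})\in\SETWBsat$ only via the compositor of $\functor{F}$ together with (\hyperref[BF5]{BF5}) for $\SETWBsat$; likewise the transported versions of \eqref{eq-23} and \eqref{eq-53} acquire compositor corrections. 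Note also that with the stated formula for $\widetilde{\functor{G}}_1$, the on-the-nose identity $\functor{U}_{\SETWBsat}\circ\functor{F}=\widetilde{\functor{G}}\circ\functor{U}_{\SETWA}$ evaluated on $\functor{U}_{\SETWA}(f_{\CATA})=(A_{\CATA},\id_{A_{\CATA}},f_{\CATA})$ forces $\functor{F}_1(\id_{A_{\CATA}})=\id_{\functor{F}_0(A_{\CATA})}$, so some unitality normalization of $\functor{F}$ is implicitly in play; none of this matters for the present paper, where the theorem is only applied in Proposition~\ref{prop-06} to the strict $2$-functor $\functor{F}^{\red}$, but it does mean you have proved a weaker statement than the one quoted. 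Second, ``checked by evaluating both sides on objects, morphisms and $2$-morphisms'' is not a proof of equality of pseudofunctors: two pseudofunctors can agree on all cells and still differ in their associativity and unit structure $2$-cells, and equality here is equality of \emph{all} the data. You must additionally show that the compositors of $\widetilde{\functor{G}}\circ\functor{U}_{\SETWA}$ and of $\functor{U}_{\SETWBsat}\circ\functor{F}$ coincide, and this is exactly where the forced normalizing choices (a) and (b) following \eqref{eq-32}, imposed in both $\CATA$ and $\CATB$, are needed --- not only for the unitors, as you state, but to make the (\hyperref[BF4]{BF4})-assembled structure cells of the two composites literally agree on the degenerate squares arising from $\functor{U}_{\SETWA}$.
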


Then we have:

\begin{prop}\label{prop-06}
If we assume the axiom of choice, there is a pseudofunctor

\[\functor{G}^{\red}:\RedOrb\longrightarrow\PEEGpd\left[\WPEEGpdinv\right]\]
such that:

\begin{enumerate}[\emphatic{(}1\emphatic{)}]
 \item for each reduced orbifold atlas $\atlas{X}$, $\functor{G}^{\red}_0(\atlas{X})=
  \functor{F}^{\red}_0(\atlas{X})$;
 \item for each morphism $(\atlas{X}',\atlasmap{\operatorname{w}}{},\atlasmap{f}{}):\atlas{X}
  \rightarrow\atlas{Y}$ in $\RedOrb$, we have
  
  \[\functor{G}^{\red}_1\Big(\atlas{X}',\atlasmap{\operatorname{w}}{},\atlasmap{f}{}\Big)=
  \Big(\functor{F}_0^{\red}(\atlas{X}),\functor{F}^{\red}_1(\atlasmap{\operatorname{w}}{}),
  \functor{F}_1^{\red}(\atlasmap{f}{})\Big);\]

 \item for each $2$-morphism

  \[\Big[\atlas{X}^3,\atlasmap{\operatorname{v}}{1},\atlasmap{\operatorname{v}}{2},
  [\delta]\Big]:\Big([\atlas{X}^1,\atlasmap{\operatorname{w}}{1},\atlasmap{f}{1}\Big)
  \Longrightarrow\Big(\atlas{X}^2,\atlasmap{\operatorname{w}}{2},\atlasmap{f}{2}\Big)\]
  in $\RedOrb$, we have
  
  \[\functor{G}^{\red}_2\Big(\Big[[\atlas{X}^3,\atlasmap{\operatorname{v}}{1},
   \atlasmap{\operatorname{v}}{2},[\delta]\Big]\Big)=
   \Big[\functor{F}_0^{\red}(\atlas{X}^3),\functor{F}_1^{\red}(\atlasmap{\operatorname{v}}{1}),
   \functor{F}_1^{\red}(\atlasmap{\operatorname{v}}{2}),\functor{F}_2^{\red}([\delta])\Big].\]
\end{enumerate}

Moreover, we have $\functor{U}_{\WPEEGpd}\circ\functor{F}^{\red}=\functor{G}^{\red}\circ
\functor{U}_{\WRedAtl}$.
\end{prop}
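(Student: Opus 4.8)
The plan is to obtain $\functor{G}^{\red}$ as a direct application of the abstract induction result for bicategories of fractions, Theorem~\ref{theo-02}, taking $\CATA:=\RedAtl$, $\CATB:=\PEEGpd$, $\SETWA:=\WRedAtl$, $\SETWB:=\WPEEGpd$ and $\functor{F}:=\functor{F}^{\red}$. First I would verify the hypotheses of Theorem~\ref{theo-02}. The pair $(\RedAtl,\WRedAtl)$ admits a right bicalculus of fractions by Proposition~\ref{prop-05}, and the pair $(\PEEGpd,\WPEEGpd)$ does so by Proposition~\ref{prop-03}; moreover $\functor{F}^{\red}$ is a $2$-functor, hence a pseudofunctor, by Theorem~\ref{theo-01}. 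The remaining hypothesis is that $\functor{F}^{\red}_1$ sends $\WRedAtl$ into the saturation $\WPEEGpd_{\operatorname{sat}}$. Since every refinement is in particular a weak equivalence of reduced orbifold atlases (see the chain of inclusions after Definition~\ref{def-10}), Lemma~\ref{lem-09} shows that $\functor{F}^{\red}_1$ carries each refinement to a Morita equivalence of proper, effective, \'etale groupoids; therefore $\functor{F}^{\red}_1(\WRedAtl)\subseteq\WPEEGpd\subseteq\WPEEGpd_{\operatorname{sat}}$.

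Next I would identify the target of the induced pseudofunctor with the bicategory we actually want. Theorem~\ref{theo-02} produces a pseudofunctor landing in $\PEEGpd\left[\WPEEGpd_{\operatorname{sat}}^{-1}\right]$, so I must check that $\WPEEGpd$ is already saturated, i.e.\ $\WPEEGpd_{\operatorname{sat}}=\WPEEGpd$. One inclusion is immediate from Definition~\ref{def-12}, taking the two auxiliary morphisms to be identities and using (\hyperref[BF1]{BF1}). For the converse, let $\groupoidmaptot{\psi}{}$ lie in the saturation, witnessed by morphisms $\groupoidmaptot{\phi}{}$, $\groupoidmaptot{\phi}{\prime}$ of $\PEEGpd$ with $\groupoidmaptot{\psi}{}\circ\groupoidmaptot{\phi}{}$ and $\groupoidmaptot{\phi}{}\circ\groupoidmaptot{\phi}{\prime}$ Morita equivalences. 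Viewing everything inside $\EGpd$ via the full embedding of Proposition~\ref{prop-03}, these two composites lie in $\WEGpd$, which is saturated by Lemma~\ref{lem-17}; hence $\groupoidmaptot{\psi}{}$ is a Morita equivalence of \'etale groupoids, and as its source and target are proper and effective it belongs to $\WPEEGpd$. Thus $\WPEEGpd_{\operatorname{sat}}=\WPEEGpd$ and the target is precisely $\PEEGpd\left[\WPEEGpdinv\right]$.

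With the hypotheses in place, Theorem~\ref{theo-02} (assuming the axiom of choice) yields a pseudofunctor $\functor{G}^{\red}:\RedOrb\rightarrow\PEEGpd\left[\WPEEGpdinv\right]$ satisfying $\functor{U}_{\WPEEGpd}\circ\functor{F}^{\red}=\functor{G}^{\red}\circ\functor{U}_{\WRedAtl}$, which is exactly the final assertion of the statement, and acting on objects and $1$-morphisms by formulas (1) and (2). The only delicate point is formula (3) for $2$-morphisms: Theorem~\ref{theo-02} describes the image of a $2$-morphism in the five-tuple form $\left[\functor{F}^{\red}_0(\atlas{X}^3),\functor{F}^{\red}_1(\atlasmap{\operatorname{v}}{1}),\functor{F}^{\red}_1(\atlasmap{\operatorname{v}}{2}),\functor{F}^{\red}_2([\mu]),\functor{F}^{\red}_2([\delta])\right]$, whereas the present statement uses the four-tuple form of Lemma~\ref{lem-25}. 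I would reconcile these by recalling that in $\RedAtl$ the $2$-morphism $[\mu]$ is uniquely determined by the two refinements $\atlasmap{\operatorname{v}}{1},\atlasmap{\operatorname{v}}{2}$ via Lemma~\ref{lem-12}, and that on the groupoid side the natural transformation $\functor{F}^{\red}_2([\mu])$ is the unique one determined by $\functor{F}^{\red}_1(\atlasmap{\operatorname{v}}{1})$ and $\functor{F}^{\red}_1(\atlasmap{\operatorname{v}}{2})$ via Lemma~\ref{lem-20}; this is the same redundancy that allows the passage from Description~\ref{descrip-02} to the simplified description of Lemma~\ref{lem-16}. Hence the five-tuple output collapses to the four-tuple of formula (3).

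The main obstacle I anticipate is not the construction itself, which is packaged in Theorem~\ref{theo-02}, but the two bookkeeping identifications just described: verifying that $\WPEEGpd$ is saturated, so that the codomain is the intended $\PEEGpd\left[\WPEEGpdinv\right]$ rather than a formally larger localization, and matching the five-tuple and four-tuple descriptions of $2$-morphisms so that formula (3) appears in the stated form. Both reduce to invoking the uniqueness statements of Lemmas~\ref{lem-12} and~\ref{lem-20} together with the saturation of $\WEGpd$ (Lemma~\ref{lem-17}), so no genuinely new computation is required.
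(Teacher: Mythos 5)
Your proposal is correct and follows essentially the same route as the paper: apply Theorem~\ref{theo-02} with $\CATA=\RedAtl$, $\SETWA=\WRedAtl$, $\CATB=\PEEGpd$, $\SETWB=\WPEEGpd$, $\functor{F}=\functor{F}^{\red}$, note that $\functor{F}^{\red}_1$ sends refinements to Morita equivalences (the paper cites Proposition~\ref{prop-04}, you cite Lemma~\ref{lem-09} --- equivalent here), that $\SETWBsat=\WPEEGpd$ via Lemma~\ref{lem-17}, and then collapse the five-tuple output to the four-tuple form using Lemmas~\ref{lem-25} and~\ref{lem-16}. Your only additions are welcome expansions of two points the paper states without detail, namely the saturation argument (embedding into $\EGpd$ and using Morita-invariance of properness and effectiveness) and the uniqueness of the $[\mu]$-component via Lemmas~\ref{lem-12} and~\ref{lem-20}.
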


\begin{proof}
Let us apply Theorem~\ref{theo-02} with $\CATA:=\RedAtl$, $\SETWA:=\WRedAtl$ (i.e.\ all refinements
of reduced orbifold atlases), $\CATB:=\PEEGpd$, $\SETWB:=\WPEEGpd$ (i.e.\ all Morita equivalences of
proper, effective, \'etale groupoids) and $\functor{F}:=\functor{F}^{\red}$. We recall that
by Lemma~\ref{lem-17} we have $\SETWBsat=\WPEEGpd$. Given any refinement
$\atlasmap{\operatorname{w}}{}$, by Proposition~\ref{prop-04} we have that
$\functor{F}_1^{\red}(\atlasmap{\operatorname{w}}{})$ is a Morita equivalence, so we are in the
hypothesis of Theorem~\ref{theo-02}. Then the claim follows at once
using Lemmas~\ref{lem-25} and~\ref{lem-16} for the description of $\functor{G}_2^{\red}$.
\end{proof}

In addition, we recall the following result. For the more general form of this statement,
we refer to~\cite[Theorem~0.2]{T5}. We state such a result here only in the special framework where:

\begin{itemize}
 \item $\CATA$ and $\CATB$ are $2$-categories and $\functor{F}$ is a $2$-functor (also known as
  strict pseudofunctor), i.e.\ it preserves compositions and identities;
 \item $\functor{U}_{\SETWB}\circ\functor{F}=\functor{G}\circ\functor{U}_{\SETWA}$ and the
  natural equivalence $\kappa$ appearing in~\cite[Theorem~0.2]{T5} is the $2$-identity of
  $\functor{U}_{\SETWB}\circ\functor{F}$.
\end{itemize}

\begin{theo}\cite{T5}
Let us fix any pair of $2$-categories $\CATA$, $\CATB$ and any pair of classes of morphisms $\SETWA$,
$\SETWB$, such that both $(\CATA,\SETWA)$ and $(\CATB,\SETWB)$ satisfy conditions
\emphatic{(\hyperref[BF]{BF})}. Moreover, let us fix any $2$-functor $\functor{F}:\CATA
\rightarrow\CATB$, such that $\functor{F}_1(\SETWA)\subseteq\SETWBsat$.
In addition, let us suppose that there is a pseudofunctor $\functor{G}:\CATA\left[\SETWAinv\right]
\rightarrow\CATB\left[\SETWBinv\right]$ such that
$\functor{U}_{\SETWB}\circ\functor{F}=\functor{G}\circ\functor{U}_{\SETWA}$, and let us assume the
axiom of choice. Then $\functor{G}$ is an equivalence of bicategories if and
only if $\functor{F}$ satisfies the following $5$ conditions.

\begin{enumerate}[\emphatic{(}{A}1\emphatic{)}]\label{A}
 \item\label{A1} For any object $A_{\CATB}$, there are a pair of objects $A_{\CATA}$ and
  $A'_{\CATB}$ and a pair of morphisms $\operatorname{w}^1_{\CATB}$ in $\SETWB$ and
  $\operatorname{w}^2_{\CATB}$ in $\SETWBsat$, as follows:
  
  \[
  \begin{tikzpicture}[xscale=2.4,yscale=-1.2]
    \node (A0_0) at (-0.2, 0) {$\functor{F}_0(A_{\CATA})$};
    \node (A0_1) at (1, 0) {$A'_{\CATB}$};
    \node (A0_2) at (2, 0) {$A_{\CATB}$.};
    \path (A0_1) edge [->]node [auto,swap] {$\scriptstyle{\operatorname{w}^1_{\CATB}}$} (A0_0);
    \path (A0_1) edge [->]node [auto] {$\scriptstyle{\operatorname{w}^2_{\CATB}}$} (A0_2);
  \end{tikzpicture}
  \]

 \item\label{A2} Let us fix any triple of objects $A^1_{\CATA},A^2_{\CATA},A_{\CATB}$ and any pair of
  morphisms $\operatorname{w}^1_{\CATB}$ in $\SETWB$ and $\operatorname{w}^2_{\CATB}$ in
  $\SETWBsat$ as follows
  
  \[
  \begin{tikzpicture}[xscale=2.4,yscale=-1.2]
    \node (A0_0) at (0, 0) {$\functor{F}_0(A^1_{\CATA})$};
    \node (A0_1) at (1, 0) {$A_{\CATB}$};
    \node (A0_2) at (2, 0) {$\functor{F}_0(A^2_{\CATA})$.};
    \path (A0_1) edge [->]node [auto,swap] {$\scriptstyle{\operatorname{w}^1_{\CATB}}$} (A0_0);
    \path (A0_1) edge [->]node [auto] {$\scriptstyle{\operatorname{w}^2_{\CATB}}$} (A0_2);
  \end{tikzpicture}
  \]
  Then there are an object $A^3_{\CATA}$, a pair of morphisms $\operatorname{w}^1_{\CATA}$ in
  $\SETWA$ and $\operatorname{w}^2_{\CATA}$ in $\SETWAsat$ as follows
  
  \[
  \begin{tikzpicture}[xscale=2.4,yscale=-1.2]
    \node (A0_0) at (0, 0) {$A^1_{\CATA}$};
    \node (A0_1) at (1, 0) {$A^3_{\CATA}$};
    \node (A0_2) at (2, 0) {$A^2_{\CATA}$};
    
    \path (A0_1) edge [->]node [auto,swap] {$\scriptstyle{\operatorname{w}^1_{\CATA}}$} (A0_0);
    \path (A0_1) edge [->]node [auto] {$\scriptstyle{\operatorname{w}^2_{\CATA}}$} (A0_2);
  \end{tikzpicture}
  \]
  and a set of data $(A'_{\CATB},\operatorname{z}^1_{\CATB},\operatorname{z}^2_{\CATB},
  \gamma^1_{\CATB},\gamma^2_{\CATB})$ as follows
  
  \[
  \begin{tikzpicture}[xscale=2.2,yscale=-0.8]
    \node (A0_2) at (2, 0) {$A_{\CATB}$};
    \node (A2_2) at (2, 2) {$A'_{\CATB}$};
    \node (A2_0) at (0, 2) {$\functor{F}_0(A^1_{\CATA})$};
    \node (A2_4) at (4, 2) {$\functor{F}_0(A^2_{\CATA})$,};
    \node (A4_2) at (2, 4) {$\functor{F}_0(A^3_{\CATA})$};
    
    \node (A2_3) at (2.8, 2) {$\Downarrow\,\gamma^2_{\CATB}$};
    \node (A2_1) at (1.2, 2) {$\Downarrow\,\gamma^1_{\CATB}$};
    
    \path (A4_2) edge [->]node [auto,swap]
      {$\scriptstyle{\functor{F}_1(\operatorname{w}^2_{\CATA})}$} (A2_4);
    \path (A0_2) edge [->]node [auto] {$\scriptstyle{\operatorname{w}^2_{\CATB}}$} (A2_4);
    \path (A2_2) edge [->]node [auto,swap] {$\scriptstyle{\operatorname{z}^1_{\CATB}}$} (A0_2);
    \path (A2_2) edge [->]node [auto] {$\scriptstyle{\operatorname{z}^2_{\CATB}}$} (A4_2);
    \path (A4_2) edge [->]node [auto]
      {$\scriptstyle{\functor{F}_1(\operatorname{w}^1_{\CATA})}$} (A2_0);
    \path (A0_2) edge [->]node [auto,swap] {$\scriptstyle{\operatorname{w}^1_{\CATB}}$} (A2_0);
  \end{tikzpicture}
  \]
  such that $\operatorname{z}^1_{\CATB}$ belongs to $\SETWB$ and both $\gamma^1_{\CATB}$ and
  $\gamma^2_{\CATB}$ are invertible.

 \item\label{A3} Let us fix any pair of objects $B_{\CATA},A_{\CATB}$ and any morphism $f_{\CATB}:
  A_{\CATB}\rightarrow\functor{F}_0(B_{\CATA})$. Then there are an object $A_{\CATA}$, a morphism
  $f_{\CATA}:A_{\CATA}\rightarrow B_{\CATA}$ and data $(A'_{\CATB},\operatorname{v}^1_{\CATB},
  \operatorname{v}^2_{\CATB},\alpha_{\CATB})$ as follows
  
  \[
  \begin{tikzpicture}[xscale=1.8,yscale=-0.6]
    \node (B0_0) at (-1, 0) {}; 
    \node (B1_1) at (5, 0) {}; 
    
    \node (A0_2) at (2, 0) {$A_{\CATB}$};
    \node (A2_2) at (0, 1) {$A'_{\CATB}$};
    \node (A2_4) at (4, 1) {$\functor{F}_0(B_{\CATA})$,};
    \node (A4_2) at (2, 2) {$\functor{F}_0(A_{\CATA})$};
    
    \node (A2_3) at (2, 1) {$\Downarrow\,\alpha_{\CATB}$};
        
    \path (A4_2) edge [->,bend left=15] node [auto,swap]
      {$\scriptstyle{\functor{F}_1(f_{\CATA})}$} (A2_4);
    \path (A0_2) edge [->,bend right=15] node [auto] {$\scriptstyle{f_{\CATB}}$} (A2_4);
    \path (A2_2) edge [->,bend right=15] node [auto]
      {$\scriptstyle{\operatorname{v}^1_{\CATB}}$} (A0_2);
    \path (A2_2) edge [->,bend left=15] node [auto,swap]
      {$\scriptstyle{\operatorname{v}^2_{\CATB}}$} (A4_2);
  \end{tikzpicture}
  \]
  with $\operatorname{v}^1_{\CATB}$ in $\SETWB$, $\operatorname{v}^2_{\CATB}$ in $\SETWBsat$ and
  $\alpha_{\CATB}$ invertible.

 \item\label{A4} Let us fix any pair of objects $A_{\CATA},B_{\CATA}$, any pair of morphisms
  $f^1_{\CATA},f^2_{\CATA}:A_{\CATA}\rightarrow B_{\CATA}$ and any pair of $2$-morphisms
  $\gamma^1_{\CATA},\gamma^2_{\CATA}:f^1_{\CATA}\Rightarrow f^2_{\CATA}$. Moreover, let us fix
  any object $A'_{\CATB}$ and any morphism $\operatorname{z}_{\CATB}:A'_{\CATB}
  \rightarrow\functor{F}_0(A_{\CATA})$ in $\SETWB$. If $\functor{F}_2(\gamma^1_{\CATA})
  \ast i_{\operatorname{z}_{\CATB}}=\functor{F}_2(\gamma^2_{\CATA})\ast
  i_{\operatorname{z}_{\CATB}}$, then there are an object $A'_{\CATA}$ and a morphism
  $\operatorname{z}_{\CATA}:A'_{\CATA}\rightarrow A_{\CATA}$ in $\SETWA$, such that
  $\gamma^1_{\CATA}\ast i_{\operatorname{z}_{\CATA}}=\gamma^2_{\CATA}\ast
  i_{\operatorname{z}_{\CATA}}$.

 \item\label{A5} Let us fix any triple of objects $A_{\CATA},B_{\CATA},A_{\CATB}$, any pair of
  morphisms $f^1_{\CATA},f^2_{\CATA}:A_{\CATA}\rightarrow B_{\CATA}$, any morphism
  $\operatorname{v}_{\CATB}:A_{\CATB}\rightarrow\functor{F}_0(A_{\CATA})$ in $\SETWB$ and any
  $2$-morphism
  
  \[
  \begin{tikzpicture}[xscale=1.8,yscale=-0.6]
    \node (B0_0) at (-1, 0) {}; 
    \node (B1_1) at (5, 0) {}; 
    
    \node (A0_1) at (2, 0) {$\functor{F}_0(A_{\CATA})$};
    \node (A1_0) at (0, 1) {$A_{\CATB}$};
    \node (A1_2) at (4, 1) {$\functor{F}_0(B_{\CATA})$.};
    \node (A2_1) at (2, 2) {$\functor{F}_0(A_{\CATA})$};
    
    \node (A1_1) at (2, 1) {$\Downarrow\,\alpha_{\CATB}$};
        
    \path (A1_0) edge [->,bend right=20]node [auto]
       {$\scriptstyle{\operatorname{v}_{\CATB}}$} (A0_1);
    \path (A1_0) edge [->,bend left=20]node [auto,swap]
      {$\scriptstyle{\operatorname{v}_{\CATB}}$} (A2_1);
    \path (A0_1) edge [->,bend right=20]node [auto]
      {$\scriptstyle{\functor{F}_1(f^1_{\CATA})}$} (A1_2);
    \path (A2_1) edge [->,bend left=20]node [auto,swap]
      {$\scriptstyle{\functor{F}_1(f^2_{\CATA})}$} (A1_2);
  \end{tikzpicture}
  \]
  Then there are a pair of objects
  $A'_{\CATA},A'_{\CATB}$, a triple of morphisms $\operatorname{v}_{\CATA}:A'_{\CATA}\rightarrow
  A_{\CATA}$ in $\SETWA$, $\operatorname{z}_{\CATB}:A'_{\CATB}\rightarrow\functor{F}_0(A'_{\CATA})$
  in $\SETWB$ and $\operatorname{z}'_{\CATB}:A'_{\CATB}\rightarrow A_{\CATB}$,
  a $2$-morphism 
  
  \[
  \begin{tikzpicture}[xscale=1.8,yscale=-0.6]
    \node (B0_0) at (-1, 0) {}; 
    \node (B1_1) at (5, 0) {}; 
    
    \node (A0_1) at (2, 0) {$A_{\CATA}$};
    \node (A1_0) at (0, 1) {$A'_{\CATA}$};
    \node (A1_2) at (4, 1) {$B_{\CATA}$};
    \node (A2_1) at (2, 2) {$A_{\CATA}$};

    \node (A1_1) at (2, 1) {$\Downarrow\,\alpha_{\CATA}$};

    \path (A1_0) edge [->,bend right=20]node [auto] {$\scriptstyle{\operatorname{v}_{\CATA}}$} (A0_1);
    \path (A1_0) edge [->,bend left=20]node [auto,swap]
      {$\scriptstyle{\operatorname{v}_{\CATA}}$} (A2_1);
    \path (A0_1) edge [->,bend right=20]node [auto] {$\scriptstyle{f^1_{\CATA}}$} (A1_2);
    \path (A2_1) edge [->,bend left=20]node [auto,swap] {$\scriptstyle{f^2_{\CATA}}$} (A1_2);
   \end{tikzpicture}
   \]
  and an invertible $2$-morphism
  
  \[
  \begin{tikzpicture}[xscale=1.8,yscale=-0.6]
    \node (B0_0) at (-1, 0) {}; 
    \node (B1_1) at (5, 0) {}; 
    
    \node (A0_1) at (2, 0) {$\functor{F}_0(A'_{\CATA})$};
    \node (A1_0) at (0, 1) {$A'_{\CATB}$};
    \node (A1_2) at (4, 1) {$\functor{F}_0(A_{\CATA})$,};
    \node (A2_1) at (2, 2) {$A_{\CATB}$};
    
    \node (A1_1) at (2, 1) {$\Downarrow\,\sigma_{\CATB}$};

    \path (A1_0) edge [->,bend right=20]node [auto]
      {$\scriptstyle{\operatorname{z}_{\CATB}}$} (A0_1);
    \path (A1_0) edge [->,bend left=20]node [auto,swap]
      {$\scriptstyle{\operatorname{z}'_{\CATB}}$} (A2_1);
    \path (A0_1) edge [->,bend right=20]node [auto] {$\scriptstyle{\functor{F}_1
      (\operatorname{v}_{\CATA})}$} (A1_2);
    \path (A2_1) edge [->,bend left=20]node [auto,swap]
      {$\scriptstyle{\operatorname{v}_{\CATB}}$} (A1_2);
   \end{tikzpicture}
   \]
  such that $\alpha_{\CATB}\ast i_{\operatorname{z}'_{\CATB}}$
  coincides with the following composition:
  
  \[
  \begin{tikzpicture}[xscale=2.5,yscale=-1.3]
    \node (A0_1) at (1, 0) {$A_{\CATB}$};
    \node (A0_2) at (2, 0) {$\functor{F}_0(A_{\CATA})$};
    \node (A1_0) at (0, 1) {$A'_{\CATB}$};
    \node (A1_1) at (1, 1) {$\functor{F}_0(A'_{\CATA})$};
    \node (A1_3) at (3, 1) {$\functor{F}_0(B_{\CATA})$.};
    \node (A2_1) at (1, 2) {$A_{\CATB}$};
    \node (A2_2) at (2, 2) {$\functor{F}_0(A_{\CATA})$};

    \node (A1_2) at (2.3, 1) {$\Downarrow\,\functor{F}_2(\alpha_{\CATA})$};
    \node (A2_0) at (1, 1.5) {$\Downarrow\,\sigma_{\CATB}$};
    \node (A0_0) at (1, 0.5) {$\Downarrow\,(\sigma_{\CATB})^{-1}$};
    
    \path (A1_0) edge [->]node [auto,swap] {$\scriptstyle{\operatorname{z}'_{\CATB}}$} (A2_1);
    \path (A2_1) edge [->]node [auto,swap] {$\scriptstyle{\operatorname{v}_{\CATB}}$} (A2_2);
    \path (A1_1) edge [->]node [auto,swap] {$\scriptstyle{\functor{F}_1
      (\operatorname{v}_{\CATA})}$} (A0_2);
    \path (A1_0) edge [->]node [auto] {$\scriptstyle{\operatorname{z}'_{\CATB}}$} (A0_1);
    \path (A1_0) edge [->]node [auto] {$\scriptstyle{\operatorname{z}_{\CATB}}$} (A1_1);
    \path (A2_2) edge [->]node [auto,swap] {$\scriptstyle{\functor{F}_1(f^2_{\CATA})}$} (A1_3);
    \path (A1_1) edge [->]node [auto] {$\scriptstyle{\functor{F}_1
      (\operatorname{v}_{\CATA})}$} (A2_2);
    \path (A0_2) edge [->]node [auto] {$\scriptstyle{\functor{F}_1(f^1_{\CATA})}$} (A1_3);
    \path (A0_1) edge [->]node [auto] {$\scriptstyle{\operatorname{v}_{\CATB}}$} (A0_2);
  \end{tikzpicture}
  \]
\end{enumerate}
\end{theo}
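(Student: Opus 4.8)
The plan is to reduce the statement to the standard recognition principle for biequivalences: a pseudofunctor between bicategories is an equivalence of bicategories if and only if it is essentially surjective on objects (each object of the target is internally equivalent to the image of some object) and a \emph{local equivalence}, i.e.\ the induced functor between each pair of Hom-categories is an equivalence of categories — equivalently, locally essentially surjective, locally full and locally faithful (see~\cite[\S~1.5]{Lei}). Under the standing hypothesis $\functor{U}_{\SETWB}\circ\functor{F}=\functor{G}\circ\functor{U}_{\SETWA}$, the pseudofunctor $\functor{G}$ coincides with $\functor{F}$ on objects and carries each $\functor{U}_{\SETWA}$-image to the $\functor{U}_{\SETWB}$-image of the corresponding $\functor{F}$-image. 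I would first record two facts used throughout: every morphism of $\SETWBsat$ is sent by $\functor{U}_{\SETWB}$ to an internal equivalence (this follows from Theorem~\ref{theo-04} together with the $2$-out-of-$3$ property for internal equivalences applied to Definition~\ref{def-12}), and $\CATB[\SETWBinv]$ may be replaced by $\CATB[\SETWBsatinv]$ by~\cite[Proposition~2.10]{T3}. The whole argument then consists in matching each recognition condition, through the roof-description of $1$- and $2$-morphisms in a bicategory of fractions (\cite[\S~2.2--2.4]{Pr}, cf.\ Description~\ref{descrip-01} and Lemma~\ref{lem-16}), with the appropriate condition among (A1)--(A5).

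The object level is immediate: essential surjectivity of $\functor{G}$ on objects says that each $A_{\CATB}$ is internally equivalent in $\CATB[\SETWBinv]$ to some $\functor{F}_0(A_{\CATA})$, and since two objects of a bicategory of fractions are internally equivalent exactly when joined by a span with legs in $\SETWB$ and $\SETWBsat$, this is verbatim condition (A1), read off in both directions. For local essential surjectivity I would take a $1$-morphism of $\CATB[\SETWBinv]$ with source and target among the $\functor{F}_0$-images and, using its roof representative, show that exhibiting it as isomorphic to $\functor{G}_1$ of a roof $(A'_{\CATA},\operatorname{w}_{\CATA},f_{\CATA})$ is precisely the assertion of (A3) (which produces $A_{\CATA}$, $f_{\CATA}$ and the comparison data $A'_{\CATB},\operatorname{v}^1_{\CATB},\operatorname{v}^2_{\CATB},\alpha_{\CATB}$). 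Condition (A2) enters as the lift, along $\functor{F}$, of a span between two images that is needed both to compose such roofs and to arrange the comparison leg to lie in $\SETWB$; conversely (A2) and (A3) are extracted from local essential surjectivity of $\functor{G}$ combined with axioms (BF3)--(BF4) for $\SETWB$.

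At the $2$-morphism level I would use the simplified description of $2$-cells of a bicategory of fractions as classes $[A^3,\operatorname{v}^1,\operatorname{v}^2,\delta]$ modulo the equivalence relation of Lemma~\ref{lem-16} (general form in~\cite[\S~2.3]{Pr}). Local faithfulness — a pair of $2$-morphisms of $\CATA[\SETWAinv]$ with equal $\functor{G}_2$-image — unwinds, after whiskering by a morphism of $\SETWB$, to the equality of their $\functor{F}_2$-images; the statement that they already coincide after whiskering by a morphism of $\SETWA$ is exactly (A4). Local fullness — every $2$-cell $\alpha_{\CATB}$ between two images is hit — is the content of (A5): the data $\operatorname{v}_{\CATA},\alpha_{\CATA}$ assemble a $2$-morphism of $\CATA[\SETWAinv]$, while $\operatorname{z}_{\CATB},\operatorname{z}'_{\CATB},\sigma_{\CATB}$ and the final pasting identity are precisely what the equivalence relation of Lemma~\ref{lem-16} demands in order that its $\functor{G}_2$-image equal $\alpha_{\CATB}$. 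The converse implications are obtained by applying local faithfulness, respectively fullness, of $\functor{G}$ to the tautological representatives.

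The hardest part will be the $2$-morphism level, and above all the fullness step governed by (A5). The difficulty is twofold. First one must reduce an arbitrary roof representative of $\alpha_{\CATB}$ to one whose outer legs are genuine $\functor{F}$-images, which uses (A1)--(A3) and the axiom of choice to select the lifts. Second, one must match the single pasting identity imposed in (A5) against the composite relation that defines equality of $2$-morphisms in $\CATB[\SETWBinv]$; the bookkeeping of associators and of the canonical invertible comparison $2$-cells of the (non-strict) pseudofunctor $\functor{G}$ — even though $\functor{F}$ itself is strict — is where the routine but delicate diagram chasing concentrates, and conditions (A2) and (A5) are engineered exactly so that these comparison cells close up.
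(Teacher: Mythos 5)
The paper itself gives no proof of this theorem --- it is recalled verbatim (in the special case where $\functor{F}$ is strict and the comparison $\kappa$ is a $2$-identity) from \cite[Theorem~0.2]{T5} --- and your sketch follows essentially the same route as the proof given there: the recognition principle for equivalences of bicategories (essential surjectivity on objects plus an equivalence on each hom-category, valid under the axiom of choice), translated through the roof description of $1$- and $2$-cells in a bicategory of fractions and through the characterization, established in \cite{T4}, of internal equivalences of $\CATB\left[\SETWBinv\right]$ by spans with one leg in $\SETWB$ and one in $\SETWBsat$, so that (A1) matches essential surjectivity, (A2)--(A3) local essential surjectivity, (A4) local faithfulness and (A5) local fullness. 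Your only imprecision is harmless: that $\functor{U}_{\SETWB}$ sends $\SETWBsat$ into internal equivalences requires a two-out-of-six argument (the images of $f\circ g$ and $g\circ h$ being equivalences equips the image of $g$ with both a left and a right quasi-inverse, after which two-out-of-three applies), not two-out-of-three alone --- or one may simply quote \cite{T4}, where this is proved.
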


Then we have:

\begin{theo}\label{theo-05}
The pseudofunctor $\functor{G}^{\red}$ described in \emphatic{Proposition~\ref{prop-06}}
\emphatic{(}using the axiom
of choice\emphatic{)} is an equivalence of bicategories.
\end{theo}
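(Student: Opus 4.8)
The plan is to apply the characterization theorem from~\cite{T5} recalled just above, with $\CATA:=\RedAtl$, $\SETWA:=\WRedAtl$, $\CATB:=\PEEGpd$, $\SETWB:=\WPEEGpd$ and $\functor{F}:=\functor{F}^{\red}$. By Proposition~\ref{prop-06} the map $\functor{G}^{\red}$ already fits the scheme required there (recall that $\SETWBsat=\WPEEGpd$ by Lemma~\ref{lem-17}, so every morphism of $\SETWBsat$ is a Morita equivalence, and that $\SETWAsat$ is exactly the class of weak equivalences by Lemma~\ref{lem-19}). Hence $\functor{G}^{\red}$ is an equivalence of bicategories \emph{if and only if} $\functor{F}^{\red}$ satisfies conditions (A1)--(A5), and it remains only to verify these. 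Throughout I would use repeatedly that $\functor{F}^{\red}$ is a \emph{strict} $2$-functor into the \emph{strict} $2$-category $\PEEGpd$ (Theorem~\ref{theo-01}), that $\functor{F}^{\red}_1$ and $\functor{F}^{\red}_2$ are bijective on morphisms and on $2$-morphisms with fixed source and target (Lemmas~\ref{lem-07} and~\ref{lem-08}), that refinements and weak equivalences correspond precisely to Morita equivalences (Proposition~\ref{prop-04}), and that every proper, effective, \'etale groupoid receives a Morita equivalence from some $\functor{F}^{\red}_0(\atlas{X})$ (Lemma~\ref{lem-11}).

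Conditions (A1), (A2), (A3) and (A5) all follow one pattern. For (A1), given $\groupoidtot{X}{}$ I would take an atlas $\atlas{X}$ and a Morita equivalence $\chi:\functor{F}^{\red}_0(\atlas{X})\rightarrow\groupoidtot{X}{}$ from Lemma~\ref{lem-11}, and set $\operatorname{w}^1_{\CATB}:=\id$, $\operatorname{w}^2_{\CATB}:=\chi$. For (A2), (A3) and (A5) I would apply Lemma~\ref{lem-11} to the groupoid at the apex of the given datum to obtain an atlas $\atlas{C}$ and a Morita equivalence $\chi:\functor{F}^{\red}_0(\atlas{C})\rightarrow A_{\CATB}$, precompose the given groupoid morphisms with $\chi$, and lift them uniquely to $\RedAtl$ by Lemma~\ref{lem-07}; each leg built from a Morita equivalence lifts to a weak equivalence (Proposition~\ref{prop-04}), which I would then upgrade to a refinement, on the leg where $\SETWA$ rather than $\SETWAsat$ is demanded, by one application of Lemma~\ref{lem-18}. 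Because $\functor{F}^{\red}$ is strict, the relevant squares commute \emph{on the nose} (for instance $\functor{F}^{\red}_1(\atlasmap{f}{})=f_{\CATB}\circ\chi$ literally, after lifting), so the connecting $2$-cells $\gamma^1,\gamma^2,\alpha_{\CATB},\sigma_{\CATB}$ required in (A2), (A3), (A5) may all be chosen to be identities; in (A3) and (A5) I would additionally set $z_{\CATB}:=\id$, whereupon the pasting equality of (A5) collapses to $\functor{F}^{\red}_2(\alpha_{\CATA})=\alpha_{\CATB}\ast i_{\atlasmap{\operatorname{z}}{\prime}}$, which holds by \emph{defining} $\alpha_{\CATA}$ to be the unique preimage of $\alpha_{\CATB}\ast i_{\atlasmap{\operatorname{z}}{\prime}}$ under the bijection $\functor{F}^{\red}_2$ of Lemma~\ref{lem-08}.

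Condition (A4) is of a different, purely injective flavour. Suppose $\gamma^1_{\CATA},\gamma^2_{\CATA}:\atlasmap{f}{1}\Rightarrow\atlasmap{f}{2}$ satisfy $\functor{F}^{\red}_2(\gamma^1_{\CATA})\ast i_{\operatorname{z}_{\CATB}}=\functor{F}^{\red}_2(\gamma^2_{\CATA})\ast i_{\operatorname{z}_{\CATB}}$ for a Morita equivalence $\operatorname{z}_{\CATB}$. Then the natural transformations $\functor{F}^{\red}_2(\gamma^m_{\CATA})$ agree on the image of $(\operatorname{z}_{\CATB})_0$, which is open (the map is \'etale) and meets every orbit by (\hyperref[ME1]{ME1}); the naturality condition (\hyperref[NT2]{NT2}) then propagates this agreement along arrows to all of $\functor{F}^{\red}_0(A_{\CATA})_0$, forcing $\functor{F}^{\red}_2(\gamma^1_{\CATA})=\functor{F}^{\red}_2(\gamma^2_{\CATA})$ and hence $\gamma^1_{\CATA}=\gamma^2_{\CATA}$ by injectivity of $\functor{F}^{\red}_2$. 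One may therefore take $\operatorname{z}_{\CATA}:=\id_{A_{\CATA}}$, which lies in $\WRedAtl$ by (\hyperref[BF1]{BF1}).

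The routine but lengthy bookkeeping is concentrated in (A2) and (A5): one must track which legs must be refinements versus merely weak equivalences (matching the asymmetry $\SETWB$ versus $\SETWBsat$ in the hypotheses), apply Lemma~\ref{lem-18} to exactly the right leg, and confirm the stated cocycle/pasting identities for the chosen identity $2$-cells. I expect the genuine obstacle to be (A5): even after choosing $z_{\CATB}$ and $\sigma_{\CATB}$ trivial, one must carefully verify that the prescribed vertical pasting of $(\sigma_{\CATB})^{-1}$, $\functor{F}^{\red}_2(\alpha_{\CATA})$ and $\sigma_{\CATB}$ genuinely reduces to $\functor{F}^{\red}_2(\alpha_{\CATA})\ast i_{\id}$, i.e.\ that the strictness of $\functor{F}^{\red}$ and of $\PEEGpd$ really eliminates every associator and unitor from the diagram. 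Once this is checked, (A1)--(A5) all hold, and the criterion of~\cite{T5} yields that $\functor{G}^{\red}$ is an equivalence of bicategories.
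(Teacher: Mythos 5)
Your proposal is correct and, for four of the five conditions, coincides with the paper's own proof: the paper likewise feeds the data of Proposition~\ref{prop-06} into the criterion of~\cite{T5} and verifies (A1), (A2), (A3), (A5) exactly as you sketch --- Lemma~\ref{lem-11} applied at the apex, unique lifts via Lemmas~\ref{lem-07} and~\ref{lem-08}, Proposition~\ref{prop-04} to recognize the lifts as weak equivalences, one application of Lemma~\ref{lem-18} to upgrade the $\SETWA$-leg to a refinement (with Lemma~\ref{lem-19} certifying the other leg lands in $\SETWAsat$), and identity $2$-cells throughout by strictness of $\functor{F}^{\red}$. In particular your treatment of (A5), with $\operatorname{z}_{\CATB}:=\id$, $\sigma_{\CATB}$ the $2$-identity of $\functor{F}_1^{\red}(\atlasmap{\operatorname{u}}{}\circ\atlasmap{\operatorname{v}}{})=\groupoidmaptot{\psi}{}\circ\groupoidmaptot{\phi}{}\circ\functor{F}_1^{\red}(\atlasmap{\operatorname{v}}{})$, and $\alpha_{\CATA}$ the unique $\functor{F}_2^{\red}$-preimage of $\alpha\ast i_{\operatorname{z}'_{\CATB}}$, is literally the paper's choice of data. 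The one genuine divergence is (A4): the paper stays within its uniform pattern, constructing a refinement $\atlasmap{\operatorname{z}}{}$ via Lemma~\ref{lem-11}, Proposition~\ref{prop-04} and Lemma~\ref{lem-18}, and concluding only $[\gamma^1]\ast i_{\atlasmap{\operatorname{z}}{}}=[\gamma^2]\ast i_{\atlasmap{\operatorname{z}}{}}$ from the injectivity part of Lemma~\ref{lem-08}, whereas you prove the stronger statement $[\gamma^1]=[\gamma^2]$ outright and take $\operatorname{z}_{\CATA}:=\id$ (legitimate by (BF1)). Your argument is sound: the hypothesis says the two natural transformations agree on the image of $(\operatorname{z}_{\CATB})_0$, condition (ME1) joins every point of the object space to that image by an arrow, and (NT2), arrows being invertible, transports the agreement along that arrow; note that the openness of the image which you invoke is not actually needed --- essential surjectivity plus naturality suffice, and effectiveness plays no role either. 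This buys a shorter, machinery-free verification of (A4), while the paper's route has the mild virtue of recycling the same three lemmas used in the other four conditions. Two cosmetic slips worth fixing: in (A3) the morphism you set to the identity is $\operatorname{v}^2_{\CATB}$ (there is no $\operatorname{z}_{\CATB}$ in that condition), and in (A5) you should record $\operatorname{z}'_{\CATB}$ explicitly as the Morita equivalence $\groupoidmaptot{\phi}{}\circ\functor{F}_1^{\red}(\atlasmap{\operatorname{v}}{})$, as the paper does.
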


\begin{proof}
Let us verify condition (\hyperref[A1]{A1}), so let us fix any $\groupoidtot{X}{}$ in $\PEEGpd$; using
Lemma~\ref{lem-11} there are a reduced orbifold atlas $\atlas{X}$ and a Morita equivalence
$\groupoidmaptot{\psi}{}:\functor{F}_0^{\red}(\atlas{X})\rightarrow\groupoidtot{X}{}$. Therefore,
(\hyperref[A1]{A1}) holds if we choose the following set of data:

\[\begin{tikzpicture}[xscale=2.6,yscale=-1.2]
    \node (A0_0) at (-0.2, 0) {$\functor{F}_0^{\red}(\atlas{X})$};
    \node (A0_1) at (1, 0) {$\functor{F}_0^{\red}(\atlas{X})$};
    \node (A0_2) at (2, 0) {$\groupoidtot{X}{}$.};
    \path (A0_1) edge [->]node [auto,swap]
      {$\scriptstyle{\id_{\functor{F}_0^{\red}(\atlas{X})}}$} (A0_0);
    \path (A0_1) edge [->]node [auto] {$\scriptstyle{\groupoidmaptot{\psi}{}}$} (A0_2);
\end{tikzpicture}\]

Let us consider (\hyperref[A2]{A2}), so let us fix any pair of reduced orbifold atlases
$\atlas{X}^1,\atlas{X}^2$ and any $\groupoidtot{X}{}$ in $\PEEGpd$, together with any pair of Morita
equivalences as follows:

\[\begin{tikzpicture}[xscale=2.9,yscale=-1.2]
    \node (A0_0) at (0, 0) {$\functor{F}_0^{\red}(\atlas{X}^1)$};
    \node (A0_1) at (1, 0) {$\groupoidtot{X}{}$};
    \node (A0_2) at (2, 0) {$\functor{F}_0^{\red}(\atlas{X}^2)$.};
    \path (A0_1) edge [->]node [auto,swap] {$\scriptstyle{\groupoidmaptot{\psi}{1}}$} (A0_0);
    \path (A0_1) edge [->]node [auto] {$\scriptstyle{\groupoidmaptot{\psi}{2}}$} (A0_2);
\end{tikzpicture}\]

By Lemma~\ref{lem-11} there are a reduced orbifold atlas $\atlas{Y}$ and a Morita equivalence
$\groupoidmaptot{\phi}{}:\functor{F}_0^{\red}(\atlas{Y})\rightarrow\groupoidtot{X}{}$. By
Proposition~\ref{prop-04} there is a weak equivalence $\atlasmap{\operatorname{v}}{}:\atlas{Y}
\rightarrow\atlas{X}^1$, such that
$\functor{F}_1^{\red}(\atlasmap{\operatorname{v}}{})=\groupoidmaptot{\psi}{1}\circ
\groupoidmaptot{\phi}{}$. Since 
$\atlasmap{\operatorname{v}}{}$ a weak equivalence, then by Lemma~\ref{lem-18} there are a reduced
orbifold atlas $\atlas{X}^3$ and a weak equivalence $\atlasmap{\operatorname{u}}{}:\atlas{X}^3
\rightarrow\atlas{Y}$, such that the morphism

\[\atlasmap{\operatorname{w}}{1}=\atlasmap{\operatorname{v}}{}\circ\atlasmap{\operatorname{u}}{}:\,
\atlas{X}^3\longrightarrow\atlas{X}^1\]
is a refinement. We set $\groupoidmaptot{\xi}{}:=\functor{F}_1(\atlasmap{\operatorname{u}}{})$; this
morphism is a Morita equivalence by Lemma~\ref{lem-09} and we have

\[\functor{F}_1^{\red}(\atlasmap{\operatorname{w}}{1})=\groupoidmaptot{\psi}{1}\circ
\groupoidmaptot{\phi}{}\circ\groupoidmaptot{\xi}{}.\]

Again by Proposition~\ref{prop-04} there is a unique weak equivalence $\atlasmap{\operatorname{w}}{2}:
\atlas{X}^3\rightarrow\atlas{X}^2$, such that

\[\functor{F}_1^{\red}(\atlasmap{\operatorname{w}}{2})=\groupoidmaptot{\psi}{2}\circ
\groupoidmaptot{\phi}{}\circ\groupoidmaptot{\xi}{}.\]

By Lemma~\ref{lem-19}, we have that $\atlasmap{\operatorname{w}}{2}$ belongs to the right saturation
of $\WRedAtl$. Then (\hyperref[A2]{A2}) is satisfied by the following set of data

\[\begin{tikzpicture}[xscale=2.4,yscale=-0.8]
    \node (A0_2) at (2, 0) {$\groupoidtot{X}{}$};
    \node (A2_2) at (2, 2) {$\functor{F}_0^{\red}(\atlas{X}^3)$};
    \node (A2_0) at (0, 2) {$\functor{F}_0^{\red}(\atlas{X}^1)$};
    \node (A2_4) at (4, 2) {$\functor{F}_0^{\red}(\atlas{X}^2)$.};
    \node (A4_2) at (2, 4) {$\functor{F}_0^{\red}(\atlas{X}^3)$};

    \node (A2_3) at (3, 2) {$\Downarrow\,i_{\functor{F}_1^{\red}(\atlasmap{\operatorname{w}}{2})}$};
    \node (A2_1) at (1, 2) {$\Downarrow\,i_{\functor{F}_1^{\red}(\atlasmap{\operatorname{w}}{1})}$};
    
    \path (A4_2) edge [->]node [auto,swap]
      {$\scriptstyle{\functor{F}_1^{\red}(\atlasmap{\operatorname{w}}{2})}$} (A2_4);
    \path (A0_2) edge [->]node [auto] {$\scriptstyle{\groupoidmaptot{\psi}{2}}$} (A2_4);
    \path (A2_2) edge [->]node [auto,swap]
      {$\scriptstyle{\groupoidmaptot{\phi}{}\circ\groupoidmaptot{\xi}{}}$} (A0_2);
    \path (A2_2) edge [->]node [auto]
      {$\scriptstyle{\id_{\functor{F}_0^{\red}(\atlas{X}^3)}}$} (A4_2);
    \path (A4_2) edge [->]node [auto]
      {$\scriptstyle{\functor{F}_1^{\red}(\atlasmap{\operatorname{w}}{1})}$} (A2_0);
    \path (A0_2) edge [->]node [auto,swap] {$\scriptstyle{\groupoidmaptot{\psi}{1}}$} (A2_0);
\end{tikzpicture}\]

Let us consider (\hyperref[A3]{A3}), so let us fix any reduced orbifold atlas $\atlas{Y}$, any object
$\groupoidtot{X}{}$ in $\PEEGpd$ and any morphism $\groupoidmaptot{\phi}{}:\groupoidtot{X}{}
\rightarrow\functor{F}_0^{\red}
(\atlas{Y})$. By Lemma~\ref{lem-11} there are a reduced orbifold atlas $\atlas{X}$ and a Morita
equivalence $\groupoidmaptot{\psi}{}:\functor{F}_0^{\red}(\atlas{X})\rightarrow\groupoidtot{X}{}$. By
Lemma~\ref{lem-07} there is a unique morphism $\atlasmap{f}{}:\atlas{X}\rightarrow\atlas{Y}$, such
that $\functor{F}_0^{\red}(\atlasmap{f}{})=\groupoidmaptot{\phi}{}\circ\groupoidmaptot{\psi}{}$. Then
(\hyperref[A3]{A3}) is easily verified with $A'_{\CATB}:=\functor{F}_0^{\red}(\atlas{X})$,
$\operatorname{v}^1_{\CATB}:=\groupoidmaptot{\psi}{}$ and $\operatorname{v}^2_{\CATB}:=
\id_{\functor{F}_0^{\red}(\atlas{X})}$.\\

Let us consider (\hyperref[A4]{A4}), so let us fix any pair of reduced orbifold atlases $\atlas{X},
\atlas{Y}$, any pair of morphisms $\atlasmap{f}{1},\atlasmap{f}{2}:\atlas{X}\rightarrow\atlas{Y}$ and
any pair of $2$-morphisms $[\gamma^1],[\gamma^2]:\atlasmap{f}{1}\Rightarrow\atlasmap{f}{2}$
in $\RedAtl$. Moreover, let us fix any object $\groupoidtot{X}{}$ in $\PEEGpd$ and any Morita
equivalence $\groupoidmaptot{\psi}{}:\groupoidtot{X}{}\rightarrow\functor{F}_0^{\red}(\atlas{X})$,
such that

\begin{equation}\label{eq-25}
\functor{F}_2^{\red}([\gamma^1])\ast i_{\groupoidmaptot{\psi}{}}=\functor{F}_2^{\red}([\gamma^2])\ast
i_{\groupoidmaptot{\psi}{}}.
\end{equation}

By Lemma~\ref{lem-11} there are a reduced orbifold atlas $\atlas{Z}$ and a Morita equivalence
$\groupoidmaptot{\phi}{}:\functor{F}_0^{\red}(\atlas{Z})\rightarrow\groupoidtot{X}{}$.
By Proposition~\ref{prop-04} there is a unique weak equivalence $\atlasmap{\operatorname{u}}{}:
\atlas{Z}\rightarrow\atlas{X}$ such that $\functor{F}_1^{\red}(\atlasmap{\operatorname{u}}{})=
\groupoidmaptot{\psi}{}\circ\groupoidmaptot{\phi}{}$. By Lemma~\ref{lem-18} there are a reduced
orbifold atlas $\atlas{U}$ and a weak equivalence $\atlasmap{\operatorname{v}}{}:
\atlas{U}\rightarrow\atlas{Z}$,
such that $\atlasmap{\operatorname{z}}{}:=\atlasmap{\operatorname{u}}{}\circ
\atlasmap{\operatorname{v}}{}$ is a refinement. So:

\begin{gather*}
\functor{F}_2^{\red}\left([\gamma^1]\ast i_{\atlasmap{\operatorname{z}}{}}\right)=\functor{F}_2^{\red}
 ([\gamma^1])\ast i_{\groupoidmaptot{\psi}{}\circ\groupoidmaptot{\phi}{}\circ\functor{F}_1^{\red}
 (\atlasmap{\operatorname{v}}{})}\stackrel{\eqref{eq-25}}{=} \\
\stackrel{\eqref{eq-25}}{=}\functor{F}_2^{\red}([\gamma^2])\ast i_{\groupoidmaptot{\psi}{}\circ
 \groupoidmaptot{\phi}{}\circ
 \functor{F}_1^{\red}(\atlasmap{\operatorname{v}}{})}=\functor{F}_2^{\red}\left([\gamma^2]\ast
 i_{\atlasmap{\operatorname{z}}{}}\right).
\end{gather*}

By Lemma~\ref{lem-08}, this implies that $[\gamma^1]\ast i_{\atlasmap{\operatorname{z}}{}}=
[\gamma^2]\ast i_{\atlasmap{\operatorname{z}}{}}$, so (\hyperref[A4]{A4}) holds.\\

Lastly, let us prove (\hyperref[A5]{A5}), so let us fix any pair of reduced orbifold atlases
$\atlas{X},\atlas{Y}$, any object $\groupoidtot{X}{}$ in $\PEEGpd$, any pair of morphisms
$\atlasmap{f}{1},
\atlasmap{f}{2}:\atlas{X}\rightarrow\atlas{Y}$, any Morita equivalence $\groupoidmaptot{\psi}{}:
\groupoidtot{X}{}\rightarrow
\functor{F}_0^{\red}(\atlas{X})$ and any natural transformation $\alpha:\functor{F}_1^{\red}(\atlasmap{f}{1})
\circ\groupoidmaptot{\psi}{}\Rightarrow\functor{F}_1^{\red}(\atlasmap{f}{2})\circ
\groupoidmaptot{\psi}{}$. By
Lemma~\ref{lem-11} there are a reduced orbifold atlas $\atlas{Z}$ and a Morita equivalence
$\groupoidmaptot{\phi}{}:
\functor{F}_0^{\red}(\atlas{Z})\rightarrow\groupoidtot{X}{}$. By Proposition~\ref{prop-04} there is a
unique weak equivalence $\atlasmap{\operatorname{u}}{}:\atlas{Z}\rightarrow\atlas{X}$ such that
$\functor{F}_1^{\red}(\atlasmap{\operatorname{u}}{})=\groupoidmaptot{\psi}{}\circ
\groupoidmaptot{\phi}{}$. By Lemma~\ref{lem-18}, there are a reduced orbifold atlas $\atlas{X}'$
and a weak equivalence $\atlasmap{\operatorname{v}}{}:\atlas{X}'\rightarrow\atlas{Z}$, such that
$\atlasmap{\operatorname{u}}{}\circ\atlasmap{\operatorname{v}}{}$ is a refinement.
Then let us consider the $2$-morphism

\begin{equation}\label{eq-24}
\alpha\ast i_{\groupoidmaptot{\phi}{}\circ\functor{F}_1^{\red}(\atlasmap{\operatorname{v}}{})}:\,
\functor{F}_1^{\red}(\atlasmap{f}{1}\circ\atlasmap{\operatorname{u}}{}\circ
\atlasmap{\operatorname{v}}{})
\Longrightarrow\functor{F}_1^{\red}(\atlasmap{f}{2}\circ\atlasmap{\operatorname{u}}{}\circ
\atlasmap{\operatorname{v}}{}).
\end{equation}

By Lemma~\ref{lem-08} there is a unique $2$-morphism

\[[\delta]:\,\atlasmap{f}{1}\circ\atlasmap{\operatorname{u}}{}\circ\atlasmap{\operatorname{v}}{}
\Longrightarrow
\atlasmap{f}{2}\circ\atlasmap{\operatorname{u}}{}\circ\atlasmap{\operatorname{v}}{}\]
in $\RedAtl$, such that $\functor{F}_2^{\red}([\delta])$ is equal to \eqref{eq-24}. Then
(\hyperref[A5]{A5}) is satisfied if we choose $A'_{\CATA}:=\atlas{X}'$, $A'_{\CATB}:=
\functor{F}_0^{\red}(\atlas{X}')$, $\operatorname{v}_{\CATA}:=\atlasmap{\operatorname{u}}{}\circ
\atlasmap{\operatorname{v}}{}:\atlas{X}'\longrightarrow\atlas{X}$, $\operatorname{z}_{\CATB}:=
\id_{\functor{F}_0^{\red}(\atlas{X}')},$

\[\operatorname{z}'_{\CATB}:=\groupoidmaptot{\phi}{}\circ\functor{F}_1^{\red}
(\atlasmap{\operatorname{v}}{}):\,\functor{F}_0(\atlas{X}')\longrightarrow\groupoidtot{X}{}\]
(this is a Morita equivalence because composition of Morita  equivalences), $\alpha_{\CATA}:=
[\delta]$ and if we define $\sigma_{\CATB}$ as the $2$-identity of

\[\functor{F}_1^{\red}(\atlasmap{\operatorname{u}}{}\circ\atlasmap{\operatorname{v}}{})=
\groupoidmaptot{\psi}{}\circ\groupoidmaptot{\phi}{}\circ\functor{F}_1^{\red}
(\atlasmap{\operatorname{v}}{}).\]
\end{proof}

\begin{rem}
As we said in the Introduction, $\PEEGpd\left[\WPEEGpdinv\right]$ is the bicategory of reduced
differentiable
orbifolds in the language of Lie groupoids; so the previous theorem proves that the bicategory
$\RedOrb$ just defined is the first known bicategory of reduced orbifolds in the language of
reduced orbifold atlases. Compared to $\PEEGpd\left[\WPEEGpdinv\right]$, the
main advantage of $\RedOrb$ for differential geometers is the fact that all the definitions 
used for the construction of such a bicategory
do not require any knowledge of Lie groupoids or differentiable stacks, but they use only
the notion of reduced orbifold atlases, local lifts and changes of charts.
\end{rem}

\section{An equivalence between $\RedOrb$ and the 2-category of effective orbifolds
described in terms of differentiable stacks}
As we mentioned in the introduction, a very convenient way to define a $2$-category of orbifolds
is by exhibiting it as a full $2$-subcategory of the $2$-category of $C^{\infty}$-stacks (these
are called ``differentiable stacks'' in several papers, see for example~\cite{Pr}). For the
Grothendieck topology used for such stacks, we refer to~\cite[Definition~8.1]{J2}. A
$C^{\infty}$-stack is called an \emph{orbifold} (see~\cite[Definition~9.25]{J2}) if it is
equivalent to the stack $[\groupname{X}{}]$ associated to a proper, \'etale groupoid
$(\groupname{X}{})$. In
particular (see again~\cite[Definition~9.25]{J2}) every orbifold is a separated, locally finitely
presented Deligne-Mumford $C^{\infty}$-stack. An orbifold $\stack{X}$ is called \emph{effective}
or \emph{reduced} (see~\cite[Definition~1.9.4]{J1}) if for every point $[x]\in
\stack{X}_{\operatorname{top}}$ there exists a linear effective action of $G:=
\operatorname{Iso}_{\stack{X}}([x])$ on some $\mathbb{R}^n$, a $G$-invariant open neighborhood $\tX$
of $0$ in $\mathbb{R}^n$ and a $1$-morphism $i:[\tX/G]\rightarrow\stack{X}$, which is an
equivalence with an open neighborhood of $x$ in $\stack{X}$ with $i_{\operatorname{top}}(0)=[x]$
(if $\stack{X}$ is not effective, we are in the same setup but the action of each $G$ is not
required to be effective). Equivalently, an orbifold is effective if and only if it
is associated to a proper, \'etale, \emph{effective} groupoid.\\

According to~\cite{J2} we write $\Orb$ and $\OrbEff$ for the full
$2$-subcategories of orbifolds, respectively of effective orbifolds, in the $2$-category of
$C^{\infty}$-stacks (or, equivalently, in the $2$-category of Deligne-Mumford $C^{\infty}$-stacks).
We recall that by~\cite[Corollary~43]{Pr} there is an equivalence of bicategories

\[\widetilde{\functor{H}}:\,\EGpd\left[\WEGpdinv\right]\longrightarrow\DiffStacks\]
and that by~\cite[Theorem~9.26]{J2} there is an equivalence of bicategories induced by
$\widetilde{\functor{H}}$:

\[\functor{H}:\,\PEGpd\left[\WPEGpdinv\right]\longrightarrow\Orb.\]

Therefore we get easily that there is also is an equivalence of bicategories induced by
$\functor{H}$:

\[\functor{H}^{\red}:\,\PEEGpd\left[\WPEEGpdinv\right]\longrightarrow\OrbEff.\]

By considering the composition:

\[\begin{tikzpicture}[xscale=4.9,yscale=-1.2]
    \node (A0_0) at (0, 0) {$\RedOrb$};
    \node (A0_1) at (1, 0) {$\PEEGpd\left[\WPEEGpdinv\right]$};
    \node (A0_2) at (2, 0) {$\OrbEff$};
    \path (A0_0) edge [->]node [auto] {$\scriptstyle{\functor{G}^{\red}}$} (A0_1);
    \path (A0_1) edge [->]node [auto] {$\scriptstyle{\functor{H}^{\red}}$} (A0_2);
\end{tikzpicture}\]
we conclude by Theorem~\ref{theo-05} that:

\begin{theo}\label{theo-03}
Assuming the axiom of choice, there is an equivalence between the bicategory
$\RedOrb$ and the $2$-category $\OrbEff$ of
effective orbifolds described as a full $2$-subcategory of the $2$-category of
$C^{\infty}$-Deligne-Mumford stacks.
\end{theo}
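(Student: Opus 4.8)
The plan is to produce the desired equivalence by composing two equivalences of bicategories that have already been established, and then invoking the fact that a composite of equivalences of bicategories is again an equivalence of bicategories. All of the genuine work has been carried out in the preceding sections, so the argument here is essentially formal; I would assemble it as follows.

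First I would recall that Theorem~\ref{theo-05} (built on Proposition~\ref{prop-06} and the axiom of choice) already shows that the pseudofunctor $\functor{G}^{\red}:\RedOrb\rightarrow\PEEGpd\left[\WPEEGpdinv\right]$ is an equivalence of bicategories; this is the substantive input, obtained through the verification of conditions (\hyperref[A1]{A1})--(\hyperref[A5]{A5}). Next I would assemble the equivalence $\functor{H}^{\red}:\PEEGpd\left[\WPEEGpdinv\right]\rightarrow\OrbEff$ out of the results stated just before the theorem: Pronk's equivalence $\widetilde{\functor{H}}:\EGpd\left[\WEGpdinv\right]\rightarrow\DiffStacks$ from \cite[Corollary~43]{Pr}, restricted via \cite[Theorem~9.26]{J2} to the equivalence $\functor{H}:\PEGpd\left[\WPEGpdinv\right]\rightarrow\Orb$, and then further restricted to the full bi-subcategory of proper, effective, \'etale groupoids. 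Finally I would form the composite $\functor{H}^{\red}\circ\functor{G}^{\red}:\RedOrb\rightarrow\OrbEff$ and conclude, since the composite of two equivalences of bicategories is an equivalence of bicategories.

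The one point that requires care, and which I regard as the only real obstacle in this otherwise formal step, is to confirm that the restriction defining $\functor{H}^{\red}$ is genuinely well-defined and essentially surjective onto $\OrbEff$; that is, that a $C^{\infty}$-orbifold is effective precisely when it is presented by a proper, effective, \'etale groupoid. This is exactly the characterization recalled in the paragraphs preceding the statement, and it relies on the fact that effectiveness is preserved by Morita equivalences among \'etale groupoids (as noted at the beginning of the proof of Proposition~\ref{prop-03}), so that the full bi-subcategory $\PEEGpd\left[\WPEEGpdinv\right]\subseteq\EGpd\left[\WEGpdinv\right]$ corresponds under $\widetilde{\functor{H}}$ exactly to the full $2$-subcategory $\OrbEff\subseteq\DiffStacks$. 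Once this correspondence is in hand, $\functor{H}^{\red}$ inherits full faithfulness (on $1$- and $2$-morphisms) and essential surjectivity from $\functor{H}$, and the composition argument completes the proof.

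In summary, I would write the proof in three short strokes: invoke Theorem~\ref{theo-05} for the first equivalence, assemble $\functor{H}^{\red}$ from \cite{Pr} and \cite{J2} (checking the effective/reduced matching), and compose. No new computation is needed beyond citing that a composite of equivalences of bicategories is again such an equivalence.
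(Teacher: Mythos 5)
Your proposal is correct and matches the paper's own argument exactly: the paper obtains $\functor{H}^{\red}$ by restricting Pronk's equivalence $\widetilde{\functor{H}}$ via Joyce's \cite[Theorem~9.26]{J2} to the full bi-subcategory $\PEEGpd\left[\WPEEGpdinv\right]$ (using that effectiveness is a Morita invariant among \'etale groupoids, as noted in the proof of Proposition~\ref{prop-03}), and then composes with the equivalence $\functor{G}^{\red}$ of Theorem~\ref{theo-05}. Your identification of the effective/reduced matching as the only point needing care is precisely the content the paper handles in the paragraphs preceding the statement, so nothing is missing.
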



\end{document}